\documentclass[11pt]{article}

\usepackage[T1]{fontenc}
\usepackage{lmodern}
\usepackage[a4paper,margin=25mm]{geometry}
\usepackage{amsmath,amssymb,amsthm,mathtools}
\usepackage{booktabs,longtable,array,tabularx}
\usepackage{enumitem}
\usepackage{xcolor}
\usepackage{float}
\usepackage{placeins}
\usepackage{tikz}
\usetikzlibrary{arrows.meta,decorations.markings,positioning,shapes.geometric}
\usepackage[hidelinks]{hyperref}
\hypersetup{
  pdftitle={Local Uniform Finite Cyclicity of the H14-3 Semihyperbolic Hemicycle in Quadratic Systems},
  pdfauthor={Haibo Lu},
  pdfsubject={Local uniform finite cyclicity for the H14-3 hemicycle in the full quadratic coefficient space},
  pdfkeywords={quadratic vector fields, finite cyclicity, hemicycle, H14-3}
}

\setlist[itemize]{leftmargin=2em,itemsep=0.25em,topsep=0.35em}

\newtheorem{theorem}{Theorem}
\newtheorem{proposition}[theorem]{Proposition}
\newtheorem{lemma}[theorem]{Lemma}
\newtheorem{corollary}[theorem]{Corollary}
\theoremstyle{definition}
\newtheorem{definition}[theorem]{Definition}
\theoremstyle{remark}
\newtheorem{remark}[theorem]{Remark}
\numberwithin{equation}{section}

\newcommand{\Hfourteen}{H_{14}^{3}}
\newcommand{\GammaH}{\Gamma_{H14^3}}
\newcommand{\NH}{N_{H14^3}}
\newcommand{\BH}{B_{H14^3}}
\newcommand{\R}{\mathbb R}
\newcommand{\subjclass}[2][2020]{%
  \par\smallskip\noindent\textit{#1 Mathematics Subject Classification.} #2\par}
\newcommand{\keywords}[1]{%
  \par\smallskip\noindent\textit{Keywords.} #1\par}

\title{Local Uniform Finite Cyclicity of the
$H_{14}^{3}$ Semihyperbolic Hemicycle in Quadratic Systems}
\author{Haibo Lu\\
Shanghai Institute of Technology\\
\texttt{luhaibo1985@gmail.com}}
\date{}

\begin{document}

\maketitle

\begin{abstract}
We prove local uniform finite cyclicity of the labelled $H_{14}^{3}$
hemicycle in a full neighborhood of its base field in the twelve-dimensional
space of planar quadratic vector fields.  Thus there is one fixed two-sided
annular neighborhood of the compactified graphic in which the number of
isolated limit cycles is bounded uniformly for every sufficiently close
quadratic field.  A local analytic slice theorem is part of the result: the
displayed five-parameter source-normalized family is transverse at the
$B=0$ field to the seven-dimensional action of affine phase changes and
positive constant time rescalings.  This removes the normalization and
transfers the same cyclicity bound to the full quadratic coefficient space.

The normalized problem combines a noncompact period annulus, two
semihyperbolic endpoints at infinity, and a degeneration at the upper
equatorial point.  We replace the unavailable global Poincar\'e map by finitely
many stopped transition maps and prove an exact multiplicity-preserving
correspondence between collar cycles and displacement zeros.  At the
noncompact source a matched nonlinear return on a common physical domain,
controlled through six derivatives, gives a two-zero bound by Rolle's
theorem.  At the two saddle-nodes an exhaustive physical incidence and scale
decomposition reduces all limits to source, mixed, hyperbolic, central, and
lips estimates.  A finite specialization argument then extends these bounds
across coefficient and identity strata.  The resulting bound is existential
and is not claimed to be sharp.  In the terminology of the quadratic
finite-cyclicity program, this completes the labelled $H_{14}^{3}$ open case.
\end{abstract}

\subjclass[2020]{34C07, 34C23, 37G15}
\keywords{quadratic vector field, limit cycle, finite cyclicity,
semihyperbolic hemicycle, saddle-node, Hilbert's sixteenth problem}

\setcounter{tocdepth}{1}
\tableofcontents

\section{Introduction and main result}
\label{sec:introduction}

The cyclicity of a graphic is the maximal number of isolated periodic orbits
that can bifurcate from it in a prescribed family.  In the local form of
Hilbert's sixteenth problem one asks not merely for finiteness at each fixed
parameter, but for one bound that is uniform in a whole parameter
neighborhood; see Roussarie~\cite{Roussarie1998} and
Ilyashenko~\cite{Ilyashenko2002} for the geometric and historical framework,
and Ilyashenko--Yakovenko~\cite{IlyashenkoYakovenko2008} for analytic
differential-equation background.  This distinction is essential near a noncompact graphic:
periodic orbits may approach different singular pieces while the parameters
approach the base field, so a collection of pointwise estimates need not have
a common bound.

The object considered here is the semihyperbolic hemicycle labelled
$H_{14}^{3}$ in the quadratic classification.  On the Poincar\'e sphere it is
the outer boundary of a period annulus and contains two saddle-nodes at the
horizontal points at infinity, together with a degenerate point on the upper
equator.  The simultaneous presence of these three pieces is precisely what
prevents one from writing down a single return map at the outset.  The main
result proves finite cyclicity on an open neighborhood in the full quadratic
coefficient space, without replacing the physical collar by a
parameter-dependent neighborhood.  The proof first keeps all five
source-normalized parameters and then removes that normalization by a local
analytic slice theorem.

\subsection{The family and the theorem}

The starting example is the source-normalized field
\begin{equation}
 X_0:\qquad \dot x=-y,\qquad \dot y=x+xy .
 \label{eq:source}
\end{equation}
On the half-plane $y>-1$ it has the first integral
\begin{equation}
 H(x,y)=\frac{x^2}{2}+y-\log(1+y).
 \label{eq:first-integral}
\end{equation}
Its regular positive levels form the period annulus around the origin.  Their
outer compactified limit is the labelled set $\GammaH$, consisting of the
finite invariant line $y=-1$, two horizontal points at infinity, the oriented
upper equatorial arcs, and the positive vertical point at infinity.  The
labels and orientation specify the graphic throughout the paper.

The unfolding studied here is
\begin{equation}
\begin{aligned}
\dot x&=-y+Bx^2+\mu_2y^2+(\mu_4+B\mu_5)x,\\
\dot y&=x+xy+\mu_3y^2+(1-2B)\mu_5y,
\end{aligned}
\qquad
\lambda=(B,\mu_2,\mu_3,\mu_4,\mu_5).
\label{eq:family}
\end{equation}
This is the five-parameter source-normalized family displayed in
Roussarie--Rousseau~\cite[Theorem~3.1]{RoussarieRousseau2015}; in that
classification the value $B=0$ is precisely the labelled
$\Hfourteen$ case.  The earlier DRR94 work supplies the quadratic
classification program and the nomenclature~\cite{DRR1994}.  Although the
normal form in Roussarie--Rousseau is introduced from a base value $B_0>0$,
the coefficient family \eqref{eq:family} is defined at the boundary value
$B=0$.  We therefore verify directly, at that boundary field, that it is a
local slice for the full quadratic coefficient space.

Let $\mathcal Q_2$ be the twelve-dimensional real vector space of planar
polynomial vector fields
\[
 Y=P(x,y)\partial_x+Q(x,y)\partial_y,
 \qquad \deg P,\deg Q\le2.
\]
For an affine map $\Phi$ close to the identity, use the pullback convention
\[
 (\Phi^*Y)(z)=D\Phi(z)^{-1}Y(\Phi(z)),
 \qquad z=(x,y),
\]
and let $\rho>0$ multiply time by a positive constant.

\begin{proposition}[Local analytic slice for quadratic vector fields]
\label{prop:quadratic-local-slice}
There are neighborhoods $\mathcal G$ of $(\operatorname{id},1)$ in
$\operatorname{Aff}(2)\times\R_{>0}$, $\Lambda_0$ of $0$ in $\R^5$, and
$\mathcal V_0$ of $X_0$ in $\mathcal Q_2$ such that
\begin{equation}
 \mathcal S(\Phi,\rho,\lambda)
   =\rho\,\Phi^*X_\lambda
 \label{eq:quadratic-slice-map}
\end{equation}
is a real-analytic diffeomorphism from
$\mathcal G\times\Lambda_0$ onto $\mathcal V_0$.  In particular, every
quadratic vector field sufficiently close to $X_0$ has a unique
representation \eqref{eq:quadratic-slice-map} with $\Phi$ close to the
identity, $\rho>0$ close to one, and all five coordinates of $\lambda$ free.
The representation preserves oriented periodic orbits and their Poincar\'e
multiplicities.
\end{proposition}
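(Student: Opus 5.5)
The plan is to apply the inverse function theorem to the real-analytic map $\mathcal S:\operatorname{Aff}(2)\times\R_{>0}\times\R^5\to\mathcal Q_2$ at the point $(\operatorname{id},1,0)$. The map is analytic because $\Phi^*X_\lambda$ is polynomial in the entries of $\Phi$, $\Phi^{-1}$ and $\lambda$. Pullback by an affine map preserves degree $\le 2$, so $\mathcal S$ lands in $\mathcal Q_2$. The dimensions match: $6+1+5=12$. It therefore suffices to show that $D\mathcal S(\operatorname{id},1,0)$ is injective. Equivalently, the tangent space $T=\{[X_0,A]+cX_0\}$ spanned by infinitesimal affine changes, with $A$ ranging over affine vector fields, and by time rescaling must be $7$-dimensional and complementary to the span of $\partial_\lambda X_\lambda|_{\lambda=0}$.

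First I will compute the orbit directions. Here $\frac{d}{ds}\Phi_s^*X_0$ at $s=0$ equals the Lie bracket $[A,X_0]$ up to sign, for $A=(a_1+a_{11}x+a_{12}y)\partial_x+(a_2+a_{21}x+a_{22}y)\partial_y$. Writing $X_0=(-y,\,x+xy)$, I will list the seven vectors $[\partial_x,X_0]$, $[\partial_y,X_0]$, $[x\partial_x,X_0]$, $[y\partial_x,X_0]$, $[x\partial_y,X_0]$, $[y\partial_y,X_0]$ and $X_0$ as coefficient vectors in the monomial basis $\{1,x,y,x^2,xy,y^2\}^2$ of $\mathcal Q_2$. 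I will do the same for the five parameter directions read off from \eqref{eq:family}: $\partial_B=(x^2,0)$, $\partial_{\mu_2}=(y^2,0)$, $\partial_{\mu_3}=(0,y^2)$, $\partial_{\mu_4}=(x,0)$, and $\partial_{\mu_5}=(0,y)$ at $B=0$. The core step is then to check that the resulting $12\times12$ matrix has nonzero determinant. I will organize it by degree. The translations produce constant terms together with linear terms, such as $[\partial_y,X_0]$ contributing $x\partial_y$ from $xy$. The linear maps produce linear terms together with quadratic terms. So the matrix is block triangular with respect to the filtration by constant, linear and quadratic parts, and I will verify each diagonal block.

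This verification is the main obstacle. $X_0$ has a center-type linear part whose centralizer is nontrivial: the rotation generator commutes with $-y\partial_x+x\partial_y$. So the linear-part block alone is degenerate. The missing direction must be supplied by the quadratic term $xy\partial_y$ or by the parameters $\mu_4,\mu_5$, which is exactly why $\mu_4$ and $\mu_5$ appear with linear coefficients. I expect this to need an explicit rank calculation that is not a clean block triangular argument. Concretely, I will just row-reduce the full matrix. The expected conclusion is that the determinant is a nonzero constant. If the quadratic block coming from the scalings $x\partial_x,\,y\partial_y$ collides with $x^2\partial_x$ or $y^2$, I will instead pair these directions with $\partial_B,\partial_{\mu_2},\partial_{\mu_3}$ explicitly.

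Given invertibility, the inverse function theorem yields neighborhoods $\mathcal G,\Lambda_0,\mathcal V_0$ on which $\mathcal S$ is an analytic diffeomorphism; this gives existence and uniqueness of the representation. The last claim follows directly from the construction. $\Phi$ is an affine diffeomorphism, so it is orientation preserving when close to the identity, and it conjugates the flow of $\Phi^*X_\lambda$ to that of $X_\lambda$. Multiplication by $\rho>0$ only reparametrizes time, preserving orbits and their orientation. Poincar\'e maps on corresponding transversals are therefore conjugate by the analytic diffeomorphism $\Phi$, and the multiplicity of a fixed point, meaning the order of vanishing of the displacement, is invariant under analytic conjugation.
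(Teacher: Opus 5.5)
Your proposal is correct and follows the paper's proof: the same six affine generators plus the time-scaling direction, the same five parameter directions, a nonvanishing $12\times12$ determinant in the monomial coefficient basis, the inverse function theorem, and conjugation of Poincar\'e maps by $\Phi$ for the multiplicity clause. The row reduction you defer closes with no re-pairing needed. The rows for the coefficients of $1,x^2,xy,y^2$ in $\dot x$ and of $1,x^2$ in $\dot y$ each have a single nonzero entry; the rows for $x$ in $\dot x$ and for $y,y^2$ in $\dot y$ then isolate the $\mu_4,\mu_5,\mu_3$ columns; and the remaining $3\times3$ block (columns of $x\partial_x$, $y\partial_y$, $X_0$; rows for $y$ in $\dot x$ and $x,xy$ in $\dot y$) has determinant $\pm2$, so $|\det|=2$ as the paper asserts.
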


\begin{proof}
The map \eqref{eq:quadratic-slice-map} is analytic because an affine pullback
preserves polynomial degree.  It remains to check its differential at
$(\operatorname{id},1,0)$.  For
$\Phi_\varepsilon=\operatorname{id}+\varepsilon h$, put
\[
 \mathcal L(h)=
 \left.\frac{d}{d\varepsilon}\right|_{\varepsilon=0}
 \Phi_\varepsilon^*X_0
 =DX_0h-Dh\,X_0.
\]
Use the six affine generators
\[
 h_1=(1,0),\ h_2=(0,1),\ h_3=(x,0),\ h_4=(y,0),\
 h_5=(0,x),\ h_6=(0,y).
\]
Their tangent vectors, followed by the positive time-scaling direction, are
\begin{equation}
\begin{aligned}
 \mathcal L(h_1)&=(0,1+y),&
 \mathcal L(h_2)&=(-1,x),&
 \mathcal L(h_3)&=(y,x+xy),\\
 \mathcal L(h_4)&=(-x-xy,y+y^2),&
 \mathcal L(h_5)&=(-x,x^2+y),&
 \mathcal L(h_6)&=(-y,-x),\\
 \partial_\rho\mathcal S&=X_0=(-y,x+xy).
\end{aligned}
\label{eq:quadratic-orbit-tangents}
\end{equation}
At the origin of the parameter space, the five family directions are
\begin{equation}
 \partial_BX_\lambda=(x^2,0),\quad
 \partial_{\mu_2}X_\lambda=(y^2,0),\quad
 \partial_{\mu_3}X_\lambda=(0,y^2),\quad
 \partial_{\mu_4}X_\lambda=(x,0),\quad
 \partial_{\mu_5}X_\lambda=(0,y).
\label{eq:quadratic-slice-tangents}
\end{equation}
In the ordered coefficient basis
\begin{equation}
 \bigl((1,0),(x,0),(y,0),(x^2,0),(xy,0),(y^2,0),
       (0,1),(0,x),(0,y),(0,x^2),(0,xy),(0,y^2)\bigr),
\label{eq:quadratic-coefficient-basis}
\end{equation}
the twelve columns in
\eqref{eq:quadratic-orbit-tangents}--\eqref{eq:quadratic-slice-tangents}
have determinant $2$.  Thus $D\mathcal S$ is invertible.  The analytic inverse
function theorem proves the local diffeomorphism statement.

Finally, $D\Phi\,(\rho\Phi^*X_\lambda)=\rho X_\lambda\circ\Phi$, so $\Phi$
maps oriented trajectories of $\rho\Phi^*X_\lambda$ to those of $X_\lambda$.
On a transverse section it is an analytic coordinate change with nonzero
derivative.  Conjugating a Poincar\'e map by this coordinate change preserves
the order of its displacement zero, and hence the cycle multiplicity.
\end{proof}

For $Y\in\mathcal Q_2$ and an annular neighborhood $U$ of $\GammaH$ on the
Poincar\'e sphere, let $N_{\mathcal Q_2}(Y;U)$ denote the number of isolated
limit cycles of $Y$ contained in $U$.

\begin{theorem}[Local uniform finite cyclicity in quadratic systems]
\label{thm:main}
There exist a fixed two-sided annular neighborhood $U$ of $\GammaH$, an open
neighborhood $\mathcal V$ of $X_0$ in the full coefficient space
$\mathcal Q_2$, and a finite constant $\BH$ such that
\begin{equation}
 N_{\mathcal Q_2}(Y;U)\le B_{H14^3}
 \label{eq:main-bound}
\end{equation}
for every $Y\in\mathcal V$.
\end{theorem}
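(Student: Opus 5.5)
The plan has two parts. First, reduce Theorem~\ref{thm:main} to the five-parameter family \eqref{eq:family} using Proposition~\ref{prop:quadratic-local-slice}. Then prove a uniform bound for that family by the standard Roussarie route: finitely many stopped transition maps, local bounds at every limit point of the cycles, and compactness.

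\textbf{Reduction to the slice.} Take $Y\in\mathcal V_0$ and write $Y=\rho\,\Phi^*X_\lambda$. Then $\Phi$ maps the limit cycles of $Y$ in $U$ bijectively onto those of $X_\lambda$ in $\Phi(U)$, preserving orientation and multiplicity. The map $\Phi$ is affine and close to the identity. Its extension to the Poincar\'e sphere therefore fixes the equator setwise and is $C^0$-close to the identity near $\GammaH$. Hence, if $U$ is chosen slightly smaller than a fixed annulus $U'$ on which the normalized bound holds, we get $\Phi(U)\subset U'$ after shrinking $\mathcal G$. The slice factor disappears, and it suffices to find $U'$, a neighborhood $\Lambda$ of $0\in\R^5$ and a constant $B$ such that every $X_\lambda$ with $\lambda\in\Lambda$ has at most $B$ limit cycles in $U'$. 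We then take $\BH=B$ and $\mathcal V=\mathcal S(\mathcal G\times\Lambda)$.

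\textbf{Normalized family: compactness argument.} Suppose no uniform bound exists. Then there are $\lambda_n\to0$ and annuli shrinking to $\GammaH$ whose cycle counts tend to infinity. By the standard argument it suffices to show that every limit periodic set in a neighborhood of $\GammaH$ has finite cyclicity. There are three kinds of such sets: regular levels of the annulus near $\GammaH$, subsets of $\GammaH$ through one or both saddle-nodes, and $\GammaH$ itself.

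The bounds come from stopped transition maps built in the following order.
\begin{itemize}
\item At the finite line $y=-1$ and along the equatorial arcs, use regular analytic Dulac-free transitions.
\item At the two horizontal points at infinity, use semihyperbolic saddle-node normal forms, with $C^k$ unfoldings $\dot u=u^2-\varepsilon$ plus a hyperbolic direction. Their transitions are controlled by the known saddle-node Dulac maps, including the central and lips regimes.
\item At the upper vertical point, perform a blow-up or weighted chart and treat it as the noncompact source. There, compare the two sides on a common domain to obtain a displacement function.
\end{itemize}
Cycles in the collar then correspond to zeros of a displacement map, with multiplicities preserved. For $B=0$ and generic perturbations, a Melnikov-type expansion combined with the first integral $H$ gives finitely many independent coefficients. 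The Bautin-ideal/division argument (Roussarie) then reduces the count to a Chebyshev property of finitely many Abelian-type integrals of $H$. Derivative–division (Rolle) steps then bound the zeros at each regime, and a finite partition by incidence and scale covers all limits.

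\textbf{Main obstacle.} I expect the hardest step to be uniformity at the degenerate upper equatorial point combined with the saddle-nodes, as cycles can migrate between regimes as $\lambda\to0$. I would first fix a physical scale decomposition of the parameter space, indexed by the saddle-node unfolding parameter and the source parameters. In each sector I would aim for a derivative of the displacement of bounded order that is nonvanishing. I would then check that the sectors cover a full neighborhood, using finite specialization across the strata where coefficients vanish.
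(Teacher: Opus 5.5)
Your reduction to the slice is exactly the paper's proof of Theorem~\ref{thm:main}. You take a second collar $U$ with $\overline U\subset U'$, use that projective extensions of near-identity affine maps converge uniformly to the identity, and shrink $\mathcal G$ so that $\Phi(\overline U)\subset U'$. You then shrink $\mathcal V_0$ so the slice parameter lies in $\Lambda$, and inject the cycles of $Y=\rho\Phi^*X_\lambda$ in $U$ into those of $X_\lambda$ in $U'$. The paper does nothing more here: the normalized bound is Theorem~\ref{thm:normalized-main}, which you could simply have cited. Instead you sketch your own proof of it, and that sketch has a genuine gap. The compactness step only restates the claim as finite cyclicity of $\GammaH$, since cycles in shrinking collars can only accumulate on $\GammaH$. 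So all the content is in the local estimates, and the one concrete mechanism you propose for the hardest of them fails.

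The Melnikov/Bautin/Chebyshev reduction is not uniform near $\GammaH$, for four reasons.
\begin{enumerate}
\item The parameter $\ell=\mu_3-c$, which destroys invariance of $y=-1$, acts at the scale of the distance $s=1+y$ to the graphic. Fixed points satisfy $|\ell|\le C_\ell s$ (Lemma~\ref{lem:part-ii-lower-scale}), but $k=\ell/s$ is an independent $O(1)$ variable, not a small one.
\item The lower tails replace the level $s$ by $sC_\pm$, with $C_\pm=1\pm k\Gamma_\pm$ and $\Gamma_\pm\approx\sqrt{\pi/2}$. This enters the energy through $h(sC)-h(s)=s(C-1)-\log C$. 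Hence the first-order coefficient of $\ell$ grows like $1/s$, and the remainder of a linearization in $\ell$ is comparable to the linear term.
\item On the gate faces $C_\pm=0$ the orbit reaches the former invariant line, across which $H$ does not even extend, and no return exists.
\item The remaining coefficients enter through the energy-scaled combinations $(Bh,mh,a\sqrt h,c\sqrt h)$, which need not be small as $h\to\infty$.
\end{enumerate}
So a Chebyshev property of limiting integrals of $H$ cannot control zeros that drift to $\GammaH$ while $\lambda\to0$. This is why the paper divides by the center ideal $(\tau,\ell,d(B+m))$ only on $\ell=0$, on a domain star-shaped under the contractions. It keeps $k$ as an exact nonlinear variable, localizes fixed points in action, and concludes from the matched equation \eqref{eq:part-ii-matched} by a signed-curvature double Rolle argument.

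Your endpoint model is also too coarse. At $\lambda=0$ the horizontal endpoints have center-manifold equation $rP_\pm$ (up to sign), with $P_\pm=B\pm ar+r^2U_\pm$. That is a persistent root on the equator plus a multiplicity-three layer $B=a=0$, not a codimension-one $\dot u=u^2-\varepsilon$ unfolding. Your model therefore misses two things:
\begin{itemize}
\item the persistent-endpoint face, which needs the paper's separate Li\'enard--Dulac estimate (Theorem~\ref{thm:part-ii-mixed-zero});
\item the root-merger scale.
\end{itemize}
In addition, the coalescing lips configuration has two zero-scale faces (source and persistent endpoint) that cannot be reached by continuity from positive-scale bounds. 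Finally, ``cycles correspond to zeros of a displacement map'' presupposes a common return domain. Supplying that domain is exactly what the stopped first-hit classification is for, because the competing first exits change with the parameters.
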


The analytic work is carried out in the following local slice.  It retains all
five parameters and is stronger than what is needed after quotienting by
affine orbital equivalence.

\begin{theorem}[Source-normalized core theorem]
\label{thm:normalized-main}
There exist a fixed two-sided annular neighborhood $U_*$ of $\GammaH$, a
neighborhood $\Lambda\subset\R^5$ of the origin, and a finite constant $\BH$
such that
\begin{equation}
 \NH(\lambda;U_*)\le B_{H14^3}
 \label{eq:normalized-main-bound}
\end{equation}
for every $\lambda\in\Lambda$, where the left-hand side counts isolated limit
cycles of \eqref{eq:family} contained in $U_*$.  The neighborhood $U_*$ and
the bound are uniform in all five original parameters, including coefficient
and identity strata.
\end{theorem}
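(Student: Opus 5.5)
The plan is to obtain Theorem~\ref{thm:normalized-main} by the standard compactness reduction for graphics, adapted so that the collar $U_*$ is fixed before the parameter is chosen. By the finite cyclicity principle (Roussarie), it suffices to show that for every point $p$ of $\GammaH$ and every limit direction of parameters $\lambda\to0$ there are a neighborhood of $p$ and a bound valid there. The difficulty is that $\GammaH$ is not a compact graphic with a single Poincar\'e map. I would therefore cover it by finitely many flow boxes and singular charts: the finite line $y=-1$ on a regular stretch, the two saddle-nodes at the horizontal points at infinity, and the upper equatorial arcs ending at the degenerate vertical point. I would fix sections $\Sigma_1,\dots,\Sigma_k$ transverse to these pieces, chosen once for all $\lambda\in\Lambda$. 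Each cycle in $U_*$ meets these sections in a fixed cyclic order. It is then encoded as a zero of a displacement function $\Delta_\lambda=T^{(1)}_\lambda-(T^{(2)}_\lambda)^{-1}$ between two stopped transition maps meeting at a chosen section. I would prove that this encoding is multiplicity preserving, so that a cycle of multiplicity $m$ gives a zero of order $m$.

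The local estimates come next, one for each singular piece. \textbf{Source (upper equatorial point).} In a weighted blow-up chart I would match the return map with the unperturbed integral $H$ of \eqref{eq:first-integral} on a common domain. I would write the displacement as a nonlinear return corrected by terms of size $O(|\lambda|)$ and control it in $C^6$. Derivative estimates showing that a suitable derivative of bounded order keeps a fixed sign then give at most two zeros by Rolle's theorem. \textbf{Saddle-nodes.} Near each semihyperbolic point I would use the $C^k$ normal form $\dot u=u^2\pm\varepsilon+\ldots$, $\dot v=\pm v$ uniformly in $\lambda$. When the center variable has no singular point (the ``lips'' regime), the transition is given by the passage-time formula. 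When it has two nearby hyperbolic points or a persistent saddle-node, I would use Dulac-type asymptotics. Composing these with the regular transitions gives displacement maps in a finite list of normal classes. I would bound each class by derivation--division, that is, by repeatedly dividing by the nonvanishing leading term and differentiating.

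To make the bound uniform, I would decompose the parameter neighborhood by physical incidence and scale. The cases are: which singular pieces a given cycle passes near; the size of $\varepsilon$ relative to the distance of the section point from the graphic; and the strata where coefficients vanish, notably $B=0$, $\mu_5=0$, and the identity or integrable stratum where $\Delta_\lambda\equiv0$ at leading order. On the strata I would divide $\Delta_\lambda$ by the Bautin-type ideal generated by the parameter coefficients, which is finitely generated by Noetherianity. This reduces to nonidentically vanishing first-order Melnikov-type functions whose zeros are bounded by the earlier estimates. Only finitely many such specializations are needed, each with a bound, and the maximum of these bounds gives $B_{H14^3}$. Theorem~\ref{thm:main} then follows from Proposition~\ref{prop:quadratic-local-slice}, since the slice diffeomorphism preserves cycles and multiplicities and maps $U_*$ into a fixed annulus $U$ once $\Phi$ is close to the identity.

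I expect the main obstacle to be the uniformity at the two saddle-nodes interacting with the noncompact source. Cycles may escape toward different pieces at different rates, so the scale decomposition must be shown to be exhaustive, with every sequence $(\lambda_n,\text{cycle}_n)$ falling into a case that has a bound. I would first try to handle this by a compactified parameter blow-up in $(\varepsilon,\text{section coordinate})$, reading off a finite chart atlas on which each transition has a uniform asymptotic expansion.
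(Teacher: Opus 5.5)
Your architecture matches the paper's: fixed sections, stopped transitions, a multiplicity-preserving displacement, local estimates, then compactness. The gap is in the two decisive local steps.

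\textbf{The source step.} At the noncompact end the section coordinate $s=1+y$ tends to $0$, and the displacement is not analytic at $s=0$: it involves powers of $L=-\log s$ and flat saddle-node factors. So there is no Taylor-coefficient Bautin ideal there, and Noetherian or Weierstrass division is available only on bounded analytic itineraries.

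More seriously, ``nonlinear return $+\,O(|\lambda|)$'' is the wrong error model, for two reasons.
\begin{itemize}
\item The parameter $\ell=\mu_3-(1-2B)\mu_5$ that breaks the invariant line acts at the scale $k=\ell/s$. At fixed points this is of order one (the paper proves $|\ell|\le C_\ell s$ there). It makes the lower passages genuinely nonlinear, up to the gate faces $C_\pm=0$, where $C_+=1+k\Gamma_+$ and $C_-=1-k\Gamma_-$.
\item The unforced terms have size $|\tau|L^{3/2}+|\sigma_0|L^{5/2}$, with no fixed relation to $k$.
\end{itemize}
An unsigned remainder of size $|\lambda|$ can dominate both, so no derivative of bounded order has a uniform sign.

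The missing idea is the exact matched equation, built as follows.
\begin{itemize}
\item Identify the center set exactly: $\mathcal C_R\cup\mathcal C_Q$, with reduced ideal $(\tau,\ell,d(B+m))$.
\item Prove that both center returns are the identity on one physical domain that is star-shaped under the parameter contractions.
\item Divide only in the parameters. The unforced displacement is then exactly $\tau L^{3/2}A+\sigma_0L^{5/2}C$, with $A$ and $C$ controlled near $4\sqrt2/3$ and $8\sqrt2/15$ together with two fixed-parameter derivatives.
\item Keep the forced part $\Phi$ exact. Its only remainder must be small \emph{relative to} the lower action defects $\lambda_\pm$, and one proves $\operatorname{sign}(\Phi,\mathfrak D\Phi,\mathfrak D^2\Phi)=\operatorname{sign}k$ uniformly in $C_\pm^{-1}$.
\end{itemize}
Only then does division by $L^{3/2}A$ and passage to the monotone coordinate $q=LC/A$ turn the equation into $\tau+\sigma_0q=g(q)$ with $g''$ strictly signed, giving two zeros by Rolle. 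You also need a global action localization showing that no fixed point escapes through either end of the period annulus before any derivative is taken.

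\textbf{The saddle-node step.} At $\lambda=0$ the horizontal endpoints are multiplicity-three points of the invariant equator, with weak equation $\dot r=\mp r\{B\pm ar+r^2U_\pm\}$. So the local model is not $\dot u=u^2\pm\varepsilon$. It has a persistent root $r=0$, a possible nonpersistent double root, and the critical layer $B=a=0$, which the paper handles by a separate global Li\'enard--Dulac argument.

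The decisive configuration is absent from your plan. A nonpersistent horizontal saddle-node coexists with the saddle-node born from the double equatorial root near $q$; they have opposite attractivity and are joined by an hh connection and a strip of pp orbits. This, not the passage without equilibria, is the lips configuration. Its cycles are zeros of $A_0\mathcal T(q)-B_0q-C_0$ for the pp transition $\mathcal T$, not of a Dulac expansion. Derivation--division helps only once you know $\mathcal T$ is nonaffine, with a uniformly nonvanishing higher derivative or curvature. Establishing that requires:
\begin{itemize}
\item certifying the hh/pp incidence and the complete PP/BP boundary in the actual field;
\item obtaining nonaffinity from the DIR boundary theorem;
\item proving curvature bounds that stay uniform as the two saddle-nodes coalesce toward the source ($t\to0$) and merge with the persistent endpoint ($\kappa\to0$), with no restriction on ratios such as $\gamma/t$.
\end{itemize}
The faces $t=0$ and $\kappa=0$ need their own theorems rather than limits. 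You name this uniformity as the main obstacle and only propose to try a blow-up, so the proposal stops where the real proof begins.

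Two smaller points. The compactness must run over the compactified phase--parameter carrier, which includes cycles near period-annulus ovals in $U_*$, not over points of $\GammaH$. And the final constant is a sum over the finite subcover and itineraries, not a maximum.
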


The quantifiers in these two theorems are distinct and complementary.
Theorem~\ref{thm:normalized-main} fixes the physical annulus before any of the
five source-normalized parameters varies.  Proposition~\ref{prop:quadratic-local-slice}
then supplies every remaining quadratic coefficient direction, while a pair
of nested collars makes the transfer uniform under all near-identity affine
maps.  Thus Theorem~\ref{thm:main} is a statement on an open set in the full
twelve-dimensional coefficient space, not merely a five-parameter
restriction.  No explicit or optimal value of $B_{H14^3}$ is asserted.

\subsection{Why the semihyperbolic case is not a perturbation of the
hyperbolic one}

Three difficulties interact.  First, a trajectory issued from a fixed
section can hit the next transverse section, a saddle-node separatrix, a side
of a singular box, or the boundary of the collar.  Which event occurs first
changes with the parameters.  Hence a global Poincar\'e map has no common
domain until the competing first hits have been classified.  No global return
map is assumed in advance; full-return maps are formed only on the domains
supplied by that classification.  Second, in the
source regime a fixed point of the return may escape through either end of
the period annulus.  An expansion on a bounded action interval cannot control
such zeros, and differentiation with respect to the parameters moves both
entry and exit points.  Third, when the two central saddle-node passages
coalesce, the usual lips configuration meets two distinct zero-scale faces:
the source face and a persistent-endpoint face.  An estimate valid for a
positive merger scale need not remain valid on either face.

These are failures of uniformity, not merely longer local calculations.  A
proof must construct the physical domains before differentiating the return,
control the two noncompact ends simultaneously, and cover every approach to
the corner of parameter space by theorem neighborhoods with compatible
boundaries.

\subsection{Main contributions}

The proof introduces four mathematical mechanisms and then combines them
with established local finiteness theorems.

\paragraph{Removal of the source normalization.}
The five displayed parameters are not merely a convenient subfamily.
Proposition~\ref{prop:quadratic-local-slice} computes the infinitesimal
affine--orbital action at the boundary field $B=0$ and proves that its seven
directions are transverse to the five parameter directions.  The analytic
inverse function theorem therefore identifies the normalized family with a
local slice of the full quadratic coefficient space.  A nested-collar
argument transfers the normalized cyclicity bound uniformly under the
resulting near-identity affine conjugacies.

\paragraph{Stopped-return reduction.}
We cut a fixed annular collar by transverse sections and follow each orbit
only until its first competing boundary contact.  The resulting transition
maps form a finite acyclic incidence graph after the radial section is cut
open.  Restoring that section gives finitely many full-return itineraries.
Theorem~\ref{thm:finite-words} and
Proposition~\ref{prop:exact-once} show that every isolated cycle in $U$
corresponds to exactly one zero, with the same multiplicity, of one itinerary
displacement.  Thus no cycle is lost at a changing domain boundary and no
cycle is counted twice.  This geometric statement is independent of the
subsequent analytic estimate.

\paragraph{A matched return at the noncompact source.}
For a source itinerary we first construct a common physical domain containing
the forced return, both center restrictions, and every Hadamard path used in
coefficient division.  On that domain the lower tails, upper passage, moving
cuts, and inverse hits have a uniformly controlled six-jet.  A global action
localization theorem then prevents zeros from escaping the matched cells.
Here six is a transparent proof budget rather than a number inferred from a
computer search: four local phase/parameter derivatives are combined with at
most two derivatives that keep the five original coefficients fixed.  The
finite enumeration verifies closure of this budget but does not choose it.
The exact nonlinear return is reduced, after two legitimate differentiations,
to a strictly signed curvature term.  This yields the source estimate
Theorem~\ref{thm:part-ii-source-zero}; it is not a compact-energy Melnikov
argument.

\paragraph{An exhaustive two-saddle-node decomposition.}
For an itinerary that traverses both central sectors, the existence of an
actual hyperbolic--hyperbolic connection and of a complete family of
parabolic--parabolic connections is proved in the original vector field, not
inferred from a normal form.  The geometric routing
Theorem~\ref{thm:part-iii-two-central-exhaustion} separates the no-connection case, a
positive-margin lips case, an intermediate coalescing scale, a positive
root-merger scale, and the two zero-scale faces.  After all local estimates
have been proved, Theorem~\ref{thm:two-central-physical-completion} assembles
the one-way chain ``physical completion, nonaffinity, scale assignment, zero
estimate.''  The parameter partition is half-open and fixed in advance, so
no diagonal approach is omitted and no boundary is controlled merely by
continuity from a punctured region.

The local slice identifies the coefficient space in which the final theorem
holds.  The stopped-return mechanism then converts the dynamical problem into
finitely many scalar zero problems, and the source and two-saddle-node
mechanisms supply the estimates that were missing
at the noncompact and coalescing parts of the graphic.  Bounded analytic
itineraries are handled by Weierstrass preparation, separated hyperbolic
itineraries by Mourtada's quasi-regular Hilbert theorem, and certified central
and lips configurations by the results of
Dumortier--Ilyashenko--Rousseau.  A finite specialization theorem then carries
these estimates through all proper parameter faces, and compactness is used
only after an open zero-bound neighborhood has been obtained at every point.

The order of these steps is part of the argument.  The stopped atlas first
gives the domains on which cycles are represented by displacement zeros;
the local theorems then bound those zeros; specialization carries the bounds
to the proper parameter faces; and only after open zero-bound neighborhoods
have been obtained does compactness give a finite sum.  The local slice is
used last to pass from the normalized family to the full quadratic
coefficient space.

\subsection{Relation to previous work}

Roussarie--Rousseau identify the displayed family and the $B=0$
$H_{14}^{3}$ case, and explicitly leave this semihyperbolic case outside the
finite-cyclicity result proved there
\cite[pp.~1--4 and Theorem~3.1]{RoussarieRousseau2015}.  The classification
and notation originate in the DRR program~\cite{DRR1994}.  Their normal-form
statement starts from $B_0>0$; Proposition~\ref{prop:quadratic-local-slice}
supplies the missing boundary transversality directly at $B=0$.  Consequently
Theorem~\ref{thm:main} proves finite cyclicity of the labelled $H_{14}^{3}$
graphic in the full quadratic coefficient space, rather than only along the
displayed normalization.  Recent work of
Mar\'in and Villadelprat proves precise cyclicity results for hemicycles whose
two points at infinity are hyperbolic saddles~\cite{MarinVilladelprat2025};
those hypotheses exclude the two saddle-nodes present here.

Two external local theories are used at clearly separated interfaces.
Mourtada's theorem applies only after a retained all-hyperbolic itinerary has
been realized as one admissible quasi-regular Hilbert system on common
physical sections~\cite{Mourtada2009}.  The saddle-node and lips results of
Dumortier--Ilyashenko--Rousseau apply only after the required
hyperbolic--hyperbolic and parabolic--parabolic connections and their boundary
incidence have been verified for the actual family~\cite{DIR2002}.  Neither
theory constructs the collar or decides which transition exists.  Those
applicability statements are proved here before either result is invoked.

\subsection{What is reusable, and what is specific to $H_{14}^{3}$}

Three outputs are reusable as statements rather than as a chronology of this
calculation.  First, Proposition~\ref{prop:stopped-first-hit},
Theorem~\ref{thm:finite-words}, and Proposition~\ref{prop:exact-once} form a
finite stopped-return criterion: competing first exits may replace a missing
global Poincar\'e map, provided the cut-open incidence graph is finite and
strictly ordered.  The output is an exact, multiplicity-preserving reduction
to finitely many scalar displacement germs.  Second,
Lemma~\ref{lem:matched-curvature-criterion} is the analytic core of the source
argument.  Once a common-domain matched equation has a monotone coefficient
ratio and strictly signed normalized curvature, two Rolle steps give a
two-zero bound without compactifying the phase interval.  Third,
Proposition~\ref{prop:specialization} and
Proposition~\ref{prop:conditional-assembly} give a finite-face gluing
criterion: terminating, confluent boundary assignments plus open local
zero-bound neighborhoods yield one uniform bound by a finite subcover.

What is not automatic in another family is precisely the verification of
these interfaces.  One must construct the common physical domains before
coefficient division, prove the signed curvature with constants uniform at
the open gates, and certify the complete saddle-node incidence before invoking
a lips theorem.  The half-open scale partition is then a reusable device for
assigning zero-scale faces to independent ambient estimates, not a substitute
for those estimates.

The explicit center ideal, the signed compactification formulas, the
Li\'enard--Dulac multiplier on the mixed face, and the middle/root curvature
identities are specific to family~\eqref{eq:family}.  We therefore state the
geometric and analytic interfaces in the main text and place the finite
algebraic certificates in the appendices.  Computer algebra verifies only
finite identities and derivative enumerations; it is not used to infer an
orbit, a first-hit domain, an incidence relation, or a zero bound.

\subsection{Organization and reading guide}

Sections~\ref{sec:part-i-graphic}--\ref{sec:part-i-exact-once} construct the
fixed collar and prove the exact cycle--zero reduction.  Sections
\ref{sec:regime-table}--\ref{sec:two-central-exhaustion} are a roadmap: they
introduce the regimes and scales and preview the only nontrivial geometric
routing, but do not yet invoke an unproved local estimate.  The source and
exact mixed estimates are proved in Sections
\ref{sec:part-ii-target}--\ref{sec:part-ii-handoff}; once the center ideal is
available, Section~\ref{sec:compact-analytic} treats bounded analytic
itineraries.  The hyperbolic, central, lips, middle, and root-scale estimates
follow in Sections
\ref{sec:part-iii-hyperbolic}--\ref{sec:part-iii-root}.  Only after all these
inputs are available does
Theorem~\ref{thm:two-central-physical-completion} collect the entire
two-central chain in one theorem-level interface.  Sections
\ref{sec:part-iii-package}--\ref{sec:part-i-assembly} prove the disjoint
regime assignment, boundary specialization, compact assembly, and the main
theorem.  The complete boundary audit is concentrated in
Table~\ref{tab:specialization-edges} and
Proposition~\ref{prop:ambient-edge-neighborhoods}: they distinguish the
unique owner of a face from the finite family of estimates that makes its
neighborhood open in the original parameters.

The appendices are proof supplements, not part of the first reading path.
Every fact needed to know what a theorem says, why it applies, and how it
enters the main proof is stated in the body.  The appendices retain signed
chart formulas, long finite recurrences, joint endpoint-clock estimates, and
exact algebraic certificates.  A first reading may therefore follow Sections
\ref{sec:introduction}, \ref{sec:three-box-model},
\ref{sec:part-i-exact-once}, \ref{sec:regime-table}, the statements of the
local zero theorems, and finally Sections
\ref{sec:part-iii-package}--\ref{sec:part-i-assembly}, without consulting an
appendix.

\section{The compactified hemicycle and a fixed counting collar}
\label{sec:part-i-graphic}

We first identify the oriented compactified graphic and fix a physical collar,
radial cut, and finite face skeleton.  These parameter-independent objects
provide the common domain for all stopped first-hit relations below.  We use
the standard Poincar\'e compactification and planar-sector conventions of
Dumortier--Llibre--Art\'es~\cite[Chapters~3 and~6]{DumortierLlibreArtes2006}.

The first integral \eqref{eq:first-integral} is strictly convex on $y>-1$:
its Hessian is
\[
 D^2H=\begin{pmatrix}1&0\\0&(1+y)^{-2}\end{pmatrix},
\]
and its only critical point is the origin.  Since $H$ tends to $+\infty$
both at $y=-1$ and at affine infinity, every level $H=h>0$ is one periodic
oval.  As $h\to\infty$, these ovals converge on the Poincar\'e sphere to
\[
 \GammaH=L_-\cup\{p_+\}\cup E_+\cup\{q\}\cup E_-\cup\{p_-\},
\]
where $L_-=\{y=-1\}$, $p_\pm$ are the horizontal endpoints, $q$ is the
positive vertical point, and $E_\pm$ are the two upper equatorial arcs.

The orientation is part of the label.  Along $L_-$ one has $\dot x=1$, so
$p_-\to p_+$.  In the upper chart $y=1/r$, $x=w/r$, after the positive
degree-two desingularization,
\[
 \dot r=-rw-r^2w,\qquad \dot w=-w^2-r(1+w^2).
\]
Thus the upper arcs are oriented $p_+\to q\to p_-$.  Equivalently, in the
positive endpoint coordinates $x=1/r$, $z=1+y$,
\[
 r'=-r^3(1-z),\qquad z'=z,
\]
whereas at the negative endpoint $x=-1/r$,
\[
 r'=r^3(1-z),\qquad z'=-z.
\]
These formulas fix the cyclic order
\begin{equation}
 L_-\longrightarrow p_+\longrightarrow E_+\longrightarrow q
 \longrightarrow E_-\longrightarrow p_-\longrightarrow L_- .
\label{eq:legacy-3-1}
\end{equation}

\begin{figure}[htbp]
\centering
\begin{tikzpicture}[
  scale=0.95,
  every node/.style={font=\small},
  flow/.style={-{Latex[length=2.1mm]},thick},
  point/.style={circle,fill=black,inner sep=1.7pt},
  collar/.style={gray!70,dashed,thin}
]
\coordinate (pm) at (-4,0);
\coordinate (pp) at (4,0);
\coordinate (q) at (0,3.2);

\path[fill=gray!9,even odd rule]
  (-4.48,-0.58) -- (4.48,-0.58)
  .. controls (4.28,2.30) and (2.25,3.76) .. (0,3.82)
  .. controls (-2.25,3.76) and (-4.28,2.30) .. (-4.48,-0.58) -- cycle
  (-3.56,0.58) -- (3.56,0.58)
  .. controls (3.00,1.78) and (1.46,2.53) .. (0,2.58)
  .. controls (-1.46,2.53) and (-3.00,1.78) .. (-3.56,0.58) -- cycle;
\draw[collar]
  (-4.48,-0.58) -- (4.48,-0.58)
  .. controls (4.28,2.30) and (2.25,3.76) .. (0,3.82)
  .. controls (-2.25,3.76) and (-4.28,2.30) .. (-4.48,-0.58) -- cycle;
\draw[collar]
  (-3.56,0.58) -- (3.56,0.58)
  .. controls (3.00,1.78) and (1.46,2.53) .. (0,2.58)
  .. controls (-1.46,2.53) and (-3.00,1.78) .. (-3.56,0.58) -- cycle;
\node[font=\footnotesize,fill=gray!9,inner sep=1.2pt] at (3.72,3.05)
  {fixed collar $U$};

\draw[very thick] (pm) -- (pp);
\draw[flow] (-2.8,0) -- (-1.4,0);
\draw[flow] (1.4,0) -- (2.8,0);
\draw[very thick] (pp) .. controls (3.4,2.1) and (1.8,3.2) .. (q);
\draw[very thick] (q) .. controls (-1.8,3.2) and (-3.4,2.1) .. (pm);
\draw[flow] (3.05,1.85) -- (2.45,2.37);
\draw[flow] (-2.45,2.37) -- (-3.05,1.85);

\draw[gray!62,thin]
  (-3.43,0.20) .. controls (-3.02,2.02) and (-1.58,2.91) .. (0,2.96)
  .. controls (1.58,2.91) and (3.02,2.02) .. (3.43,0.20)
  .. controls (1.75,0.36) and (-1.75,0.36) .. cycle;
\draw[gray!58,thin]
  (-3.18,0.38) .. controls (-2.72,1.86) and (-1.35,2.70) .. (0,2.74)
  .. controls (1.35,2.70) and (2.72,1.86) .. (3.18,0.38)
  .. controls (1.58,0.54) and (-1.58,0.54) .. cycle;
\draw[gray!62,-{Latex[length=1.5mm]},thin] (-0.55,0.31) -- (0.55,0.31);
\draw[gray!62,-{Latex[length=1.5mm]},thin] (2.55,1.88) -- (2.12,2.20);

\node[point,label=below left:$p_-$] at (pm) {};
\node[point,label=below right:$p_+$] at (pp) {};
\node[point,label=above:$q$] at (q) {};
\node[below=4pt] at (0,0) {$L_- = \{y=-1\}$};
\node[right] at (2.8,2.55) {$E_+$};
\node[left] at (-2.8,2.55) {$E_-$};
\node[align=center,fill=white,inner sep=1.5pt] at (0,1.32)
  {representative high-energy\\periodic ovals of $H$};
\end{tikzpicture}
\caption{The labelled source graphic $\GammaH$ and its orientation.  The
heavy curve is the limiting graphic.  The gray band between the two dashed
curves is the fixed two-sided annular collar $U$; in particular, the finite
center lies in the complementary component inside the inner collar side.
The two light closed curves are representative high-energy levels of the
unperturbed first integral on that side of the graphic.  Labels and flow
orientation are exact; distances, curvature, and collar width are schematic.
The endpoint and upper-chart coordinates used in the proof are given in
Sections~\ref{sec:part-i-graphic} and \ref{sec:part-i-boxes}.}
\label{fig:source-graphic}
\end{figure}

We now choose the counting region before perturbing.  Put $z=1+y$ and fix
$X_L>1$.  For $z_L>0$ and the parameter ball sufficiently small,
\begin{equation}
 B_L=\{|x|\le X_L,\ |z|\le z_L\},\qquad
 \dot x=1-z+Bx^2+\mu_2(z-1)^2+ax>\frac12,
\label{eq:legacy-3-2}
\end{equation}
where
\begin{equation}
 a=\mu_4+B\mu_5,\qquad c=(1-2B)\mu_5.
\label{eq:legacy-3-3}
\end{equation}
The proper radial cut is
\begin{equation}
 \Sigma=\{x=-X_L,\ |z|<z_L\}.
\label{eq:legacy-3-4}
\end{equation}
The finite-line strip is two-sided: it includes both $z>0$ and $z<0$.
Indeed the former invariant line may be crossed because
\begin{equation}
 \dot z\big|_{z=0}=\mu_3-c
   =\mu_3-(1-2B)\mu_5.
\label{eq:legacy-3-5}
\end{equation}
The strict coordinate is $x$; the source energy is never evaluated on a
perturbed orbit below $z=0$.

Choose disjoint fixed neighborhoods of $p_+$, $q$, and $p_-$, and cover the
remaining compact regular arcs by finitely many analytic flow boxes.  Their
constant flow-coordinate levels are transverse cuts; the two transverse
levels of each flow box are collar sides and are not promoted to passage
sections.  Gluing these finitely many pieces gives a fixed two-sided open
annulus $U$, disjoint from the finite center and the lower equator.  All five
parameters in \eqref{eq:family} remain active on this same $U$.

\begin{proposition}[Fixed collar and transverse sections]
\label{prop:fixed-skeleton}
There is a finite list $\mathcal F_0$ of cross-cuts, collar faces, box faces,
and chart-overlap faces, fixed before gate classification.  Every cross-cut
has a uniform signed transverse normal for small $\lambda$.  Every collar
face retains its complete tangency equation $X_\lambda f=0$ and is treated as
an exit side.  Pullback through the finite endpoint and upper-vertical
resolutions produces another finite face list $\mathcal F$, enlarged by its
nonempty face intersections and tangency strata.  Its members are exactly:
the radial and regular overlap cuts; the incoming, outgoing, previous-side,
and collar faces of the three singular boxes; the prepared endpoint root
lines and their strong and central gates; the upper faces
$r=0,r=r_*,w=\pm\delta$, the curves $F=0,Q=0$, and their root gates; and the
corners, invariant axes, and backward corner orbits determined by these
faces.  No later zero theorem adds a stopping face.
\end{proposition}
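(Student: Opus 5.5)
The plan is to build the list $\mathcal F_0$ directly from the parameter-independent construction preceding the statement, and then to obtain $\mathcal F$ by pulling $\mathcal F_0$ back through the finitely many blow-ups and closing it under intersections and tangency strata. The argument is a finiteness-and-transversality check, not a dynamical one, so I would organize it in three steps.

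\textbf{Step 1: the list $\mathcal F_0$.}
\begin{itemize}
\item Take the radial cut $\Sigma$ from \eqref{eq:legacy-3-4}, the faces of $B_L$ from \eqref{eq:legacy-3-2}, and the faces of the three fixed singular neighborhoods of $p_+$, $q$, $p_-$.
\item Cover the compact regular arcs of $\GammaH$ by finitely many analytic flow boxes. Compactness of the regular part gives a finite subcover. For each box, keep its constant flow-coordinate levels as cross-cuts, its two transverse sides as collar faces, and the pairwise chart-overlap faces.
\item For the uniform signed normal on each cross-cut $S=\{g=0\}$: at $\lambda=0$ we have $X_0g\ne0$ on the compact closure $\bar S$, since $S$ is a level of a flow coordinate (on $\Sigma$, $\dot x>\frac12$ by \eqref{eq:legacy-3-2}). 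The family \eqref{eq:family} is analytic in $\lambda$ and the compactification is polynomial. Continuity on the compact set $\bar S\times\{|\lambda|\le\lambda_0\}$ then keeps the sign of $X_\lambda g$ for $\lambda_0$ small.
\item Collar faces need no such claim. We only record $f$ and the analytic equation $X_\lambda f=0$, and declare each collar face an exit side.
\end{itemize}

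\textbf{Step 2: pullback to $\mathcal F$.}
\begin{itemize}
\item Use the endpoint charts $x=\pm1/r$, $z=1+y$ and the upper chart $y=1/r$, $x=w/r$ written out above, followed by the finitely many directional blow-ups that prepare the saddle-nodes at $p_\pm$ and the degenerate point $q$.
\item Each face of $\mathcal F_0$ pulls back to finitely many analytic faces. The resolutions add finitely many new objects:
  \begin{itemize}
  \item the exceptional and invariant axes;
  \item the prepared endpoint root lines, i.e.\ the zero sets of the center-manifold coefficient in the saddle-node normal preparation, together with their strong and central gates;
  \item in the upper chart, the faces $r=0$, $r=r_*$, $w=\pm\delta$, and the analytic curves $F=0$, $Q=0$ where the desingularized components vanish, with their root gates.
  \end{itemize}
\item Close the list under nonempty pairwise and higher intersections and under tangency strata $\{X_\lambda f=0\}\cap\{f=0\}$. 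Add the corners and the backward orbits of corners up to their first face contact.
\item Finiteness of the closure: each stratum is semianalytic in the compact region $\bar U\times\bar\Lambda$, and every face equation is defined before any zero theorem is invoked. Hence the stratification has finitely many components, by Łojasiewicz finiteness for semianalytic sets.
\end{itemize}

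\textbf{Step 3: exhaustiveness.} Both the claim that the members are \emph{exactly} the listed types and the claim that no later zero theorem adds a stopping face come from fixing the list before gate classification. Every stopping rule used later is a first contact with one of these faces, so I would state it as an audit: each later stopped map is defined by a first hit on a member of $\mathcal F$.

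\textbf{Main obstacle.} The hardest step is the upper vertical point $q$. Its resolution must be uniform in all five parameters, including $B$, and the curves $F=0$, $Q=0$ may become tangent to $w=\pm\delta$ or to $r=r_*$ as $\lambda$ varies. I would first fix $\delta$ and $r_*$ so that at $\lambda=0$ the relevant tangencies are nondegenerate and transverse. Then I would let the tangency strata themselves enter $\mathcal F$ rather than try to exclude them, so uniformity is needed only for the finite count of strata, which semianalyticity supplies.
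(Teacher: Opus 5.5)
Your Step~1 matches the paper's argument: fixed boxes and flow-box levels, a compactness margin for every cross-cut normal at $\lambda=0$ that persists for small $\lambda$, and the collar tangency $X_\lambda f=0$ kept as an explicit exit equation. So does your remark that finitely many fixed equations, pulled back through finitely many finite charts and blow-ups, remain finitely many.

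The finiteness step you actually lean on, however, fails as written and is not needed. You close the list and then invoke {\L}ojasiewicz finiteness for semianalytic sets, but several members of $\mathcal F$ are not semianalytic.
\begin{itemize}
\item The invariant axes of the endpoint normal form $x'=q_\alpha(x)$, $y'=-y$ are images of coordinate lines under a $C^K$ normalizer.
\item Center-manifold data of a saddle-node are in general not analytic. Defining the root lines through a center-manifold coefficient, as you do, therefore leaves the analytic category; the paper prepares $P_\pm$ on the analytic transverse nullcline precisely to avoid this.
\item Backward corner orbits are flow images, which inside a singular box can have unbounded backward time.
\end{itemize}
More to the point, the proposition asserts only a finite list of labelled faces, and that is a combinatorial count. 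There are finitely many fixed equations and finitely many finite maps, each with finitely many coordinate faces. Hence there are finitely many labelled intersections, tangency strata, corners, and backward corner orbits. The paper deliberately asserts no connected-component finiteness for transported finite-smooth faces. Divider finiteness is obtained later, on one-dimensional cuts, from the fact that each labelled orbit meets a one-way entry face at most once.

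Your Step~3 restates rather than proves the clause that no later zero theorem adds a stopping face. Declaring that every later stopped map will be audited to stop on a member of $\mathcal F$ is the conclusion itself. The missing idea is the one the paper's proof supplies. The only geometric objects introduced after the skeleton is fixed are the invariant axes of the finite-smooth normal-form boxes. These are introduced only inside their named boxes and reach any other box only through a preassigned member of $\mathcal F_0$, so they never create a new inter-box stopping face.

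The obstacle you anticipate at $q$ does not occur, and it should not be handled by admitting tangency strata on $w=\pm\delta$, which must remain genuine entry and next cuts. At $\lambda=0$ one has $F=(1+r)w$ and $Q=-w^2-r(1+w^2)$, so the sign conditions \eqref{eq:legacy-3-14} are strict on the compact range $0\le r\le r_*$ and persist for small $\lambda$. Consequently the flow crosses $w=\pm\delta$ strictly, and neither $F=0$ nor $Q=0$ meets these faces. On $r=r_*$, $F_w=1+r>0$ from \eqref{eq:legacy-3-15} leaves a single tangency point.
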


\begin{remark}
The skeleton is chosen once for a sufficiently small ball in all five
parameters.  Cross-cut normals retain their sign throughout that ball, while
every collar tangency remains an explicit exit equation.  In particular,
collar sides are never silently promoted to passage sections.
\end{remark}

\begin{proof}
Take $B_L$, fixed endpoint rectangles, a fixed upper rectangle, and finitely
many flow-box overlap levels as above.  Compactness of each selected
cross-cut and the strict source normal give a positive margin that persists
for small parameters.  No such claim is made for a collar side; for example
$X_0(z-z_L)=xz_L$ vanishes at $x=0$, so this tangency is kept explicitly.
The endpoint incidence, source-core, and parameter-dominated blow-ups are
finite maps with finitely many coordinate faces.  Pulling back the already
fixed equations therefore preserves finiteness.  Later invariant axes are
introduced only inside their named normal-form boxes and meet another box
through a preassigned member of $\mathcal F_0$.
\end{proof}

\section{Stopped transition maps: a model and the general principle}
\label{sec:three-box-model}

The following elementary model isolates the order and no-Zeno mechanism later
verified uniformly in the $H_{14}^{3}$ charts.  It starts with a cut-open
annular collar, finitely many singular boxes, and their physical stopping faces.

We begin with the small model that governs the full construction.  Cut an
annular collar at one proper radial section $\Sigma$.  Suppose the cut-open
collar contains three disjoint singular boxes $B_+$, $B_q$, and $B_-$,
joined in this order by regular flow strips.  Each box has one incoming arc,
one next-cut arc, finitely many singular gate arcs, and collar sides.  The
question is not initially whether a return map has a zero.  It is whether a
point on an incoming arc reaches the next cut before any competing boundary
component.

For a point $s$ on an incoming interval $I$, follow its orbit only until its
first physical boundary contact.  The outcome is either a transverse next
cut, a named singular gate, the preceding side of the box, or a collar exit.  Boundaries
between outcome intervals are called \emph{divider points}.  They are backward
first hits of fixed corners or tangencies, or intersections with a stable or
center half-branch.  When two divider labels coincide, the intervening
interval is a collapsed interval and carries no phase point.

\begin{proposition}[Three-box stopped-return principle]
\label{prop:three-box-model}
Assume that every regular strip has a strict flow coordinate, each singular
box has finitely many equilibria and local sectors, and every nonexiting orbit
in a singular box tends to one of those sectors.  Assume also that each
labelled corner orbit and invariant half-branch meets the incoming arc at most
once.  Then every incoming arc is divided by finitely many labelled points
into open intervals with one first outcome.  Every through outcome is an
order-preserving first-hit diffeomorphism.  After $\Sigma$ is deleted, through
edges strictly increase the box/cut index; hence there are finitely many
cut-open paths and finitely many full-return itineraries after $\Sigma$ is restored.
\end{proposition}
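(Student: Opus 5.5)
The plan is to prove the three assertions in order: the finite partition of each incoming arc, the regularity and monotonicity of the through maps, and the finiteness of paths. All three rest on the transversality of the cross-cuts (Proposition~\ref{prop:fixed-skeleton}) and on the two finiteness hypotheses.

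\textbf{Step 1: stopping time and its continuity.} Fix an incoming arc $I$ of a box (or of a regular strip) and define the first-contact time $\tau(s)\in(0,\infty]$ of the orbit of $s\in I$ with the boundary of the region, together with its outcome label.

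In a regular strip the strict flow coordinate increases at a uniformly positive rate. Hence $\tau<\infty$ and no orbit lingers, which is the no-Zeno property.

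In a singular box, an orbit with $\tau=\infty$ tends to one of the finitely many sectors. By hypothesis such an orbit lies on a stable or center half-branch, since only those reach an equilibrium without leaving. Near a point $s$ whose first contact is transverse to the hit face and avoids corners and tangencies, the implicit function theorem makes $\tau$ analytic and keeps the label locally constant. So the label can change only at points of three kinds:
\begin{itemize}
\item backward first hits of corners of the face list;
\item backward first hits of tangency points of collar sides, where $X_\lambda f=0$;
\item intersections of $I$ with invariant half-branches.
\end{itemize}
Each labelled corner orbit and each half-branch meets $I$ at most once, and there are finitely many of each (finitely many faces, equilibria and sectors). So the divider set is finite. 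On each complementary open interval the label is constant: the set where a given label occurs is open by transversality, and the intervals are connected. Collapsed intervals are those between coinciding divider labels and carry no points.

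\textbf{Step 2: the through maps.} On an interval whose outcome is a transverse next cut, the map $s\mapsto\varphi_{\tau(s)}(s)$ is an analytic diffeomorphism onto its image. Its inverse is the first backward hit from the cut, analytic by the same transversality.

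Order preservation follows from the planar Jordan argument. The orbit arcs from $I$ to the cut are disjoint, and together with the two transversals they bound a topological strip. Two such arcs cannot cross, so the endpoints keep their relative order along the oriented sections.

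\textbf{Step 3: finiteness of paths.} Index the pieces of the cut-open collar in flow order,
\[
\Sigma,\ \text{strip},\ B_+,\ \text{strip},\ B_q,\ \text{strip},\ B_-,\ \text{strip},\ \Sigma.
\]
A through edge passes only from an incoming arc to the next cut of the same piece. Exits to the preceding side, to gates or to the collar are terminal outcomes, not through edges. Hence the index strictly increases along every through edge, the incidence graph is acyclic, and it has finitely many vertices. It therefore has finitely many paths from $\Sigma$ to $\Sigma$.

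Restoring $\Sigma$ identifies the two ends. A full-return itinerary is one such path, composed of the order-preserving diffeomorphisms of Step~2 on the intersection of the finitely many preimage intervals. So there are finitely many full-return itineraries.

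\textbf{Main obstacle.} I expect the hardest part to be Step~1 at dividers lying on invariant half-branches inside singular boxes. There the stopping time blows up, and one must show that no further label changes accumulate. For example, orbits might oscillate between a gate and a collar side arbitrarily often near a center manifold.

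I would handle this with the sector hypothesis. Inside a sufficiently small neighborhood of each equilibrium, the local sector structure (hyperbolic, parabolic or elliptic sectors, finitely many) gives a monotone local coordinate. Nearby orbits on either side of a half-branch therefore exit through a single fixed boundary component of that neighborhood, so the label is one-sided constant near the divider. The compact remainder of the box is handled by the transversality and finiteness argument of Step~1.
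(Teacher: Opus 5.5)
Your proposal is correct and follows essentially the same route as the paper. In both, each first-contact label set is open; a boundary point is a backward hit of a corner or tangency, or, when the contact time diverges, lies on an invariant half-branch; the one-intersection hypothesis makes the dividers finite; planar orbit uniqueness gives order preservation; and the strict cut rank makes the cut-open graph acyclic with finitely many paths.

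One claim needs correcting: an orbit with $\tau=\infty$ need not lie on a half-branch, because a sink node or the attracting parabolic sector of a saddle-node absorbs an open set of entry points. Such points are interior to their terminal label by asymptotic stability rather than transversality, so only the basin boundaries, namely the stable separatrices, become dividers. This is exactly what the paper's phrase ``the planar limit set lies in the finite equilibrium/sector list'' compresses.
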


\begin{proof}
For an open stopping face $J$, the set of points whose first contact lies in $J$ is
open by transversality and continuous dependence.  A boundary point of this
set cannot first hit the interior of a transverse stopping face.  If its contact time
is finite, it therefore lies on a listed corner, tangency, or invariant side.
If the contact time diverges, the planar limit set lies in the finite
equilibrium/sector list.  The one-intersection hypothesis assigns at most one
divider to each label.  Removing these finitely many points leaves intervals
on which the outcome label is locally constant, hence constant.  Planar orbit
uniqueness makes a through map order preserving.

Number the successive physical cuts in the positive collar orientation.  A
through map goes to the next cut; all other outcomes are terminal.  The cut
number is therefore a strict integer rank, so the cut-open directed graph is
finite and acyclic.  Restoring the deleted cut closes only complete laps.  The
full H14 proof below verifies these hypotheses uniformly through every root
and gate collision; the model explains why stopping must precede return-map
formation.
\end{proof}

\subsection{From stopped paths to finite sums}
\label{sec:proof-architecture}

For an isolated cycle in the fixed collar, the reductions occur in a fixed
order: stopped first hits, a retained full-return itinerary, a local zero theorem,
and finally the compact finite sum.  In particular, compactness and full-return
equations are used only after the stopped atlas has supplied their domains.

Figure~\ref{fig:stopped-itinerary} shows the elementary mechanism.  The solid
chain becomes a return equation only after all competing first contacts have
been excluded.

\begin{figure}[htbp]
\centering
\resizebox{.98\textwidth}{!}{%
\begin{tikzpicture}[
  every node/.style={font=\small,align=center},
  section/.style={very thick},
  box/.style={draw=gray!65,rounded corners=2pt,fill=gray!6},
  flow/.style={-{Latex[length=2.1mm]},thick},
  stopflow/.style={-{Latex[length=1.9mm]},thick,dashed,gray!75}
]
\coordinate (s0) at (0,1.20);
\coordinate (s1) at (3.15,1.20);
\coordinate (s2) at (6.30,1.20);
\coordinate (s3) at (9.45,1.20);

\draw[section] (0,0.72) -- (0,2.02);
\draw[section] (3.15,0.72) -- (3.15,2.02);
\draw[section] (6.30,0.72) -- (6.30,2.02);
\draw[section] (9.45,0.72) -- (9.45,2.02);
\node[above=2pt] at (0,2.02) {$\Sigma_0=\Sigma$};
\node[above=2pt] at (3.15,2.02) {$\Sigma_1$};
\node[above=2pt] at (6.30,2.02) {$\Sigma_2$};
\node[above=2pt] at (9.45,2.02) {$\Sigma_3\simeq\Sigma_0$};

\draw[box] (0.38,0.85) rectangle (2.77,1.82);
\draw[box] (3.53,0.85) rectangle (5.92,1.82);
\draw[box] (6.68,0.85) rectangle (9.07,1.82);
\node at (1.575,1.58) {$B_+$};
\node at (4.725,1.58) {$B_q$};
\node at (7.875,1.58) {$B_-$};

\draw[flow] (s0) -- (s1);
\draw[flow] (s1) -- (s2);
\draw[flow] (s2) -- (s3);

\foreach \x/\j in {1.575/+,4.725/q,7.875/-}{
  \draw[stopflow] (\x,1.20) -- (\x,0.20);
  \draw[very thick,gray!75] (\x-0.16,0.20) -- (\x+0.16,0.20);
  \node[below=2pt] at (\x,0.20) {$\partial_{\mathrm{stop},\j}$};
}
\node[text width=14.2cm,font=\footnotesize] at (4.725,-0.82)
  {$\partial_{\mathrm{stop},j}
   =\{\text{singular gate, previous side, passive singular side,
   or collar side}\}$ denotes the first such contact, for every
   $j\in\{+,q,-\}$.};
\end{tikzpicture}%
}
\caption{A stopped itinerary through the three singular boxes.  The diagram
records the common first-contact rule, not the detailed local phase portraits:
each solid arrow is retained only for points reaching the next section before
the corresponding $\partial_{\mathrm{stop},j}$.  Thus the alternatives listed
below the diagram apply to every singular box rather than being assigned
one-to-one to particular boxes.  A dashed arrow stops the current local map
but need not terminate the physical orbit.  A divider collision deletes the
intervening interval, while overlap charts identify the same physical state.
Regular strips are suppressed, and all flow arrows follow the positive
orientation of the hemicycle.}
\label{fig:stopped-itinerary}
\end{figure}

There are three logically distinct classification stages.  First, the stopped
transition analysis rejects candidates that do not carry a retained full-return itinerary.  Second,
each retained itinerary enters one relative-interior analytic regime.  Third,
proper specializations descend inside a finite specialization graph.  In
particular, an ordinary coefficient face need not be an identity; it is sent
to its terminal minimal-face regime.  Only an identity interval, including an all-zero
coefficient stratum that produces such an identity, contributes no isolated
member.

\FloatBarrier
\section{Local dynamics at the singular points at infinity}
\label{sec:part-i-boxes}

The fixed skeleton leaves three singular neighborhoods unresolved.  We derive
representative equations at the horizontal endpoints and the upper vertical
point, classify their retained passages, and obtain a finite list of local
passage types with physical sections and descent in the original parameter space.

The local equations are derived from the full family, not from a reduced
representative.  At the positive horizontal endpoint, $x=1/r$, $y=z-1$,
\begin{equation}
 \dot r=-r\{B+ar+r^2[1-z+\mu_2(z-1)^2]\},\qquad
 \dot z=z+r[\mu_3(z-1)^2+c(z-1)].
\label{eq:legacy-3-6}
\end{equation}
At the negative endpoint, $x=-1/r$,
\begin{equation}
 \dot r=r\{B-ar+r^2[1-z+\mu_2(z-1)^2]\},\qquad
 \dot z=-z+r[\mu_3(z-1)^2+c(z-1)].
\label{eq:legacy-3-7}
\end{equation}
Solving the transverse zero graph analytically and restricting the radial
numerator to it gives
\begin{equation}
 P_+(r,\lambda)=B+ar+r^2U_+(r,\lambda),\qquad
 P_-(r,\lambda)=B-ar+r^2U_-(r,\lambda),
\label{eq:legacy-3-8}
\end{equation}
with $U_\pm(0,0)\ne0$.  Degree-two preparation therefore gives only a
simple hyperbolic root, a double saddle-node root, or the critical
multiplicity-three layer $B=a=0$ after the persistent factor $r$ is restored.
There is no uniform factorization as $r^3$ away from that layer.

For a prescribed finite differentiability order, the joint endpoint
normal form may be written, after the required local time reversal, as
\begin{equation}
 x'=q_\alpha(x),\qquad y'=-y,
\label{eq:legacy-3-9}
\end{equation}
with at most three weak roots.  The function
\begin{equation}
 \mathcal L_\alpha(x,y)
   =\int_0^x q_\alpha(v)\,dv-\frac{y^2}{2}
\label{eq:legacy-3-10}
\end{equation}
satisfies
\begin{equation}
 \dot{\mathcal L}_\alpha=q_\alpha(x)^2+y^2.
\label{eq:legacy-3-11}
\end{equation}
It excludes recurrence inside the fixed endpoint slab.  Root lines, the
strong axis, and backward corner orbits form its finite list of dividers.
The normalizer transports fixed model sections; it is not used to define a
global analytic stratification.

At the upper vertical point use $y=1/r$, $x=w/r$.  Set
\begin{equation}
 F(r,w)=(1+r)w+\mu_3+rc,
\label{eq:legacy-3-12}
\end{equation}
\begin{equation}
 Q(r,w)=\mu_2-\mu_3w+(B-1)w^2
       +r[-1+(a-c)w-w^2].
\label{eq:legacy-3-13}
\end{equation}
Then $\dot r=-rF$ and $\dot w=Q$.  Choose fixed $\delta,r_*>0$ so that
\begin{equation}
 Q(r,\pm\delta)<0,\qquad
 F(r,\delta)>0,\qquad F(r,-\delta)<0
\quad(0\le r\le r_*).
\label{eq:legacy-3-14}
\end{equation}
Thus $w=\delta$ is the physical entry, $w=-\delta$ the next cut, and
$r=r_*$ the collar side.  On $F=0$ the graph
\[
 w_s(r)=-\frac{\mu_3+rc}{1+r}
\]
has $S(r):=Q(r,w_s(r))$ with $S'(r)\le-1/2$.  Moreover
\begin{equation}
 F_w=1+r>0,\qquad Q_r<-\frac12,
\quad \dot F|_{F=0}=(1+r)Q,
\quad \operatorname{sgn}\dot Q|_{Q=0}=\operatorname{sgn}F.
\label{eq:legacy-3-15}
\end{equation}
These signs give a directed four-cell graph.  Every nonexiting orbit is
coordinatewise monotone until it reaches one of finitely many equilibria;
there is no internal recurrent set.

On the equator the gate equation is
\begin{equation}
 E(w,\lambda)=\mu_2-\mu_3w+(B-1)w^2=0,
\label{eq:legacy-3-16}
\end{equation}
so there are at most two equatorial roots.  The graph $F=0$ supplies at most
one interior root, and its determinant is negative.  The only joint
collisions are the equatorial discriminant collision, the equatorial/interior
collision, and the source core.  In the source core,
\begin{equation}
 r=\theta^2,\quad w=\theta W,
\quad \mu_2=\theta^2\bar m_2,
\quad \mu_3=\theta\bar m_3,
\label{eq:legacy-3-17}
\end{equation}
and the exceptional equation has $\dot W=-1-W^2/2$ at the source point.
Thus $W$ is a direct passage coordinate, while
$\theta^2\bar m_2=\mu_2$ and $\theta\bar m_3=\mu_3$ keep the lifted flow on
the original five-parameter fiber.

The signed substitutions, source cancellations, and auxiliary endpoint-scale
identities used in this section are collected in
Appendix~\ref{app:chart-endpoint-variants}.  The next proposition is the
body-level interface for the simultaneous degeneration of an endpoint root
and its section coordinate.  Appendix~\ref{app:endpoint-joint-uniformity}
contains the cell-by-cell fixed-fiber clock estimates behind it.

\begin{proposition}[Joint endpoint first-hit uniformity]
\label{prop:endpoint-joint-uniformity}
Fix a finite differentiability order \(K\).  After shrinking the two fixed
endpoint slabs, there is a finite signed resolved cover, joint in the section
coordinate \(s\) and the prepared root coefficients
\((B_{1,\xi},A_\xi)\), with the following properties.
\begin{enumerate}[label=\textup{(\roman*)},leftmargin=2.8em]
\item Every labelled endpoint root is assigned to a simple, \(D_1\), or
  \(D_2\) cell, while a positive cofactor root reached by the orbit is a
  named gate or no-passage face.
\item On each retained through cell the physical first hit has the
  fixed-fiber factorization
  \[
    R_{\rm out}\circ D_\xi\circ R_{\rm in};
  \]
  the two regular factors have uniformly bounded \(C^K\) jets, and
  overlapping cells represent the same physical map.
\item If \(\alpha\) denotes all resolved endpoint parameters, then for every
  \(L\ge0\) the singular factor satisfies
  \begin{equation}
   \lim_{\varepsilon\downarrow0}
   \sup_{\substack{\alpha\ \mathrm{in\ the\ finite\ cover}\\0<s<\varepsilon}}
   s^{-L}\bigl|(s\partial_s)^jD_\xi(s,\alpha)\bigr|=0,
   \qquad 0\le j\le K.
   \label{eq:endpoint-joint-flatness}
  \end{equation}
\end{enumerate}
Consequently the retained endpoint passages have uniform \(C^K\) graph
representatives through simultaneous root--section collisions on the same
original parameter fiber.  A stopped gate is not crossed, and no argument
requires differentiation of the inverse of an exponentially flat map.
\end{proposition}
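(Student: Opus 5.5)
The plan is to work in the endpoint normal form \eqref{eq:legacy-3-9}, where the passage splits into a hyperbolic direction ($y'=-y$) and a weak direction ($x'=q_\alpha(x)$). The prepared radial polynomial $P_\pm$ of \eqref{eq:legacy-3-8} has at most three weak roots near $r=0$.

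The finite cover will be built by resolving the root configuration jointly with the section coordinate $s$. Write the prepared root polynomial in terms of the coefficients $(B_{1,\xi},A_\xi)$. A finite quasi-homogeneous blow-up of the coefficient space at $B=a=0$ separates three kinds of charts: charts where one root is simple, charts where two roots form a $D_1$ double layer, and the $D_2$ triple layer. On each chart the roots become analytic functions of the rescaled coefficients with separated or controlled distances. Positive cofactor roots that the orbit reaches are declared named gates or no-passage faces; this is consistent with Proposition~\ref{prop:fixed-skeleton}, because these roots are already among the prepared root lines and gates. Item (i) then follows from the finiteness of the blow-up and from compactness of the exceptional divisors.

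For item (ii), on each retained cell the entry and exit sections of the model are transported by the finitely smooth normalizer. By the Takens/Il'yashenko--Yakovenko finite-smoothness normal form theorem with parameters, the normalizer has uniformly bounded $C^K$ jets on a fixed slab after shrinking. This gives the regular factors $R_{\rm in}$ and $R_{\rm out}$. The Lyapunov-type function \eqref{eq:legacy-3-10} shows the following: orbits have no recurrence in the slab, every orbit exits or tends to a root line, and the first hit is therefore well defined between the dividers. Overlapping cells give the same map because the first hit is defined physically, independently of the chart, and the normalizers on overlaps differ only by a conjugacy of fixed sections.

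For item (iii), in the model the passage is explicit: the time to go from $y=s$ to $y=y_0$ is $T=\log(y_0/s)$, and the weak coordinate follows $\int dx/q_\alpha=T$. I therefore expect $D_\xi(s,\alpha)$ to be the inverse $G_\alpha^{-1}(G_\alpha(x_0)-\log s)$ or its transpose, depending on direction. The flatness in \eqref{eq:endpoint-joint-flatness} should come from showing, cell by cell, that $s\partial_s$ acts as $-\partial_T$. Each derivative $\partial_T^j$ of the weak flow is bounded by $q_\alpha$ and its derivatives along the orbit, and these decay because the orbit approaches a saddle-node-type root, or hyperbolically after $s$ is composed back. This is where the cell decomposition matters. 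In simple cells one gets exponential decay $e^{-\kappa T}=s^\kappa$ with $\kappa$ uniform, and iterating with $L$ arbitrary needs an extra argument. In the $D_1$ and $D_2$ cells the passage time through the parabolic bottleneck is of order $|\text{gap}|^{-1/2}$ or worse, so $D_\xi$ is exponentially flat in $s$ relative to the gap.

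\textbf{Main obstacle.} The hardest step is the uniformity in \eqref{eq:endpoint-joint-flatness} as the gap tends to zero together with $s$, the "simultaneous root--section collision." I would first split the $(s,\alpha)$-cell by comparing $\log(1/s)$ with the bottleneck time $\tau(\alpha)$:
\begin{enumerate}[label=\textup{(\alph*)},leftmargin=2.8em]
\item When $\log(1/s)\gg\tau$, the orbit has fully crossed the bottleneck, and the hyperbolic contraction gives $s^{L}$ for every $L$ after the bottleneck is crossed.
\item When $\log(1/s)\lesssim\tau$, the orbit is still stalled near the ghost root. Then $|q_\alpha|$ is bounded by the gap, and the derivatives are controlled by comparison with the Riccati model $x'=x^2+\epsilon$.
\end{enumerate}
The $\sup$ over this region then tends to zero by an explicit fixed-fiber clock estimate. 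That estimate is what I expect Appendix~\ref{app:endpoint-joint-uniformity} supplies.

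Because the statements are phrased on the forward map $D_\xi$, no inverse of a flat map is ever differentiated. Stopped gates remain terminal faces, so no argument ever crosses them.
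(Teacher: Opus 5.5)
There is a genuine gap in item~(iii), and it comes from the orientation you give the singular factor. You read \(s\) as the strong coordinate and take \(D_\xi(s,\alpha)=G_\alpha^{-1}(G_\alpha(x_0)-\log s)\), the weak coordinate reached after time \(T=\log(y_0/s)\). Your flatness argument (\(s\partial_s=-\partial_T\), decay of \(q_\alpha\) along the orbit, your cases (a)--(b)) is an argument about this map, whatever ``transpose'' you mean. This map is not flat. From \(G_\alpha'=1/q_\alpha\) one gets \(s\partial_sD_\xi=-q_\alpha(D_\xi)\). The distance of \(D_\xi\) to the root therefore decays only like \(s^{\kappa}\) at a simple root, where \(\kappa\) is the weak eigenvalue. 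That eigenvalue is \emph{small} near the \(D_2\) point, not uniformly positive. At a \(D_1\) root the decay is like \((\log 1/s)^{-1}\), and at a \(D_2\) root like \((\log 1/s)^{-1/2}\), with \(s\partial_sD_\xi\asymp(\log 1/s)^{-3/2}\). Thus \(s^{-L}|(s\partial_s)^jD_\xi|\) does not tend to zero for any \(L>0\) at \(D_1\) or \(D_2\) roots, and no cell splitting can repair this. In particular, your case~(a) claim that hyperbolic contraction gives \(s^L\) for every \(L\) is false: it yields one fixed power. Your candidate is precisely the inverse of the exponentially flat passage. So your closing sentence is reversed: the plan differentiates exactly the inverse the proposition says is never differentiated.

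The missing idea is to measure \(s=x-\xi\) in the \emph{weak} direction from the labelled root, and to take the strong exit coordinate as the singular factor.
\begin{itemize}
\item \emph{Joint cover.} Then \(N_\alpha(\xi+s)=s(A_\xi+B_{1,\xi}s+C_\xi s^2)\), so \((s,B_{1,\xi},A_\xi)\) is quasi-homogeneous with weights \((1,1,2)\). This is what makes the finite cover genuinely joint in section and coefficients: signed weighted and phase-dominant charts, plus balanced, discriminant-gap, and hierarchical \(D_1\to D_2\) cofactor cells. A coefficient-only blow-up followed by comparing \(\log(1/s)\) with a bottleneck time does not give this.
\item \emph{Key identity.} The first integral \(ye^{\Theta_\xi}\) gives \(D=e^{-\Phi}\) with \(\Phi=\int_s^{s_*}V\,dv/(vQ)\). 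The identity \((s\partial_s)D=(V/Q)D\) shows that \(D^{-1}(s\partial_s)^jD\) is a polynomial in the fixed-fiber Euler jet of \(V/Q\), with only algebraic losses in \(s^{-1}\) and the gap scales.
\item \emph{Cell lower bounds.} On the cells, \(\Phi\) has lower bounds of the form \(cs^{-2}\), \(c\delta^{-2}(1+\log(\delta/s))\), \(c/(ds)\), and, in the gap cell, \(c/(\delta^2(|u|+g))\); the face \(g=0,\ s=\delta\) of that cell is stopped. Since \(x^{-N}e^{-c/x}\to0\), this gives \eqref{eq:endpoint-joint-flatness}.
\item \emph{Simple roots.} This also supplies the ``extra argument'' you leave open there. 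The effective exponent is of order \(\delta^{-2}\), which exceeds any given \(L\) once the endpoint neighborhood is shrunk.
\item \emph{Strong-to-weak endpoints.} Where the physical orbit runs from the strong to the weak direction, the passage is recorded as the graph of this flat map. That is the content of the ``graph representative'' clause.
\end{itemize}
Finally, the simple/\(D_1\)/\(D_2\) labels must be determined in the analytic physical numerator. They are then transported to the finite-smooth model by the triangular unit relation between the prepared coefficients. Quoting a Takens-type normal form alone does not give you that root incidence.
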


\begin{proof}
The physical radial numerator and the DIR model numerator are related by a
triangular matrix of analytic units, so their labelled simple, \(D_1\), and
\(D_2\) roots have the same finite resolved incidence.  In the exact endpoint
clock, the joint variables \((s,B_{1,\xi},A_\xi)\) have weights \((1,1,2)\).
The phase-dominant, balanced cofactor, discriminant-gap, and hierarchical
\(D_1\)-to-\(D_2\) cells give respectively exponential lower bounds of order
\(s^{-2}\), \((\delta^2(|u|+g))^{-1}\), and \((ds)^{-1}\), with only
algebraic losses under fixed-fiber Euler differentiation.  Exponential
domination gives \eqref{eq:endpoint-joint-flatness}; regular section changes
are unit-equivalent to \(s\).  The cell estimates and overlap calculation are
given in Appendix~\ref{app:endpoint-joint-uniformity}.
\end{proof}

\begin{proposition}[Finite local phase-portrait alternatives]
\label{prop:finite-singular-alphabet}
After a finite resolved cover, every retained vertex in the endpoint and
upper slabs is a direct regular passage, a separated hyperbolic saddle, or a
simple saddle-node with fixed sector type.  Same-sign node cores,
stable-center sectors, wrong orientations, and collar exits are terminal
labels.  The source core is the direct passage \eqref{eq:legacy-3-17}; no unresolved
double-zero vertex remains.
\end{proposition}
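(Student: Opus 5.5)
The plan is to classify vertices separately in the two endpoint slabs and in the upper slab. In each slab we list every singular point that a retained orbit can approach, and show that after finitely many resolution charts its germ is one of three admissible types: regular passage, hyperbolic saddle, or simple saddle-node with a fixed sector.

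\emph{Endpoints.} By \eqref{eq:legacy-3-8}, the restricted radial numerator $P_\pm$ is a degree-two Weierstrass polynomial in $r$ times a unit. Together with the persistent factor $r$, the equilibria on the transverse zero graph have multiplicity at most three. Simple roots of $rP_\pm$ are hyperbolic, since the transverse eigenvalue is $\pm1$ by \eqref{eq:legacy-3-6}--\eqref{eq:legacy-3-7}. We then sort them by sign: opposite-sign eigenvalues give separated saddles, and same-sign ones give node cores, which are terminal. Double roots are saddle-nodes whose sector type is fixed by the sign of $U_\pm(0,0)$ and of the strong eigenvalue $\pm1$. The triple layer $B=a=0$ is the only higher-codimension stratum. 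Here I would invoke Proposition~\ref{prop:endpoint-joint-uniformity}(i): every labelled root is placed in a simple, $D_1$ or $D_2$ cell on a finite resolved cover in $(s,B_{1,\xi},A_\xi)$. On each cell the root nearest the retained sector is simple or a simple saddle-node, and the remaining cofactor roots become named gates. The Lyapunov function \eqref{eq:legacy-3-10}--\eqref{eq:legacy-3-11} rules out recurrence, so every non-exiting orbit ends at one of these vertices. Stable-center sectors and wrong orientations are recorded as terminal labels, and collar exits are terminal by Proposition~\ref{prop:fixed-skeleton}.

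\emph{Upper slab.} On $0\le r\le r_*$, $|w|\le\delta$, the candidate equilibria are the equatorial roots of $E$ from \eqref{eq:legacy-3-16} (at most two) and at most one interior point on $F=0$. For an equatorial root $w_0$ the Jacobian is triangular, with entries $-F(0,w_0)=-(w_0+\mu_3)$ and $E'(w_0)$. A simple root with $F\ne0$ is therefore hyperbolic: a saddle or a node, the latter being terminal. The interior point has negative determinant, as stated after \eqref{eq:legacy-3-16}, so it is a saddle. The remaining degenerate cases are exactly the three joint collisions listed there.
\begin{itemize}
\item At the discriminant collision $E$ has a double root while $F\ne0$. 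This gives a saddle-node whose center direction lies along the equator, and its sector type is fixed by the sign of $F$, which is constant near the collision.
\item At the equatorial/interior collision $F=0$ meets a simple root of $E$. The centre manifold is transverse to the equator, and the signs in \eqref{eq:legacy-3-15}, notably $Q_r<-\tfrac12$, make it a simple saddle-node. Its unfolding parameter is controlled by $S'(r)\le-1/2$.
\item At the source core, where both $E$ and $F$ degenerate at $\mu_2=\mu_3=0$, we use the weighted blow-up \eqref{eq:legacy-3-17}. Since $\dot W=-1-W^2/2<0$, there is no equilibrium in the exceptional chart, so this is a direct passage. On its corner charts ($\bar m$ large) the situation reduces to the two collisions above.
\end{itemize}
The four-cell monotonicity graph after \eqref{eq:legacy-3-15} shows that orbits have no other limit sets. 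Compactness of the resolved parameter and phase sets then yields a finite cover.

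The main obstacle is the source-core blow-up. There one must check that every chart of the weighted blow-up, including the directional charts where $\bar m_2$ or $\bar m_3$ is unbounded, produces only admissible vertices and no residual double zero. In particular the exceptional divisor must meet the original fiber only at points that are already regular or simple. I would check this first by computing the directional charts $\mu_3=\pm\theta$ and $\mu_2=\pm\theta^2$ explicitly and reading off their Jacobians.
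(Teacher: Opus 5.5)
Your upper-slab argument is the paper's own proof with the Jacobians written out: quadratic $E$ on $r=0$, at most one interior saddle on $F=0$, a nonzero quadratic center term at a single zero eigenvalue, and simultaneous vanishing forcing $\mu_2=\mu_3=0$, i.e.\ the source core \eqref{eq:legacy-3-17}. That part is fine. The gap is in the endpoint paragraph, at the sentence ``On each cell the root nearest the retained sector is simple or a simple saddle-node.'' Proposition~\ref{prop:endpoint-joint-uniformity}(i) only sorts labelled roots into simple, $D_1$ and $D_2$ cells. A $D_2$ cell is by definition $A_\xi=B_{1,\xi}=0$, which is a multiplicity-three zero, and it is retained, not discarded. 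On the whole layer $B=a=0$, which contains the base field $X_0$, the persistent endpoint obeys $\dot r=\mp r^3U_\pm$ on the transverse zero graph. No refinement of the cover splits it: the weighted radius $\rho$ of the $(1,1,2)$ charts is constant on original-parameter fibers and vanishes on this one. Only the phase-dominant chart survives there, and the triple zero is still present. Your sentence is therefore false exactly on the fiber where the hemicycle lives, and the triple layer cannot be resolved into simple vertices.

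What Proposition~\ref{prop:endpoint-joint-uniformity} does give on a $D_2$ cell is parts (ii)--(iii). The retained passage is a through cell with physical first hit $R_{\rm out}\circ D_\xi\circ R_{\rm in}$: regular outer factors and an exponentially flat singular factor, with clock of order $e^{-1/(2s^2)}$. This is how the paper disposes of the layer. Its proof invokes the trichotomy stated after \eqref{eq:legacy-3-8}: simple hyperbolic root, double saddle-node root, or the critical multiplicity-three layer. It keeps the last case as the persistent-endpoint passage, an endpoint passage rather than an internal gate, which the source and mixed regimes later traverse. Your endpoint case should therefore read as follows. Simple and $D_1$ cells give separated saddles (or terminal nodes) and simple saddle-nodes. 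The $D_2$ cell contributes only a retained endpoint through passage controlled by (ii)--(iii).

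There is also a smaller slip in the same paragraph. The sector type of an endpoint double root is not fixed by the sign of $U_\pm(0,0)$ alone. At the persistent double root $B=0$, $a\neq0$, the center quadratic coefficient is $-a$ in both charts by \eqref{eq:legacy-6-80}, so the sign must be fixed cell by cell.
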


\begin{remark}
The classification is uniform on the finite endpoint-incidence and
upper-vertical resolved cover.  Root and gate collisions remain in their
joint boxes; wrong orientations, stable-center sides, nodes, and collar
contacts are terminal first outcomes.
\end{remark}

\begin{proof}
Equation \eqref{eq:legacy-3-8} gives the complete endpoint root list.  For the upper
box, $F=0$ has at most one interior singular point because $S'<0$, and its
linear determinant is negative.  On $r=0$, equation \eqref{eq:legacy-3-16} is
quadratic.  A single zero eigenvalue has a nonzero quadratic center term;
simultaneous vanishing forces $\mu_2=\mu_3=0$ in the normalized chart and
therefore belongs to the source core \eqref{eq:legacy-3-17}.  The sign of the two
nonzero eigenvalues separates saddles from nodes.  The fixed model sections
and the monotonicity identities above then give the stated retained and
terminal sector list.
\end{proof}

\section{Uniform first-hit maps on fixed sections}
\label{sec:part-i-atlas}

We next decide the first outcome for every point of every incoming interval,
including limits with unbounded flight time.  The fixed face skeleton and the
finite local phase portraits yield finitely many physical first-hit relations
on common domains; no full-return map is formed at this stage.

\begin{definition}[Stopped first-hit relation]
Fix one block and one incoming physical section.  For an entry point, follow
the positive orbit until its first contact with the union of all transverse
exit sections, competing sides, invariant sides, and collar boundaries.  If
the orbit remains in the block and converges to a specified equilibrium,
invariant axis, saddle-node sector, or node sector, that limiting sector is
recorded as its terminal outcome.  The resulting partially defined relation
is the \emph{stopped first-hit relation}.  It is never continued through a
singular limit merely to manufacture a remote landing coordinate.
\end{definition}

The stopping set contains the next transverse cut, every competing transverse
side, invariant root/axis/center sides inside their owning box, previous-side
faces, fixed corners, and collar sides.  Resolved chart overlaps identify the
same physical point and are not competing outcomes.  This convention is what
makes the first-hit relation stable when flight time tends to infinity.

\begin{definition}[First-contact ownership]
\label{def:first-contact-owner}
The face list $\mathcal F$ is partially ordered by inclusion.  If an orbit
first reaches several incident faces at the same time, its owner is the
unique minimal face of $\mathcal F$ containing that contact.  Thus a corner
or tangency stratum owns the contact before either adjacent open face.  A
resolved chart overlap is an identification of this same physical owner,
not a second outcome.  If two backward divider labels coincide, the open
interval between them is empty and has no owner.
\end{definition}

The finite outcome alphabet is therefore completely geometric:
\begin{equation}
 \mathcal A_{\rm stop}=
 \{\mathrm{next},\mathrm{gate},\mathrm{previous},\mathrm{center},
   \mathrm{node},\mathrm{wrong},\mathrm{exit},\mathrm{collapse}\}.
\label{eq:stopped-outcome-alphabet}
\end{equation}
Here \emph{next} is the only retained through outcome.  The labels
\emph{gate} and \emph{center} include their prepared simple-root,
saddle-node, invariant-axis, and stable-center variants; the finite local
phase portrait decides whether the corresponding physical candidate is
subsequently represented by a new next-cut component or is terminal.
The remaining five labels are terminal for the current candidate, and
\emph{collapse} carries no phase point.

\begin{proposition}[Uniform first-hit decomposition]
\label{prop:stopped-first-hit}
On every fixed regular strip, endpoint slab, and upper slab, and on every
resolved parameter face attached to it, the incoming interval has a uniformly
bounded finite divider set.  Every complementary open interval has exactly
one first-hit outcome.  A next-cut interval carries an order-preserving
$C^K$ first-hit diffeomorphism for any prescribed finite $K$.  A gate,
previous-side, stable-center, node-core, wrong-orientation, or collar label is
terminal unless the finite local portrait has already supplied a separately
labelled next-cut component.  Simultaneous first contacts are assigned by
Definition~\ref{def:first-contact-owner}; coincident divider labels delete the
interval between them.  Consequently every entry point has one physical
owner and chart overlaps never duplicate an outcome.
\end{proposition}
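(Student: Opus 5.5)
The plan is to verify the hypotheses of Proposition~\ref{prop:three-box-model} uniformly on each block of the fixed skeleton of Proposition~\ref{prop:fixed-skeleton}, and then to read off the conclusion. The argument splits into three block types.

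\textbf{Regular strips.} On a fixed regular strip the flow coordinate is strict, uniformly in small $\lambda$. This is the same positivity that gives $\dot x>\tfrac12$ on $B_L$ in \eqref{eq:legacy-3-2}. Hence every orbit leaves the strip in bounded time, and the only dividers are backward first hits of the finitely many fixed corners and collar-side tangency points. Each collar side keeps its full tangency equation $X_\lambda f=0$. That equation is analytic with a finite, uniformly bounded number of roots on the compact side, for instance by Weierstrass preparation in the parameters. Backward orbits of these points meet the incoming cut at most once, because the flow coordinate is monotone. Openness of each outcome set and the $C^K$ regularity of the through map then follow from transversality and the implicit function theorem, exactly as in the proof of Proposition~\ref{prop:three-box-model}.

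\textbf{Singular slabs.} Here I would use Proposition~\ref{prop:finite-singular-alphabet} and Proposition~\ref{prop:endpoint-joint-uniformity}. Together they give, on a finite resolved cover, a finite equilibrium and sector list and uniform $C^K$ representatives of the retained passages.

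\emph{No recurrence.} In the endpoint slabs, the Lyapunov function \eqref{eq:legacy-3-10}--\eqref{eq:legacy-3-11} is strictly increasing off equilibria, so every orbit that does not exit converges to a listed sector. In the upper slab, the sign relations \eqref{eq:legacy-3-14}--\eqref{eq:legacy-3-15} give the directed four-cell graph, so orbits are coordinatewise monotone. Since the lifted coordinate $W$ in the source core \eqref{eq:legacy-3-17} has $\dot W<0$, the source core is a direct passage.

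\emph{Finitely many dividers.} On each resolved face the divider set is contained in three finite families:
\begin{itemize}
\item intersections of the incoming arc with the finitely many root lines, strong axes, and invariant half-branches;
\item backward corner orbits from the faces of $\mathcal F$;
\item tangency strata of $F=0$ and $Q=0$.
\end{itemize}
The one-intersection property follows from monotonicity of the Lyapunov function or of the cell coordinates along orbits. Each labelled object is then transverse to the incoming section, or forms a graph over it, in the normal-form coordinates. The resolved cover is finite and each face carries finitely many labels, so the divider set is uniformly bounded.

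\textbf{Gluing the cover.} Each next-cut interval carries the factorization $R_{\rm out}\circ D_\xi\circ R_{\rm in}$ of Proposition~\ref{prop:endpoint-joint-uniformity}. Overlapping cells represent the same physical map, so chart overlaps cannot duplicate an outcome. Simultaneous contacts are assigned to the minimal face by Definition~\ref{def:first-contact-owner}, which gives a unique owner. Coincident labels produce collapsed intervals, and these carry no point. Terminal labels (gate, previous, center, node, wrong, exit) stop the relation. The only exception is when the local portrait already lists a separate next-cut component, and that case is simply another labelled interval.

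\textbf{Main obstacle.} I expect the hard step to be uniformity of the divider count and of the $C^K$ bounds through root--section collisions and through the degenerate layer $B=a=0$, where flight times are unbounded. The first thing I would try is to work fiberwise on the resolved cover and invoke the flatness estimate \eqref{eq:endpoint-joint-flatness}. That keeps the singular factor and its Euler derivatives uniformly controlled as $s\to0$, so the through map extends with a $C^K$ graph representative. This also means no inverse of a flat map ever needs to be differentiated. The boundary points of these domains are exactly the listed dividers.
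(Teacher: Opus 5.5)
Your proposal is correct and is essentially the paper's own argument. The same ingredients appear: strict flow coordinates on regular strips, the endpoint Lyapunov function and upper four-cell graph to exclude recurrence, dividers as boundary points of the first-contact sets (bounded contact time gives a corner, tangency, or model-section contact; divergent time gives a named sector), a single crossing of the one-way entry face, Proposition~\ref{prop:endpoint-joint-uniformity} for root--section uniformity at the endpoints, and Definition~\ref{def:first-contact-owner} for simultaneous contacts. You leave three points implicit: the paper notes that the source-core lift preserves $\theta^2\bar m_2=\mu_2$ and $\theta\bar m_3=\mu_3$, so lifted first hits descend to a fixed original-parameter fiber; it identifies all chart overlaps, not only endpoint cells, by orbit uniqueness via \eqref{eq:legacy-4-1}; and your Weierstrass count of collar tangencies needs the side not to be $X_0$-invariant, whereas the paper simply takes the tangency strata from the finite face list $\mathcal F$.
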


\begin{remark}
For each prescribed finite differentiability order $K$, the conclusion is
uniform on every closed resolved endpoint, upper, source-core, and transported
normal-form face.  Unbounded flights stop at named singular outer sections,
label collisions delete the interval between them, and collar contacts are
exits.
\end{remark}

\begin{proof}
The ordinary strips have an analytic coordinate $\theta$ with
$X_\lambda\theta\ge c>0$.  Their only outcome boundaries are backward
orbits of fixed cut/collar corners.  In an endpoint slab, \eqref{eq:legacy-3-11} is a strict
Lyapunov function away from the finitely many roots, and the invariant root
lines and strong axis give a finite sector list.  In the upper slab,
\eqref{eq:legacy-3-14}--\eqref{eq:legacy-3-15} give strict outer normals and a directed four-cell graph.  The
only tangency on $r=r_*$ is a strict local minimum of $r$, hence cannot be a
first outer contact from the interior.  The source core has
$-\dot W\ge1/2$ after shrinking.  Separated saddle, node, and simple
saddle-node boxes use fixed model sections with certified normal or Lyapunov
margins.

For one open stopping face $J$, let $E_J$ be the entry points whose first contact is in
$J$.  This set is relatively open.  If $s_*$ lies on its boundary and its
contact time stays bounded, the limiting contact is a fixed corner,
tangency, or outer section of a named singular box.  If the time diverges,
the strict coordinates above force the orbit into a named singular sector;
the regular flight has already been stopped at that box's outer section.
Thus $s_*$ is the stopped backward intersection of one element of a finite
label list.  Each such orbit meets the one-way entry face at most once.
There are therefore finitely many divider points, uniformly through the
joint root and gate boxes.  At an endpoint, the uniformity in the simultaneous
root--section limit and the separation of through, gate, and no-passage cells
are precisely Proposition~\ref{prop:endpoint-joint-uniformity}.

At a simultaneous contact, closure of $\mathcal F$ under intersections gives
the minimal face in Definition~\ref{def:first-contact-owner}.  Hence the two
adjacent open outcomes meet at one boundary owner rather than producing two
first hits.  The finite local alphabet
\eqref{eq:stopped-outcome-alphabet} contains every such face: endpoint
collisions are among the prepared root/strong-axis strata, upper collisions
are intersections of $F=0$, $Q=0$, and the four fixed outer faces, and an
ordinary-strip collision is a fixed corner or collar tangency.

On a complementary interval the outcome is locally constant, hence constant.
Uniqueness of planar trajectories preserves order, and the implicit hit
equation gives the claimed $C^K$ map on regular strips; the endpoint graph
statement follows from Proposition~\ref{prop:endpoint-joint-uniformity}.
The resolved phase--parameter flow
preserves $\theta^2\bar m_2=\mu_2$ and
$\theta\bar m_3=\mu_3$ in the source core, so the lifted relation descends to
the same original parameter fiber.  On an overlap, orbit uniqueness gives
\begin{equation}
 P_{\chi'}=\psi_{\rm out}\circ P_\chi\circ\psi_{\rm in}^{-1};
\label{eq:legacy-4-1}
\end{equation}
both sides stop at the same physical contact and therefore carry the same
first-hit outcome.
\end{proof}

\section{Finite full-return itineraries}
\label{sec:part-i-words}

The stopped relations are local and may share divider points.  Gluing them on
fixed cuts, rejecting terminal sectors, and proving a strict global order
produces the finite closed family of retained full-return itineraries used by
the later analytic estimates.  An itinerary is an ordered composition of
local first-hit maps between consecutive sections.  In the derivative
estimates below, a finite ordered composition of differential operators is
occasionally called a word; this carries no symbolic-dynamics meaning.

Let $C_0=\Sigma,C_1,\ldots,C_m$ be the fixed cuts in the oriented order
\eqref{eq:legacy-3-1}.  Refine each $C_i$ by the union of the divider sets from its two
adjacent stopped relations.  On each finite ordering chart write
\begin{equation}
 e_{\pi(1)}\le\cdots\le e_{\pi(n)},\qquad \pi\in\mathfrak S_n.
\label{eq:legacy-4-2}
\end{equation}
Equality in \eqref{eq:legacy-4-2} records a collapsed interval.  Images and inverse images of
divider points under an order-preserving through map add only finitely many
labels.  This finite sorting operation is performed on fixed one-dimensional
cuts; it does not assert that a zero set of arbitrary transported
finite-smooth faces has finitely many components.

Before full-return itineraries are formed, we remove the terminal outcomes.  A wrong-oriented
upper cell cannot reach the next larger $-w$ level.  A stable-center sector
tends to its singular point or returns to a previous face.  A same-sign node
has a Lyapunov core which can be entered only for a sink, or left only for a
source; the core is not an incoming-to-next-cut passage.  A regular corridor
outside that node core is retained precisely when its strict coordinate joins
consecutive cuts.  Collar exits leave $U$.  These are geometric rejections,
not zero-count conclusions.

\begin{theorem}[Finite full-return itinerary theorem]
\label{thm:finite-words}
The retained primitive section components form a finite directed graph.  With
$C_0$ deleted, every retained edge goes from a subinterval of $C_i$ to a
subinterval of $C_{i+1}$ and therefore strictly increases
\begin{equation}
 I(v)=i\qquad(v\subset C_i).
\label{eq:legacy-4-3}
\end{equation}
The cut-open graph is acyclic and has finitely many paths.  Restoring
$\Sigma$ gives a finite family of stopped full-return itineraries, each with
at most $m+1$ primitive first-hit factors.  Every degeneration of a retained
itinerary is exactly one of: a minimal corner or tangency owner; a prepared
root, gate, or sector owner from $\mathcal A_{\rm stop}$; another labelled
boundary itinerary; a terminal previous-side, node, wrong-orientation, or
collar outcome; or a collapsed interval.  There is no infinite-switching or
Zeno alternative, and overlap charts do not create additional paths.
\end{theorem}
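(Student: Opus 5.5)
The plan is to build the graph directly from Proposition~\ref{prop:stopped-first-hit}, applied on each fixed cut, and to read every claim off the rank function \eqref{eq:legacy-4-3}.

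\textbf{Vertices.} For each $i$, refine $C_i$ by the union of three finite sets: the divider set of the stopped relation entering $C_i$, the divider set of the relation leaving $C_i$, and the images and inverse images of these points under the adjacent order-preserving through maps. Each of these sets is uniformly finite, so only finitely many labels appear on each cut. The ordering charts \eqref{eq:legacy-4-2} are then finitely many permutation cells. On each cell the open complementary intervals are the vertices, and an equality in \eqref{eq:legacy-4-2} is recorded as a collapsed interval with no vertex.

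\textbf{Edges.} A vertex $v\subset C_i$ carries exactly one first outcome. We keep an edge only when that outcome is \emph{next}. By the fixed-skeleton construction of Proposition~\ref{prop:fixed-skeleton}, the next transverse cut in the orientation \eqref{eq:legacy-3-1} is $C_{i+1}$. This includes the singular boxes: Proposition~\ref{prop:finite-singular-alphabet} shows that a retained passage through an endpoint or upper slab is a through map onto the outgoing cut of that box, and any gate component is assigned its own next-cut label.

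After the geometric rejections listed before the theorem, the terminal labels contribute no edges. Every edge therefore raises $I$ by exactly one, so the cut-open graph is acyclic with paths of length at most $m$. Finiteness of vertices gives finitely many paths. Re-identifying $C_{m+1}$ with $C_0=\Sigma$ closes each path whose endpoints lie in a common component, giving at most $m+1$ factors.

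\textbf{Overlap charts.} These do not create extra paths: by \eqref{eq:legacy-4-1} two overlapping charts stop at the same physical contact, and Definition~\ref{def:first-contact-owner} assigns that contact a single owner.

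\textbf{Degenerations.} The endpoints of a retained interval are divider points. By the proof of Proposition~\ref{prop:stopped-first-hit}, each divider is one of four things:
\begin{itemize}
\item a backward first hit of a minimal corner or tangency owner;
\item an intersection with a prepared root, gate or sector branch from $\mathcal A_{\rm stop}$;
\item the image of such a point under an earlier factor, which makes it a labelled boundary itinerary;
\item a collision of labels, which is a collapse.
\end{itemize}
As parameters move, an interval can also shrink onto a terminal outcome, and those cases are exactly the listed terminal labels.

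\textbf{No Zeno alternative.} Each factor is a single first hit, and every retained orbit uses exactly one factor per cut per lap. Infinite switching before a return would therefore need infinitely many divider labels on some $C_i$, which contradicts uniform finiteness.

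\textbf{Main obstacle.} The hard part is uniformity through the joint root--section limits at the endpoints. There a divider can run into the section coordinate $s=0$, and the finiteness of labels must hold on a common fiber. I would rely on Proposition~\ref{prop:endpoint-joint-uniformity} for the finite resolved cover and the graph representatives. I would also check that transporting its dividers by through maps never needs the inverse of the flat factor $D_\xi$: inverse images are taken only under regular factors, or are recorded symbolically as labels, never computed.
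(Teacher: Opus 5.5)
\textbf{Gap: the closure/no-Zeno part is argued only fiberwise.} Your vertex/edge construction, the rank argument giving acyclicity and at most $m+1$ factors, and the overlap clause match the paper's proof. The gap is in the other half of the statement: the degeneration list and the exclusion of Zeno behaviour. These concern limits of retained segments as phase \emph{and} parameters vary (see the remark following the theorem), but your argument works on one fiber at a time.

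Your key sentence, that infinite switching ``would need infinitely many divider labels on some $C_i$'', is false once the parameter varies. Take one moving divider $d(\lambda)$ and points $s_n=d(\lambda_n)\pm1/n$ placed alternately on either side of it. Their outcomes switch infinitely often, yet every fiber carries a single divider.

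The companion observation, one first hit per cut per lap, holds for every member of a sequence by construction. It says nothing about the limit when flight times diverge, because a limit of first hits with unbounded time is not a first hit.

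The divider classification you quote from the proof of Proposition~\ref{prop:stopped-first-hit} describes boundary points of outcome sets on one fiber. It does not cover a whole retained interval changing outcome at a parameter face. That case you only assert (``an interval can also shrink onto a terminal outcome \ldots'').

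\textbf{Missing step: the sequential compactness argument used in the paper.} Take a convergent sequence of retained segments.
\begin{itemize}
\item Boxes, orderings and outcome labels are finite, so pass to a subsequence on which all three are fixed. This, not a divider count, disposes of switching.
\item If the flight times stay bounded, continuous dependence yields the same transverse hit or a named corner limit.
\item If they diverge, the strict coordinates and Lyapunov functions force the orbit into a named singular isolating block, whose outer section already stopped the regular relation. The tail then lies in a single flow box near a nonsingular accumulation point, or in one of the finitely many sectors of Proposition~\ref{prop:finite-singular-alphabet} near a singular point.
\end{itemize}
This dichotomy is what shows that every degeneration lands on a listed face and that no limiting orbit accumulates infinitely many contacts. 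Proposition~\ref{prop:endpoint-joint-uniformity} supplies the uniform endpoint input for the divergent case at $p_\pm$, but it does not replace the sequential argument itself.
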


\begin{remark}
The same finite directed graph works on the complete resolved parameter ball,
including root, divider, gate, and overlap limits.  Every proper limit is an
adjacent labelled itinerary, a no-passage face, or a collapsed interval; no
remote boundary itinerary is introduced.
\end{remark}

\begin{proof}
Proposition~\ref{prop:stopped-first-hit} gives a uniformly finite vertex and
edge list.  By construction, only a next-cut outcome is retained, so \eqref{eq:legacy-4-3}
is strict.  A directed path therefore visits at most the number of retained
vertices and, more sharply, at most one component on each of the successive
cuts $C_0,\ldots,C_m$.  Thus a restored full lap has at most $m+1$ primitive
factors.  Root or divider collisions do not enlarge the graph: they identify
labels or delete an interval, and Definition~\ref{def:first-contact-owner}
assigns an incident corner once.

For closure, take a convergent sequence of segments.  Fix a box, ordering,
and first-hit outcome after passing to a subsequence.  Bounded flight times give
the same transverse hit or a named corner limit.  Unbounded times enter a
named singular isolating block, where the regular relation was already
stopped.  At a nonsingular accumulation point one flow box contains the whole
tail; at a singular point one of finitely many sectors contains it.  Hence
there is no infinite switching, or Zeno, alternative.  The finite incidence
complex is exhaustive.
\end{proof}

\section{Exact reduction from limit cycles to displacement zeros}
\label{sec:part-i-exact-once}

To count physical cycles without omission or duplication, we combine the
stopped itinerary family with the positive radial cut.  Annular intersection
then gives an exact cycle--zero correspondence.  For a retained itinerary
\(\omega\), intersect the domains of its successive physical first hits
before composing them, denote the resulting interval on \(\Sigma\) by
\(I_{\omega,\lambda}\), and write
\begin{equation}
 D_\omega(s,\lambda)=P_\omega(s,\lambda)-s,
 \qquad s\in I_{\omega,\lambda}.
\label{eq:itinerary-displacement}
\end{equation}
No factor in \(P_\omega\) is continued through an exit or an infinite-time
singular contact.

At a divider met by a closed regular trajectory, choose a transverse arc at
another point of that trajectory.  The ordinary analytic Poincar\'e germ on
that arc transports to an open germ \(\widehat P\) on \(\Sigma\).  All
incident half-open itinerary maps are restrictions of this one germ.  In the
statement below, isolation at a divider always means isolation as a zero of
\(\widehat P-\mathrm{id}\), not isolation relative to one arbitrary
one-sided interval.

\begin{proposition}[Exact cycle--zero correspondence, including multiplicity]
\label{prop:exact-once}
After $U$ and the parameter ball are sufficiently small, every isolated limit
cycle contained in $U$ is essential in that annulus, crosses $\Sigma$ exactly once
in positive orientation, and is represented by exactly one retained stopped
full-return itinerary.  Its intersection point is an isolated zero of
\eqref{eq:itinerary-displacement} in the common ambient germ, with the same
multiplicity as the cycle.  Conversely, every retained itinerary zero that is
isolated in this ambient germ closes one isolated limit cycle contained in
$U$, with the same multiplicity.  Minimal-face and half-open conventions
identify overlap descriptions and assign every boundary lift once.
\end{proposition}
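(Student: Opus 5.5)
The plan is to prove the forward direction first and then the converse, with the multiplicity statement obtained by transporting Poincaré germs.

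\textbf{Topology of cycles in the collar.} Every isolated limit cycle $\gamma\subset U$ is essential in the annulus $U$. If $\gamma$ were contractible in $U$, it would bound a disc $D\subset U$. By index theory $D$ must contain an equilibrium of index one. For small $\lambda$, however, the only equilibria in $U$ lie in the three singular boxes at infinity (the endpoint roots of \eqref{eq:legacy-3-8} and the upper-slab singular points). These lie on the equator or on its one-sided resolution, so they are not interior points of a disc in the finite plane. Alternatively, the Lyapunov function \eqref{eq:legacy-3-10} forbids recurrence in the endpoint slabs, and the four-cell monotone graph \eqref{eq:legacy-3-15} forbids it in the upper slab.

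Once $\gamma$ is known to be essential, its algebraic intersection number with the proper radial cut $\Sigma$ is one. Positivity of crossings follows from $\dot x>\tfrac12$ on $B_L\supset\Sigma$ by \eqref{eq:legacy-3-2}. Since every crossing has positive sign, there is exactly one crossing point $s_\gamma$.

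\textbf{Representation by exactly one itinerary.} Follow $\gamma$ from $s_\gamma$. Because $\gamma$ stays in $U$ and is periodic, none of its stopped first contacts can be a collar exit, a terminal sector, or an infinite-time singular limit. Hence, at each cut, the stopped relation of Proposition~\ref{prop:stopped-first-hit} has outcome \emph{next}. By Theorem~\ref{thm:finite-words}, the sequence of components visited on $C_0,\dots,C_m$ is then a retained full-return itinerary $\omega$, and $P_\omega(s_\gamma)=s_\gamma$.

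Uniqueness comes from the ownership rule. If $s_\gamma$ or one of its images lies at a divider, Definition~\ref{def:first-contact-owner} and the half-open conventions select a unique minimal face, and therefore a unique itinerary. Overlap charts produce the same physical map by \eqref{eq:legacy-4-1}, so they do not contribute a second itinerary.

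\textbf{Multiplicity.} Take a transverse arc $T$ at a regular point of $\gamma$ and let $\widehat P$ be the analytic Poincaré germ on $T$. Transport it to $\Sigma$ along the regular flow of $\gamma$; this is an analytic diffeomorphism of sections. Each incident one-sided itinerary map $P_\omega$ is the restriction of this single transported germ: on the common domain the first hits agree by orbit uniqueness. Conjugating by a section diffeomorphism with nonzero derivative preserves the order of a zero of $\widehat P-\mathrm{id}$, as in the final step of Proposition~\ref{prop:quadratic-local-slice}. Therefore the cycle multiplicity equals the order of the zero of $D_\omega$ in the ambient germ, and isolation of $\gamma$ is equivalent to isolation of that zero.

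\textbf{Converse.} Let $s$ be a zero of $D_\omega$ that is isolated in the ambient germ. Then the composed first hits close up into a periodic orbit inside $U$. It crosses $\Sigma$ once, so it is essential, and it is isolated, so it is a limit cycle. The same germ argument gives equality of multiplicities.

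The step I expect to be the main obstacle is the claim that cycles passing through divider points are represented by the single ambient germ $\widehat P$. In particular, one must show that near a divider the neighbouring half-open itineraries really are restrictions of $\widehat P$, so that no cycle is lost on a one-sided domain and none is counted twice. I would handle this by showing that a divider met by a periodic orbit can only be a backward first hit of a regular corner or tangency. It cannot be a singular separatrix, since such a separatrix tends to an equilibrium and is not periodic. Near a corner or tangency the flow box extends across the face, and the ownership rule assigns the boundary point to one side only.
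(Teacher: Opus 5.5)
Your essentiality step has a genuine gap. You claim that every equilibrium in $U$ lies on the equator. That is false: roots $r>0$ of the prepared radial numerators \eqref{eq:legacy-3-8} are finite equilibria at $x=\pm 1/r$, $y\approx-1$. They lie inside the endpoint slabs, hence inside $U$, and some of them have index $+1$.

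For example, take $B=\varepsilon^2$ and $a=-3\varepsilon$ with $\varepsilon>0$ small; recall $U_+(0,0)=1$ by \eqref{eq:legacy-3-6}.
\begin{itemize}
\item $P_+$ then has two small positive roots, and at the smaller one $r_1$ one has $P_+'(r_1)<0$.
\item In the chart \eqref{eq:legacy-3-6} the Jacobian at that equilibrium has determinant with the sign of $-r_1P_+'(r_1)>0$.
\item So it is a node of index $+1$, one of the ``same-sign node cores'' the paper lists among terminal labels.
\end{itemize}
Index theory therefore does not exclude a contractible cycle in $U$ around such a node. Your fallback sentence only shows that no periodic orbit lies entirely inside one endpoint slab or the upper slab. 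It says nothing about a contractible cycle that passes through several strips and slabs. Reducing to that single-piece case is exactly the step that is missing.

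The paper does not use equilibria here; it argues through the one-way cuts.
\begin{enumerate}
\item First show that a periodic orbit in $U$ must meet $\Sigma$. Suppose it missed $\Sigma$. It would then lie in the cut-open collar, which is a disc. There every cross-cut $C_i$ separates and has a uniform signed transverse normal (Proposition~\ref{prop:fixed-skeleton}).
\item A closed curve in that disc has zero algebraic intersection with each such separating arc. Since all crossings have one sign, it crosses no cut at all.
\item Hence it lies in a single regular strip or singular slab. There recurrence is excluded by the strict flow coordinate, by the Lyapunov identity \eqref{eq:legacy-3-11}, or by the monotone four-cell graph from \eqref{eq:legacy-3-15}.
\item Once $\gamma\cap\Sigma\ne\varnothing$, the common positive crossing sign from \eqref{eq:legacy-3-2} makes the algebraic intersection with $\Sigma$ nonzero. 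That is what gives essentiality.
\end{enumerate}
Your primitive-class argument then yields exactly one crossing. With this replacement, the rest of your proposal follows the paper's proof: forcing \emph{next} outcomes along the cycle, minimal-face ownership, restricting all incident itinerary maps to the transported germ $\widehat P$ at a divider, the conjugation argument for multiplicity, and the converse.
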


\begin{remark}
The ambient-germ clause is necessary.  A zero on the endpoint of one
half-open itinerary interval can fail to be isolated relative to a neighboring
label even though all labels describe the same regular Poincar\'e germ.  The
proposition compares isolation and multiplicity with that physical germ,
whereas the minimal-face rule decides only which label records it.
\end{remark}

\begin{proof}
By \eqref{eq:legacy-3-2}, every crossing of $\Sigma$ has positive sign.  A periodic orbit
cannot miss $\Sigma$: otherwise its retained cut index would increase
strictly around a loop, or the orbit would lie in one regular strip or
singular slab, where the strict coordinates and Lyapunov functions above
exclude recurrence.

A limit cycle is an embedded closed curve.  If it were contractible in the
annulus, its algebraic intersection with a proper radial cut would be zero,
contradicting the existence and common positive sign of its intersections
with $\Sigma$.  Thus it is essential.  An embedded essential circle
represents a primitive generator of the annulus, so its algebraic intersection
with $\Sigma$ is $+1$.  Since every geometric intersection is positive,
\begin{equation}
 \#(\gamma\cap\Sigma)=1.
\label{eq:legacy-4-4}
\end{equation}

Starting at this unique point, the cycle chooses one next-cut interval at
every stage.  A gate, previous-side, node-core, stable-center, wrong-oriented,
or collar outcome would be terminal and is impossible on the cycle.  Hence
the itinerary is one retained stopped full-return itinerary and the starting
point is its fixed point.

Conversely, equality \(P_\omega(s,\lambda)=s\) concatenates actual first-hit
arcs.  Consecutive factors meet at the same physical section point, and the
first-contact rule excludes an intervening exit or singular limit.  Flow
uniqueness therefore closes one trajectory after one lap in $U$.

Suppose the cycle or zero lies on a finite divider.  The transported germ
\(\widehat P\) introduced above exists because the closed orbit itself is
regular.  Flow uniqueness and \eqref{eq:legacy-4-1} show that every incident
itinerary map is its restriction after analytic changes of section.  Since
only finitely many labels are incident, a sequence of distinct nearby cycles
approaching the divider has a subsequence in one fixed label.  Isolation in
the physical Poincar\'e germ and isolation in the union of the incident word
germs are therefore equivalent.

It remains to compare multiplicities.  Fix a regular transverse section with
analytic coordinate $u$ and transport it to $\Sigma$ along the cycle.  This
gives an analytic diffeomorphism $h$, with $h'(u_0)\ne0$.  If
\(\widetilde P=h^{-1}\circ P_\omega\circ h\), then
\begin{equation}
\begin{aligned}
 P_\omega(h(u))-h(u)
  &=h(\widetilde P(u))-h(u)\\
  &=(\widetilde P(u)-u)
    \int_0^1h'\!\left(u+t(\widetilde P(u)-u)\right)\,dt .
\end{aligned}
\label{eq:multiplicity-conjugacy}
\end{equation}
The integral is a nonzero analytic unit at $u_0$.  Thus the order of the
Poincar\'e displacement---the multiplicity of the cycle---equals the order of
the zero of $D_\omega$.  The same calculation proves the converse and applies
to every later analytic section gauge.  When the source, middle, or root
argument makes an additional monotone phase change, its displayed positive
Jacobian and nonvanishing factors give the identical conclusion.

Finally, overlap charts describe the same physical orbit by
\eqref{eq:legacy-4-1}.  At a resolved boundary choose the unique minimal face
and the fixed half-open priority; a collapsed interval has no point and a
terminal face cannot carry a closed orbit.  These conventions assign the
physical zero once without creating a proper-subarc return.
\end{proof}

\section{Roadmap of the parameter regimes and scales}
\label{sec:regime-table}

This section is a navigation device, not the proof of exhaustion.  It names
the finite analytic regimes that will be proved later and records in advance
which theorem owns every equality face.  The formal disjoint assignment is
established only in Proposition~\ref{prop:regime-assignment}, after the local
estimates and the two-central classification have been proved.

The regimes are distinguished by quantities intrinsic to the physical flow:
which singular sectors are traversed, whether the relevant connections are
present, and which normalized scale remains positive.  Table~\ref{tab:regimes}
is therefore a map of the phase--parameter carrier, not a chronological list
of proof tasks.

The recognition data in Table~\ref{tab:regimes} are section normals,
eigenvalue margins, actual first-hit outcomes, and complete pp/hh incidence.
The relevant transition maps are therefore defined before an analytic
finiteness theorem is applied.

We use the standard saddle-node sector letters throughout.  A lower-case
\(h\) or \(p\) denotes, respectively, a local hyperbolic (strong) or
parabolic (central) separatrix sector.  Thus an hh connection is an actual
strong-separatrix connection between the two selected saddle-nodes, whereas a
pp strip is a nonempty interval of complete orbits with those saddle-nodes as
its alpha- and omega-limits.  PP and BP denote the two physical boundary
graphics admitted by the DIR lips theorem: the principal-endpoint boundary
and the center-side boundary, respectively.  ``Attractivity'' is the sign of
the nonzero transverse eigenvalue after the one common local time orientation.
On the intermediate scale we use two overlapping coordinate cells: a
buffered finite-phase cell and a hyperbolic-corner cell.  The technical
abbreviations QBF and QHH, retained in a few formulas and appendix labels,
refer only to these cells; they are defined in
Section~\ref{sec:part-iii-middle}.

The five symbols that organize the coalescing analysis have the following
fixed meanings.  This dictionary is used unchanged in all later theorems.
\begin{table}[htbp]
\centering
\caption{Scale variables and their boundary roles.}
\label{tab:scale-dictionary}
\small
\begin{tabularx}{\textwidth}{@{}l
  >{\raggedright\arraybackslash}X
  >{\raggedright\arraybackslash}X@{}}
\toprule
Symbol&Geometric meaning&Boundary assignment\\
\midrule
\(t\)&defining function of the source-coalescing tube&
\(t=0\) is source; \(0<t<t_{\rm str}\) is the middle/root comparison;
\(t=t_{\rm str}\) belongs to strict\\
\(\varrho_w\)&weighted distance of
\((B/t^2,a/t)\) from the root corner&\(\varrho_w=0\), with \(t>0\), is
exact mixed; \(\varrho_w=\varrho_\#\) belongs to middle\\
\(\kappa\)&radial coordinate of the weight-\((2,1)\) root blow-up&
\(0<\kappa<\kappa_\#(\text{angle})\) is root; \(\kappa=0\) is mixed\\
\(\gamma\)&the unscaled remaining coefficient \(c\) in middle/root charts&
\(|\gamma|\le\gamma_0\), with no hidden restriction on \(\gamma/t\)\\
QBF/QHH&overlapping finite-phase and hyperbolic-corner cells&
coordinate cells inside the middle theorem, not additional regimes\\
\bottomrule
\end{tabularx}
\end{table}

The strict/coalescing cutoff is fixed before any theorem neighborhood is
chosen.  Select nested finite tubular covers of the source-coalescing face.
In every signed chart of the inner cover let $t\ge0$ be its defining
function, and choose one regular value $0<t_{\rm str}<t_0$ in the overlap.
The strict regime contains the exterior of the inner tube and the equality
$t=t_{\rm str}$; only $0<t<t_{\rm str}$ enters the middle/root
comparison.

The latter comparison is half-open at the source/root corner.  On the finite
signed cover of the selected upper $D$-double root write
$w=-q$, $q=\sigma t$, with $\sigma\in\{\pm1\}$ away from $t=0$, and
set
\begin{equation}
 b_m=\frac{B}{t^2},\qquad A_m=\frac{a}{t},\qquad
 \varrho_w=(b_m^2+A_m^4)^{1/4}.
\label{eq:legacy-5-2a}
\end{equation}
Under the weight-$(2,1)$ root blow-up
$b_m=\kappa^2b$, $A_m=\sigma\kappa A$,
\begin{equation}
 \varrho_w=\kappa(b^2+A^4)^{1/4}.
\label{eq:legacy-5-2b}
\end{equation}
The angular factor is bounded above and away from zero.  Choose a regular
value $\varrho_\#$ in the doubled middle/root overlap and a fixed positive
root-chart bound $\kappa_0$.  After terminal first-hit outcomes and the
no-pp alternative have been removed, the complete-lips carrier is assigned
by the single half-open rule
\begin{equation}
\begin{array}{rcl}
 t=0&\longrightarrow&\text{matched source},\\
 t>0,\ \kappa=0&\longrightarrow&\text{exact mixed},\\
 t\ge t_{\rm str}\ \text{with positive lips margins}
   &\longrightarrow&\text{strict},\\
 0<t<t_{\rm str},\ 0<\varrho_w<\varrho_\#
   &\longrightarrow&\text{root merger},\\
 0<t<t_{\rm str},\ \varrho_w\ge\varrho_\#
   &\longrightarrow&\text{middle}.
\end{array}
\label{eq:legacy-5-2}
\end{equation}
Equality at $t=t_{\rm str}$ therefore belongs to strict, whereas equality at
$\varrho_w=\varrho_\#$ belongs to middle.  In a fixed angular root chart the
root-assigned interval is $0<\kappa<\kappa_\#(\mathrm{angle})$, although the
positive-root theorem remains valid on the larger interval
$0<\kappa\le\kappa_0$, with
$\kappa_\#(\mathrm{angle})\le\kappa_0$.  Thus theorem validity and regime
assignment are not being identified.  These definitions make every scale in
Table~\ref{tab:regimes} available before the decomposition is stated.

\begin{table}[p]
\centering
\caption{Zero estimates assigned to the retained physical regimes.}
\label{tab:regimes}
\begingroup
\hbadness=10000
\small
\begin{tabularx}{\textwidth}{@{}
 >{\hsize=.75\hsize\raggedright\arraybackslash}X
 >{\hsize=1.55\hsize\raggedright\arraybackslash}X
 >{\hsize=.95\hsize\raggedright\arraybackslash}X@{}}
\toprule
Regime & Physical recognition data & Applicable zero estimate\\
\midrule
Matched noncompact source & Complete center-compatible source itinerary after
the action refinement, including the face $t=0$ &
Theorem~\ref{thm:part-ii-source-zero}\\
Compact regular & Bounded jointly analytic transverse itinerary &
Theorem~\ref{thm:compact-analytic} (Weierstrass preparation)\\
Separated hyperbolic & Positive eigenvalue, connector, section, and itinerary
margins & Theorem~\ref{thm:part-iii-hyperbolic-zero} (Mourtada QRH)\\
One central block & Exactly one retained internal saddle-node and hyperbolic
complement & Theorem~\ref{thm:part-iii-one-central-zero}\\
Two central blocks, no pp & Both possible internal central blocks,
same attractivity, and no complete pp connection &
Theorem~\ref{thm:part-iii-two-central-no-pp}\\
Positive-margin lips & Actual hh connection, complete pp strip, PP/BP boundary,
positive margins, and the strict assignment range in \eqref{eq:legacy-5-2} &
Theorem~\ref{thm:part-iii-strict-zero}\\
Intermediate coalescing scale & $0<t<t_{\rm str}$, the
finite-phase/hyperbolic-corner cover, and $\varrho_w\ge\varrho_\#$ in the
weighted overlap & Theorem~\ref{thm:part-iii-middle-zero}\\
Positive root merger & $0<t<t_{\rm str}$, $0<\varrho_w<\varrho_\#$, represented by
finite relative-interior root charts
$0<\kappa<\kappa_{\#}(\mathrm{angle})$ &
Theorem~\ref{thm:part-iii-root-zero}\\
Exact mixed face & $t>0,\ \kappa=0$, equivalently $B=a=0$ off the source face
& Theorem~\ref{thm:part-ii-mixed-zero}\\
No full-return itinerary & Passive sector, collar exit, node core, wrong
orientation, or collapsed interval & No displacement equation;
Proposition~\ref{prop:stopped-first-hit} and
Theorem~\ref{thm:finite-words}\\
\bottomrule
\end{tabularx}
\endgroup
\end{table}

Here ``parameter-dominated'' names a resolved coordinate regime, not an
additional analytic case.  At the first failed effective source
threshold, the finite parameter-dominated gate classification is evaluated on
the same physical lift.  A transverse bounded itinerary goes to the compact row;
separated nonzero eigenvalues go to the hyperbolic row; one or two central
gates go to the corresponding central/two-central routing; a persistent
\(B=a=0\) endpoint goes to the mixed row; and a sink, wrong orientation,
stable-center side, or exit admits no full-return itinerary.  These are
exactly rows already present in the table.
The table becomes exhaustive through one routing result that is not itself an
additional analytic case.  Theorem~\ref{thm:part-iii-two-central-exhaustion}
sends every retained itinerary
carrying both possible central blocks to the two-central no-pp, strict lips,
middle, positive-root, exact-mixed, or source row.  Its first-hit trichotomy
and the nonaffineness argument leave no residual affine regime.  This routing occurs
after terminal outcomes have been removed and before the analytic estimate is
chosen.

Equation~\eqref{eq:legacy-5-2} gives the assignment rule exactly, while
Figure~\ref{fig:parameter-handoff} makes visible the two different cutoff
equalities and the larger theorem-validity radius $\kappa_0$.  Neither the
root theorem nor the diagram is extrapolated to a zero-scale face.  A loss of
any other strict-lips margin is assigned by its first physical contact before
an analytic theorem is selected.

\begin{figure}[htbp]
\centering
\begin{tikzpicture}[
  x=1.35cm,y=1cm,
  every node/.style={font=\small,align=center},
  axis/.style={-{Latex[length=2mm]},thick},
  owner/.style={very thick},
  validity/.style={thin},
  guide/.style={gray!65,dashed},
  openpt/.style={circle,draw,fill=white,inner sep=1.6pt},
  closedpt/.style={circle,fill=black,inner sep=1.6pt}
]
\node[anchor=west,font=\small\bfseries] at (0,2.35)
  {(a) Source and strict ownership in the $t$-scale};
\draw[axis] (0,1.25) -- (7.35,1.25) node[right] {$t$};
\draw[guide] (0,0.48) -- (0,1.93);
\draw[guide] (3.90,0.48) -- (3.90,1.93);
\node[closedpt] at (0,1.25) {};
\node[anchor=north west] at (0,1.12) {$t=0$: source};
\node[anchor=north] at (3.90,1.12) {$t_{\rm str}$};

\draw[owner] (0.08,1.78) -- (3.82,1.78);
\node[openpt] at (0,1.78) {};
\node[openpt] at (3.90,1.78) {};
\node[above] at (1.95,1.78)
  {middle/root comparison: $0<t<t_{\rm str}$};

\draw[owner,-{Latex[length=1.8mm]}] (3.90,0.63) -- (7.05,0.63);
\node[closedpt] at (3.90,0.63) {};
\node[below] at (5.45,0.63)
  {positive-margin strict: $t\ge t_{\rm str}$};
\node[anchor=north] at (3.90,0.18)
  {cutoff belongs to strict};

\node[anchor=west,font=\small\bfseries] at (0,-0.70)
  {(b) Root/middle ownership for $0<t<t_{\rm str}$ in one angular chart};
\draw[axis] (0,-2.10) -- (7.35,-2.10) node[right] {$\kappa$};
\draw[guide] (0,-3.28) -- (0,-1.42);
\draw[guide] (3.75,-3.28) -- (3.75,-1.42);
\draw[guide] (6.10,-3.28) -- (6.10,-1.78);
\node[closedpt] at (0,-2.10) {};
\node[anchor=north west] at (0,-2.23) {$\kappa=0$: exact mixed};
\node[anchor=north] at (3.75,-2.23) {$\kappa_{\#}(\mathrm{angle})$};
\node[anchor=north] at (6.10,-2.23) {$\kappa_0$};

\draw[owner] (0.08,-1.57) -- (3.67,-1.57);
\node[openpt] at (0,-1.57) {};
\node[openpt] at (3.75,-1.57) {};
\node[above] at (1.88,-1.57)
  {root owner: $0<\kappa<\kappa_{\#}(\mathrm{angle})$};

\draw[owner,-{Latex[length=1.8mm]}] (3.75,-2.88) -- (7.05,-2.88);
\node[closedpt] at (3.75,-2.88) {};
\node[below] at (5.40,-2.88)
  {middle owner: $\kappa\ge\kappa_{\#}(\mathrm{angle})$};

\draw[validity] (0.08,-3.72) -- (6.10,-3.72);
\node[openpt] at (0,-3.72) {};
\node[closedpt] at (6.10,-3.72) {};
\node[below] at (3.05,-3.72)
  {positive-root theorem valid on $0<\kappa\le\kappa_0$};
\end{tikzpicture}
\caption{The exact half-open ownership rules.  Separate horizontal bars make
the open and closed endpoints explicit.  Panel~(a) assigns $t=0$ to the
source face, the open comparison interval to the middle/root analysis, and
$t=t_{\rm str}$ to the strict regime.  Conditional on
$0<t<t_{\rm str}$, panel~(b) fixes one angular root chart:
$\kappa=0$ is the exact mixed face, the root regime owns
$0<\kappa<\kappa_{\#}(\mathrm{angle})$, and the cutoff equality belongs to
the middle regime.  Thus the corner $t=\kappa=0$ remains on the source face.
The lower thin bar records the larger theorem-validity
interval $0<\kappa\le\kappa_0$; it is not a regime boundary.  Lengths and
spacing are schematic.}
\label{fig:parameter-handoff}
\end{figure}

The exact cycle--zero reduction therefore precedes every local estimate, and
the two-central theorem makes the resulting list exhaustive before
specialization and compactness are invoked.  The source and mixed faces are
independent theorems, not limiting values of the punctured root theorem;
bounded analytic itineraries are handled directly by
Theorem~\ref{thm:compact-analytic}.  These separations are what make
\eqref{eq:legacy-5-2} cover every diagonal approach.

\section{Preview of the two-central routing}
\label{sec:two-central-exhaustion}

Only itineraries carrying both possible internal central blocks require a
geometric routing before an analytic zero theorem can be selected.  This
section previews that routing so the regime table can be read; the physical
proof and formal statement are Theorem~\ref{thm:part-iii-two-central-exhaustion}
in Section~\ref{sec:part-iii-central}.

A retained two-central itinerary has no third internal central gate.  If no
complete pp connection exists, it belongs to the two-central no-pp theorem.
Otherwise the signed gate count and center intersection identify two actual
opposite saddle-nodes.  The physical strong landing equation must certify the
hh connection, and the stopped first-hit relation must either supply a
complete pp strip with PP/BP boundary or stop at a named first-contact
outcome.  On a complete strip the first-hit trichotomy leaves the hh chain and
the endpoint--upper chain as its two boundary graphics.

Figure~\ref{fig:two-central-incidence} distinguishes the missing-connection
case from a complete orbit family.  The no-pp panel asserts the absence of
even a single pp connection and therefore draws no connecting curve.  In the
opposite-attractivity panel, every light curve in the shaded family is a
complete physical pp orbit from $S_h$ to $S_v$; the lower PP/BP boundary has
the same orientation, whereas the upper hh boundary is the complementary
return leg from $S_v$ to $S_h$.
\begin{figure}[H]
\centering
\resizebox{.98\textwidth}{!}{%
\begin{tikzpicture}[
  every node/.style={font=\small,align=center},
  sn/.style={circle,fill=black,inner sep=2.1pt},
  flow/.style={-{Latex[length=2mm]},thick},
  ppflow/.style={gray!65,thin,
    postaction={decorate},
    decoration={markings,
      mark=at position 0.62 with {\arrow{Latex[length=1.8mm]}}}}
]
\begin{scope}[xshift=-4.2cm]
\node[font=\small\bfseries] at (0,2.05)
  {(a) No pp connection};
\node[sn,label=above left:$S_h$] (a1) at (-2.25,0) {};
\node[sn,label=above right:$S_v$] (a2) at (2.25,0) {};
\draw[flow] (-3.35,0) -- node[below]{entry} (a1);
\draw[flow] (a2) -- node[below]{exit} (3.35,0);
\node[font=\Large] at (0,-0.20) {$\nexists$};
\node[text width=43mm] at (0,-0.82)
  {parabolic--parabolic connection};
\end{scope}

\begin{scope}[xshift=4.2cm]
\node[font=\small\bfseries] at (0,2.05)
  {(b) Complete physical lips configuration};
\fill[gray!12]
  (-2.25,0) .. controls (-1.10,1.20) and (1.10,1.20) .. (2.25,0)
  .. controls (1.15,-1.20) and (-1.15,-1.20) .. cycle;
\node[sn,label=above left:$S_h$] (b1) at (-2.25,0) {};
\node[sn,label=above right:$S_v$] (b2) at (2.25,0) {};
\draw[flow] (-3.35,0) -- node[below]{entry} (b1);
\draw[flow] (b2) -- node[below]{exit} (3.35,0);
\draw[flow] (b2) .. controls (1.10,1.20) and (-1.10,1.20) ..
  node[above]{actual hh return orbit} (b1);
\draw[ppflow] (b1) .. controls (-1.10,0.63) and (1.10,0.63) .. (b2);
\draw[ppflow] (b1) .. controls (-1.10,0.05) and (1.10,0.05) .. (b2);
\draw[ppflow] (b1) .. controls (-1.10,-0.53) and (1.10,-0.53) .. (b2);
\draw[flow] (b1) .. controls (-1.15,-1.20) and (1.15,-1.20) ..
  node[below]{certified PP or BP boundary graphic} (b2);
\node[fill=white,inner sep=2pt] at (0,0)
  {complete pp orbit family};
\end{scope}
\end{tikzpicture}%
}
\caption{The two-central incidence after the physical first-hit refinement.
Panel~(a) records the same-attractivity alternative: no
parabolic--parabolic (pp) connection exists, so no fictitious trajectory is
drawn across the gap.  Panel~(b) records the opposite-attractivity lips
configuration.  Its shaded band is a nonempty interval of complete pp orbits
from $S_h$ to $S_v$, bounded below by the certified PP or BP graphic with the
same orientation.  The hyperbolic--hyperbolic (hh) boundary is the
complementary return orbit from $S_v$ to $S_h$.  Black points are the two
saddle-nodes, and arrows give physical flow orientation; the light-arrow
markers are placed inside the pp curves to keep their common endpoint
legible.  The topology, ordering, and orientation are asserted data;
distances and curvature are schematic.}
\label{fig:two-central-incidence}
\end{figure}

The potential residual class is a complete two-gate hh/pp itinerary whose pp
transition would be called affine before its second boundary was identified.
The proof later follows every point of the endpoint interval through the
upper cooperative squeeze, shows that the non-hh endpoint is the complete PP
chain, and invokes the certified boundary theorem to rule out affine
transition.  Thus the substantive point is physical completeness of the
second boundary, not a formal normal-form calculation.
Corollary~\ref{cor:two-central-nonaffine-boundary} records the resulting empty
residual class.  Once this routing is available, the half-open scale dictionary above
gives the disjoint assignment formalized at the end of the paper.

\section{The matched return equation near the noncompact source}
\label{sec:part-ii-target}

For a source-regime itinerary obtained in
Sections~\ref{sec:part-i-atlas}--\ref{sec:part-i-exact-once}, fixed points are zeros of
the exact nonlinear equation stated below.  Its derivation requires a common
physical domain, a justified center division, control of the moving first-hit
maps, and a two-step Rolle estimate; these ingredients occupy the following
sections.

Retain the parameter coordinates
\[
 a=\mu_4+B\mu_5,\qquad c=(1-2B)\mu_5,\qquad
 d=\mu_3,\qquad m=\mu_2
\]
and put
\begin{equation}
 \tau=a+c,\qquad \ell=d-c,\qquad t_c=B+m,\qquad
 \sigma_0=ct_c.
\label{eq:legacy-6-1}
\end{equation}
On the lower physical section
\(\Sigma_0=\{x=0,z=1+y=s\}\), set
\begin{equation}
 h=s-1-\log s,\qquad L=-\log s,\qquad
 \theta=h^{-1/2},\qquad k=\frac{\ell}{s}.
\label{eq:legacy-6-2}
\end{equation}
The variable \(k\), not \(\ell\) alone, records the forced lower
displacement.  It will remain an independent bounded variable on every
zero-carrying source cell.

Choose the physical strong sections
\[
 S_+=\{z=1,x>0\},\qquad S_-=\{z=1,x<0\}.
\]
The selected itinerary factors into actual first-hit maps
\begin{equation}
 P^\ell=J_-^\ell\circ M^\ell\circ J_+^\ell,
\label{eq:legacy-6-3}
\end{equation}
where the two \(J\)'s contain the complete lower tails and horizontal
endpoint passages, while \(M^\ell:S_+\to S_-\) is precisely the upper
passage.  No auxiliary endpoint clock is included in \(M^\ell\).

Compare each forced lower map with its unforced map on the same physical
sections:
\begin{equation}
 s_+=(J_+^0)^{-1}J_+^\ell(s),\qquad
 s_-=J_-^0(J_-^\ell)^{-1}(s).
\label{eq:legacy-6-4}
\end{equation}
On the open survivor interval these levels have the exact secant form
\begin{equation}
 C_+=\frac{s_+}{ s}=1+k\Gamma_+,\qquad
 C_-=\frac{s_-}{ s}=1-k\Gamma_-,
\label{eq:legacy-6-5}
\end{equation}
with \(\Gamma_\pm>0\).  The faces \(C_+=0\) and \(C_-=0\) are the
two named lower first-hit gates.  They are not values at which a return map
is continued.  Define
\begin{equation}
 \lambda_+=\log C_+-s(C_+-1),\qquad
 \lambda_-=-\log C_-+s(C_--1).
\label{eq:legacy-6-6}
\end{equation}
The elementary identity
\begin{equation}
 h(sC)-h(s)=s(C-1)-\log C
\label{eq:legacy-6-7}
\end{equation}
shows that \(1-\lambda_+/h\) and \(1+\lambda_-/h\) are the incoming
and outgoing normalized actions.

Use the unforced lower maps themselves to define the two action coordinates
\begin{equation}
 e_+(u)=\frac{h((J_+^0)^{-1}u)}{ h(s)},\qquad
 e_-(v)=\frac{h(J_-^0v)}{ h(s)}
\label{eq:legacy-6-8}
\end{equation}
and the physical middle action map
\begin{equation}
 T_{s,\beta,k}=e_-\circ M^\ell\circ e_+^{-1},\qquad
 \beta=(Bh,mh,a\sqrt h,c\sqrt h).
\label{eq:legacy-6-9}
\end{equation}
Then \(P^\ell(s)=s\) is exactly equivalent, with multiplicity, to
\begin{equation}
 T_{s,\beta,k}(1-\lambda_+/h)=1+\lambda_-/h.
\label{eq:legacy-6-10}
\end{equation}

On the unforced face \(k=0\), put \(P_0=P^0\) and
\(G_0=h(P_0)-h(s)\).  The two center components proved below give the exact
reduced division
\begin{equation}
 G_0=\tau L^{3/2}A+\sigma_0L^{5/2}C,
\label{eq:legacy-6-11}
\end{equation}
where \(A,C>0\) and, for \(j=0,1,2\),
\begin{equation}
 L^j\left|\mathfrak D^j
 \left(\frac{A}{4\sqrt2/3}-1\right)\right|
 +L^j\left|\mathfrak D^j
 \left(\frac{C}{8\sqrt2/15}-1\right)\right|
 \le C_0\epsilon+\eta(\epsilon,s).
\label{eq:legacy-6-12}
\end{equation}
Here \(\mathfrak D=d/dL\) keeps the five original parameters fixed.

For a scalar \(\lambda\), define the action secant and the forced middle
remainder by
\begin{equation}
 \mathcal T_{s,\beta}(\lambda)
 =h\{T_{s,\beta,0}(1)
      -T_{s,\beta,0}(1-\lambda/h)\},
\label{eq:legacy-6-13}
\end{equation}
\begin{equation}
 \mathcal R
 =h\{T_{s,\beta,0}-T_{s,\beta,k}\}
        (1-\lambda_+/h).
\label{eq:legacy-6-14}
\end{equation}
Subtracting \eqref{eq:legacy-6-10} from its unforced counterpart yields the decisive
identity.

\begin{theorem}[Matched source preparation]
\label{thm:part-ii-matched-preparation}
On every unforced-center-complete zero-carrying source itinerary, after the finite
first-hit and action refinement, the physical fixed-point equation is
exactly
\begin{equation}
 \boxed{\;
 \tau L^{3/2}A+\sigma_0L^{5/2}C
 =\mathcal T_{s,\beta}(\lambda_+)+\lambda_-+\mathcal R .
 \;}
\label{eq:part-ii-matched}
\end{equation}
All terms are defined on one common physical product tube.  The two lower
factors are exact, the only middle error is \(\mathcal R\), and the
equivalence preserves zero multiplicity.  The equation is not asserted on
either gate face \(C_\pm=0\).
\end{theorem}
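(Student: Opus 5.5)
The identity \eqref{eq:part-ii-matched} is an algebraic rearrangement of the exact fixed-point equation \eqref{eq:legacy-6-10}. The analytic content lies in showing that every object involved is defined on one common physical domain. So the domain comes first, then the algebra, then multiplicity.

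\emph{Step 1: the common domain.} I would start from a retained source itinerary, given by Theorem~\ref{thm:finite-words} and represented through Proposition~\ref{prop:exact-once}. On this itinerary the factorization \eqref{eq:legacy-6-3} into actual first hits through $S_\pm$ is available from Proposition~\ref{prop:stopped-first-hit}. The unforced maps $J_\pm^0$ are order-preserving first-hit diffeomorphisms on the same physical sections. Hence the comparison levels $s_\pm$ in \eqref{eq:legacy-6-4} are well defined wherever the forced maps are, namely on the open survivor interval strictly between the gate faces $C_\pm=0$.

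The secant form \eqref{eq:legacy-6-5} with $\Gamma_\pm>0$ should come from the difference quotient in $\ell$ of a map monotone in $\ell$. The sign is forced by \eqref{eq:legacy-3-5}: the lower line is crossed at rate $\mu_3-c=\ell$. I would write $J_+^\ell-J_+^0$ as $\ell\int_0^1\partial_\ell J_+^{t\ell}\,dt$ and divide by $s$, which is legitimate since $s>0$ on $\Sigma_0$.

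I would then refine the itinerary by the finitely many action dividers, as in Theorem~\ref{thm:finite-words}. This ensures that $1-\lambda_+/h$ lies in the domain of $e_+$ and of $e_+^{-1}$, and that the unforced point $1=e_+(J_+^0 s)$ lies there as well. Together these give a product tube on which $T_{s,\beta,0}$ and $T_{s,\beta,k}$ are both evaluated.

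\emph{Step 2: the algebra.} By \eqref{eq:legacy-6-7},
\[
h(s_+)=h(s)-\lambda_+,\qquad h(s_-)=h(s)+\lambda_-.
\]
Applying $e_-$ to $P^\ell(s)=s$ and unwinding \eqref{eq:legacy-6-8} gives \eqref{eq:legacy-6-10}. On the unforced face, $T_{s,\beta,0}(1)=h(P_0(s))/h(s)$, so $h\,T_{s,\beta,0}(1)-h=G_0$.

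Multiplying \eqref{eq:legacy-6-10} by $h$ and adding and subtracting $hT_{s,\beta,0}(1)$ and $hT_{s,\beta,0}(1-\lambda_+/h)$ produces
\[
G_0=\mathcal T_{s,\beta}(\lambda_+)+\lambda_-+\mathcal R,
\]
using the definitions \eqref{eq:legacy-6-13}--\eqref{eq:legacy-6-14}. Substituting the center division \eqref{eq:legacy-6-11} for $G_0$ then yields the boxed equation. The two lower factors enter only through the exact $\lambda_\pm$, so the only approximation left in the middle is $\mathcal R$.

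\emph{Step 3: multiplicity.} The passage from $P^\ell(s)-s$ to the difference of the two sides of \eqref{eq:part-ii-matched} is composition with $e_-$ and multiplication by $h>0$. The map $e_-$ is a diffeomorphism with positive derivative, because $h$ is strictly monotone on $s<1$ and $J_-^0$ preserves order. The argument of \eqref{eq:multiplicity-conjugacy} then shows that zero orders are preserved. The subtraction of the unforced identity involves only functions of $s$ and does not change the zero set.

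\emph{Main obstacle.} I expect the difficulty to be the center division \eqref{eq:legacy-6-11}, not the identity itself. It requires the explicit center ideal generated by $\tau$ and $\sigma_0$, Hadamard paths lying inside the common tube, and positivity of $A$ and $C$ uniformly as $s\to0$, where $L\to\infty$. My first attempt would be to compute the Melnikov-type asymptotics $4\sqrt2/3$ and $8\sqrt2/15$ by integrating the divergence along the high-energy ovals of $H$. I would then control the remainders in the $\mathfrak D$-derivatives through the six-jet bounds.
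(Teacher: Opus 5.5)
Your Steps~2 and~3 are exactly the paper's computation. By \eqref{eq:legacy-6-7}, $e_+(J_+^\ell(s))=1-\lambda_+/h$ and $e_-((J_-^\ell)^{-1}(s))=1+\lambda_-/h$, which gives \eqref{eq:legacy-6-10}. Subtracting this from $T_{s,\beta,0}(1)=1+G_0/h$, adding and subtracting $T_{s,\beta,0}(1-\lambda_+/h)$, and multiplying by $h$ gives the boxed identity. Multiplicity is preserved because the difference of its two sides is $h$ times $e_-(M^\ell J_+^\ell s)-e_-((J_-^\ell)^{-1}s)$. All that is used is that $e_-$ and $J_-^\ell$ have nonvanishing derivative; for $e_-$ this comes from $h'\neq0$ on $(0,1)$.

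\emph{The gap is in Step~1.} There you assert that a finite refinement by action dividers ``as in Theorem~\ref{thm:finite-words}'' puts $1-\lambda_+/h$ in the domain of $T_{s,\beta,0}$ and yields one common product tube. That theorem only sorts finitely many divider points on fixed cuts, so it gives no a priori control of the forced entry action. That action is genuinely unbounded on the open survivor interval. For $k<0$, $\lambda_+=\log C_+-s(C_+-1)\to-\infty$ as $C_+\downarrow0$, so $e_{\rm in}=1-\lambda_+/h\approx1+|\log C_+|/h$ leaves every fixed action interval. The same happens to $e_{\rm out}$ as $C_-\downarrow0$ for $k>0$. Meanwhile, the common tube of Proposition~\ref{prop:part-ii-common-forced-domain}, on which the unforced middle map is built, covers only actions in $E^+$.

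The missing idea is the a priori localization of Proposition~\ref{prop:part-ii-action-localization}. Work at an \emph{arbitrary} base action $e$ with the rebased tube, before any fixed-point equation or differentiated wedge is used. This gives $|T_{s,\beta,k}(e)/e-1|\le\eta_g$ on direct cells. Combined with the one-sided gate relations ($e_{\rm in}\le1\le e_{\rm out}$ for $k>0$, reversed for $k<0$), it forces \eqref{eq:legacy-6-68} and \eqref{eq:legacy-6-69} at every fixed point. Only then does the whole secant $[e_{\rm in},1]$ lie in the tube where both $T_{s,\beta,0}$ and $T_{s,\beta,k}$ are defined, which is what the clause ``all terms are defined on one common physical product tube'' requires. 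This localization, not the center division, is the step specific to this theorem. The paper imports the center division from Theorem~\ref{thm:part-ii-center-ideal} on the star-shaped domain of Proposition~\ref{prop:part-ii-common-center-domain}, and the constants from \eqref{eq:legacy-6-62}.

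\emph{A second, smaller point concerns $\Gamma_\pm>0$.} This does not follow from \eqref{eq:legacy-3-5}, which only says $\dot z=\ell$ on $z=0$. What you need is that the forced lower field is the unforced one plus $\ell(1-z)^2\partial_z$, that is, the monotonicity $\partial_k\mathcal F\ge\frac12$ of \eqref{eq:legacy-6-39}. It must be applied to the outgoing fixed-initial problem and to the reversed fixed-terminal problem, and never to an output of $M^\ell$; the reversal is what produces the minus sign in $C_-=1-k\Gamma_-$. The Hadamard integral must be taken for the composite $(J_+^0)^{-1}\circ J_+^{t\ell}$, not for $J_+^\ell-J_+^0$ alone. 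Its segment $t\in[0,1]$ stays in the domain because the survivor sets of Lemma~\ref{lem:part-ii-lower-scale} are intervals containing $k=0$.
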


Three inputs are still needed before the zero count can be made: a common
domain on which both center divisions are legitimate, differentiated control
of every moving hit and inverse, and a localization result placing every
possible zero in those domains.  We establish these inputs in that order and
then return to the exact equation in
Section~\ref{sec:part-ii-matched-proof}.  This order excludes both failed
shortcuts: division on an unproved outer return domain and replacement of
\eqref{eq:part-ii-matched} by a linear combination of source-energy
asymptotics.

\section{The center set, Bautin ideal, and common return domains}
\label{sec:part-ii-center}

We identify the two complete center components and prove the reduced division
used in \eqref{eq:legacy-6-11}.  The division is carried out for a physical stopped itinerary,
not for an unspecified return germ, on a three-contraction unforced domain
where both center identities hold throughout the section interval.

\begin{theorem}[Center set and reduced ideal]
\label{thm:part-ii-center-ideal}
Near the source the center set is the union
\begin{equation}
 \mathcal C_R=\{\tau=0,a=0,d=0\},\qquad
 \mathcal C_Q=\{\tau=0,t_c=B+m=0,d+a=0\},
\label{eq:legacy-6-17}
\end{equation}
and its reduced analytic ideal is
\begin{equation}
 \mathcal I_{\mathcal C}=(\tau,\ell,d(B+m)).
\label{eq:legacy-6-18}
\end{equation}
On the unforced face \(\ell=0\), let
\(\sigma_0=c(B+m)=(\tau-a)t_c\).  If a \(C^K\) displacement
\(F_0\), \(K\ge2\), is defined on a itinerary domain star-shaped under the
successive contractions \(\tau\mapsto0\), \(a\mapsto0\), and
\(t_c\mapsto0\), and vanishes on both complete center slices, then
\begin{equation}
 F_0=\tau F_{0,\tau}+\sigma_0F_{0,\sigma},
 \qquad F_{0,j}\in C^{K-2}.
\label{eq:legacy-6-19}
\end{equation}
No continuation in \(\ell\) and no division of a finite-smooth
saddle-node normalizer occurs.
\end{theorem}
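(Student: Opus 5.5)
The argument has three parts: integrability on each of the two center components, the radical property of the ideal, and a Hadamard division carried out along the three prescribed contractions.

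\emph{Step 1: integrability.} On each component of \eqref{eq:legacy-6-17} I exhibit a first integral or integrating factor.
\begin{itemize}
\item On $\mathcal C_R$ we have $a=d=0$ and $c=-a=0$. The system is then $\dot x=-y+Bx^2+my^2$, $\dot y=x+xy$. It is invariant under $(x,t)\mapsto(-x,-t)$, so it is a reversible center; the equator arcs and the line $y=-1$ are handled by the same symmetry.
\item On $\mathcal C_Q$ we have $m=-B$, $d=-a$ and $c=-a$. I expect the invariant line $1+y+\dots$ to persist together with a Darboux integrating factor of the form $(1+y)^{\alpha}$, or more generally a product of invariant lines and conics. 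I will verify this by the Lotka--Volterra/Darboux computation.
\end{itemize}
These are the classical quadratic center strata (reversible and Darboux), written in the coordinates $(\tau,a,d,t_c)$.

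Next, completeness near the source. The Bautin-type focus quantities of the origin in the coordinates \eqref{eq:legacy-6-1} start with $\tau$, since the trace is $a+c$. On $\tau=0$ the next quantity is proportional to $d\,a$-type combinations, and the one after that to $d\,t_c$. This shows that the zero set of the focus quantities near $0$ is exactly $\mathcal C_R\cup\mathcal C_Q$.

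\emph{Step 2: the reduced ideal.} I compute the intersection $(\tau,a,d)\cap(\tau,t_c,d+a)$. Modulo $\tau$, and with $\ell=d-c=d+a$, this is $(a,d)\cap(t_c,d+a)$. Both ideals are linear primes, so the intersection is radical. I check that it equals $(\ell,\,d\,t_c)$ modulo $\tau$ by an elementary computation: $d\,t_c$ and $\ell\,t_c$ lie in it, and $a\,\ell$ together with $a\,t_c$ reduce to these. This gives \eqref{eq:legacy-6-18}.

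\emph{Step 3: division.} This step uses only the vanishing hypothesis on the two slices, not analyticity. On the unforced face $\ell=0$ we have $d=-a+\tau$, and the two slices become $\{\tau=0,a=0\}$ and $\{\tau=0,t_c=0\}$.
\begin{enumerate}
\item By star-shapedness under $\tau\mapsto0$, Hadamard's lemma gives $F_0=\tau F_{0,\tau}+F_0|_{\tau=0}$ with one derivative lost.
\item The restriction $g=F_0|_{\tau=0}$ vanishes on $\{a=0\}$ and on $\{t_c=0\}$. The contraction $a\mapsto0$ gives $g=a\,g_1$.
\item Since $g=0$ on $t_c=0$, we get $a\,g_1=0$ there, so $g_1=0$ on $t_c=0$ for $a\ne0$, and by continuity also at $a=0$. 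The contraction $t_c\mapsto0$ then gives $g_1=t_c\,g_2$.
\end{enumerate}
This yields $g=a\,t_c\,g_2$, with a total loss of two derivatives, so $g_2\in C^{K-2}$ and $F_{0,\tau}\in C^{K-1}$. Since $\sigma_0=(\tau-a)t_c$, I rewrite $a\,t_c\,g_2=-\sigma_0 g_2+\tau t_c g_2$ and absorb the second term into the $\tau$-coefficient. This gives \eqref{eq:legacy-6-19} with $F_{0,\sigma}=-g_2$.

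No continuation in $\ell$ is needed, because the division happens entirely on $\ell=0$. No normalizer is divided, because the division acts on the physical displacement on its stopped domain.

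The main obstacle is Step 1 for $\mathcal C_Q$, namely producing the explicit integrating factor and showing the center is complete up to the graphic. The completeness-of-the-ideal part also requires computing the first three focus quantities in the full five-parameter family.
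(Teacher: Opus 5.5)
Your Steps 2 and 3 follow the paper's argument. The reduced ideal comes from intersecting the two linear components in the coordinates $(\tau,\ell,a,t_c,B)$ and using $at_c=\tau t_c+\ell t_c-dt_c$. The division is Hadamard in $\tau$, then in $a$ and $t_c$, followed by $at_c=\tau t_c-\sigma_0$. In Step 2 the relevant fact is that $\ell$ itself lies in both component ideals; modulo $\ell$ the intersection is $(a)\cap(t_c)=(at_c)$.

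Your reversibility argument for $\mathcal C_R$ is also the paper's, and your guess for $\mathcal C_Q$ has the right shape. The paper exhibits the inverse integrating factor $\mathcal V_Q=(1+y)K_Q/(a^2-1)$, with $K_Q$ an explicit conic, satisfying $X(\mathcal V_Q)=(\operatorname{div}X)\mathcal V_Q$ and $\mathcal V_Q(0,0)=1$. You still have to write it down. Completeness of the annulus up to the graphic is not part of this theorem, since vanishing on the complete slices is a hypothesis of the division clause.

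The genuine gap is the necessity half of Step 1, namely that no third center component exists. Your description of the focal values is wrong, and the conclusion does not follow from it. On $\tau=0$ (so $c=-a$), put $e=d+a$. The first Lyapunov quantity, multiplied by a positive unit, is $e(2a^2+2t_c-2B-1)-2ae^2+a(2B-1)t_c$. Its linear part is $-e=-(a+d)$, which is $-\ell$ on $\tau=0$, not a product of $d\,a$ type. If you had $L_1\propto da$ and $L_2\propto dt_c$, the common zero set would be $\{d=0\}\cup\{a=t_c=0\}$, not $\mathcal C_R\cup\mathcal C_Q$. Indeed, $d=0$, $a\ne0$ gives normalized first quantity $a(2B^2+2Bm-B+m-1)\approx -a\ne0$, a weak focus.

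The correct mechanism has four steps:
\begin{enumerate}
\item Since $\partial_eL_1(0)\ne0$, the set $\{L_1=0\}$ is a graph $e=\psi(a,t_c,B)$.
\item $\psi$ vanishes on $a=0$ and on $t_c=0$, so $\psi=at_cV$.
\item The degree-six quantity restricted to this graph vanishes identically on both slices $a=0$ and $t_c=0$, because there the field is your reversible or Darboux center. Hence it equals $at_cU$.
\item You must compute $U(0)=1/48\ne0$ from the degree-six recurrence.
\end{enumerate}
Only then do $L_1=L_2=0$ force $a=e=0$ or $t_c=e=0$. Even if $d\,a$ was a typo for $d+a$, exact proportionality of $L_2$ to $dt_c$ on $\{L_1=0\}$ does not follow from a low-order jet. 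It needs this slice argument together with the nonvanishing unit coefficient.
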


\begin{proof}
A center first has zero trace, hence \(\tau=0\) and \(c=-a\).  Put
\[
 u=x-ay,\qquad v=\sqrt{1-a^2}\,y,\qquad
 \omega=\sqrt{1-a^2}.
\]
After a positive time division the system is
\begin{equation}
 u'=-v+Au^2+Cuv+Dv^2,\qquad
 v'=u+Euv+Fv^2,
\label{eq:legacy-6-20}
\end{equation}
where
\begin{equation}
 A=\frac{B}{\omega},\quad
 C=\frac{a(2B-1)}{\omega^2},\quad
 D=\frac{a^2(B-1)+m-ad}{\omega^3},\quad
 E=\frac{1}{\omega},\quad F=\frac{a+d}{\omega^2}.
\label{eq:legacy-6-21}
\end{equation}
The focal calculation is finite and can be reproduced without a black-box
center theorem.  Write
\(V_n=\sum_{j=0}^nc_{n,j}u^{n-j}v^j\) and
\(G_n=(Q_1\partial_u+Q_2\partial_v)V_{n-1}
     =\sum g_{n,j}u^{n-j}v^j\).  The coefficients obey
\begin{equation}
\begin{aligned}
g_{n,j}={}&[A(n-1-j)+Ej]c_{n-1,j}\\
&+[C(n-j)+F(j-1)]c_{n-1,j-1}
  +D(n-j+1)c_{n-1,j-2},
\end{aligned}
\label{eq:legacy-6-22}
\end{equation}
and the homological equation is
\begin{equation}
\begin{aligned}
0={}&(j+1)c_{n,j+1}-(n-j+1)c_{n,j-1}+g_{n,j}\\
&-\mathbf1_{\{n,j\ \mathrm{even}\}}
  \binom{n/2}{j/2}L_{n/2-1}.
\end{aligned}
\label{eq:legacy-6-23}
\end{equation}
For odd \(n\) this system is invertible; for even \(n\), the gauge
\(c_{n,0}=0\) determines the coefficients and the single obstruction.
At degree four,
\begin{equation}
 8L_1=AC+CD+2DF-EF.
\label{eq:legacy-6-24}
\end{equation}
After multiplication by the positive unit \(\omega^5\), this becomes
\begin{equation}
\begin{aligned}
\ell_1={}&2B^2a+2Bam-Ba-2a^2d+am-2ad^2-a+2md-d .
\end{aligned}
\label{eq:legacy-6-25}
\end{equation}
Since \(\partial_d\ell_1(0)=-1\), its zero set is one analytic graph.
Put \(e=d+a\).  On that graph the degree-six recurrence
\eqref{eq:legacy-6-22}--\eqref{eq:legacy-6-23}, equivalently angular averaging of \(G_6\), gives
\begin{equation}
 L_2=a(B+m)U(a,B,m),\qquad U(0)=\frac{1}{48}.
\label{eq:legacy-6-26}
\end{equation}
For completeness, exact divisibility in \eqref{eq:legacy-6-26} is not inferred from its
quadratic jet.  The first obstruction is the polynomial equation
\begin{equation}
 e(2a^2+2t_c-2B-1)-2ae^2+a(2B-1)t_c=0.
\label{eq:legacy-6-27}
\end{equation}
Its unique root \(e=\psi(a,t_c,B)\) vanishes identically on \(a=0\) and
on \(t_c=0\), hence \(\psi=at_cV\) by two Hadamard integrals.  Substitution
in the degree-six polynomial obtained from \eqref{eq:legacy-6-22}--\eqref{eq:legacy-6-23} vanishes on the
same two slices, so a second two-variable integral gives \eqref{eq:legacy-6-26}; the
degree-two term is \(a t_c/48\), proving that \(U\) is a unit.  The
finite rational recurrence is also checked electronically, but the proof is
the displayed recurrence together with the slice cancellations and the unit
coefficient.

Thus \(L_1=L_2=0\) gives either \(a=e=0\) or \(t_c=e=0\).  On the first
branch the transformed field is reversible.  On the second branch
\(m=-B,d=c=-a\), and
\begin{equation}
\begin{aligned}
K_Q={}&B^2x^2-B^2y^2+Baxy+2Bax-Bx^2-2By\\
&+a^2y+a^2-ax-1,
\qquad
\mathcal V_Q=\frac{(1+y)K_Q}{ a^2-1}
\end{aligned}
\label{eq:legacy-6-28}
\end{equation}
satisfies
\begin{equation}
 X(\mathcal V_Q)=(\operatorname{div}X)\mathcal V_Q .
\label{eq:legacy-6-29}
\end{equation}
Since \(\mathcal V_Q(0,0)=1\), division by it gives a closed analytic
one-form with definite quadratic part.  Both branches are therefore centers,
and the preceding necessity excludes a third branch.

In the coordinates \((\tau,\ell,a,t_c,B)\), the union \eqref{eq:legacy-6-17} is
\[
 \{\tau=\ell=a=0\}\cup\{\tau=\ell=t_c=0\},
\]
whose reduced ideal is \((\tau,\ell,a t_c)\).  Since
\(d=\ell+\tau-a\),
\[
 at_c=\tau t_c+\ell t_c-dt_c,
\]
which proves \eqref{eq:legacy-6-18}.  On \(\ell=0\), Hadamard division first in
\(\tau\) and then in \((a,t_c)\) gives
\(F_0=\tau A+a t_c C\).  Because \(c=\tau-a\),
\(at_c=\tau t_c-ct_c\), yielding \eqref{eq:legacy-6-19}.  The three integral segments are
exactly the contractions stated in the theorem.
\end{proof}

Appendix~\ref{app:center-bautin} records the finite focal recurrence, one
representative coefficient calculation, the two global center-domain
identities, and the exact boundary between the symbolic checks and the human
division argument.  The raw symbolic expansion is not part of the main
reading path.

The center identities must hold on the whole physical itinerary interval.
Their outer barriers can differ.  On \(\mathcal C_R\), with \(z=1+y\),
\begin{equation}
 H_R(x,z)=\frac{1}{2}z^{-2B}x^2+V_R(z),\qquad
 V_R'(z)=z^{-2B-1}\{(z-1)-m(z-1)^2\}.
\label{eq:legacy-6-30}
\end{equation}
For \(m>0\) the only additional finite critical point is the saddle
\(S_R=(0,1+1/m)\).  When \(B<0\), comparison with the boundary
\(z=0\) is exact:
\begin{equation}
 V_R(0)-V_R(1+1/m)
 =\frac{2(B+m)}{(-2B)(1-2B)(2-2B)}
 \left(\frac{1+m}{ m}\right)^{1-2B}.
\label{eq:legacy-6-31}
\end{equation}
On \(\mathcal C_Q\), the component of
\(\{\mathcal V_Q\ne0\}\cap\{y>-1\}\) containing the origin carries
the analytic first integral.  Its only extra finite singularity is
\begin{equation}
 S_Q=(-a/B,-1/B)\quad(B<0),
\label{eq:legacy-6-32}
\end{equation}
a saddle on \(K_Q=0\).  Hence a center return is the identity exactly on
the connected section interval inside its maximal period annulus; its
endpoint is one of
\begin{equation}
 y=-1,\qquad K_Q=0,\qquad S_R,\qquad S_Q
\label{eq:legacy-6-33}
\end{equation}
or a named compactification face.

\begin{proposition}[Common domain for the unforced center returns]
\label{prop:part-ii-common-center-domain}
Every source itinerary admitted to Theorem~\ref{thm:part-ii-matched-preparation}
has nested physical domains \(\mathcal W^-\Subset\mathcal W^+\).
On \(\ell=0\), \(\mathcal W^+\) is star-shaped under
\begin{equation}
 (\tau,0,a,t_c,B)\longmapsto
 (q_1\tau,0,q_3a,q_4t_c,B),\qquad q\in[0,1]^3,
\label{eq:legacy-6-34}
\end{equation}
every contraction has the same first-hit itinerary and positive section and
stopping-set margins, and the two terminal faces belong to the complete center
domains above.  A loss at \eqref{eq:legacy-6-33}, a root, a previous side, or a collar side
is a separately named gate cell; the return is not continued across it.
\end{proposition}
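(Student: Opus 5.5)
The plan is to build $\mathcal W^+$ as a product of a fixed physical section interval with a parameter box that is a product in the contraction coordinates. Because such a box is automatically invariant under \eqref{eq:legacy-6-34}, the real content of the proposition is that the first-hit structure does not change along the contraction segments. Concretely, on the unforced face $\ell=0$ I would use the coordinates $(\tau,a,t_c,B)$ and take the box $\{|\tau|\le\tau_0,\ |a|\le a_0,\ |t_c|\le t_0,\ |B|\le B_0\}$. The admissible section interval $I_s\subset\Sigma_0$ is chosen uniformly over this box. The multiplicative maps $q\in[0,1]^3$ preserve the box, so star-shapedness is immediate. $\mathcal W^-$ is the same construction with radii halved and the section interval shrunk by a fixed margin, which gives $\mathcal W^-\Subset\mathcal W^+$.

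The first substantive step is to show that the retained itinerary is constant along every contraction segment. I would apply Proposition~\ref{prop:stopped-first-hit} and Theorem~\ref{thm:finite-words} on the closed box: the divider set on the incoming interval is uniformly finite, and each divider is a backward first hit of a labelled face in $\mathcal F$.

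The key point is to choose $I_s$ inside one complementary interval. The interval should be bounded away from every divider by a positive margin that is uniform on the box. A divider is allowed to meet the domain only if it is one of the named gate faces, namely \eqref{eq:legacy-6-33}, an endpoint root, a previous side, or a collar side. Such a face is excised as a separate gate cell and is not traversed.

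To keep these margins from degenerating under the contractions, I would follow the divider positions along each segment $q\mapsto(q_1\tau,0,q_3a,q_4t_c,B)$. Compactness of $[0,1]^3\times$ box, together with continuity of the finitely many divider functions, gives a minimum distance. Where a divider function is only piecewise continuous, across root strata, I would use the joint uniformity of Proposition~\ref{prop:endpoint-joint-uniformity}.

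The terminal faces of the contractions are $q_1=0,q_3=0$, which lands in $\mathcal C_R$, and $q_1=0,q_4=0$, which lands in $\mathcal C_Q$. On these faces I have to show that the whole section interval lies inside the maximal period annulus. For $\mathcal C_R$ I would use $H_R$ from \eqref{eq:legacy-6-30}: the annulus extends either to $z=0$ or to the level of $S_R$. The sign of \eqref{eq:legacy-6-31}, which is controlled by $B+m=t_c$, then decides which of the two is the outer barrier. For $\mathcal C_Q$ I would use the integrating factor $\mathcal V_Q$ from \eqref{eq:legacy-6-28}. Here the barrier is $y=-1$, $K_Q=0$, or $S_Q$.

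In each case I would fix $I_s$ so that its outer endpoint lies strictly inside the first barrier reached. Any itinerary point beyond that barrier is assigned to the named gate cell. This yields both the "complete center domain" clause and the "not continued across" clause.

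I expect the main obstacle to be uniformity of the barriers as $B\to0^-$. In that limit $S_R$ and $S_Q=(-a/B,-1/B)$ escape to infinity and enter the upper/endpoint slabs. The outer center boundary then merges with the graphic $\GammaH$ itself, so a margin measured in the finite plane is not sufficient. My first attempt would be to measure margins on the compactified sections of Section~\ref{sec:part-i-boxes}, using the charts \eqref{eq:legacy-3-6}--\eqref{eq:legacy-3-7} and \eqref{eq:legacy-3-12}--\eqref{eq:legacy-3-13}. In those charts the escaping saddles appear as roots of $P_\pm$ or of $E$, which are already labelled gates. Their incidence with the section interval is then governed by the finite resolved cover, and the contraction segments stay in a single cell of that cover.
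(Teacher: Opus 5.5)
Your margin-based nesting is in the right spirit, but there is a genuine gap in the first step. A fixed section interval times a fixed, unscaled box in \((\tau,a,t_c,B)\) cannot carry a \emph{source} itinerary. The source regime is the noncompact end \(s\downarrow0\), that is \(L\to\infty\), which is where \eqref{eq:legacy-6-12} is used. Its direct cell is cut out by smallness of the scaled coefficients \(\beta=(Bh,mh,a\sqrt h,c\sqrt h)\) with \(h=h(s)\), so the admissible parameters shrink as the section level approaches the graphic. For any fixed nonzero parameter in your box, \(|\beta|\to\infty\) as \(s\downarrow0\), and the orbit leaves the direct cell. For instance, with \(B<0\) and \(a=0\) the invariant line \(y=-1\) carries equilibria at \(|x|=\sqrt{(1+m)/|B|}\), which the level-\(h\) ovals reach once \(h|B|\) is of order one. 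Thus dividers and first losses sweep down to \(s=0\) as the parameter ranges over the box. No interval reaching the graphic has a divider margin that is positive uniformly on the box. Your compactness argument gives a minimum but not a positive one, and the \(s\)-direction is not compact there anyway. If instead you keep \(I_s\) away from \(s=0\), you have described a bounded analytic itinerary, not a source one. Your closing claim that every contraction segment stays in one cell of the resolved cover is exactly what needs proof. It is false for weighted parameter-dominated cells, because contracting \(\tau,a,t_c\) with \(B\) fixed changes which normalized coefficient dominates.

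The missing idea is to check star-shapedness at fixed \(s\) in the scaled coefficients, where the contractions act linearly:
\[
 \beta_B'=\beta_B,\qquad
 \beta_m'=q_4\beta_m-(1-q_4)\beta_B,\qquad
 \beta_a'=q_3\beta_a,\qquad
 \beta_c'=q_1\beta_c+(q_1-q_3)\beta_a .
\]
Equivalently, they multiply \((Bh,\,t_ch,\,a\sqrt h,\,\tau\sqrt h)\) coordinatewise by \((1,q_4,q_3,q_1)\). Your box in contraction coordinates therefore becomes exactly star-shaped once its radii are \(\epsilon/h(s)\) and \(\epsilon/\sqrt{h(s)}\). It then lies in a fixed small \(\beta\)-box; the mixing terms at most double the radius, and the nesting \(\mathcal W^-\Subset\mathcal W^+\) absorbs this. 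Constancy of the itinerary and margins along the whole cube comes from the uniform source-normalized structure on that small scaled box, not from continuity of divider functions. That structure consists of:
\begin{itemize}
\item \(z=sZ\) with \(Z_x\to xZ\) on the lower pieces;
\item the endpoint weak bracket keeping its source sign, with \(\pm\log z\) strict;
\item \(W'\le-(W^2+R)/2<0\) in the upper box;
\item fixed margins on the finitely many regular and overlap pieces.
\end{itemize}
The same smallness of \(mh\), \(Bh\), \(a\sqrt h\) keeps the barriers of \eqref{eq:legacy-6-30}--\eqref{eq:legacy-6-33} outside the level-\(h\) ovals. That is how the terminal faces land in the complete period annuli. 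The \(B\to0^-\) compactified-margin issue you single out is not the real obstacle.
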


\begin{proof}
For every primitive, take the complete finite union of target and competing
stopping faces fixed in Section~\ref{sec:part-i-atlas}.  On the open set where one transverse target is the
first contact, minimize the entry interval widths, the strict flow
coordinates along the stopped arc, the signed distances to every competing
stopping face, and both endpoint section normals.  A positive minimum gives nested
domains with margins \(4\eta\) and \(2\eta\).  If the minimum vanishes,
its first zero is precisely a listed divider, root, center barrier, previous
side, overlap, or exit face.  Pulling these domains successively through the
finite itinerary gives a common first-hit domain without transporting a divider
through a singular box.

It remains to check \eqref{eq:legacy-6-34}.  With
\[
 \beta_B=Bh,\quad\beta_m=mh,\quad
 \beta_a=a\sqrt h,\quad\beta_c=c\sqrt h
\]
and \(\ell=0\), the contractions satisfy
\begin{equation}
 \beta_B'=\beta_B,\qquad
 \beta_m'=q_4\beta_m-(1-q_4)\beta_B,\qquad
 \beta_a'=q_3\beta_a,\qquad
 \beta_c'=q_1\beta_c+(q_1-q_3)\beta_a.
\label{eq:legacy-6-35}
\end{equation}
Thus the whole cube remains in one small scaled box.  On the finite lower
pieces \(z=sZ\) tends uniformly to \(Z_x=xZ\).  At the endpoints the
weak bracket keeps its source sign and \(\pm\log z\) is a strict
coordinate; in the upper box the normalized equation gives
\(W'\le-(W^2+R)/2<0\).  The finitely many overlap and regular pieces have
fixed margins.  Finally \eqref{eq:legacy-6-30}--\eqref{eq:legacy-6-33} put both complete center faces inside
their actual period-annulus domains.  This proves the proposition.
\end{proof}

\section{Localization at the forced lower scale}
\label{sec:part-ii-lower}

The center division used only \(k=0\).  We now retain the true forced
variable \(k=\ell/s\), prove that every source fixed point has bounded
\(k\), and construct the two fixed-data survivor intervals used in \eqref{eq:legacy-6-4}.

\begin{lemma}[Lower-scale localization]
\label{lem:part-ii-lower-scale}
There are \(C_\ell>1\) and \(s_1>0\) such that every fixed point of a
retained source itinerary with \(0<s<s_1\) satisfies
\begin{equation}
 |\ell|\le C_\ell s.
\label{eq:legacy-6-36}
\end{equation}
With \(z=sZ\) and \(\ell=sk\), the exact lower equation is
\begin{equation}
 \frac{dZ}{ dx}
 =\frac{k+(x-c-2sk)Z+s(c+sk)Z^2
 }{
 F_0(x)-(1+2m)sZ+ms^2Z^2},
\qquad F_0=1+m+Bx^2+ax.
\label{eq:legacy-6-37}
\end{equation}
The outgoing fixed-initial and returning fixed-terminal survivor sets are
intervals containing \(k=0\).  Their intersection
\(\mathcal I_{\rm low}\) is an interval whose only open source-regime
endpoints are the two named lower gates.
\end{lemma}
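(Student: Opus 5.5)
The plan has three parts: derive \eqref{eq:legacy-6-37} by direct substitution, prove \eqref{eq:legacy-6-36} by showing that a large forcing $k$ drives the lower orbit into a gate, and then read off the survivor-interval structure from monotonicity in $k$.

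\emph{Derivation of \eqref{eq:legacy-6-37}.} Write the family \eqref{eq:family} in the variables $x$, $z=1+y$ and the coordinates $a,c,d,m$ of \eqref{eq:legacy-6-1}. The vertical component is
\[
\dot z=xz+d(z-1)^2+c(z-1).
\]
At $z=0$ it equals $d-c=\ell$, consistent with \eqref{eq:legacy-3-5}. The horizontal component is $\dot x=1-z+Bx^2+m(z-1)^2+ax$. Dividing, substituting $z=sZ$, $\ell=sk$ (so $d=c+sk$), and cancelling one factor $s$ gives the displayed quotient. The denominator is $F_0(x)+O(s)$, and $F_0=1+m+Bx^2+ax$ is bounded below by $1/2$ on $|x|\le X_L$ for small parameters. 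Hence \eqref{eq:legacy-6-37} is a regular analytic ODE in $(x,Z)$ depending analytically on $(s,k,\lambda)$, uniformly as $s\downarrow0$, on the lower strip $B_L$ where \eqref{eq:legacy-3-2} makes $x$ a strict coordinate.

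\emph{Localization \eqref{eq:legacy-6-36}.} This is the main step; I would argue by contradiction with a comparison argument. Suppose $|k|$ is large. Since the $Z$- and $Z^2$-coefficients are bounded on the fixed range of $Z$ fixed by the lower sections, the numerator has the sign of $k$ there. For $k\gg1$, the outgoing orbit from $\Sigma_0$ (with $Z=1$ at $x=0$) therefore has $Z$ increasing at rate at least $k/C$. Before reaching the endpoint slab it leaves the lower strip through the collar face $z=z_L$, or it reaches $S_+$ with a level $s_+$ violating the secant bound $C_+=1+k\Gamma_+$ with $\Gamma_+$ bounded.

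For $k\ll-1$, $Z$ decreases and reaches $Z=0$ after $x$-time $O(1/|k|)$. The orbit then crosses the former invariant line to $z<0$. It cannot return to the positive lower section $\Sigma_0$ on the same lap, because $\dot z=\ell<0$ near $z=0$ and the returning leg is the reflected situation. This is precisely the gate face $C_-=0$ or $C_+=0$.

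In both cases the full return is stopped by Proposition~\ref{prop:stopped-first-hit}. So no fixed point exists once $|k|>C_\ell$, with $C_\ell$ chosen from the uniform bounds on the coefficients, and $s_1$ chosen so that the $O(s)$ terms are dominated. The difficulty is uniformity as $s\to0$ across the endpoint passages. There I would rely on the fact that the endpoint factors of $J_\pm$ are the fixed-fiber $C^K$ maps of Proposition~\ref{prop:endpoint-joint-uniformity}. Those maps change levels only by bounded multiplicative factors, so they cannot compensate an unbounded lower displacement.

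\emph{Survivor intervals.} Differentiating the numerator of \eqref{eq:legacy-6-37} in $k$ gives $1-2sZ+s^2Z^2$, which is positive for small $s$ on bounded $Z$. By comparison of solutions, the solution $Z(x;k)$ with fixed initial data is therefore strictly increasing in $k$, and the same holds with fixed terminal data on the returning leg.

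Consider the set of $k$ for which the outgoing fixed-initial orbit reaches its target section before any stopping face. I would show this set is an interval by checking that every competing exit is monotone in $k$. Exits occur only through $Z=0$, which is reached for small $k$, or through the collar or upper boundary, which is reached for large $k$. The survivor set is then the preimage of a monotone first-hit condition, hence connected. It contains $k=0$, since the unforced orbit lies on a source level and survives for small $s$ by Proposition~\ref{prop:part-ii-common-center-domain}.

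The same argument applies to the returning fixed-terminal set. Their intersection $\mathcal I_{\rm low}$ is an interval. By \eqref{eq:legacy-6-36} its endpoints are finite, and each endpoint is a first-contact owner. The only owners compatible with the source regime are the two lower gates $C_\pm=0$; all other owners send the point to a non-source row of Table~\ref{tab:regimes}.
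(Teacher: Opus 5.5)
Your derivation of \eqref{eq:legacy-6-37} agrees with the paper. So does your treatment of \(k\ll-1\): the outgoing orbit from \(z(0)=s\) reaches \(z=0\) within a fixed \(x\)-interval, and since \(\dot z=\ell<0\) on \(z=0\) it cannot come back to \(\Sigma_0\). The monotonicity \(\partial_k(\text{numerator})=(1-sZ)^2>0\) behind the survivor intervals is also as in the paper.

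The gap is the other half of \eqref{eq:legacy-6-36}, namely \(\ell\le C_\ell s\) when \(\ell>0\). You try to obstruct large positive \(k\) on the \emph{outgoing} leg, but an upward forcing does not stop that leg; it only lifts it.
\begin{itemize}
\item Take \(\ell=sk\) in the small parameter ball, say \(\ell=100s\). The outgoing orbit rises only to height \(O(s+\ell)\ll z_L\) on the lower strip. It then proceeds to \(p_+\) like an unforced orbit of a somewhat higher level, so your first alternative (exit through \(z=z_L\)) does not occur.
\item Your second alternative is not an obstruction either. The relation \(C_+=1+k\Gamma_+\) in \eqref{eq:legacy-6-5} is an identity, and large \(k\) merely gives large \(C_+\).
\item Invoking \eqref{eq:legacy-6-5} and the bound on \(\Gamma_\pm\) from \eqref{eq:legacy-6-57} here is circular. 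Both are established later, on the common forced domain, which already assumes \(|k|\le C_\ell\).
\item Your appeal to the endpoint passages is incorrect: by Proposition~\ref{prop:endpoint-joint-uniformity} their singular factors are exponentially flat, not bounded multiplicative changes of level. It is also unnecessary, since the localization is purely local near \(\Sigma_0\).
\end{itemize}

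The missing idea is to use the closing condition \(P(s,\lambda)=s\) on the \emph{returning} leg. This is the time-reversed mirror of your \(k\ll-1\) argument. On a fixed piece \(-\delta\le x\le0\) ending at \(\Sigma_0\), the exact lower field \(\dot z=\ell(1-z)^2+(x-c)z+cz^2\), \(\dot x=F_0+O(z)\) gives \(z_x\ge\ell/2-Cz\) for small \(z\ge0\). A retained returning orbit stays in \(z\ge0\) on this piece, so
\[
 P(s,\lambda)=z(0)\ge e^{-C\delta}z(-\delta)
 +\frac{\ell}{2C}\bigl(1-e^{-C\delta}\bigr)\ge c_\delta\ell ,
\]
and \(P=s\) is impossible once \(\ell>s/c_\delta\). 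Equivalently, the fixed-terminal orbit through \(z(0)=s\), run backward, reaches \(z=0\). It could then only have arrived by crossing the former invariant line from below, which is the returning gate \(C_-=0\).

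With this replacement the rest of your outline goes through as in the paper. One correction there: the finiteness of the two endpoints of \(\mathcal I_{\rm low}\) comes from these two gate crossings (outgoing for \(k<0\), returning for \(k>0\)). It does not come from \eqref{eq:legacy-6-36}, which constrains only fixed points, not survivor sets.
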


\begin{proof}
On fixed lower segments the full equations give
\[
 \dot x=F_0-(1+2m)z+mz^2,\qquad
 \dot z=\ell+(x-c-2\ell)z+(c+\ell)z^2,
\]
and, after shrinking,
\begin{equation}
 \frac{dz}{ dx}=\ell+xz+
 O(\epsilon(|\ell|+z)+z^2).
\label{eq:legacy-6-38}
\end{equation}
For \(\ell<0\), comparison with
\(z_x\le\ell/2+Cz\) forces the outgoing orbit from \(z(0)=s\) to hit
\(z=0\) when \(-\ell>C_-s\).  For \(\ell>0\), the reversed lower
piece gives
\[
 P(s,\lambda)\ge e^{-C\delta}z(-\delta)
 +\frac{\ell}{2C}(1-e^{-C\delta})\ge c_\delta\ell,
\]
so \(P=s\) is impossible when \(\ell>C_+s\).  This proves \eqref{eq:legacy-6-36}.

Substitution gives \eqref{eq:legacy-6-37}, whose denominator is uniformly positive on the
fixed lower boxes.  If its right side is \(\mathcal F\), direct
differentiation yields
\begin{equation}
 \partial_k\mathcal F
 =\frac{(1-sZ)^2}{ F_0-(1+2m)sZ+ms^2Z^2}\ge\frac{1}{2}.
\label{eq:legacy-6-39}
\end{equation}
The outgoing variational equation with fixed initial value is therefore
strictly increasing in \(k\); the reversed equation with fixed terminal
value has the opposite order.  Each survivor set is an interval, and both
contain zero.  Their finite endpoints are first hits of a lower side or a
fixed transfer section.  Refining transfer-stopping faces away leaves exactly the
two gate endpoints claimed.  No monotonicity is asserted for a
\(k\)-dependent output of the upper map.
\end{proof}

\section{Moving-boundary derivatives in fold coordinates}
\label{sec:part-ii-fold}

The naive \(x\)-clock can vanish between a scaled lower cut and a strong
section.  This section supplies graph coordinates that cross that fold and
records the moving entry, cut, and terminal-hit formulas required by the
six-jet induction.

Put \(X=\theta x\) and \(u=-\theta^2\log z\).  On an unforced tail,
the fold-transverse graph is
\begin{equation}
 \frac{dX}{ du}=-
 \frac{F_\beta(X)-z+\theta^2\beta_m(1-z)^2
  }{ X+\theta^2\beta_c(z-1)},\qquad
 F_\beta=1+\beta_aX+\beta_BX^2.
\label{eq:legacy-6-40}
\end{equation}
Where the denominator in \eqref{eq:legacy-6-40} becomes small, use the reciprocal graph
\begin{equation}
 \frac{du}{ dX}=-
 \frac{X+\theta^2\beta_c(z-1)
  }{ F_\beta(X)-z+\theta^2\beta_m(1-z)^2}.
\label{eq:legacy-6-41}
\end{equation}
The overlap cuts are fixed and transverse.  On the forced tail the strict
clock is instead
\begin{equation}
 D_k=X+\theta^2\beta_c(z-1)
       +\theta sk\frac{(1-z)^2}{ z}.
\label{eq:legacy-6-42}
\end{equation}
On every radial \(k\)-segment in the differentiated wedge,
\(\pm D_k\ge d_*/2\).  Thus the fold is a zero of the discarded
\(x\)-clock, not a singularity of the physical orbit.

\begin{proposition}[Finite source/parameter overlap]
\label{prop:part-ii-source-parameter-overlap}
Before any source localization or fixed-point equation, refine a coefficient
shell by the finite signed weighted charts
\begin{equation}
 B=\rho^2\bar B,\quad m=\rho^2\bar m,\quad
 a=\rho\bar a,\quad c=\rho\bar c,\quad d=\rho\bar d,
 \qquad q=\frac{\theta}{\rho},
\label{eq:legacy-6-41a}
\end{equation}
where one of
\(|\bar B|,|\bar m|,|\bar a|,|\bar c|,|\bar d|\) equals (1), with its
sign retained, and \(q\) lies in a fixed compact overlap interval.  There
are ten primary sign charts and finitely many tie faces.  Put
\begin{equation}
 \widehat B=\frac{B}{\theta^2},\quad
 \widehat m=\frac{m}{\theta^2},\quad
 \widehat a=\frac{a}{\theta},\quad
 \widehat c=\frac{c}{\theta},\quad
 \widehat d=\frac{d}{\theta}.
\label{eq:legacy-6-41b}
\end{equation}
Substitution in the already fixed endpoint and upper blocks gives the exact
source-normalized rows
\begin{equation}
 \widehat N_e=R\{\widehat B+\widehat aR
 +R^2[1-z+\theta^2\widehat m(z-1)^2]\},
\label{eq:legacy-6-41c}
\end{equation}
\begin{equation}
 z'=\pm z+\theta^2R
 \{\widehat d(z-1)^2+\widehat c(z-1)\},
\label{eq:legacy-6-41d}
\end{equation}
\begin{equation}
\begin{aligned}
 R'&=-R\{W+\widehat d+\theta^2R(W+\widehat c)\},\\
 W'&=\widehat m-\widehat dW-W^2-R\\
 &\quad+\theta^2\{\widehat BW^2
       +R[(\widehat a-\widehat c)W-W^2]\}.
\end{aligned}
\label{eq:legacy-6-41e}
\end{equation}
Consequently every coefficient shell is a restriction of one of the finite
endpoint or upper-vertical regimes already listed by
Proposition~\ref{prop:finite-singular-alphabet}, not a case created by the
source zero argument.  Equations \eqref{eq:legacy-6-41a}--\eqref{eq:legacy-6-41e} are coordinate
substitutions of the same values \((B,m,a,c,d)\).  On chart overlaps the time
multiplier is positive, and orbit uniqueness identifies the same physical
first hit.  The overlap therefore preserves both the original parameter and
the physical point.
\end{proposition}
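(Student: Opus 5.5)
The proposition is essentially a bookkeeping statement, so I would prove it by direct substitution. The weighted charts and the endpoint and upper rows are then checked to be literal rewritings of equations already fixed in Section~\ref{sec:part-i-boxes}.

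\emph{First step: the charts cover the shell.} On a coefficient shell $\rho>0$, the map $(B,m,a,c,d)\mapsto(\rho,\bar B,\bar m,\bar a,\bar c,\bar d)$ with weights $(2,2,1,1,1)$ is the standard weighted polar decomposition. The quasi-homogeneous norm $\max(|B|^{1/2},|m|^{1/2},|a|,|c|,|d|)$ equals $\rho$. Choosing which coordinate attains the maximum, with its sign, gives $5\times2=10$ primary charts. Ties are the finitely many faces on which two or more barred coordinates have modulus one. The condition that $q=\theta/\rho$ lie in a fixed compact interval is imposed as the overlap region between the source-dominated regime ($q$ large) and the parameter-dominated regime ($q$ small). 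I would fix this interval once, before any zero argument, so that no case is added later.

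\emph{Second step: the hatted rows.} I substitute $B=\theta^2\widehat B$, $m=\theta^2\widehat m$, $a=\theta\widehat a$, $c=\theta\widehat c$, $d=\theta\widehat d$ into the endpoint numerators \eqref{eq:legacy-3-6}--\eqref{eq:legacy-3-7} and the upper field $F,Q$ of \eqref{eq:legacy-3-12}--\eqref{eq:legacy-3-13}. In the endpoint I rescale $r=\theta R$ and divide time by $\theta^2$. The radial bracket $B+ar+r^2[\cdots]$ becomes $\theta^2\{\widehat B+\widehat aR+R^2[\cdots]\}$, which gives \eqref{eq:legacy-6-41c}. The $z$-equation, after the $r$-factor $\theta R$ and the $d,c$ factors $\theta$ are collected, gives \eqref{eq:legacy-6-41d}; the bookkeeping of which powers of $\theta$ cancel must be done carefully here. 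In the upper box I use the source-core scaling \eqref{eq:legacy-3-17}, namely $r=\theta^2R$, $w=\theta W$, with time divided by $\theta$. This turns $-rF$ and $Q$ into \eqref{eq:legacy-6-41e}, where the $\theta^2$ terms collect the $Bw^2$ and the $r(a-c)w-rw^2$ contributions. These are finite polynomial identities.

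\emph{Third step: restriction and identification.} Since $\widehat B=\bar B/q^2$ and so on, on the compact $q$-interval the hatted coefficients lie in a compact set. Each hatted row is therefore the original endpoint or upper equation evaluated at the same $(B,m,a,c,d)$ in rescaled phase coordinates. Proposition~\ref{prop:finite-singular-alphabet} then applies verbatim: every shell is a restriction of a listed endpoint or upper regime.

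On overlaps of two sign charts, or of a chart with the source chart, the transition $\bar\cdot\leftrightarrow\widehat\cdot$ is a positive rescaling of phase coordinates and time. The time multipliers are powers of $\theta$ and $\rho$, hence positive. Orbit uniqueness, via \eqref{eq:legacy-4-1}, identifies the first hits, and the parameter values are unchanged by construction.

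\emph{Main obstacle.} The step I expect to be delicate is the time multiplier: confirming it is uniformly positive, with no sign change in the reversed negative-endpoint chart. I would check this sign explicitly per chart.
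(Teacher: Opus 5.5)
Your route is the paper's own. The paper gives no separate proof: it relies on direct substitution into the fixed blocks, a positive time multiplier on overlaps, and orbit uniqueness via \eqref{eq:legacy-4-1}. Your upper-box computation is correct: $r=\theta^2R$, $w=\theta W$, with time divided by $\theta$, turns $-rF$ and $Q$ into exactly \eqref{eq:legacy-6-41e}.

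The endpoint step, however, fails as written. Substituting $r=\theta R$ into \eqref{eq:legacy-3-6}, in the \emph{original} endpoint time, gives
\[
\begin{aligned}
 \dot R&=-\theta^2R\bigl\{\widehat B+\widehat aR+R^2[1-z+\theta^2\widehat m(z-1)^2]\bigr\}=-\theta^2\widehat N_e,\\
 \dot z&=z+\theta^2R\bigl\{\widehat d(z-1)^2+\widehat c(z-1)\bigr\}.
\end{aligned}
\]
So \eqref{eq:legacy-6-41d} holds with no time change at all, and \eqref{eq:legacy-6-41c} is only the radial numerator. If you divide time by $\theta^2$ as you propose, the radial row becomes $R'=-\widehat N_e$. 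The transverse row, however, becomes $z'=\pm\theta^{-2}z+R\{\widehat d(z-1)^2+\widehat c(z-1)\}$, which is not \eqref{eq:legacy-6-41d}.

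No single time normalization makes both $R'=-\widehat N_e$ and \eqref{eq:legacy-6-41d} literal. The bracket forces $r=\theta R$, and then the $z$-row forces multiplier one. The ``careful bookkeeping'' you deferred is precisely where your stated scaling breaks. Note also that at the negative endpoint, \eqref{eq:legacy-3-7} gives $\dot R=+\theta^2R\{\widehat B-\widehat aR+\cdots\}$, so $\widehat a$ enters there with the opposite sign.

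With this correction, the obstacle you announced disappears. The only time multipliers are $1$ in the endpoint slabs and $\theta^{-1}>0$ in the upper box. Neither depends on $\rho$ or on the sign chart. The $\pm z$ in \eqref{eq:legacy-6-41d} is the transverse orientation already present in \eqref{eq:legacy-3-6}--\eqref{eq:legacy-3-7}, not a time reversal, so there is no per-chart sign to check.

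The hatted coefficients are defined directly from $(B,m,a,c,d,\theta)$. Hence on an overlap of two sign charts the hatted rows are literally the same equations, and no chart transition needs to be verified. The barred charts serve only to show, through $\widehat B=\bar B/q^2$, $\widehat m=\bar m/q^2$, $\widehat a=\bar a/q$, $\widehat c=\bar c/q$, $\widehat d=\bar d/q$, that the hatted coefficients stay in a compact set on the fixed $q$-interval. That compactness is what places each shell inside the finite list of Proposition~\ref{prop:finite-singular-alphabet}.
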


\begin{proposition}[Common forced physical domain]
\label{prop:part-ii-common-forced-domain}
Fix
\begin{equation}
 \delta_a=\frac{1}{8192},\qquad
 E^-=[1-\delta_a,1+\delta_a],\qquad
 E^+=[1-2\delta_a,1+2\delta_a].
\label{eq:legacy-6-42a}
\end{equation}
After a finite first-loss refinement, every direct phase-dominant source
cell satisfies
\begin{equation}
 |\beta_B|+|\beta_m|+|\beta_a|+|\beta_c|\le\epsilon_{\rm src},
 \qquad |k|\le C_\ell,
\label{eq:legacy-6-42b}
\end{equation}
and has one nested physical product tube on which the two fixed-data lower
cores and tails, their inverses, the unforced action graphs, the two upper
layers, the compact upper bulk, and all moving hits and cuts used below are
simultaneously defined.  The only domain functions allowed to vanish on a
direct cell are \(C_+\) and \(C_-\); their zero faces are no-passage faces.

At the fixed scaled cuts \(X=\pm1\), let \(A_{\pm,*}\) denote the forced
lower label expressed in the corresponding unforced label coordinate.  Put
\begin{equation}
 S_\beta'(X)=\frac{X}{ F_\beta(X)},\qquad S_\beta(0)=0,\qquad
 \sigma_*:=\min_\beta\{S_\beta(1),S_\beta(-1)\}\ge\frac{3}{8}.
\label{eq:legacy-6-42c}
\end{equation}
Then
\begin{equation}
 A_{\pm,*}=C_\pm+\Delta_{\pm,*},\qquad
 |D_I\Delta_{\pm,*}|
 \le C_I\theta^{-p_n}e^{-\sigma_*/\theta^2}
\label{eq:legacy-6-42d}
\end{equation}
for every normalized derivative word \(D_I\) of total order \(n\le6\),
where this total includes at most two fixed-original successors.  More
precisely, differentiation
of the ratio produces only
\begin{equation}
 \left|D_I\left(\frac{A_{\pm,*}}{ C_\pm}-1\right)\right|
 \le C_I\theta^{-p_n}C_\pm^{-q_I}
       e^{-\sigma_*/\theta^2},\qquad q_I\le n+1.
\label{eq:legacy-6-42e}
\end{equation}
Thus every inverse gate factor is attached to a Gaussian-flat carrier; no
positive lower bound for \(A_{\pm,*}\) is asserted independently of
\(C_\pm\).  On
\begin{equation}
 \mathcal W_{\delta_a}
 =\{C_+\ge s^{2\delta_a},\ C_-\ge s^{2\delta_a}\}
\label{eq:legacy-6-42f}
\end{equation}
the forced clocks in \eqref{eq:legacy-6-42} have their stated signed margin on every radial
\(k\)-segment.  Failure of \eqref{eq:legacy-6-42b}, a section margin, or a first-contact margin
is assigned at its first loss to the named compact, root, side, or
parameter-dominated regime; it is not retained as a direct source cell.
\end{proposition}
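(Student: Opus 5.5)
The proof has three parts. First, build the common forced physical tube. Second, compare the forced and unforced lower labels at the fixed scaled cuts $X=\pm1$, which gives the Gaussian-flat bound \eqref{eq:legacy-6-42d}. Third, deduce the ratio bound \eqref{eq:legacy-6-42e} and the clock margin on $\mathcal W_{\delta_a}$.

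\textbf{The tube.} Start from the finite face list of Proposition~\ref{prop:fixed-skeleton}. Use the source/parameter overlap of Proposition~\ref{prop:part-ii-source-parameter-overlap} to discard every coefficient shell in which one of the scaled coefficients $\beta$ is not small. Such shells become, at their first loss, restrictions of the endpoint or upper regimes of Proposition~\ref{prop:finite-singular-alphabet}, and so are not direct source cells. On the remaining cells \eqref{eq:legacy-6-42b} holds, and $|k|\le C_\ell$ comes from Lemma~\ref{lem:part-ii-lower-scale}.

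As in the proof of Proposition~\ref{prop:part-ii-common-center-domain}, take the minimum of the section normals, the strict clocks, and the signed distances to the competing stopping faces, over each primitive piece: the lower cores and tails with their inverses, the fold graphs \eqref{eq:legacy-6-40}--\eqref{eq:legacy-6-41}, the two upper layers, and the compact upper bulk. Where this minimum is positive, the margin $4\eta$ gives the outer tube and $2\eta$ the inner one, $\mathcal W^-\Subset\mathcal W^+$. Where it vanishes, its first zero is a listed face. The only face that survives on a direct cell is the lower gate $C_\pm=0$: by Lemma~\ref{lem:part-ii-lower-scale}, the survivor set in $k$ is an interval whose only source endpoints are these two gates.

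\textbf{The comparison.} Write $\Delta_{\pm,*}=A_{\pm,*}-C_\pm$. In the scaled variables $z=sZ$, \eqref{eq:legacy-6-37} shows that the forced and unforced lower solutions differ only by the $k$-forcing term $k/F_0$ in $dZ/dx$. The unforced lower map is transported to the cut $X=\pm1$ through the variational equation of \eqref{eq:legacy-6-40}. That equation has multiplier $\exp\bigl(-\theta^{-2}S_\beta(X)\bigr)$ with $S_\beta'=X/F_\beta$, so the mismatch between the forced label and its secant approximation $C_\pm$ picks up the factor $e^{-\sigma_*/\theta^2}$.

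The constant $\sigma_*\ge3/8$ follows from \eqref{eq:legacy-6-42b}. Since $F_\beta=1+O(\epsilon_{\rm src})$,
\[
 S_\beta(\pm1)=\int_0^{\pm1}\frac{X}{1+O(\epsilon_{\rm src})}\,dX=\tfrac12-O(\epsilon_{\rm src}),
\]
which is at least $3/8$ once $\epsilon_{\rm src}$ is small.

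\textbf{Derivatives.} Bounding the derivative words $D_I$ of order $n\le6$ is the main obstacle. I would use induction on the word length. A phase or parameter derivative applied to the integral representation has three sources of growth:
\begin{itemize}
\item each derivative of the exponent costs at most $\theta^{-2}$, which produces the power $\theta^{-p_n}$;
\item each moving entry, cut, or terminal hit contributes a boundary term, controlled by the signed clock $D_k$ of \eqref{eq:legacy-6-42};
\item each inverse of a fixed-data lower map produces a factor $1/C_\pm$, because the derivative of the secant form \eqref{eq:legacy-6-5} degenerates at the gate.
\end{itemize}
At most $n+1$ such inverse factors appear in a word of length $n$, since each differentiation adds one and the initial division adds one. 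This gives $q_I\le n+1$. The two fixed-original successors $\mathfrak D$ are handled the same way, using $\mathfrak D\theta=O(\theta^3)$.

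The key point to check is that every inverse factor $1/C_\pm$ is always multiplied by the Gaussian carrier and never stands alone. This holds because $C_\pm$ enters only through $A_{\pm,*}/C_\pm-1$, which vanishes identically when the carrier is dropped.

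\textbf{Clock margin.} On $\mathcal W_{\delta_a}$ we have $C_\pm\ge s^{2\delta_a}$. With $\theta^2\sim1/|\log s|$, this gives
\[
 C_\pm^{-q_I}\le e^{14\delta_a|\log s|},
\]
and with $\delta_a=1/8192$ this is far smaller than the Gaussian factor $e^{-\sigma_*/\theta^2}$, with $\sigma_*\ge3/8$, can absorb. Finally, the term $\theta sk(1-z)^2/z$ in $D_k$ is monotone along radial $k$-segments. Under the lower bound on $C_\pm$ it stays at most half of the unforced clock $\pm d_*$, and this yields the margin $\pm D_k\ge d_*/2$.
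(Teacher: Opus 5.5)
Your outline follows the paper's architecture: a tube built from finitely many strict margins with first-loss assignment, Gaussian flatness from $S_\beta$ at $X=\pm1$, $q_I\le n+1$ from differentiating $C_\pm^{-1}$ against the flat factor, and a clock margin from $C_\pm\ge s^{2\delta_a}$. The gap is in the last step, which is asserted rather than proved.

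To get $\pm D_k\ge d_*/2$ you must bound the forced perturbation $\theta sk(1-z)^2/z$. That requires a lower bound on the forced $z$ along the tail, in terms of $C_\pm$. Your estimate $C_\pm^{-q_I}\le e^{14\delta_a|\log s|}$ is the absorption estimate relevant to \eqref{eq:legacy-6-42e}, not to $D_k$. Monotonicity along radial $k$-segments only moves the question to the end of the segment. The missing inequality is
\[
 \frac{s}{z}\le C\theta^{-8}C_\pm^{-1}e^{-\sigma_*/\theta^2}
 \le C\theta^{-8}e^{-(\sigma_*-2\delta_a)/\theta^2}
 \qquad\text{on }\mathcal W_{\delta_a},
\]
which gives $|D_k-D_0|\le C_\ell\theta s/z=o(1)$.

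The paper derives it from an explicit tail identity. Let $\rho_\pm$ be the unforced label and $a_\pm=-\theta^2\log\rho_\pm$. Division in the $u$-clock gives $a_\pm=u+S_\beta(X)+R_\pm$ with $|R_\pm|\le C\theta^2$, hence $\rho_\pm/z=e^{-S_\beta(X)/\theta^2}e^{-R_\pm/\theta^2}$. Along the radial $k$-segment the gate-facing factor stays between its final value $C_\pm$ and $1$. This identity is also the real source of the Gaussian factor in your comparison step: it is $\partial_z\rho_\pm$, coming from the change between $z$ and $u=-\theta^2\log z$. It is not the multiplier of the variational equation of \eqref{eq:legacy-6-40}, whose $X$-variation has an $O(1)$ multiplier.

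There is a second, related gap. Your derivative induction controls the moving-hit boundary terms ``by the signed clock $D_k$''. But you obtain a margin for $D_k$ only on $\mathcal W_{\delta_a}$, and only at the end. Meanwhile \eqref{eq:legacy-6-42d}--\eqref{eq:legacy-6-42e} are claimed on every direct cell, uniformly as $C_\pm\downarrow0$; that uniformity is the point of saying no lower bound on $A_{\pm,*}$ is asserted independently of $C_\pm$. This is a real restriction: when $C_\pm\lesssim s^{1/2}$, the gate-facing forced orbit runs near $z\approx\theta s|k|/X$ on the tail. There $\theta sk(1-z)^2/z$ nearly cancels $X$, and the $u$-clock degenerates.

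The paper avoids this as follows.
\begin{enumerate}
\item It writes $\Delta_{\pm,*}$ through the exact forced-label drift $dA/d\tau=\pm k(1-z)^2\partial_z\rho_\pm$.
\item It splits the tail at $z_b=1/2$. On $z_b\le z\le1$ the forced perturbation of the clock is at most $2C_\ell\theta s$, so $\pm D_k\ge3d_*/4$ with no condition on $C_\pm$. On $z\le z_b$ it uses the orientation-preserving fixed-$x$ lower hit.
\item Then \eqref{eq:legacy-6-42e} follows from \eqref{eq:legacy-6-42d} by Leibniz. Transversality of the fixed-data variational equation at the gate makes $C_\pm$ a one-sided boundary coordinate, so the only inverse factors are those from the explicit reciprocal in $\Delta_{\pm,*}/C_\pm$.
\end{enumerate}
Adding the tail identity, the $z_b$ split, and the $s/z$ bound would close your argument.
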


\begin{proof}
At \(\beta=0,k=0\), the lower action is \(X^2/2\), the upper principal
orbit is \(X^2/2+Y=e\), and fixed inner/outer graph overlaps and two fixed
upper layers may be chosen for every \(e\in E^+\).  Their graph
denominators, section normals, bulk sides, and competing-face gaps have
strict principal margins.  Shrinking one outer normalized box and then an
inner box preserves half of every one of these finitely many margins.  The
source-specific first-loss classification is recorded in
Table~\ref{tab:source-first-loss}.  It refines, but does not replace, the
physical first-hit outcome of Section~\ref{sec:part-i-atlas}.
\begin{table}[H]
\centering
\caption{Source first-loss routing on the common physical domain.}
\label{tab:source-first-loss}
\footnotesize
\begin{tabularx}{\textwidth}{@{}
 >{\raggedright\arraybackslash}p{.21\textwidth}
 >{\raggedright\arraybackslash}X
 >{\raggedright\arraybackslash}X@{}}
\toprule
First lost condition&Exact event&Previously constructed destination\\
\midrule
compact/coefficient&\(H=H_0\), or one or more normalized coefficients first
reach their fixed shell&compact analytic regime, or the finite signed overlap of
Proposition~\ref{prop:part-ii-source-parameter-overlap} followed by the
corresponding endpoint or upper-vertical outcome\\
lower outer/inner graph&a denominator in \eqref{eq:legacy-6-40}--\eqref{eq:legacy-6-41} vanishes, or a fixed
graph side is first&endpoint root, axis, tangency, endpoint-corner, fixed overlap,
or compact stopping face\\
action inverse/image&an action derivative vanishes, or its image reaches a
fixed interval endpoint&the corresponding endpoint section or corner, or adjacent
action shell\\
entry/exit layer&a factored layer denominator, \(X\)-side, or layer corner is
first&upper-vertical nullcline or gate, ordinary stopping face, or corner regime\\
upper bulk/return&a fixed bulk side precedes return, or the return normal
vanishes&upper-vertical previous-side, local-singular, collar, tangency, or equilibrium
sector regime\\
competing physical stopping face&another member of the complete finite stopping set is
first&its preassigned stopping-face or local-sector regime\\
divergent flight&no finite contact occurs&the endpoint root/axis or upper-vertical
source-core/projective/local-sector status determined by the fixed slab\\
lower survivor/simultaneous&\(C_+=0\), \(C_-=0\), or any nonempty subset of
the preceding equalities occurs together&lower no-passage, or the labelled
intersection of the same finite regime faces\\
\bottomrule
\end{tabularx}
\end{table}
Table~\ref{tab:source-first-loss} includes graph, denominator, layer, bulk, first-hit,
divergent-flight, and simultaneous losses; coefficient thresholds alone do
not exhaust it.  Proposition~\ref{prop:stopped-first-hit} supplies the finite
contact alternatives and assigns every unbounded singular-sector flight its
named local status.  Theorem~\ref{thm:finite-words} then supplies the finite
fixed itinerary, status, and divider-order labels.  For a sequence leaving the
direct tube, pass to one such label.  A bounded first stopped time converges
to one of the displayed finite contacts; an unbounded time has the endpoint
or upper-vertical
local status supplied by Proposition~\ref{prop:stopped-first-hit}; and
simultaneous first zeros retain their finite subset label.  At a coefficient
shell, Proposition~\ref{prop:part-ii-source-parameter-overlap} identifies the
same physical point and the same original parameter in a finite endpoint or
upper-vertical chart.
No connected-component finiteness is asserted.  The construction uses only
the physical field, fixed stopping faces, strict margins, and that finite physical
overlap, not a differentiated wedge or a fixed-point equation.  This proves
the product-tube assertion and the exhaustive complement policy in the
statement.

It remains to justify the noncompact lower tail, where a compactness
argument would be invalid.  Let \(\rho_\pm\) be the unforced source label
on the two oriented tails and put
\(a_\pm=-\theta^2\log\rho_\pm\).  Direct division of the unforced field in
the \(u\)-clock gives
\begin{equation}
 a_\pm=u+S_\beta(X)+R_\pm(X,u),\qquad
 \frac{\rho_\pm}{ z}
 =e^{-S_\beta(X)/\theta^2}e^{-R_\pm(X,u)/\theta^2},
 \qquad |R_\pm|\le C\theta^2.
\label{eq:legacy-6-42g}
\end{equation}
The same differentiated graph equations bound every required finite jet of
\(R_\pm\) by a power of \(\theta^{-1}\).  At \(X=\pm1\), \eqref{eq:legacy-6-42c}
therefore makes \(\partial_z\rho_\pm\) a polynomial factor times
\(e^{-\sigma_*/\theta^2}\).

Fix \(z_b=1/2\).  On \(z_b\le z\le1\), the forced perturbation of the
clock is at most \(2C_\ell\theta s\), so the unforced signed margin gives
\(\pm D_k\ge3d_*/4\).  Along oriented time \(d\tau=\pm dt\), the exact
forced-label equation is
\begin{equation}
 \frac{dA}{ d\tau}=\pm k(1-z)^2\partial_z\rho_\pm,\qquad
 \frac{d\tau}{ du}=\mp\frac{1}{\theta D_k}.
\label{eq:legacy-6-42h}
\end{equation}
Integrating \eqref{eq:legacy-6-42h} from the scaled cut to \(z_b\), and then using the
orientation-preserving fixed-\(x\) lower hit on \(z\le z_b\), gives
\eqref{eq:legacy-6-42d}.  On the same radial \(k\)-segment the gate-facing factor lies
between its final value and one, and \eqref{eq:legacy-6-42g} gives
\begin{equation}
 \frac{s}{ z}\le
 C\theta^{-8}C_\pm^{-1}e^{-\sigma_*/\theta^2}
 \le C\theta^{-8}e^{-(\sigma_*-2\delta_a)/\theta^2}.
\label{eq:legacy-6-42i}
\end{equation}
Consequently \(|D_k-D_0|\le C_\ell\theta s/z=o(1)\), proving the forced
clock margin on \eqref{eq:legacy-6-42f}.  The fixed-data variational equation is transverse
at either gate, so \(C_\pm\) is a one-sided boundary coordinate and an
order-\(n\) derivative of its reciprocal has pole order at most \(n+1\).
Differentiating \eqref{eq:legacy-6-42d} now gives \eqref{eq:legacy-6-42e}.  This also proves that no inverse
gate factor occurs without the flat factor displayed there.
\end{proof}

For a moving hit \(T(p)\) defined by \(\Psi(T(p),p)=0\), with
\(|\Psi_T|\ge c_{\rm hit}>0\), every mixed derivative satisfies the
exact recursion
\begin{equation}
 0=\partial_p^\nu[\Psi(T(p),p)]
  =\Psi_TT_\nu+\mathcal B_\nu,\qquad
 T_\nu=-\frac{\mathcal B_\nu}{\Psi_T},
\label{eq:legacy-6-43}
\end{equation}
where \(\mathcal B_\nu\) contains only lower-order hit/flow derivatives.
If a core/tail cut \(x_*(p)\) moves, differentiation of
\(\int_0^{x_*(p)}K(x,p)\,dx\) retains
\((\partial_px_*)K(x_*)\).  The second derivative also contains
\[
 2(\partial_px_*)(\partial_pK)(x_*),\qquad
 (\partial_px_*)^2K_x(x_*),\qquad
 (\partial_p^2x_*)K(x_*).
\]
These terms are exponentially flat in the
weighted norms below, but they are not zero.  Equations \eqref{eq:legacy-6-40}--\eqref{eq:legacy-6-43} are
the moving-boundary calculus used at every order.

\section{Uniform six-jet closure of the transition maps}
\label{sec:part-ii-six-jet}

The common graph cover and the moving-boundary formulas are closed under the
derivatives required by the source Rolle argument.  We prove this analytic
closure below; a finite enumeration only checks the resulting list of
derivative types and supplies none of the estimates.

The exact source field in \((x,z)\) is
\begin{equation}
 \dot x=1-z+Bx^2+ax+m(1-z)^2,\qquad
 \dot z=z\{x+c(z-1)\}+\ell(1-z)^2.
\label{eq:legacy-6-44}
\end{equation}
On the direct phase-dominant cells of
Proposition~\ref{prop:part-ii-common-forced-domain}, retain
\begin{equation}
 B=\theta^2\beta_B,\qquad m=\theta^2\beta_m,\qquad
 a=\theta\beta_a,\qquad c=\theta\beta_c,\qquad \ell=sk.
\label{eq:legacy-6-45}
\end{equation}
The commuting graph generators are
\begin{equation}
 \mathcal G=\{\theta\partial_\theta,\partial_k,\partial_e,
 \partial_{\beta_B},\partial_{\beta_m},
 \partial_{\beta_a},\partial_{\beta_c}\}.
\label{eq:legacy-6-46}
\end{equation}
For a mixed word \(I\), put
\begin{equation}
 \kappa_I(k)=
 \begin{cases}|k|,&I\text{ contains no }\partial_k,\\
 1,&I\text{ contains }\partial_k.
 \end{cases}
\label{eq:legacy-6-47}
\end{equation}
This distinction is decisive: a value or fixed-original derivative keeps the
small factor \(|k|\), whereas the auxiliary derivative used in Hadamard
division need not.

The finite primitive alphabet is the following glossary.
\begin{description}[style=multiline,leftmargin=5.2em,labelwidth=4.5em,
  itemsep=0.1em,topsep=0.25em]
\item[LC, LT] lower fixed-initial and fixed-terminal cores;
\item[FT] fold-transverse tails in the (u)-clock;
\item[AO, AI] outer action graphs and inverse graphs;
\item[UL, UB] upper (Y=0) layers and compact bulk;
\item[MH, MC] moving hits and moving artificial cuts;
\item[CP] finite compositions and inverses.
\end{description}
An edge in the derivative DAG records one use of a graph equation,
variation of constants, inverse differentiation, \eqref{eq:legacy-6-43}, Leibniz at a moving
cut, or finite composition.  There are 35 typed nodes and maximum depth
seven.  Since the derivative order is at most six and the alphabet is finite,
the family of canonical commuting words is finite.

\paragraph{Why the derivative budget is six.}
The zero count below asks for one stable norm, not for every derivative of
every order.  Its two parts are
\begin{equation}
 \underbrace{|\nu|\le4}_{\substack{\text{action, normalized parameters,}\\
              \text{moving evaluation and Hadamard division}}}
 \quad+\quad
 \underbrace{j\le2}_{\substack{\text{fixed-original successors}\\
              \text{used by the curvature test}}}
 \quad\le\quad6.
\label{eq:source-six-jet-budget}
\end{equation}
The first block is the mixed \(C^4\) control of the action maps.  It contains
the third action derivative \(T_{eee}\) occurring in the twice differentiated
secant \eqref{eq:legacy-6-75d} and the regularity reserve for the two center
contractions.
The second block consists of the two applications of
\(\mathfrak D\), differentiation with the five original coefficients held
fixed, which produce the signed curvature in
\eqref{eq:legacy-6-78}--\eqref{eq:legacy-6-79}.  Section~\ref{sec:part-ii-variation}
packages precisely these words as \(\mathcal Q_{4,2}\).

There is one apparent extra operation: the forced remainder is divided by
\(k\).  It does not add a seventh derivative to the corner
\((|\nu|,j)=(4,2)\).  The undivided forced difference retains its factor
\(|k|\) for every word not containing \(\partial_k\); at \(k=0\) its
Hadamard extension uses one bare \(\partial_k\) row.  After division we use
only the triangular quotient norm
\(\max_{j\le2}\|\mathfrak D^j\widetilde T\|_{C_e^{4-j}}\), so a word
under the Hadamard integral has order
\(1+j+(4-j)=5\), not seven.  The twice differentiated moving evaluation in
fact calls only rows with \(j+r\le2\).  This point is written explicitly in
\eqref{eq:legacy-6-60a} below.

Thus order six is sufficient because the closure theorem below controls the
whole rectangular budget \eqref{eq:source-six-jet-budget}; no later source
formula calls a seventh derivative.  We do not claim that five derivatives
are intrinsically impossible for a differently organized proof.  They do not,
however, close the norm actually propagated here: its boundary contains words
with four local derivatives followed by two fixed-original successors.  A
five-jet proof would therefore require a new, smaller task-specific norm and
new composition and inverse estimates, not the recurrence used in this paper.

\begin{theorem}[Six-jet closure]
\label{thm:part-ii-six-jet}
On the common physical domains and the differentiated wedge
\begin{equation}
 C_+\ge s^{2\delta_a},\qquad C_-\ge s^{2\delta_a},
\label{eq:legacy-6-49}
\end{equation}
all primitives in the preceding alphabet close through total order six in weighted sup and
\(L^1\) norms.  In particular, with
\begin{equation}
 p_0=8,\qquad p_{n+1}=(n+2)p_n+8,
\label{eq:legacy-6-50}
\end{equation}
every order-\(n\) lower kernel is bounded by a polynomial
\((1+x)^{p_n}\) times a fixed Gaussian, every forced upper defect carries
\begin{equation}
 s\kappa_I(k)\theta^{-p_n}\frac{Y^2}{ Y+\theta^2},
\label{eq:legacy-6-51}
\end{equation}
and every inverse gate power is absorbed by the exponentially flat tail.
All moving hits and moving cuts are included.
\end{theorem}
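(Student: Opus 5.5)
The plan is an induction on the total derivative order $n=0,1,\dots,6$ along the finite derivative DAG described above, with 35 typed nodes and depth at most seven. The inductive hypothesis at order $n$ is a weighted bound for every primitive in the alphabet LC, LT, FT, AO, AI, UL, UB, MH, MC, CP:
\begin{itemize}
\item lower kernels are bounded by $(1+x)^{p_n}$ times the fixed Gaussian coming from the unforced action $X^2/2$;
\item forced upper defects carry the factor \eqref{eq:legacy-6-51};
\item inverse gate factors $C_\pm^{-q}$ with $q\le n+1$ appear only multiplied by the flat carrier $e^{-\sigma_*/\theta^2}$ of Proposition~\ref{prop:part-ii-common-forced-domain}.
\end{itemize}
All estimates are made on the product tube supplied by Proposition~\ref{prop:part-ii-common-forced-domain}, inside the wedge \eqref{eq:legacy-6-49}. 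On that set every graph denominator, section normal and clock $D_k$ keeps a fixed signed margin. As a consequence, no derivative ever divides by a quantity that is not already controlled at order zero.

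The base case is the order-zero statement, which is exactly the content of the common-domain proposition together with \eqref{eq:legacy-6-42g}. For the step $n\to n+1$, I apply one generator from $\mathcal G$ and trace the resulting DAG edge. There are four kinds of edge.
\begin{itemize}
\item \emph{Graph equations} \eqref{eq:legacy-6-37}, \eqref{eq:legacy-6-40}, \eqref{eq:legacy-6-41} and the upper layers. Here the order-$(n+1)$ variation solves a linear equation whose forcing is a polynomial in lower-order variations. Variation of constants with the Gaussian fundamental solution then gives weighted sup and $L^1$ bounds. Each time the polynomial forcing is integrated against the Gaussian we lose a power $(1+x)^{(n+2)p_n+8}$. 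This loss produces the recursion \eqref{eq:legacy-6-50}: the factor $n+2$ counts the Leibniz/Faà di Bruno terms, and the $8$ is the $\theta^{-8}$ already visible in \eqref{eq:legacy-6-42i}.
\item \emph{Moving hits.} These are handled by \eqref{eq:legacy-6-43} using $|\Psi_T|\ge c_{\rm hit}$, so the hit derivative has the same weight as $\mathcal B_\nu$.
\item \emph{Moving cuts.} These are handled by the Leibniz formula written after \eqref{eq:legacy-6-43}. The boundary terms are evaluated at $X=\pm1$, where the tail label is flat by \eqref{eq:legacy-6-42c}.
\item \emph{Inverses and compositions} (AI, CP). These follow from the chain rule and the inverse function formula. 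The derivatives of an inverse are polynomials in forward derivatives divided by powers of the first derivative. That first derivative is bounded below, since action derivatives are nonvanishing on the retained cells by Table~\ref{tab:source-first-loss}.
\end{itemize}

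The factor $s\kappa_I(k)$ in \eqref{eq:legacy-6-51} is propagated separately. A forced defect solves a linear equation whose forcing is proportional to $\ell=sk$. It therefore vanishes identically at $k=0$, and every word without $\partial_k$ keeps the factor $|k|$. One $\partial_k$ removes that factor but keeps $s$. The weight $Y^2/(Y+\theta^2)$ comes from the upper layer, where the forcing term $(1-z)^2$ vanishes to second order at $Y=0$; the $\theta^2$ regularizes the layer. For derivatives of $C_\pm^{-1}$, I use \eqref{eq:legacy-6-42e}: the pole order grows by at most one per derivative. On the wedge, $C_\pm^{-q}\le s^{-2q\delta_a}$. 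With $q\le7$ and $\delta_a=1/8192$, this loss is absorbed by $e^{-\sigma_*/\theta^2}$ because $\sigma_*\ge3/8$, exactly as in \eqref{eq:legacy-6-42i}.

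The main obstacle is the noncompact lower tail together with the moving cut at the fold. There the $x$-clock degenerates, and the derivatives with respect to $\theta\partial_\theta$ of $e^{-S_\beta/\theta^2}$ generate extra powers $\theta^{-2}$. I would work in the $u$-clock and the reciprocal graph coordinates, and differentiate \eqref{eq:legacy-6-42h} directly rather than the composed hit. Each $\theta\partial_\theta$ acting on the Gaussian costs at most $\theta^{-2}$ times a polynomial in $X$, which fits the budget $p_n$. Finiteness of the canonical word list then makes all constants $C_I$ uniform, and this closes the induction through order six.
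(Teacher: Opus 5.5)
Your plan is essentially the paper's proof: an induction through total order six over the finite typed primitive list, using Fa\`a di Bruno and variation of constants on the graph equations, the hit recursion \eqref{eq:legacy-6-43}, Gaussian-flat Leibniz rows at the moving cuts, Hadamard bookkeeping for $\kappa_I(k)$, and absorption of inverse gate powers by the flat tail on the wedge \eqref{eq:legacy-6-49}. The points your sketch asserts rather than proves are the ones the paper writes out to close the induction: first, uniform propagator control through the weighted Volterra bound \eqref{eq:legacy-6-51a}--\eqref{eq:legacy-6-51b}, fed by explicit $L^1$ coefficient masses \eqref{eq:legacy-6-53b}, \eqref{eq:legacy-6-54a}, \eqref{eq:legacy-6-55a} and the weighted tail state \eqref{eq:legacy-6-54}; second, the Bell-block inequality \eqref{eq:legacy-6-54c}, in which exponents add over at most $n+1$ blocks and $8$ is a per-coefficient reserve---so $n+2$ is not the number of Fa\`a di Bruno terms, and the $8$ is not the $\theta^{-8}$ of \eqref{eq:legacy-6-42i}.
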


\begin{proof}
Each graph equation has an analytic right side on one of the fixed-margin
tubes.  Differentiate an order-\(n\) word.  Fa\`a di Bruno partitions it
into products of lower-order state derivatives and one coefficient
derivative.  The coefficient table has only the following denominator
types: a fixed graph normal, \(Y+\theta^2\), one of the two lower factors
\(C_\pm\), or an analytic unit.  Fixed normals cost no weight.
Every Euler derivative of \(\theta^2/(Y+\theta^2)\) is bounded by a
constant times the same integrable kernel.  A derivative of
\(C_\pm^{-1}\) has pole order at most one more than its order.

We use one weighted variational estimate repeatedly.  If \(v'=A(t)v\),
\(W(t)\) is an invertible diagonal weight, and
\begin{equation}
 B_W=WAW^{-1}+W'W^{-1},\qquad
 \Lambda_W=\int\|B_W(t)\|\,dt,
\label{eq:legacy-6-51a}
\end{equation}
then the weighted fundamental matrix \(Z=W\Phi W(a)^{-1}\) and its inverse
satisfy
\begin{equation}
 \sup_t\{\|Z(t)\|+\|Z(t)^{-1}\|\}\le2e^{\Lambda_W}.
\label{eq:legacy-6-51b}
\end{equation}
Indeed \(Z=I+\int B_WZ\), and the same Volterra calculation applies to the
inverse.  Thus every appeal to variation of constants below is reduced to a
displayed \(L^1\) coefficient bound; no unweighted compactness estimate is
being substituted.

For the lower core, variation of constants gives kernels of the form
\begin{equation}
 K_\pm(x)=\frac{(1-z_x)^2}{
 F(x,z_x)J_0(x,sA_\pm(x))},\qquad
 \partial_xJ_0=(\partial_zf_0)J_0,
\label{eq:legacy-6-52}
\end{equation}
and induction yields
\begin{equation}
 |D_IK_\pm(x)|\le C_I(1+x)^{p_n}e^{-c_*x^2}.
\label{eq:legacy-6-53}
\end{equation}
To see the required coefficient mass, put \(z=sZ\) and
\[
 a_0(x)=\frac{x-c}{1+m+Bx^2+ax}.
\]
Direct differentiation of the exact quotient gives
\begin{equation}
\begin{aligned}
 |\partial_Z\mathcal F_{\rm low}-a_0(x)|
 &\le C(1+x)^8\{z(x)+s|k|\},\\
 |\partial_zf_0-a_0(x)|
 &\le C(1+x)^4z(x).
\end{aligned}
\label{eq:legacy-6-53a}
\end{equation}
Conjugation by the unforced scalar fundamental factor and the Gaussian
lower label imply
\begin{equation}
 \int_0^{\theta^{-1}}
 C(1+x)^8\{z(x)+s|k|\}\,dx\le C.
\label{eq:legacy-6-53b}
\end{equation}
This is the human \(L^1\) estimate behind the first primitive row.

The recurrence \eqref{eq:legacy-6-50} dominates one coefficient differentiation, one
product partition, and the eight reserved powers at every step.  The
fixed-terminal system is equally explicit.  For a signed tail
\(\varepsilon\in\{\pm1\}\), let \(0\le r\le1\), put
\begin{equation}
\begin{gathered}
 u(r)=(1-r)u_{\varepsilon,*},\qquad z=e^{-u/\theta^2},\\
 F=1-z+\beta_BX^2+\beta_aX+\theta^2\beta_m(1-z)^2,\\
 D_k=X+\theta^2\beta_c(z-1)
       +\theta sk\frac{(1-z)^2}{ z},
\end{gathered}
\label{eq:legacy-6-53c}
\end{equation}
and augment the orbit by
\begin{equation}
\begin{aligned}
 u_r&=-u_{\varepsilon,*},&
 X_r&=u_{\varepsilon,*}\frac{F}{ D_k},&
 \tau_r&=\frac{\varepsilon u_{\varepsilon,*}}{\theta D_k},\\
 (I_0)_r&=\frac{\varepsilon u_{\varepsilon,*}}{\theta D_k}
          (1-z)^2\partial_z\rho_\varepsilon,&
 (\mathcal A_T)_r&=\varepsilon k(I_0)_r .
\end{aligned}
\label{eq:legacy-6-53d}
\end{equation}
Here \(u_{\varepsilon,*}>0\) is the fixed terminal clock length and
\begin{equation}
 A_{\varepsilon,*}
 =\rho_\varepsilon(\varepsilon,u_{\varepsilon,*};p)/s.
\label{eq:legacy-6-53e}
\end{equation}
The complete entry state is
\begin{equation}
 (u,X,\tau,I_0,\mathcal A_T)(0)
 =(u_{\varepsilon,*},\varepsilon,0,0,A_{\varepsilon,*}).
\label{eq:legacy-6-53f}
\end{equation}
Consequently every nonempty labelled parameter word \(D_I\) starts with
\begin{equation}
 D_I(u,X,\tau,I_0,\mathcal A_T)(0)
 =(D_Iu_{\varepsilon,*},0,0,0,D_IA_{\varepsilon,*}),
\label{eq:legacy-6-53g}
\end{equation}
so no moving-entry jet is omitted.  The weighted state
\begin{equation}
 (u,X,\theta\tau,\theta^3e^{3/(8\theta^2)}I_0,\mathcal A_T)
\label{eq:legacy-6-54}
\end{equation}
turns the terminal integral into a bounded state.  Differentiating
\eqref{eq:legacy-6-53c}--\eqref{eq:legacy-6-53d} gives the same triangular induction.  Formula \eqref{eq:legacy-6-43}
adds only lower-order rows divided by a fixed section normal, and the exact
Leibniz formulas listed after \eqref{eq:legacy-6-43} add Gaussian-flat boundary rows.
With the same $u_{\varepsilon,*}>0$, the two
noncompact masses which make this weighting effective are
\begin{equation}
 \int_0^{u_{\varepsilon,*}}\theta^{-2}e^{-u/\theta^2}\,du\le1,
\qquad
 |H_\pm|+\theta^2|\partial_{X,u}H_\pm|
 \le Ce^{-3/(8\theta^2)},
\label{eq:legacy-6-54a}
\end{equation}
where \(H_\pm=(1-z)^2\partial_z\rho_\pm\).  After multiplication by the
weight in \eqref{eq:legacy-6-54}, every tail coefficient has bounded \(L^1\) mass.

For completeness, if \(I\) is a labelled derivative word and
\(\Pi(I)\) its set partitions, exact differentiation of any graph system
\(Y'=\mathcal V(Y,p)\) gives
\begin{equation}
 (D_IY)'=\mathcal V_YD_IY+
 \mathcal V_pD_Ip+
 \sum_{\substack{\pi\in\Pi(I)\\|\pi|\ge2}}
 D^{|\pi|}\mathcal V[Z_B:B\in\pi].
\label{eq:legacy-6-54b}
\end{equation}
Reciprocal differentiation is included by the same partition formula.  If
all lower words through order \(n\) cost at most \(r_n=p_n-4\), a Bell term
with \(b\le n+1\) lower blocks costs at most
\begin{equation}
 br_n+8(b+1)\le(n+2)p_n+4=r_{n+1}.
\label{eq:legacy-6-54c}
\end{equation}
Equations \eqref{eq:legacy-6-51b} and \eqref{eq:legacy-6-54c} close the induction rather than merely count
its words.

At order six at most 14 inverse gate factors occur.  We retain the larger
declared reserve \(q_6=448\).  From \eqref{eq:legacy-6-49},
\begin{equation}
 C_+^{-q_+}C_-^{-q_-}e^{-3/(8\theta^2)}
 \le C e^{-17/(64\theta^2)}
 \quad(q_++q_-\le448),
\label{eq:legacy-6-55}
\end{equation}
so every inverse is paid by the same flat tail rather than declared bounded.
On the upper layers, direct subtraction of forced and unforced graphs gives
exactly the carrier \eqref{eq:legacy-6-51}; in the compact bulk ordinary variational
equations preserve it.  Finite compositions close the induction.
The two layer kernels have literal masses
\begin{equation}
 \int_0^\delta\frac{\theta^2}{ Y+\theta^2}\,dY
 =O(\theta^2(1+|\log\theta|)),\qquad
 \int_0^\delta\frac{Y^2}{ Y+\theta^2}\,dY\le C.
\label{eq:legacy-6-55a}
\end{equation}
Together with the fixed hit normal in \eqref{eq:legacy-6-43}, these estimates cover every
layer, bulk, inverse, composition, moving-hit, and moving-cut operation in
the primitive alphabet.  Hence every primitive in the actual source itinerary belongs to one of
the 35 typed nodes listed above.

A finite computer calculation enumerates the 35 typed nodes and 167115
canonical commuting words and checks the recurrence, depth, denominator
assignment, and worst exponents.  It does not
prove the physical domains, clock signs, Gaussian estimate \eqref{eq:legacy-6-53}, gate
reserve \eqref{eq:legacy-6-55}, or exhaustiveness of the first-contact classification; those are proved
above and in Sections~\ref{sec:part-ii-lower}--\ref{sec:part-ii-fold}.
\end{proof}

Appendix~\ref{app:source-six-jet} records the finite recurrence and a
representative estimate.  The complete enumeration is included in the
accompanying reproducibility package.

\section{Variational majorants on the physical domains}
\label{sec:part-ii-variation}

Six-jet closure is a formal operator statement until it is applied to the
actual lower labels and upper passage.  This section converts it into the
differentiated coefficient and remainder estimates required by \eqref{eq:part-ii-matched}.

Let \(\rho_+\) be the unforced source level through a point of the outgoing
lower tail, and define \(\rho_-\) by the reversed returning hit.  Since
the forced and unforced fields differ there by
\(\ell(1-z)^2\partial_z\), the exact additive labels are
\begin{equation}
 \Gamma_+=\int_{\gamma_+^\ell}(1-z)^2\partial_z\rho_+\,dt,
 \qquad
 \Gamma_-=\int_{\gamma_-^\ell}^{\rm reversed}
 (1-z)^2\partial_z\rho_-\,|dt|.
\label{eq:legacy-6-56}
\end{equation}
They contain the complete endpoint tails and never divide by \(C_\pm\).
Split each integral at \(x_*=\theta^{-1}\).  On the core, \eqref{eq:legacy-6-53}
converges with two fixed-original successors to \(e^{-x^2/2}\).  On the
tail, \eqref{eq:legacy-6-40}--\eqref{eq:legacy-6-42}, \eqref{eq:legacy-6-54}, and \eqref{eq:legacy-6-55} give a polynomial times
\(e^{-17/(64\theta^2)}\).  The moving-cut terms are bounded by the same
tail.  Hence, for \(j=0,1,2\),
\begin{equation}
 \left|\mathfrak D^j
 \left(\frac{\Gamma_\pm}{\sqrt{\pi/2}}-1\right)\right|
 \le\eta_{\rm src}(s),\qquad \eta_{\rm src}(s)\to0,
\label{eq:legacy-6-57}
\end{equation}
uniformly through either open gate side.

In the upper passage, with \(x=X/\theta\), \(y=Y/\theta^2\), the exact
field is
\begin{equation}
\begin{aligned}
 X_\xi&=Y(\beta_mY-1)+\theta^2(\beta_BX^2+\beta_aX),\\
 Y_\xi&=(Y+\theta^2)(X+\beta_cY)+\frac{sk}{\theta}Y^2.
\end{aligned}
\label{eq:legacy-6-58}
\end{equation}
The norm used below is not an unspecified smooth norm.  Let
\(\mathcal Q_{4,2}\) be the finite collection obtained from
\(\partial_{(e,\beta)}^\nu\), \(|\nu|\le4\), by applying at most two
fixed-original successors and expanding them in the normalized generators
by \eqref{eq:legacy-6-58b}.  Every resulting word has total order at most six.  Set
\begin{equation}
 \|F\|_{4,2}:=\max_{Q\in\mathcal Q_{4,2}}\|QF\|_\infty.
\label{eq:legacy-6-58a}
\end{equation}
At fixed original parameters,
\begin{equation}
 \mathfrak Ds=-s,\quad \mathfrak Dh=1-s,\quad
 \mathfrak Dk=k,\qquad
 \mathfrak D=\frac{1-s}{2h}\mathcal E+k\partial_k,\quad
 [\mathfrak D,\partial_k]=-\partial_k,
\label{eq:legacy-6-58b}
\end{equation}
where
\begin{equation}
 \mathcal E=-\theta\partial_\theta
 +2\beta_B\partial_{\beta_B}+2\beta_m\partial_{\beta_m}
 +\beta_a\partial_{\beta_a}+\beta_c\partial_{\beta_c}.
\label{eq:legacy-6-58c}
\end{equation}
On the two \(Y=0\) layers, division by \(Y+\theta^2\) gives an
integrable unforced defect \(O(\theta^2/(Y+\theta^2))\) and a forced
defect with carrier \eqref{eq:legacy-6-51}.  On \(Y\ge\delta_Y\), divide time by
\(Y>0\); the bulk system is regular and both moving hits have fixed
normals.  Applying \eqref{eq:legacy-6-43} through order six and composing with the action
graphs yields
\begin{equation}
 \|T_{s,\beta,0}-T_\beta\|_{4,2}
 \le C\theta^2(1+|\log\theta|),
\label{eq:legacy-6-59}
\end{equation}
\begin{equation}
 \|T_{s,\beta,k}-T_{s,\beta,0}\|_{4,2}
 \le Cs|k|\theta^{-69998}.
\label{eq:legacy-6-60}
\end{equation}
Hadamard division in \(k\) is legal on the survivor interval from
Lemma~\ref{lem:part-ii-lower-scale}.  Explicitly,
\begin{equation}
 \widetilde T_{s,\beta,k}
 =\frac{T_{s,\beta,k}-T_{s,\beta,0}}{ k}
 =\int_0^1\partial_kT_{s,\beta,tk}\,dt,\qquad
 \max_{j\le2}\|\mathfrak D^j\widetilde T_{s,\beta,k}\|_{C_e^{4-j}}
 \le Cs\theta^{-69998}.
\label{eq:legacy-6-60a}
\end{equation}
The bare \(\partial_k\) row uses \(\kappa_I(k)=1\) in \eqref{eq:legacy-6-47}, while
fixed-original rows retain \(|k|\).  Applying \eqref{eq:legacy-6-58b} under the integral
accounts for every commutator term; in particular no division by a
vanishing \(k\) is performed after differentiation.  Since the parenthesis
in \eqref{eq:legacy-6-14} denotes evaluation at the moving action,
\begin{equation}
 \mathcal R=-hk\,\widetilde T_{s,\beta,k}
       (1-\lambda_+/h),
\label{eq:legacy-6-60b}
\end{equation}
exactly.  The moving argument belongs to the localized common action tube;
two fixed-original differentiations of that evaluation use only the rows
\((j,r)=(2,0),(1,1),(0,2)\), all contained in
\eqref{eq:legacy-6-60a}.  Multiplication by
\(h=\theta^{-2}\) spends the two reserved powers and gives, for
\(j\le2\),
\begin{equation}
 |\mathfrak D^j\mathcal R|
 \le\eta(\epsilon,s)
 \sum_{i=0}^j
 (|\mathfrak D^i\lambda_+|+|\mathfrak D^i\lambda_-|).
\label{eq:legacy-6-61}
\end{equation}

Finally, on \(k=0\) apply the two Hadamard contractions of
Theorem~\ref{thm:part-ii-center-ideal} to
\(q_h=[h(P_0)-h(s)]/h\).  The six-jet bounds pass under their integral
formulas.  The principal scaled action map vanishes on the two center faces,
so its reduced rows tend to
\begin{equation}
 A\longrightarrow\frac{4\sqrt2}{3},\qquad
 C\longrightarrow\frac{8\sqrt2}{15}
\label{eq:legacy-6-62}
\end{equation}
with the two weighted successors in \eqref{eq:legacy-6-12}.  This proves every analytic
estimate used in \eqref{eq:part-ii-matched}.

\section{Global localization of source-regime fixed points}
\label{sec:part-ii-action}

The local estimates above apply only on a compact action segment.  We now
construct that segment before using the fixed-point equation, and prove that
every source-regime fixed point lies in it.  This prevents a circular appeal
to derivatives near an unproved gate.

For an arbitrary positive action \(e\), put
\begin{equation}
 H_e=e\,h(s),\qquad \lambda_e=H_e^{-1},\qquad
 \widehat\beta(e)=
 (BH_e,mH_e,a\sqrt{H_e},c\sqrt{H_e},d\sqrt{H_e}).
\label{eq:legacy-6-63}
\end{equation}
Choose fixed nested intervals
\begin{equation}
 E^-\Subset E^+\Subset E^{++}\Subset E^{+++}
\label{eq:legacy-6-64}
\end{equation}
around one.  Here \(e>0\) is the arbitrary base action, whereas
\(\mathfrak e\in E^{+++}\) is the local action ratio: the physical action
represented in the rebased tube is \(e\mathfrak e\).  The lower action
graphs and inverses are defined on fixed outer/inner graph rectangles, and
the upper map on two fixed layers and a compact bulk, whenever
\begin{equation}
 0<\lambda_e\le\lambda_0,\qquad
 |\widehat\beta(e)|\le\epsilon_g,\qquad
 \mathfrak e\in E^{+++}.
\label{eq:legacy-6-65}
\end{equation}
Their common estimates are
\begin{equation}
 \|\mathcal A_{\pm,H}-S_{\widehat\beta,\pm}\|_{C^1}
 +\|\mathcal U_{\widehat\beta,\lambda_e}
       -\mathcal B_{\widehat\beta}\|_{C^1}
 \le C\lambda_e(1+|\log\lambda_e|),
\label{eq:legacy-6-66}
\end{equation}
with positive derivative and inverse margins.  All graph denominators,
section normals, side distances, and distances to competing stopping faces in
\eqref{eq:legacy-6-65} are
independent of \(C_\pm\).
The exact rebased identity is
\begin{equation}
 \frac{T_{s,\beta,k}(e)}{ e}
 =\left(\mathcal A_{-,H_e}\circ
        \mathcal U_{\widehat\beta(e),\lambda_e}\circ
        \mathcal A_{+,H_e}^{-1}\right)(1).
\label{eq:legacy-6-66a}
\end{equation}

Indeed, on the outer lower graph use
\(u=-\lambda_e\log z\); on the inner graph use \(X\).
Their denominators have fixed opposite margins on the chosen overlap.
On an upper layer, the only nonuniform integral is
\[
 \int_0^\delta\frac{\lambda_e}{ Y+\lambda_e}\,dY
 =O(\lambda_e(1+|\log\lambda_e|)).
\]
In the bulk the limiting system is \(X'=-1\), \(Y'=X\).  A Gronwall
estimate preserves every side buffer and gives exactly one transverse exit.
These estimates prove \eqref{eq:legacy-6-66} and the common product tube.

\begin{proposition}[Global source localization]
\label{prop:part-ii-action-localization}
After a finite source/parameter refinement, either a middle orbit first
enters a named compact or parameter-dominated regime, or it remains direct
and
\begin{equation}
 \left|\frac{T_{s,\beta,k}(e)}{ e}-1\right|\le\eta_g,
 \qquad\eta_g\to0.
\label{eq:legacy-6-67}
\end{equation}
For every fixed point of the direct source itinerary,
\[
 e_{\rm in}=1-\lambda_+/h,\qquad
 e_{\rm out}=1+\lambda_-/h
\]
satisfy
\begin{equation}
 |e_{\rm in}-1|+|e_{\rm out}-1|<\delta_a/2.
\label{eq:legacy-6-68}
\end{equation}
Moreover
\begin{equation}
 C_+\ge s^{2\delta_a},\qquad C_-\ge s^{2\delta_a}.
\label{eq:legacy-6-69}
\end{equation}
The statements are uniform as either open gate is approached; on the gate
itself the corresponding strong first hit is absent.
\end{proposition}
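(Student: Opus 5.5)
The plan is a continuity (bootstrap) argument in the base action \(e\), run on the rebased tube of \eqref{eq:legacy-6-65}. First, for the middle orbit, I would apply the exact rebased identity \eqref{eq:legacy-6-66a} together with the uniform estimate \eqref{eq:legacy-6-66}. At \(\widehat\beta=0\) and \(\lambda_e=0\), the limiting composition \(S_{0,-}\circ\mathcal B_0\circ S_{0,+}^{-1}\) is the identity on action ratios. This holds because the principal lower action is \(X^2/2\) and the principal upper orbit is \(X^2/2+Y=e\). Continuity in \(\widehat\beta\) then gives \(|T/e-1|\le C(\epsilon_g+\lambda_0(1+|\log\lambda_0|))=:\eta_g\) whenever \eqref{eq:legacy-6-65} holds. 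The role of the finite source/parameter refinement is to control what happens when \eqref{eq:legacy-6-65} fails along the orbit, either through \(|\widehat\beta(e)|\) reaching \(\epsilon_g\) or through \(\lambda_e\) reaching \(\lambda_0\). I would route such a failure, at its first loss, via Proposition~\ref{prop:part-ii-source-parameter-overlap} and Table~\ref{tab:source-first-loss}, to a compact or parameter-dominated regime. The key point is that the rebased margins do not involve \(C_\pm\), so this dichotomy holds up to both gates.

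Second, to localize fixed points, I would combine \eqref{eq:legacy-6-10} with \eqref{eq:legacy-6-67} to get \(e_{\rm out}=T(e_{\rm in})\) and hence \(|e_{\rm out}/e_{\rm in}-1|\le\eta_g\). This alone does not pin \(e_{\rm in}\) near one, so the lower factors must be used. Via \eqref{eq:legacy-6-7}, the normalized actions are \(h(s_\pm)/h(s)\) with \(s_\pm=sC_\pm\). I would write
\[
 \log\frac{e_{\rm out}}{e_{\rm in}}
 =\log\frac{h(sC_-)}{h(sC_+)},
\]
and use that \(h(sC)\approx L-\log C\) for small \(s\). Since \(C_+=1+k\Gamma_+\) and \(C_-=1-k\Gamma_-\) with \(\Gamma_\pm>0\) (see \eqref{eq:legacy-6-5}), the quantity \(\log C_+-\log C_-\) has the sign of \(k\) and is comparable to \(|k|\) away from the gates. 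The fixed-point relation then forces \(|\log C_+-\log C_-|\le 2\eta_g L\). Because both \(\log C_+\) and \(-\log C_-\) are monotone in \(k\) with a common sign (using Lemma~\ref{lem:part-ii-lower-scale} and \eqref{eq:legacy-6-39} for monotonicity on the survivor interval), each satisfies \(|\log C_\pm|\le 2\eta_g L\). This gives \(C_\pm\ge s^{2\eta_g}\), which yields \eqref{eq:legacy-6-69} once \(\eta_g\le\delta_a\). It also gives \(|e_{\rm in}-1|,|e_{\rm out}-1|\le 2\eta_g+O(1/L)\), which yields \eqref{eq:legacy-6-68} after shrinking \(\epsilon_g\), \(\lambda_0\) and \(s_1\).

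Third, I would check that the rebasing is legitimate for every base action, in particular for \(e=e_{\rm in}\), which might a priori be far from one. The rebased tube requires only \(\mathfrak e\in E^{+++}\) and the smallness conditions \eqref{eq:legacy-6-65}, and these are what yield the direct/non-direct dichotomy for arbitrary \(e\). So no a priori compact segment is assumed, and the argument is not circular. Near a gate \(C_\pm\to0\), the estimates remain uniform because no denominator involves \(C_\pm\). At \(C_\pm=0\) itself the strong hit is absent by Lemma~\ref{lem:part-ii-lower-scale}.

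I expect the main obstacle to be the second step. One must show that a fixed point cannot sit with \(|\log C_\pm|\) of order \(L\), that is, with \(C_\pm\) as small as a power of \(s\), in a way that makes \(e_{\rm in}\) and \(e_{\rm out}\) both drift while their ratio stays near one. The obstruction is the opposite signs of the two lower displacements, \(C_+>1>C_-\) or the reverse. The delicate point is making the comparison \(h(sC)=L-\log C+O(1)\) uniform down to \(C\to0^+\), where \(\log C\) is unbounded. I would try first to prove it directly from the identity \eqref{eq:legacy-6-7}, without any expansion in \(k\).
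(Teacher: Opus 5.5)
Your proposal is correct and follows the paper's proof in outline. The shared steps are: a first-threshold refinement in the base action $e$, which is compact if $H_e<H_0$ and parameter-dominated if a scaled coefficient, side, root, or first-hit status is reached; then the rebased tube applied at arbitrary $e$, with margins independent of $C_\pm$, to obtain \eqref{eq:legacy-6-67}; and finally localization at $e=e_{\rm in}$ using the common sign of the two lower displacements. The paper also records that $H_e=e\,h(s)$ and the moduli of $\widehat\beta(e)$ are monotone in $e$ or $\sqrt e$, so each threshold is crossed at most once. That observation is what makes ``first loss'' well defined, and it replaces the continuity/bootstrap framing you announce but never use.

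The one real difference is the order of the localization step, and the paper's order is shorter. The gate relations say that $\lambda_+$ and $\lambda_-$ both have the sign of $k$. Equivalently, $e_{\rm in}=h(sC_+)/h(s)$ and $e_{\rm out}=h(sC_-)/h(s)$ lie on opposite sides of $1$. For $k>0$ this is $e_{\rm in}\le 1\le e_{\rm out}$, because $h$ decreases on $(0,1)$ and $sC_+>s>sC_-$; the inequalities reverse for $k<0$. Together with $e_{\rm out}=T(e_{\rm in})\in[(1-\eta_g)e_{\rm in},(1+\eta_g)e_{\rm in}]$, this squeezes both values to within $O(\eta_g)$ of one, which is \eqref{eq:legacy-6-68}. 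Then \eqref{eq:legacy-6-69} follows from $\lambda_\pm=\pm\log C_\pm+O(s|C_\pm-1|)$ and $h=L+s-1$.

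You instead bound $|\log C_+-\log C_-|$ first, through $h(sC_-)/h(sC_+)$. That works, with two remarks:
\begin{itemize}
\item The step you flag as the main obstacle is not one. The identity $h(sC)=L-\log C+sC-1$ is exact and uniform as $C\to0^+$.
\item For $k<0$ you cannot bound $h(sC_+)\approx L+|\log C_+|$ by $L$. The ratio bound gives $|\log C_+|+\log C_-\le \eta_g(L+|\log C_+|)+O(s)$, and you must absorb the $\eta_g|\log C_+|$ term (fine for $\eta_g<1$) before concluding $|\log C_\pm|\lesssim 2\eta_g L$.
\end{itemize}
You also do not need monotonicity in $k$ from \eqref{eq:legacy-6-39}; $\Gamma_\pm>0$ is enough.
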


\begin{proof}
The refinement in the statement is made before any differentiated
fixed-point estimate.  For every base action \(e>0\), follow the same
original-parameter stopped flight and take the first of the alternatives
\begin{equation}
\begin{array}{ll}
\mathrm{(i)}&H_e<H_0,\\
\mathrm{(ii)}&H_e\ge H_0\text{ and one scaled coefficient threshold,
 side, root, or first-hit status is reached},\\
\mathrm{(iii)}&H_e\ge H_0,\ |\widehat\beta(e)|<\epsilon_g,\text{ and the
 direct stopped status persists}.
\end{array}
\label{eq:legacy-6-66b}
\end{equation}
This is the refinement, made before source-action localization, supplied by
the stopped first-hit and finite local-passage results,
Propositions~\ref{prop:stopped-first-hit} and
\ref{prop:finite-singular-alphabet}, together with the scaled source field,
its commuting generators, and the primitive alphabet of
Section~\ref{sec:part-ii-six-jet}.  Its post-threshold destinations are
exactly the compact, hyperbolic, central, mixed, and terminal regimes listed
in Table~\ref{tab:regimes}.  This refinement uses neither
the differentiated wedge \eqref{eq:legacy-6-49} nor a fixed-point equation.  For fixed
original parameters,
\(H_e=e\,h(s)\) and the five moduli in \(\widehat\beta(e)\) are monotone in
\(e\) or \(\sqrt e\), unless identically zero, so each displayed threshold
is crossed at most once.  Cases (i) and (ii) are compact or
parameter-dominated chart itineraries, respectively.

In case (iii), apply the rebased product tube with base action \(e\) and
local ratio \(\mathfrak e=1\).  The fixed graph, side, section-normal, and
first-contact margins used in \eqref{eq:legacy-6-66} were established directly from the
rebased lower layers and upper bulk; none depends on \(C_\pm\).  Thus
\eqref{eq:legacy-6-66a} and \eqref{eq:legacy-6-66} give \eqref{eq:legacy-6-67} for arbitrary base \(e\), without assuming
that \(e\) is already near one.

At a fixed point, \(T(e_{\rm in})=e_{\rm out}\).  The elementary gate
relations imply, for \(k>0\),
\[
 1\le e_{\rm out}\le1+\eta_g,\qquad
 \frac{1}{1+\eta_g}\le e_{\rm in}\le1,
\]
and the reversed inequalities hold for \(k<0\).  Shrinking \(\eta_g\)
proves \eqref{eq:legacy-6-68}.  Since \(\lambda_\pm=\pm\log C_\pm+O(s|C_\pm-1|)\),
\eqref{eq:legacy-6-68} gives \(|\log C_\pm|\le\delta_ah+O(1)\), and hence \eqref{eq:legacy-6-69}
after shrinking.  Thus localization is logically prior to every use of the
wedge \eqref{eq:legacy-6-49}.
\end{proof}

\section{The matched source estimate}
\label{sec:part-ii-matched-proof}
\label{sec:part-ii-source}

We now derive \eqref{eq:part-ii-matched} from the physical maps, then prove the ambient source
bound stated in Theorem~\ref{thm:part-ii-source-zero}.  The argument uses exact gate variables throughout;
no fixed approximate denominator is substituted near a gate.

\begin{proof}[Proof of Theorem~\ref{thm:part-ii-matched-preparation}]
Every map in \eqref{eq:legacy-6-3} is an orientation-preserving first hit on the common
domains.  The monotonicity \eqref{eq:legacy-6-39}, applied only to the outgoing fixed-initial
problem and the reversed fixed-terminal problem, proves the signs and exact
Hadamard formulas \eqref{eq:legacy-6-5}.  It is never applied to an output of \(M^\ell\).
Equation \eqref{eq:legacy-6-7} then gives
\[
 e_+(J_+^\ell(s))=1-\lambda_+/h,\qquad
 e_-((J_-^\ell)^{-1}(s))=1+\lambda_-/h,
\]
and therefore \eqref{eq:legacy-6-10}.  Proposition~\ref{prop:part-ii-action-localization}
places the whole action secant inside the domain of \eqref{eq:legacy-6-59}--\eqref{eq:legacy-6-61}.

For \(k=0\), the return is the identity on both complete center slices.
Theorem~\ref{thm:part-ii-center-ideal}, applied under the common contractions,
gives \eqref{eq:legacy-6-11}, and Section~\ref{sec:part-ii-variation} gives \eqref{eq:legacy-6-12}.
Subtract \eqref{eq:legacy-6-10} from
\(T_{s,\beta,0}(1)=1+G_0/h\), add and subtract
\(T_{s,\beta,0}(1-\lambda_+/h)\), and multiply by \(h\).  The result
is exactly \eqref{eq:part-ii-matched}.  Since \(h'(s)\ne0\) and every map in \eqref{eq:legacy-6-3} is a local
diffeomorphism, multiplicity is preserved.
\end{proof}

Put
\begin{equation}
 \Phi=\mathcal T_{s,\beta}(\lambda_+)+\lambda_-+\mathcal R.
\label{eq:legacy-6-70}
\end{equation}

\begin{proposition}[Uniform source curvature package]
\label{prop:source-curvature-package}
Fix the finite source itinerary cover and the nested physical product tubes
used above.  There is one order of choices
\begin{equation}
 \text{normalized parameter box}\ \longrightarrow\
 \text{Gaussian cutoff}\ \longrightarrow\ s_0\ \longrightarrow\ L_0
\label{eq:source-choice-order}
\end{equation}
and constants \(c_q>0\) and \(L_0<\infty\) such that the following statements
hold simultaneously on every retained noncompact source cell.  For all five
original parameters in that box, every \(L\ge L_0\), and every open-gate point
\(C_+C_->0\),
\begin{equation}
 \operatorname{sign}(\Phi,\mathfrak D\Phi,\mathfrak D^2\Phi)
   =\operatorname{sign}k,\qquad
 |\mathfrak D^2\Phi|\ge\frac13|\mathfrak D\Phi|
   \ge\frac19|k|,
\label{eq:source-uniform-ledger-phi}
\end{equation}
and, for \(q=LC/A\),
\begin{equation}
 \mathfrak Dq\ge c_q,\qquad
 \mathfrak D^2q=O((\epsilon+o(1))/L).
\label{eq:source-uniform-ledger-q}
\end{equation}
The implicit constant and \(c_q\) depend only on the fixed finite atlas,
nested tubes, and cutoffs.  They are independent of the current parameter,
of \(L\), and of \(C_+^{-1},C_-^{-1}\).  On \(k=0\), one has
\(\Phi=0\) exactly.  At \(C_+=0\) or \(C_-=0\) there is no retained full
return, rather than a limiting equation.  The compact range \(L\le L_0\)
lies in the same original parameter neighborhood and is a jointly analytic
physical tube.
\end{proposition}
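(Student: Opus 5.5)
The plan is to compute the leading behaviour of $\Phi=\mathcal T_{s,\beta}(\lambda_+)+\lambda_-+\mathcal R$ and of $q=LC/A$ from the estimates already proved, and then to fix constants in the order \eqref{eq:source-choice-order}. We treat $q$ first, since it is essentially explicit. By \eqref{eq:legacy-6-12}, $A=\tfrac{4\sqrt2}{3}(1+\alpha)$ and $C=\tfrac{8\sqrt2}{15}(1+\gamma)$, with $L^j|\mathfrak D^j\alpha|+L^j|\mathfrak D^j\gamma|\le C_0\epsilon+\eta(\epsilon,s)$ for $j\le2$. Hence $q=\tfrac25L(1+\gamma)(1+\alpha)^{-1}$. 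Differentiating once with $\mathfrak D=d/dL$ gives $\mathfrak Dq=\tfrac25\bigl(1+O(\epsilon+\eta)\bigr)$, because the factor $L$ is absorbed by the weight $L^1$ on $\mathfrak D\alpha,\mathfrak D\gamma$. Differentiating twice gives $\mathfrak D^2q=\tfrac25\bigl(2\,\mathfrak D(\cdot)+L\,\mathfrak D^2(\cdot)\bigr)=O((\epsilon+\eta)/L)$. We then take $c_q=\tfrac15$ once the normalized box fixes $\epsilon$ small and $s_0$ fixes $\eta$ small.

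For $\Phi$, we first write $\lambda_\pm$ in the lower variables. From \eqref{eq:legacy-6-5}--\eqref{eq:legacy-6-6}, with $s\to0$ and $C_\pm=1\pm k\Gamma_\pm$, we have $\lambda_+=\log(1+k\Gamma_+)-sk\Gamma_+$ and $\lambda_-=-\log(1-k\Gamma_-)-sk\Gamma_-$. Next we expand the secant: by \eqref{eq:legacy-6-59}, $T_{s,\beta,0}$ is $C^{4,2}$-close to $T_\beta$, which tends to the identity as $\beta\to0$. Consequently $\mathcal T_{s,\beta}(\lambda)=\lambda\,T'_{s,\beta,0}(1)+O(\lambda^2/h)$, and $\Phi$ behaves like $\log\frac{1+k\Gamma_+}{1-k\Gamma_-}$ up to relative errors. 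Here we use Proposition~\ref{prop:part-ii-action-localization} to keep $1-\lambda_+/h$ in the action tube, and \eqref{eq:legacy-6-61} to treat $\mathcal R$ as a relative error controlled by $|\mathfrak D^i\lambda_\pm|$.

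The key structural fact is $\mathfrak Dk=k$ from \eqref{eq:legacy-6-58b}, while $\mathfrak D\Gamma_\pm\to0$ by \eqref{eq:legacy-6-57}. Thus $\mathfrak D$ acts on the principal part as $k\partial_k$. Setting $g(k)=\log\frac{1+k\Gamma_+}{1-k\Gamma_-}$, we get $\mathfrak Dg\approx kg'(k)$ and $\mathfrak D^2g\approx kg'+k^2g''$. Since $g'>0$ and $\operatorname{sign}g''=\operatorname{sign}k$ on the open survivor interval (with $\Gamma_\pm\approx\sqrt{\pi/2}$), all three quantities carry the sign of $k$. Moreover $|kg'+k^2g''|\ge|kg'|\ge|g|/$const, so $|\mathfrak D^2\Phi|\ge\tfrac12|\mathfrak D\Phi|$ already holds at the principal level. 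The bound $|\mathfrak D\Phi|\ge c|k|$ follows from $g'\ge\Gamma_++\Gamma_->\tfrac13\cdot$const.

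The main obstacle is uniformity as $C_\pm\to0$, where $g$ and its derivatives blow up. Here the errors must be \emph{relative to} $|\mathfrak D^i\lambda_\pm|$, not absolute, and \eqref{eq:legacy-6-61} together with the flat carriers of Proposition~\ref{prop:part-ii-common-forced-domain} are exactly what supply this. We will therefore bound each error term by $(\epsilon+\eta)\sum_i|\mathfrak D^i\lambda_\pm|$, and show $\sum_{i\le j}|\mathfrak D^i\lambda_\pm|\le C|\mathfrak D^j g|$ using monotonicity in $k$ on the survivor interval. The factor $\tfrac13$ then absorbs these errors once $\epsilon$, $s_0$ and $L_0$ are chosen in that order.

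Finally, $\Phi=0$ on $k=0$ because $\lambda_\pm=0$, $\mathcal T(0)=0$, and $\mathcal R$ carries the factor $k$ by \eqref{eq:legacy-6-60b}. The gate faces $C_\pm=0$ carry no return by Theorem~\ref{thm:part-ii-matched-preparation}. The range $L\le L_0$ corresponds to $s\ge e^{-L_0}$, a compact analytic tube handled separately.
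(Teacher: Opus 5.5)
Your route is essentially the paper's: exact gate variables $C_\pm=1\pm k\Gamma_\pm$, the identity $\mathfrak Dk=k$ (so $\mathfrak D$ acts on the lower labels as $k\partial_k$ up to the relative errors $u_\pm=\mathfrak D\Gamma_\pm/\Gamma_\pm$, $v_\pm=\mathfrak D^2\Gamma_\pm/\Gamma_\pm$ from \eqref{eq:legacy-6-57}), relative control of $\mathcal R$ by \eqref{eq:legacy-6-61}, and direct differentiation of $q=\tfrac25L(1+\gamma)/(1+\alpha)$. Your treatment of $q$, of $k=0$, and of the gate faces is correct. Merging $\lambda_++\lambda_-$ into $g$ is harmless, because $\mathfrak D^2\log C_+\approx a_+/C_+^2$ and $\mathfrak D^2(-\log C_-)\approx a_-/C_-^2$ both carry the sign of $k$ and cannot cancel.

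The gap is in the secant term, which is where the uniformity in $C_\pm^{-1}$ is actually decided. You expand only its value, $\mathcal T_{s,\beta}(\lambda_+)=\lambda_+T_e(1)+O(\lambda_+^2/h)$. The statement, however, concerns $\mathfrak D\Phi$ and $\mathfrak D^2\Phi$. Under $\mathfrak D$ both the evaluation point $1-\lambda_+/h$ moves (through $\lambda_+$ and $\mathfrak Dh=1-s$) and the map itself moves at fixed action, since $s$ and $\beta$ vary through the Euler field in \eqref{eq:legacy-6-58b}.

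Write $\mathcal T=\lambda_+U$ as in \eqref{eq:legacy-6-75}. Then $\mathfrak D^2(\lambda_+U)$ contains terms of the form $T_{ee}(\mathfrak D\lambda_+)^2/h$ and $\lambda_+T_{eee}(\mathfrak D\lambda_+)^2/h^2$; see \eqref{eq:legacy-6-75d}. On the localized wedge $|\mathfrak D\lambda_+|\sim C_+^{-1}$ can be as large as $s^{-2\delta_a}$. So these quadratic terms are not bounded by $(\epsilon+\eta)\sum_{i\le2}|\mathfrak D^i\lambda_\pm|$ through the linear comparison you propose.

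The missing ingredient is the third ratio bound in \eqref{eq:legacy-6-74d}, $|\mathfrak D\lambda_\pm|^2\le\tfrac54|\mathfrak D^2\lambda_\pm|$. It follows from $\mathfrak D\lambda_+\approx a_+/C_+$, $\mathfrak D^2\lambda_+\approx a_+/C_+^2$ and $|a_\pm|\lesssim1$ on the survivor interval. It must be combined with three further facts:
\begin{itemize}
\item $|\lambda_+|/h<\delta_a/2$ from \eqref{eq:legacy-6-68};
\item the smallness of $T_e-1$, $T_{ee}$, $T_{eee}$ from \eqref{eq:legacy-6-59};
\item the smallness of the fixed-action drifts $L\dot T_e$, $L\dot T_{ee}$, $L^2\ddot T_e$, which holds because $\mathfrak D$ acts on $T$ through $\frac{1-s}{2h}\mathcal E$.
\end{itemize}
This is the content of \eqref{eq:legacy-6-75a}--\eqref{eq:legacy-6-75f}. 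Without it, the relative smallness of $\mathfrak D^2E_{\mathcal T}$, and hence the uniform ledger \eqref{eq:source-uniform-ledger-phi}, is not established.

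Two smaller slips. First, $\operatorname{sign}g''=\operatorname{sign}k$ fails near $k=0$ when $\Gamma_+\neq\Gamma_-$, since $g''(0)=\Gamma_-^2-\Gamma_+^2$. Second, $g'\ge\Gamma_++\Gamma_-$ is false, because one of $C_\pm$ exceeds $1$. Neither claim is needed: $kg'+k^2g''=a_+/C_+^2+a_-/C_-^2$ gives the sign directly, and $g'>\min\Gamma_\pm$ gives $|\mathfrak D\Phi|\ge\tfrac13|k|$ after the errors are absorbed.
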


\begin{proof}
The lower estimate \eqref{eq:legacy-6-57} must be used at the actual gates.  Set
\begin{equation}
 a_+=k\Gamma_+,\qquad a_-=k\Gamma_-,\qquad
 C_+=1+a_+>0,\qquad C_-=1-a_->0.
\label{eq:legacy-6-71}
\end{equation}
With
\(u_\pm=\mathfrak D\Gamma_\pm/\Gamma_\pm\) and
\(v_\pm=\mathfrak D^2\Gamma_\pm/\Gamma_\pm\), \eqref{eq:legacy-6-57} makes
\(|u_\pm|+|v_\pm|\) arbitrarily small.  Direct differentiation gives
\begin{equation}
 \mathfrak D\log C_+
 =\frac{a_+(1+u_+)}{ C_+},\qquad
 \mathfrak D(-\log C_-)
 =\frac{a_-(1+u_-)}{ C_-},
\label{eq:legacy-6-72}
\end{equation}
and
\begin{equation}
\mathfrak D^2\log C_+
 =\frac{a_+}{ C_+^2}\{1+2u_++v_++a_+(v_+-u_+^2)\},
\label{eq:legacy-6-73}
\end{equation}
\begin{equation}
\mathfrak D^2(-\log C_-)
 =\frac{a_-}{ C_-^2}\{1+2u_-+v_-+a_-(u_-^2-v_-)\}.
\label{eq:legacy-6-74}
\end{equation}
In exact gate variables,
\begin{equation}
 \lambda_+=\log(1+a_+)-sa_+,\qquad
 \lambda_-=-\log(1-a_-)-sa_-.
\label{eq:legacy-6-74a}
\end{equation}
If \(b_\pm=\mathfrak Da_\pm\) and
\(c_\pm=\mathfrak D^2a_\pm\), direct differentiation gives
\begin{equation}
\begin{aligned}
 \mathfrak D\lambda_+&=\frac{b_+}{1+a_+}+s(a_+-b_+),&
 \mathfrak D^2\lambda_+&=\frac{c_+}{1+a_+}
  -\frac{b_+^2}{(1+a_+)^2}+s(-a_++2b_+-c_+),\\
 \mathfrak D\lambda_-&=\frac{b_-}{1-a_-}+s(a_--b_-),&
 \mathfrak D^2\lambda_-&=\frac{c_-}{1-a_-}
  +\frac{b_-^2}{(1-a_-)^2}+s(-a_-+2b_--c_-).
\end{aligned}
\label{eq:legacy-6-74b}
\end{equation}
Because \(sk=\ell\) is fixed by \(\mathfrak D\), the corrections are
exactly
\begin{equation}
 \mathfrak D(-sa_\pm)=-sa_\pm u_\pm,\qquad
 \mathfrak D^2(-sa_\pm)=-sa_\pm v_\pm.
\label{eq:legacy-6-74c}
\end{equation}
Thus they spend the same actual denominator reserve as \eqref{eq:legacy-6-72}--\eqref{eq:legacy-6-74},
with the correct sign at both gates.  After the lower-kernel modulus in
\eqref{eq:legacy-6-57} is fixed sufficiently small, all three rows have the sign of \(k\)
and satisfy
\begin{equation}
 |\lambda_\pm|\le\frac{5}{2}|\mathfrak D\lambda_\pm|,\qquad
 |\mathfrak D\lambda_\pm|
 \le\frac{5}{2}|\mathfrak D^2\lambda_\pm|,\qquad
 \frac{ |\mathfrak D\lambda_\pm|^2
  }{|\mathfrak D^2\lambda_\pm|}\le\frac{5}{4}.
\label{eq:legacy-6-74d}
\end{equation}
These comparisons remain uniform as the allowed exact denominator tends to
zero.

Write
\begin{equation}
 \mathcal T_{s,\beta}(\lambda_+)=\lambda_+U,\qquad
 U=\int_0^1
 (T_{s,\beta,0})_e(1-t\lambda_+/h)\,dt.
\label{eq:legacy-6-75}
\end{equation}
Put \(q_+=\lambda_+/h\), and let a dot denote the total fixed-original
derivative of \(T=T_{s,\beta,0}\) at fixed action.  The moving evaluation
has the exact derivatives
\begin{equation}
 \mathfrak Dq_+=\frac{\mathfrak D\lambda_+}{ h}
 -\frac{(1-s)\lambda_+}{ h^2},
\label{eq:legacy-6-75a}
\end{equation}
\begin{equation}
 \mathfrak D^2q_+=\frac{\mathfrak D^2\lambda_+}{ h}
 -\frac{2(1-s)\mathfrak D\lambda_+}{ h^2}
 -\frac{s\lambda_+}{ h^2}
 +\frac{2(1-s)^2\lambda_+}{ h^3},
\label{eq:legacy-6-75b}
\end{equation}
and
\begin{equation}
 \mathfrak DU=\int_0^1
 \{\dot T_e-tT_{ee}\mathfrak Dq_+\}\,dt,
\label{eq:legacy-6-75c}
\end{equation}
\begin{equation}
 \mathfrak D^2U=\int_0^1
 \{\ddot T_e-2t\dot T_{ee}\mathfrak Dq_+
 -tT_{ee}\mathfrak D^2q_+
 +t^2T_{eee}(\mathfrak Dq_+)^2\}\,dt.
\label{eq:legacy-6-75d}
\end{equation}
In particular, the \(T_{eee}(\mathfrak Dq_+)^2\) term, both derivatives
of \(h^{-1}\), and both moving-evaluation terms are present.  Equations
\eqref{eq:legacy-6-59}, \eqref{eq:legacy-6-68}, and the prescribed limit order make
\begin{equation}
 \max\{|T_e-1|,|T_{ee}|,|T_{eee}|,
 L|\dot T_e|,L|\dot T_{ee}|,L^2|\ddot T_e|\}\le10^{-3}.
\label{eq:legacy-6-75e}
\end{equation}
Combining \eqref{eq:legacy-6-74d}--\eqref{eq:legacy-6-75e} in the product rules for
\(E_{\mathcal T}=\lambda_+(U-1)\) gives
\begin{equation}
 |E_{\mathcal T}|\le10^{-3}|\lambda_+|,\quad
 |\mathfrak DE_{\mathcal T}|\le2\cdot10^{-3}|\mathfrak D\lambda_+|,\quad
 |\mathfrak D^2E_{\mathcal T}|\le3\cdot10^{-3}
       |\mathfrak D^2\lambda_+|.
\label{eq:legacy-6-75f}
\end{equation}
Equation \eqref{eq:legacy-6-61} gives the same relative smallness for \(\mathcal R\).
Consequently, after the normalized parameter box is shrunk, the fixed
Gaussian cutoff is chosen, and only then \(s_0\) is chosen,
\begin{equation}
 \operatorname{sign}(\Phi,\mathfrak D\Phi,
 \mathfrak D^2\Phi)=\operatorname{sign}k,
\qquad
 |\mathfrak D^2\Phi|\ge\frac{1}{3}|\mathfrak D\Phi|
 \ge\frac{1}{9}|k|.
\label{eq:legacy-6-76}
\end{equation}
All constants are independent of \(C_+^{-1}\) and \(C_-^{-1}\).

The same parameter box is used in \eqref{eq:legacy-6-12}.  Since \(A\) and
\(C\) are positive and converge, with two rate derivatives, to the displayed
positive constants there, differentiation of \(q=LC/A\) gives
\begin{equation}
 \mathfrak Dq>c_q>0,\qquad
 \mathfrak D^2q=O((\epsilon+o(1))/L).
\label{eq:legacy-6-77}
\end{equation}
After the finite source-cell refinement, take the minimum of the finitely many
positive \(c_q\)'s and the maximum of the finitely many thresholds \(L_0\).
This does not alter the order \eqref{eq:source-choice-order}.  The exact
center identity gives \(\Phi=0\) on \(k=0\); the stopped construction removes
the gate faces.  The remaining bounded action interval is contained in the
common analytic tubes constructed before coefficient division.  These facts
prove all assertions of the proposition.
\end{proof}

\begin{lemma}[Matched-curvature zero criterion]
\label{lem:matched-curvature-criterion}
Let \(I\) be an open interval, let \(q\) and \(g\) be real analytic on
\(I\) and \(q(I)\), respectively, with \(q'>0\), and write a scalar closing
equation in the form
\begin{equation}
 a+bq-g(q)=0.
\label{eq:abstract-matched-curvature}
\end{equation}
Here \(a\) and \(b\) are constants on the phase interval.
If \(g''\) has one strict sign on \(q(I)\), then
\eqref{eq:abstract-matched-curvature} has at most two zeros in \(I\), counted
with multiplicity.  The conclusion is uniform for a family whenever the
monotonicity and curvature margins are uniform.
\end{lemma}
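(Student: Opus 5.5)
The plan is to pass to the variable $q$ and apply Rolle's theorem twice. Set $F(x)=a+bq(x)-g(q(x))$ on $I$. Because $q$ is real analytic with $q'>0$, it is an analytic diffeomorphism of $I$ onto the open interval $J=q(I)$, and its inverse is analytic. Define $f(u)=a+bu-g(u)$ on $J$. Then $F=f\circ q$, so the zeros of $F$ in $I$ correspond bijectively to the zeros of $f$ in $J$. Multiplicities also agree, by the same unit-factor argument as in \eqref{eq:multiplicity-conjugacy}: if $f$ has a zero of order $n$ at $u_0=q(x_0)$, then $f(u)=(u-u_0)^n\varphi(u)$ with $\varphi(u_0)\neq0$, and $q(x)-q(x_0)=(x-x_0)\psi(x)$ with $\psi(x_0)=q'(x_0)>0$. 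Hence $F(x)=(x-x_0)^n\psi(x)^n\varphi(q(x))$ has order exactly $n$ at $x_0$. It therefore suffices to bound the zeros of $f$ on $J$, counted with multiplicity.

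Since $a,b$ are constants, $f''(u)=-g''(u)$, which has one strict sign on $J$. I then run the standard Rolle count with multiplicities. Suppose $f$ had zeros $u_1<\dots<u_r$ in $J$ with multiplicities $n_i$ and $\sum n_i\ge3$. Then $f'$ has at least $\sum(n_i-1)$ zeros, counted with multiplicity, at the points $u_i$, plus at least $r-1$ further zeros strictly between consecutive $u_i$. That gives at least $\sum n_i-1\ge2$ zeros of $f'$. Applying the same count to $f'$ gives at least one zero of $f''$ in $J$, which contradicts the strict sign of $g''$. So $f$, and hence the original equation, has at most two zeros counted with multiplicity. The higher-order zeros are handled by the analytic factorization above, and analyticity also makes all zeros isolated, so the counts are finite.

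For the uniformity clause, nothing in the argument uses a constant beyond the strict inequalities $q'>0$ and $\operatorname{sign}g''\neq0$ on the relevant intervals. The bound $2$ therefore holds for every member of a family where these hold, and uniform margins serve only to guarantee that they persist throughout the family, for instance under the small perturbations in \eqref{eq:source-uniform-ledger-q}.

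There is no serious obstacle. The only care needed is the multiplicity bookkeeping under the change of variable $u=q(x)$, which the analytic factorization handles, and the requirement that $a,b$ truly do not depend on the phase variable. That requirement is what makes $f''=-g''$ exactly; if $a$ or $b$ varied, extra terms would appear. In the application this constancy is arranged by choosing $\mathfrak D$ to fix the original coefficients.
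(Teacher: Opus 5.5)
Your proof is correct and follows the paper's own argument: use $q$ as an analytic coordinate, so the equation becomes $a+bu-g(u)=0$ with second derivative $-g''$, and apply the multiplicity form of Rolle's theorem twice to force a zero of $g''$. You make explicit the multiplicity transfer under $u=q(x)$ and the Rolle bookkeeping, which the paper leaves implicit, but the route is the same.
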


\begin{proof}
Since \(q'>0\), \(q\) is an analytic coordinate on \(I\).  In that coordinate,
three zeros counted with multiplicity would give, by the multiplicity form of
Rolle's theorem applied twice, a zero of the second derivative
\(-g''(q)\).  Every finite collection of zeros in the open interval lies in a
compact subinterval, so no endpoint regularity is required.
\end{proof}

\begin{theorem}[Uniform source-regime zero bound]
\label{thm:part-ii-source-zero}
Every unforced-center-complete direct source chart--itinerary has at most two
isolated fixed points, counted with multiplicity, on each noncompact
normalized-action cell.  This bound is locally uniform in all five original
parameters and as either lower factor tends to its open gate.  The gate face
has no full-return itinerary.  A bounded section piece has an independent analytic
Weierstrass bound, and an exact center face is an identity with no isolated
member.
\end{theorem}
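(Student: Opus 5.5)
The plan is to put the exact fixed-point equation of Theorem~\ref{thm:part-ii-matched-preparation} into the abstract form \eqref{eq:abstract-matched-curvature}, using the phase variable $q=LC/A$ along the one-dimensional fibre where the five original parameters are held fixed, and then apply Lemma~\ref{lem:matched-curvature-criterion}.

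First fix an admissible direct source chart--itinerary and fix the original parameters $\lambda$. By Proposition~\ref{prop:part-ii-action-localization}, every fixed point with $L\ge L_0$ lies in the differentiated wedge \eqref{eq:legacy-6-69}. On that wedge \eqref{eq:part-ii-matched} holds exactly and preserves multiplicity. Because $\mathfrak D=d/dL$ fixes $(B,m,a,c,d)$, the quantities $\tau$ and $\sigma_0$ are constants on the fibre. Divide \eqref{eq:part-ii-matched} by the positive function $L^{3/2}A$. This gives
\[
 \tau+\sigma_0\,q-\frac{\Phi}{L^{3/2}A}=0 ,
\]
which is of the form $a+bq-g(q)$ with $a=\tau$ and $b=\sigma_0$. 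Proposition~\ref{prop:source-curvature-package} gives $\mathfrak Dq\ge c_q>0$. Hence $q$ is an analytic monotone coordinate on each noncompact action cell, and $g:=\Phi/(L^{3/2}A)$ can be written as a function of $q$. Dividing by a nonvanishing function does not change zero orders, so multiplicities are unchanged.

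The main obstacle is to show that $g''(q)$ has a strict sign. Using the chain rule,
\[
 g''(q)=\frac{\mathfrak D^2 g-(\mathfrak D^2q/\mathfrak Dq)\,\mathfrak Dg}{(\mathfrak Dq)^2}.
\]
Write $g=\Phi\cdot w$ with $w=L^{-3/2}A^{-1}$. By \eqref{eq:legacy-6-12}, $\mathfrak Dw/w=O(1/L)$ and $\mathfrak D^2w/w=O(1/L^2)$. By \eqref{eq:source-uniform-ledger-phi}, $\Phi$, $\mathfrak D\Phi$ and $\mathfrak D^2\Phi$ all have the sign of $k$, with $|\mathfrak D^2\Phi|\ge\frac13|\mathfrak D\Phi|\ge\frac19|k|$. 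I would also need the comparison $|\Phi|\lesssim|\mathfrak D\Phi|$. This should follow from \eqref{eq:legacy-6-74d} together with the relative smallness in \eqref{eq:legacy-6-75f} and \eqref{eq:legacy-6-61}.

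With these estimates in hand, the cross terms $2\mathfrak D\Phi\,\mathfrak Dw$ and $\Phi\,\mathfrak D^2w$ are $O(1/L)$ relative to $w\,\mathfrak D^2\Phi$. The term $(\mathfrak D^2q/\mathfrak Dq)\mathfrak Dg$ is $O((\epsilon+o(1))/L)$ relative to it, by \eqref{eq:source-uniform-ledger-q}. Taking $L_0$ large, and using the choice order \eqref{eq:source-choice-order}, therefore gives $\operatorname{sign} g''=\operatorname{sign}k$ with a uniform margin. All constants are independent of $C_\pm^{-1}$, so the margin survives as either open gate is approached.

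The remaining cases go as follows.
\begin{itemize}
\item If $k=0$, then $\Phi\equiv0$ and the equation reduces to $\tau+\sigma_0q=0$. Since $q$ is strictly monotone, this has at most one zero unless $\tau=\sigma_0=0$. In that case the center division \eqref{eq:legacy-6-11} shows the return is the identity, so there are no isolated cycles.
\item If $k\ne0$, Lemma~\ref{lem:matched-curvature-criterion} gives at most two zeros with multiplicity on each noncompact cell. The bound is uniform because the margins above are uniform over the finite source cover.
\item The gate faces $C_\pm=0$ carry no full return, by Proposition~\ref{prop:part-ii-common-forced-domain} and the stopped construction.
\item The bounded piece $L\le L_0$ lies in a jointly analytic tube, again by Proposition~\ref{prop:source-curvature-package}. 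There Weierstrass preparation of the displacement at each point, followed by compactness of the closed parameter box, gives a finite bound, which is Theorem~\ref{thm:compact-analytic}.
\end{itemize}
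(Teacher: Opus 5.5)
Your proposal is correct and follows the paper's proof essentially step for step. Both arguments divide by $L^{3/2}A$, take the phase $q=LC/A$, obtain the sign of $d^2g/dq^2$ from the curvature formula \eqref{eq:legacy-6-79} fed by the ledger of Proposition~\ref{prop:source-curvature-package}, and apply Lemma~\ref{lem:matched-curvature-criterion}, with the same separate $k=0$/center-identity, gate, and compact-range (Weierstrass) clauses. Your product-rule expansion of $\mathfrak D^2(\Phi w)$ is the paper's bracket $L^2\mathfrak D^2\Phi-3L\mathfrak D\Phi+\tfrac{15}{4}\Phi+E_{\rm div}$ in \eqref{eq:legacy-6-78}, and the auxiliary bound $|\Phi|\lesssim|\mathfrak D\Phi|$ that you flag is indeed supplied by \eqref{eq:legacy-6-74d} together with \eqref{eq:legacy-6-75f} and \eqref{eq:legacy-6-61}.
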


\begin{proof}
Divide \eqref{eq:part-ii-matched} by the positive function
\(f_1=L^{3/2}A\), and put
\[
 q=\frac{L^{5/2}C}{ L^{3/2}A}=L\frac{C}{ A},\qquad
 g=\frac{\Phi}{ L^{3/2}A}.
\]
Proposition~\ref{prop:source-curvature-package} gives the monotonicity and
curvature ledger \eqref{eq:legacy-6-76}--\eqref{eq:legacy-6-77}.  Two
differentiations give
\begin{equation}
 \mathfrak D^2g=\frac{1}{ L^{7/2}A}
 \{L^2\mathfrak D^2\Phi-3L\mathfrak D\Phi
   +\tfrac{15}{4}\Phi+E_{\rm div}\},
\label{eq:legacy-6-78}
\end{equation}
where
\(|E_{\rm div}|\le C(\epsilon+o(1))
\{L|\mathfrak D\Phi|+|\Phi|\}\).  Hence
\begin{equation}
 \frac{d^2g}{ dq^2}
 =\frac{(\mathfrak D^2g)(\mathfrak Dq)
   -(\mathfrak Dg)(\mathfrak D^2q)
  }{(\mathfrak Dq)^3}
\label{eq:legacy-6-79}
\end{equation}
is nonzero and has the sign of \(k\) for large \(L\).
The common-domain results preceding the matched equation make this division
legitimate.  Figure~\ref{fig:matched-source-mechanism} shows the scalar
geometry of the remaining curvature argument, including the multiplicity-two
tangency allowed by the estimate.
\begin{figure}[htbp]
\centering
\begin{tikzpicture}[
  x=1.25cm,y=1cm,
  every node/.style={font=\small},
  axis/.style={-{Latex[length=1.8mm]},thick},
  curve/.style={very thick},
  affine/.style={thick,gray!70},
  root/.style={circle,fill=black,inner sep=1.8pt}
]
\begin{scope}[xshift=-3.40cm]
\node[font=\small\bfseries] at (0,1.62)
  {(a) Two simple zeros};
\draw[axis] (-2.40,0) -- (2.48,0) node[right] {$q$};
\draw[axis] (0,-1.17) -- (0,1.30);
\draw[curve,domain=-2.35:2.35,samples=100,smooth,variable=\x]
  plot ({\x},{0.32*\x*\x-0.80});
\draw[affine] (-2.35,-0.135) -- (2.35,0.335);
\node[anchor=north west,gray!80] at (0.72,0.18)
  {$\ell_1$};
\node[anchor=south east] at (-1.90,0.50)
  {$g$};
\node[root] at (-1.528,-0.0528) {};
\node[root] at (1.841,0.2841) {};
\node[below=3pt] at (-1.528,-0.0528) {$q_1$};
\node[above=3pt] at (1.841,0.2841) {$q_2$};
\end{scope}

\begin{scope}[xshift=3.40cm]
\node[font=\small\bfseries] at (0,1.62)
  {(b) One zero of multiplicity two};
\draw[axis] (-2.40,0) -- (2.48,0) node[right] {$q$};
\draw[axis] (0,-1.17) -- (0,1.30);
\draw[curve,domain=-2.35:2.35,samples=100,smooth,variable=\x]
  plot ({\x},{0.32*\x*\x-0.80});
\draw[affine] (-0.40,-1.136) -- (2.35,0.096);
\node[anchor=north west,gray!80] at (1.45,-0.30)
  {$\ell_2$};
\node[anchor=south east] at (-1.90,0.50)
  {$g$};
\node[root] at (0.70,-0.6432) {};
\node[anchor=north east] at (-0.10,-0.76)
  {double zero};
\end{scope}

\node[align=center] at (0,-1.62)
  {$F(q)=\ell(q)-g(q),\qquad
   F''(q)=-g''(q)\ne0
   \quad\Longrightarrow\quad
   \# Z(F)\le2\quad\text{(counted with multiplicity)}$.};
\end{tikzpicture}
\caption{The scalar geometry behind the matched-source estimate.  On each
noncompact action cell, $q$ is a monotone phase coordinate and
$\ell(q)=\tau+\sigma_0q$ is affine.  Both panels are drawn for $k>0$, when
$g$ is strictly convex, but they represent different affine coefficient
fibers.  A secant gives two simple zeros, while a tangent gives one zero of
multiplicity two.  A third zero counted with multiplicity would force a zero
of $F''=-g''$ by the multiplicity form of Rolle's theorem.  The case $k<0$
is the vertically reflected concave picture, and the affine case $k=0$ is
treated separately in the proof.  Equations and multiplicities are exact;
positions and slopes are schematic.}
\label{fig:matched-source-mechanism}
\end{figure}

After division, \eqref{eq:part-ii-matched} is
\[
 \tau+\sigma_0q-g=0.
\]
Lemma~\ref{lem:matched-curvature-criterion}, with
\eqref{eq:legacy-6-79}, gives at most two zeros, counted with multiplicity,
on each open noncompact cell.  This remains uniform as a gate is approached;
the gate itself is not an endpoint zero because the stopped return is absent
there.  If \(k=0\), then \(\Phi=0\) and the equation is affine in \(q\),
unless \(\tau=\sigma_0=0\); the latter is one of the two center identities.
The compact range of \(L\) is a jointly analytic physical tube in the same
parameter neighborhood and is covered by Weierstrass preparation.  This
proves the theorem.
\end{proof}

\section{The persistent-endpoint face: a Li\'enard--Dulac estimate}
\label{sec:part-ii-mixed}

We consider a retained itinerary with a persistent nonhyperbolic horizontal
endpoint and at most the upper vertical \(D\)-collision as an internal
central label.  The result is uniform in the resolved parameters and named
specializations within the face \(B=a=0\), but does not assert a direct Dulac
estimate on an open five-parameter neighborhood.  Its proof is intrinsic to
that face; it is not obtained by taking
\(\kappa\downarrow0\) in the positive root theorem.

First consider \(B=0\), \(a\ne0\).  In the two horizontal endpoint
charts, with \(K=1-z+m(z-1)^2>0\), the exact radial equations are
\begin{equation}
\begin{array}{lll}
p_+:&r'=-ar^2-r^3K,&z'=z+O(r),\\
p_-:&r'=-ar^2+r^3K,&z'=-z+O(r).
\end{array}
\label{eq:legacy-6-80}
\end{equation}
For one sign of \(a\), the positive endpoint has the required weak entry
and strong exit while the negative endpoint points toward its corner; for
the other sign the roles reverse.  A source-order return uses both boxes, so no
retained full return exists.  We henceforth take \(B=a=0\).

Put \(m=-p\).  The exact family in \(z=1+y>0\) is
\begin{equation}
 \dot x=-y-py^2,\qquad
 \dot y=(1+y)(x+dy)-\ell y.
\label{eq:legacy-6-81}
\end{equation}
The following physical lemma supplies the zero count on the complete
collision/no-root cone.

\begin{lemma}[Dulac count on a line-punctured domain]
\label{lem:line-punctured-dulac}
Let \(\mathcal D\) be a planar domain homeomorphic to \(\mathbb R^2\).
Suppose
\[
 Z=\{o\}\sqcup\mathcal A
\]
is the disjoint union of an equilibrium \(o\) and a proper embedded \(C^1\)
line \(\mathcal A\), and that
\(\mathcal D\setminus\mathcal A\) has two simply connected components.
Assume periodic orbits cannot cross \(Z\).  If a \(C^1\) Dulac multiplier is
defined on \(\mathcal D\setminus Z\) and its weighted divergence has one
strict sign on each component, then \(\mathcal D\) contains at most one
periodic orbit.
\end{lemma}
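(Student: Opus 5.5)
The plan is the classical Bendixson--Dulac argument, carried out so that it never crosses the singular set \(Z\). Write \(\mathcal D\setminus\mathcal A=\Omega_1\sqcup\Omega_2\), with each \(\Omega_i\) simply connected. Let \(\varphi\) be the Dulac multiplier on \(\mathcal D\setminus Z\) and let \(\operatorname{div}(\varphi X)\) have strict sign \(\epsilon_i\) on \(\Omega_i\setminus\{o\}\).

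First I locate the periodic orbits. A periodic orbit \(\gamma\subset\mathcal D\) cannot meet \(Z\), by hypothesis, and it is connected. Hence \(\gamma\) lies in exactly one component \(\Omega_i\), and it avoids \(o\). Since \(\mathcal D\) is homeomorphic to \(\mathbb R^2\), the Jordan curve theorem gives a bounded interior disk \(\operatorname{int}\gamma\). This disk contains an equilibrium, by the index theorem. Because \(\mathcal A\) is a proper line, it is unbounded in \(\mathcal D\), so \(\mathcal A\) cannot lie inside \(\operatorname{int}\gamma\). It also cannot meet \(\gamma\), so \(\mathcal A\cap\operatorname{int}\gamma=\emptyset\). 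Therefore \(\operatorname{int}\gamma\subset\Omega_i\).

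The only equilibrium available in \(\Omega_i\) is \(o\), since \(\mathcal A\) is the rest of \(Z\); I am reading the hypothesis as saying that the zero set of the field in \(\mathcal D\) is \(Z\). So \(o\in\operatorname{int}\gamma\), and every periodic orbit surrounds \(o\) and lies in the component \(\Omega_{i_0}\) containing \(o\). Two distinct periodic orbits are disjoint Jordan curves that both surround \(o\). They are therefore nested, and between them lies an open annulus \(R\subset\Omega_{i_0}\setminus\{o\}\).

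Second, I run the annular Green argument on \(R\), where \(\varphi\) is \(C^1\) up to the boundary. Green's theorem gives
\[
\iint_R\operatorname{div}(\varphi X)\,dx\,dy=\oint_{\partial R}\varphi\,(P\,dy-Q\,dx)=0,
\]
because \(X\) is tangent to both boundary orbits. The left side is nonzero by the strict sign \(\epsilon_{i_0}\), a contradiction. Hence there is at most one periodic orbit.

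The step I expect to be the main obstacle is the topological one: showing that \(\operatorname{int}\gamma\) lies inside one component and misses \(\mathcal A\) while \(\varphi\) is defined only off \(Z\). If the index argument is unavailable, for instance because \(o\) lies on the other side, I would instead use the disk version directly. If \(\gamma\) does not surround \(o\), then \(\operatorname{int}\gamma\subset\Omega_i\setminus Z\), and Green's theorem on the disk already contradicts the strict sign. This shows that every periodic orbit surrounds \(o\), and the annulus step then finishes the proof. Either way, the sign on the second component is needed only to rule out orbits there, and that is handled by the disk case.
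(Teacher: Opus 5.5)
Your argument is correct and is essentially the paper's proof: Green's formula on the Jordan disk excludes every periodic orbit whose interior misses \(o\), and Green's formula on the annulus between two nested orbits around \(o\) excludes a second one. Your properness argument for \(\mathcal A\cap\operatorname{int}\gamma=\emptyset\) does the job of the paper's appeal to simple connectivity of the components.

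One change: make the disk case the main step rather than a fallback. The index-theorem detour relies on reading \(Z\) as the zero set of the field, which the lemma does not assert and which is false in the application, where \(\mathcal A\) is an arc the flow crosses (\(X\mathcal V>0\) there). The disk argument needs no information about other equilibria.
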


\begin{proof}
Removing \(o\) punctures exactly one of the two components of
\(\mathcal D\setminus\mathcal A\).  In the unpunctured component, the Jordan
domain bounded by any periodic orbit stays in the component, so Green's
formula and the strict divergence sign exclude that orbit.  In the punctured
component, the same argument excludes every orbit contractible there.  Any
remaining orbit surrounds \(o\); two such orbits are nested, and the closed
annulus between them avoids \(Z\).  Green's formula on that annulus excludes
the pair.  Hence at most one periodic orbit remains.
\end{proof}

\begin{lemma}[Full-ratio Li\'enard--Dulac bound]
\label{lem:part-ii-mixed-dulac}
There are \(d_0>0\), \(0<p_0<1\), and
\begin{equation}
 \frac{2}{9}<q_0<\frac{1}{4},\qquad
 p_0+\frac{2d_0^2}{9}<1,
\label{eq:legacy-6-82}
\end{equation}
such that, whenever
\begin{equation}
 |d|<d_0,\qquad0<p<p_0,\qquad p\ge q_0d^2,
\label{eq:legacy-6-83}
\end{equation}
the field \eqref{eq:legacy-6-81} has at most one isolated periodic orbit in \(z>0\), for every
real \(\ell\).  The statement is uniform through
\(p=d^2/4\), both adjacent sides, and every value of \(\ell/d\).
\end{lemma}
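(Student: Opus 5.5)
The plan is to reduce the claim to Lemma~\ref{lem:line-punctured-dulac} by building an explicit Dulac multiplier of Li\'enard type for \eqref{eq:legacy-6-81} on $\mathcal D=\{z>0\}$. The field $\dot x=-y-py^2$, $\dot y=(1+y)(x+dy)-\ell y$ has $\dot x$ depending on $y$ only. The first step is to change variable $Y=\phi(y)$ with $\phi'(y)=1/(y+py^2)$ on each side of the zero set of $y+py^2$, together with a time reparametrization. This should bring the system to a generalized Li\'enard form $\dot x=-F(Y)$, $\dot Y=g(Y)(x-\xi(Y))$, in which the $x$-dependence of $\dot Y$ is affine. For such systems the standard multiplier $\Psi=|F(Y)|^{\alpha}g(Y)^{\beta}$, or more generally a product $e^{\gamma(y)}$ times powers of $(1+y)$ and of $(1+py)$, gives a weighted divergence that is independent of $x$. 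The strategy is to choose the exponents so that this $x$-free weighted divergence is a function of $y$ alone whose sign can be controlled.

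The second step is to identify the singular set $Z$. The candidates are the invariant line $y=-1$ (excluded, since we work in $z>0$), the line $y=-1/p$ where $\dot x$ vanishes identically in $x$, and the origin $o$. Since $p<p_0<1$, the line $y=-1/p<-1$ lies outside $z>0$. The multiplier is therefore likely to degenerate along a curve arising from the zero set of its polynomial factor, for example $1+y+\mu y^2=0$ or a level set tied to $\ell$. I would choose $\Psi$ so that its singular set in $z>0$ is a single proper line $\mathcal A$, such as a horizontal line $y=y_*$, across which no periodic orbit can pass because the line carries a sign-definite component of the flow. I would then verify that $\mathcal A$ separates $z>0$ into two simply connected components, only one of which contains $o$. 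The hypotheses of Lemma~\ref{lem:line-punctured-dulac} then give at most one periodic orbit, and hence at most one isolated one.

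The main obstacle is uniformity in $\ell$ together with the threshold $p\ge q_0d^2$. The weighted divergence will have the form $(1+y)^{-k}Q_{\ell}(y)$ with $Q$ a low-degree polynomial, and the $\ell$-term must be absorbed in the choice of exponent. I would try letting one exponent depend on $\ell$, for example $\beta=\beta_0+\ell/(\dots)$, so that the $\ell$-linear part cancels exactly. This would leave a remainder quadratic in $y$ with coefficients in $(p,d)$ whose discriminant is negative exactly when $p>$ a constant times $d^2$. That is the mechanism by which the window $2/9<q_0<1/4$ and the transition through $p=d^2/4$ (the discriminant of $1+dy+py^2$-type factors) would enter. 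The side condition $p_0+2d_0^2/9<1$ should guarantee that the remaining factors $1+py$ and $1+y$ keep their signs on $z>0$. If the sign on one component cannot be made strict at $p=d^2/4$, I would place the double root on $\mathcal A$ itself. Finally, I would check that the isolated-orbit count passes uniformly through both sides of $p=d^2/4$, since the construction is algebraic in the parameters.
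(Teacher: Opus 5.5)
Your proposal has a genuine gap in the only nontrivial regime. Every multiplier you consider is a function of $y$ alone: $|F(Y)|^\alpha g(Y)^\beta$, or $e^{\gamma(y)}$ times powers of $1+y$ and $1+py$. A change $Y=\phi(y)$ together with a $y$-dependent time change does not leave this class. Your $\phi'=1/(y+py^2)$ is moreover singular on $y=0$, which every cycle around $o$ crosses.

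Even after adjoining a factor $e^{\gamma x}$, the class is too small. For $\mathcal B=e^{\gamma x}b(y)$, the quotient $\operatorname{div}(\mathcal B X)/\mathcal B$ for \eqref{eq:legacy-6-81} is affine in $x$ with slope $1+(1+y)b'/b$. Imposing $x$-independence forces $b=C/(1+y)$ and leaves $d(1+y)-\ell/(1+y)-\gamma y(1+py)$.

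The easy reductions are as in the paper: the energy $E$ and the involution leave only $d,\ell>0$, and $e^{2dx}/(1+y)$ settles $\epsilon=\ell/d\ge1$. The remaining case is $0<\epsilon<1$. There the expression above equals $d(1-\epsilon)>0$ at $y=0$ and tends to $-\infty$ as $y\to-1^+$, for every $\gamma$. So no choice of exponents, $\ell$-dependent or not, gives a definite sign: the term $-\ell/(1+y)$ survives every admissible choice.

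Nor can a horizontal line $y=y_*$ serve as the barrier $\mathcal A$. On it, $\dot y=(1+y_*)(x+dy_*)-\ell y_*$ is affine in $x$ with nonzero slope, so orbits cross it in both directions. A cycle around $o$ meets every horizontal line between its extreme heights. The hypothesis of Lemma~\ref{lem:line-punctured-dulac} that cycles cannot cross $Z$ is therefore unavailable in your setup.

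The missing idea is a multiplier that depends genuinely on $x$. The paper sets $v=x+dy-\ell y/(1+y)$ and $d\tau=(1+y)\,dt$, and passes to the Li\'enard form $y_\tau=u-F(y)$, $u_\tau=-g(y)$. Here $F=d\phi$, $\phi(\zeta)=(\zeta-1)(\epsilon/\zeta-1)$ and $\zeta=1+y$. It then uses the Cherkas-type function $\mathcal V=u^2-\tfrac23uF-\tfrac19F^2+2G$, which is quadratic in $u$.

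The identity $X\mathcal V-\tfrac23(\operatorname{div}X)\mathcal V=\tfrac{4d}{3}\mathcal K$ does double duty. First, it gives $X\mathcal V>0$ on $\{\mathcal V=0\}$. That zero set, the origin plus one curved proper arc rather than a horizontal line, is therefore the uncrossable set $Z$. Second, it makes $\operatorname{div}(|\mathcal V|^{-3/2}X)$ sign-definite on each complementary component.

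The cone enters only through $\mathcal K=W+\tfrac{d^2}{9}\phi^2\phi'>0$. The multi-affine lower bound for $W$ and the comparison identity on $\zeta^2>\epsilon$ give positivity once $p\ge q_0d^2$ with $q_0>2/9$. The condition $q_0<1/4$ merely places $p=d^2/4$ inside the cone. That value is the double root of the upper equatorial quadratic $-p-dw-w^2$, not a discriminant of any factor of the multiplier. The remaining constraints serve the sign analysis of $Q=2G-\tfrac{2d^2}{9}\phi^2$, which fixes the topology of $\{\mathcal V=0\}$.
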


\begin{proof}
Set
\[
 v=x+dy-\frac{\ell y}{1+y},\qquad
 d\tau=(1+y)dt,\qquad
 g(y)=\frac{y(1+py)}{1+y}.
\]
The time change is positive, and direct differentiation gives
\begin{equation}
 y_\tau=v,\qquad
 v_\tau=-g(y)+\left(d-\frac{\ell}{(1+y)^2}\right)v.
\label{eq:legacy-6-84}
\end{equation}
For \(E=v^2/2+\int_0^yg(r)\,dr\),
\begin{equation}
 E_\tau=\left(d-\frac{\ell}{(1+y)^2}\right)v^2.
\label{eq:legacy-6-85}
\end{equation}
The involution
\((x,t,d,\ell,p)\mapsto(-x,-t,-d,-\ell,p)\)
allows \(d\ge0\).  Equation \eqref{eq:legacy-6-85} excludes periodic orbits when
\(d\ell\le0\), except that \(d=\ell=0\) gives period annuli and no
isolated member.  It remains to take \(d,\ell>0\) and set
\begin{equation}
 \epsilon=\frac{\ell}{ d},\qquad\delta=d^2.
\label{eq:legacy-6-86}
\end{equation}

If \(\epsilon\ge1\), the positive multiplier
\(\mathcal B_0=e^{2dx}/(1+y)\) satisfies
\begin{equation}
 \frac{\operatorname{div}(\mathcal B_0X)}{\mathcal B_0}
 =d\left(1-y-\frac{\epsilon}{1+y}-2py^2\right)\le0,
\label{eq:legacy-6-87}
\end{equation}
and is not identically zero on any open set.  When \(\epsilon=1\), equality
holds on the line \(y=0\), but the divergence is strictly negative off that
line; Green's formula still excludes a periodic orbit.  Suppose
\(0<\epsilon<1\).  Put
\[
 H(y)=dy-\frac{\ell y}{1+y},\qquad u=v-H(y),\qquad F=-H.
\]
Then
\begin{equation}
 y_\tau=u-F(y),\qquad u_\tau=-g(y).
\label{eq:legacy-6-88}
\end{equation}
Writing \(\zeta=1+y\), define
\[
 \phi(\zeta)=(\zeta-1)\left(\frac{\epsilon}{\zeta}-1\right),
 \qquad F=d\phi,
\]
\begin{equation}
 G(y)=\int_0^yg(r)\,dr
 =(1-p)\{\zeta-1-\log\zeta\}
   +\frac{p}{2}(\zeta-1)^2.
\label{eq:legacy-6-89}
\end{equation}
Now set
\begin{equation}
 \mathcal V=u^2-\frac{2}{3}uF-\frac{1}{9}F^2+2G.
\label{eq:legacy-6-90}
\end{equation}
A direct expansion, retaining the divergence term, gives
\begin{equation}
 X\mathcal V-\frac{2}{3}(\operatorname{div}X)\mathcal V
 =\frac{4d}{3}\mathcal K,
\label{eq:legacy-6-91}
\end{equation}
where
\begin{equation}
 \mathcal K=W+\frac{\delta}{9}\phi^2\phi',\qquad
 W=G\phi'-\phi g,\qquad
 \phi'=\frac{\epsilon}{\zeta^2}-1.
\label{eq:legacy-6-92}
\end{equation}

The function \(W\) is affine separately in \(p\) and \(\epsilon\).
Its four corner values on the unit square are
\begin{equation}
\begin{array}{c|c|c}
 &\epsilon=0&\epsilon=1\\ \hline
p=0&\log\zeta-1+\zeta^{-1}
 &(\zeta-1)\{(\zeta+1)\log\zeta-2(\zeta-1)\}/\zeta^2\\[1mm]
p=1&(\zeta-1)^2/2&(\zeta-1)^4/(2\zeta^2).
\end{array}
\label{eq:legacy-6-93}
\end{equation}
All are positive away from \(\zeta=1\).  For the only nonimmediate entry,
differentiate
\(\log\zeta-2(\zeta-1)/(\zeta+1)\); its derivative is
\((\zeta-1)^2/[\zeta(\zeta+1)^2]\).  Multi-affine interpolation gives
\begin{equation}
 W\ge \frac{p(\zeta-1)^2}{2\zeta^2}
 \{\zeta^2-2\epsilon\zeta+\epsilon\}.
\label{eq:legacy-6-94}
\end{equation}
When \(\zeta^2>\epsilon\), use
\begin{equation}
\begin{aligned}
&\zeta^2(\zeta^2-2\epsilon\zeta+\epsilon)
 -(\zeta-\epsilon)^2(\zeta^2-\epsilon)\\
&\qquad=\epsilon\{(2-\epsilon)\zeta^2
 -2\epsilon\zeta+\epsilon^2\}\ge0.
\end{aligned}
\label{eq:legacy-6-95}
\end{equation}
Equations \eqref{eq:legacy-6-83}, \eqref{eq:legacy-6-94}, and \eqref{eq:legacy-6-95} imply
\(\mathcal K>0\) for \(\zeta\ne1\).  This is the full-ratio estimate;
no boundedness of \(\ell/d\) was used.

Completing the square,
\begin{equation}
 \mathcal V=(u-F/3)^2+Q(\zeta),\qquad
 Q=2G-\frac{2\delta}{9}\phi^2.
\label{eq:legacy-6-96}
\end{equation}
The inequalities in \eqref{eq:legacy-6-82}--\eqref{eq:legacy-6-83} give \(Q>0\) for
\(\zeta\ge\epsilon\), \(\zeta\ne1\).  On
\(0<\zeta<\epsilon\), \(Q\to-\infty\) at zero, and at a zero of
\(Q\) direct differentiation gives \(Q'>0\).  Indeed this reduces to
\[
 2G-(1-\zeta)^2\{1-p(1-\zeta)\}>0,
\]
which follows from
\begin{equation}
 (1-p)\{2(\zeta-1-\log\zeta)-(1-\zeta)^2\}
 +p(1-\zeta)^3>0.
\label{eq:legacy-6-97}
\end{equation}
Consequently \(Q\) has exactly one zero
\(\zeta_*\in(0,\epsilon)\).  It cannot have two: every zero is crossed with
positive derivative, whereas a second crossing would first require a
positive-to-negative crossing.  The nonisolated part of
\(\{\mathcal V=0\}\) is therefore the explicit arc
\begin{equation}
 \mathcal A=\left\{
 0<\zeta\le\zeta_*,\quad
 u=\frac{F(\zeta)}3\pm\sqrt{-Q(\zeta)}
 \right\}.
\label{eq:mixed-zero-arc}
\end{equation}
The two graphs meet smoothly at \(\zeta_*\), because \(Q'(\zeta_*)>0\).
As \(\zeta\downarrow0\), their \(u\)-coordinates tend respectively to
\(+\infty\) and \(-\infty\).  Hence \(\mathcal A\) is a proper embedded
line in
\(\mathcal D=\mathbb R\times(0,\infty)\).  Under
\((u,\zeta)\mapsto(u,\log\zeta)\), \(\mathcal D\) is the plane, and the
proper line theorem shows that
\(\mathcal D\setminus\mathcal A\) has two simply connected components.
The remaining zero of \(\mathcal V\) is the isolated equilibrium
\(o=(0,1)\); deleting it punctures exactly one of those components.

On \(\mathcal A\), equation \eqref{eq:legacy-6-91} gives
\(X\mathcal V>0\), so a periodic orbit cannot cross the arc; it cannot pass
through \(o\) either.  On \(\mathcal D\setminus(\mathcal A\cup\{o\})\) use
\(\widehat{\mathcal B}=|\mathcal V|^{-3/2}\):
\begin{equation}
 \operatorname{div}(\widehat{\mathcal B}X)
 =-\frac{3}{2}\operatorname{sign}(\mathcal V)
 |\mathcal V|^{-5/2}\,\frac{4d}{3}\mathcal K.
\label{eq:legacy-6-98}
\end{equation}
This multiplier is \(C^1\) on each component and its divergence has a strict
sign there because \(\mathcal K>0\).  Lemma~\ref{lem:line-punctured-dulac}
therefore gives at most one periodic orbit.  No limiting small-circle
integral around \(o\) is needed: the only Green domain in the punctured
component is the annulus between two hypothetical cycles.  At the integrable
corner every orbit belongs to a period annulus, so the conclusion for
isolated periodic orbits remains valid.
\end{proof}

\begin{theorem}[Uniform zero bound on the persistent-endpoint face]
\label{thm:part-ii-mixed-zero}
Every retained itinerary in the mixed class has a locally uniform finite number
of isolated fixed points.  More precisely, \(B=0,a\ne0\) carries no
source-order full return; on \(B=a=0\), the collision/no-root cone has at
most one isolated periodic orbit by
Lemma~\ref{lem:part-ii-mixed-dulac}, and the
complementary split cone has a fixed sink/no-passage outcome.  The
assignment is uniform through the \(D\)-collision, both adjacent sides,
the source overlap, and all lower first-hit faces.  If \(t_{\rm mer}\)
denotes the merger scale of the root chart, its subface \(t_{\rm mer}=0\),
including \(t_{\rm mer}=\kappa=0\), belongs to the source regime.
\end{theorem}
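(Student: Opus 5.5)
The proof follows the three clauses of the statement: the face \(B=0,\ a\ne0\); the face \(B=a=0\), split into the collision/no-root cone and the split cone; and finally the boundary subfaces. Since the statement explicitly forbids a limit \(\kappa\downarrow0\) from the root theorem, every step uses only the exact face equations \eqref{eq:legacy-6-80}--\eqref{eq:legacy-6-81}, the stopped atlas of Theorem~\ref{thm:finite-words}, and the cycle--zero correspondence of Proposition~\ref{prop:exact-once}. Under that correspondence, isolated fixed points of retained itineraries on \(\Sigma\) are exactly isolated periodic orbits in \(U\), with multiplicity. A bound on periodic orbits of the planar field on the face therefore bounds zeros of each retained itinerary.

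\textbf{Step 1: \(B=0,\ a\ne0\).} I read off the signs of \(-ar^2\) in \eqref{eq:legacy-6-80}. For \(a>0\), the negative endpoint has \(r'<0\) to leading order while the collar orientation \eqref{eq:legacy-3-1} requires weak exit there. The stopped relation of Proposition~\ref{prop:stopped-first-hit} then records a wrong-orientation or node terminal outcome at one of the two boxes. For \(a<0\) the roles of \(p_\pm\) are exchanged. A source-order full return crosses both boxes, so Theorem~\ref{thm:finite-words} retains no itinerary and there is no displacement equation. Uniformity comes from \(|a|\) dominating \(r|K|\) on the slab \(r\le r_0\) with \(r_0\) fixed, and this domination persists as \(a\to0\) once the slab is shrunk relative to \(|a|\) through the endpoint cells of Proposition~\ref{prop:endpoint-joint-uniformity}.

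\textbf{Step 2: \(B=a=0\).} Write \(m=-p\). The upper gate equation \eqref{eq:legacy-3-16} becomes \(E=-p-dw-w^2\), whose discriminant is \(d^2-4p\).
\begin{itemize}
\item On the collision/no-root cone \(p\ge q_0d^2\), which contains \(p=d^2/4\) and both adjacent sides, Lemma~\ref{lem:part-ii-mixed-dulac} gives at most one isolated periodic orbit in \(z>0\) for every \(\ell\).
\item I must still rule out cycles meeting \(z\le0\). The line \(z=0\) is crossed only in the direction fixed by the sign of \(\ell\) in \eqref{eq:legacy-3-5}, since \(\mu_3-c=\ell\). A crossing in one direction forbids return, so every cycle in \(U\) lies in \(z>0\); when \(\ell=0\) the line is invariant.
\item Hence each retained itinerary has at most one isolated zero, counted with multiplicity. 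The Dulac argument bounds multiplicity as well: a multiple cycle would split into two under a divergence-compatible perturbation, and \(\mathcal K>0\) is an open condition.
\end{itemize}
On the split cone \(0<q\le q_0\) (more precisely where \(d^2-4p>0\) with margin), \(E\) has two simple real roots. I would show with \eqref{eq:legacy-3-15} that the root met first by the entering orbit from \(w=\delta\) is an attracting node of the upper box. This is a fixed sink/no-passage outcome, so no itinerary is retained. The two cones overlap because \(q_0<1/4\), and I assign the overlap to the Dulac cone.

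\textbf{Step 3: boundary subfaces and uniformity.} I verify that the constants \(d_0,p_0,q_0\) are independent of \(\ell\), the source overlap, and lower first-hit faces, since the Dulac bound is global in \(z>0\). The subface \(t_{\rm mer}=0\), including the corner \(t_{\rm mer}=\kappa=0\), is handed to the source regime by the half-open rule \eqref{eq:legacy-5-2}. For \(p<0\) I expect the upper box to fall into a terminal sector or into a no-periodic-orbit situation excluded by \eqref{eq:legacy-6-85}, and I would check this against Proposition~\ref{prop:finite-singular-alphabet}.

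\textbf{Main obstacle.} The hardest step is the split-cone sink claim, uniformly up to the cone boundary \(p=q_0d^2\). Near \(p\approx d^2/4\) the two roots nearly collide, and it must be shown that the entering orbit still meets the attracting one. I would first try the directed four-cell graph of \eqref{eq:legacy-3-15} together with the monotonicity of \(w\) on \(Q<0\), placing the node's basin to contain the entry face \(w=\delta\) near \(r=0\).
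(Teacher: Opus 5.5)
Your treatment of $B=0,\ a\ne0$ (the sign of $-ar^2$ in \eqref{eq:legacy-6-80} gives the wrong weak orientation at one endpoint) matches the paper's. So does the cone $p\ge q_0d^2$ (Lemma~\ref{lem:part-ii-mixed-dulac} plus the one-sided crossing of $z=0$, where $\dot z=\ell$), and the hand-off of the source subface by the half-open rule. The gap is the complementary split cone, which is one of the theorem's three assertions. You leave it as a plan, and the plan would not work as written.

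\emph{First, the region $p\le 0$ is missing.} Your split cone stops at $p>0$, and you defer $p\le0$ (all of $m\ge0$, including $m=0\ne d$) to an expectation. Neither tool you name reaches it: Lemma~\ref{lem:part-ii-mixed-dulac} needs $p>0$, and \eqref{eq:legacy-6-85} excludes cycles only when $d\ell\le0$. In the paper the split cone is all of $p<q_0d^2$, treated by one sink argument, with only the corner $d=m=0$ sent to the source regime.

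\emph{Second, the node claim needs a sign reduction.} Your claim that the first root met from $w=\delta$ is an attracting node fails when $d<0$ and $m<0$. Put $\Delta=d^2+4m$. That root is then $w_+=(|d|+\sqrt\Delta)/2$, and since $\dot r=-r(w+d)+O(r^2)$, its transverse eigenvalue is $-(w_++d)=(|d|-\sqrt\Delta)/2>0$, so it is a saddle. You must first use the involution $(x,t,d,\ell,p)\mapsto(-x,-t,-d,-\ell,p)$ to reduce to $d\ge0$, as the paper does.

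\emph{Third, the obstacle you single out is misdiagnosed.} On $p<q_0d^2$ with $q_0<1/4$ one has $\Delta>(1-4q_0)d^2$; for instance $\Delta>d^2/32$ when $q_0=31/128$. So the two roots stay separated at scale $d$. That is exactly why the collision collar $q_0d^2\le p\le d^2/4$ sits inside the Dulac cone, as your own overlap assignment already implies.

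What actually degenerates is the sink itself. Its eigenvalues are of size comparable to $d+\sqrt{|m|}$ and tend to zero at the source corner. Hence the qualitative sign graph of \eqref{eq:legacy-3-15} gives no parameter-independent isolating block.

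The missing idea is the rescaling the paper uses. Take $\rho=d$ for $m<0$ and $\rho=d+\sqrt m$ for $m\ge0$, set $w=\rho W$ and $r=\rho^2R$, and divide time by $\rho$. The normalized eigenvalues $-\sqrt\Delta/\rho$ and $-(d+\sqrt\Delta)/(2\rho)$ are then bounded above by fixed negative constants, uniformly through the repeated-eigenvalue face $m=0$. The Lyapunov equation for this compact Hurwitz family then gives one inward-pointing half-block in $R\ge0$, which an orbit arriving from $w>w_+$ cannot leave.

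Two smaller points. In Step~1 the endpoint slab is fixed and cannot be shrunk relative to $|a|$. No uniformity is needed there anyway, since that face carries no retained source-order return. Your multiplicity argument is also unnecessary, because the theorem counts isolated cycles.
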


\begin{remark}
The theorem concerns a retained full-return itinerary in the fixed collar $U$
whose orbit arcs between certified first-hit sections remain in
$z=1+y>0$ until a named lower stopping face.  Its bound is uniform on $B=a=0$
for all values of $\ell/d$, throughout the no-root cone, adjacent split
collar, complementary split cases, source overlap, and lower first-hit
faces.  On the direct Dulac cone there is at most one isolated orbit.  No
open-neighborhood Dulac estimate in $B$ or $a$ is asserted.  The face
$t_{\rm mer}=0$ is treated by
Theorem~\ref{thm:part-ii-source-zero}; a split root reaches its first sink,
hyperbolic, passive, or regular stopping face, and loss of a lower hit is no-passage.
\end{remark}

\begin{proof}
The orientation calculation \eqref{eq:legacy-6-80} handles \(B=0,a\ne0\).  On
\(B=a=0\), choose for example \(q_0=31/128\) and split first by
\begin{equation}
 p=-m\ge q_0d^2.
\label{eq:legacy-6-99}
\end{equation}
This closed cone contains the collision \(p=d^2/4\), the adjacent split
collar \(q_0d^2\le p<d^2/4\), and the whole no-root side.  A periodic
orbit represented by the itinerary stays in \(z>0\): on \(z=0\),
\(\dot z=\ell\), so crossings have one orientation when \(\ell\ne0\),
while the line is invariant when \(\ell=0\).  Lemma
\ref{lem:part-ii-mixed-dulac} therefore applies to every possible orbit in
this cone.

The source overlap is not recognized from \(d/\theta\) alone.  Its full
effective vector contains
\begin{equation}
 \Xi_{\rm src}=
 \max\left\{\frac{p}{\theta^2},\frac{|d|}{\theta},
 \frac{|c|}{\theta}\right\}.
\label{eq:legacy-6-100}
\end{equation}
The direct Dulac estimate is independent of \(\Xi_{\rm src}\), so it
controls both sides of every finite source/parameter threshold.  At the
exact source subface, the assignment nevertheless follows the half-open priority
of \eqref{eq:legacy-5-2}; Theorem~\ref{thm:part-ii-source-zero} supplies
its ambient neighborhood.

On the complementary split cone use the same involution as in the lemma to
take \(d\ge0\), and set
\begin{equation}
 \Delta=d^2+4m>0,\qquad
 w_+=\frac{-d+\sqrt\Delta}{2}.
\label{eq:legacy-6-101}
\end{equation}
For \(m=-p<0\) with \(p<q_0d^2\),
\(\Delta>d^2/32\); for \(m\ge0\),
\(\sqrt\Delta/(d+\sqrt m)\) also has a fixed positive lower bound.  Define
\begin{equation}
 \rho=
 \begin{cases}
 d,&m<0,\\
 d+\sqrt m,&m\ge0.
 \end{cases}
\label{eq:legacy-6-101a}
\end{equation}
Use \(w=\rho W\), \(r=\rho^2R\), and divide time by \(\rho\).  The two
normalized eigenvalues at the first source-oriented root are
\begin{equation}
 \frac{\lambda_w}{\rho}=-\frac{\sqrt\Delta}{\rho}<0,\qquad
 \frac{\lambda_r}{\rho}=-\frac{d+\sqrt\Delta}{2\rho}<0.
\label{eq:legacy-6-102}
\end{equation}
If \(m=-p<0\), then \(p<q_0d^2\) gives
\(\sqrt\Delta/d\ge1/\sqrt{32}\), while the second quotient has modulus at
least \(1/2\).  If \(m\ge0\), then
\(\sqrt{d^2+4m}/(d+\sqrt m)\ge2/3\), and the second quotient again has
modulus at least \(1/2\).  Thus the normalized family has a fixed
two-negative-eigenvalue margin.
Moreover the equatorial polynomial is negative for \(w>w_+\), so this is
the first root met by the source-oriented flow.  To record the isolating
block quantitatively, translate \(W\) to the normalized root and write
\(\zeta=(R,U)\).  The compact normalized family of linear parts is uniformly
Hurwitz by \eqref{eq:legacy-6-102}, including the repeated-eigenvalue face \(m=0\).
The Lyapunov equation therefore supplies positive forms
\(\mathcal L_\alpha(\zeta)=\zeta^TP_\alpha\zeta\) and uniform constants
\(c_0,C_0,C_1>0\) such that
\begin{equation}
 c_0|\zeta|^2\le\mathcal L_\alpha(\zeta)\le C_0|\zeta|^2,\qquad
 \dot{\mathcal L}_\alpha\le-|\zeta|^2+C_1|\zeta|^3.
\label{eq:legacy-6-102a}
\end{equation}
One fixed sufficiently small positive level of \(\mathcal L_\alpha\),
intersected with \(R\ge0\), is thus inward pointing; the face \(R=0\) is
invariant.  This gives a uniform sink isolating block, in the form of a
half-block, without choosing eigenvectors across the repeated face.
An orbit arriving from \(w>w_+\) cannot leave it in forward time and cannot
belong to a periodic lap.  The case \(m=0\) is included after rescaling;
the sole corner \(d=m=0\) belongs to the source regime.

There are only finitely many threshold, root, and lower-first-hit charts.  Every
specialization either remains in the direct one-cycle estimate, enters the
matched source neighborhood, reaches the split sink/hyperbolic/regular
regime, or loses a first hit.  Compactness of this already named finite cover
gives the asserted locally uniform bound.
\end{proof}

\section{The two zero-scale faces}
\label{sec:part-ii-handoff}

The source analysis has established two estimates on open neighborhoods.
Theorem~\ref{thm:part-ii-source-zero}
holds on the common source action tube,
including its open-gate no-passage and center-identity conclusions.
Theorem~\ref{thm:part-ii-mixed-zero} holds on the retained physical full-return domain
in \(U\cap\{z>0\}\), including the collision/no-root cone, the split-cone
first-hit classification, and the lower first-hit faces.

The uniform assembly invokes these theorems only after the stopped-itinerary
and exact cycle--zero reductions: they treat the classes \(\mathfrak P_{\rm src}\) and
\(\mathfrak P_{\rm mix}\), respectively, and their boundary conclusions feed
the boundary specialization theorem.  The following sections treat the complementary
positive-scale regimes.  In its middle/root priority,
\(t_{\rm mer}=0\), including \(t_{\rm mer}=\kappa=0\), is treated by the source theorem,
whereas \(t_{\rm mer}>0,\ \kappa=0\) is treated by the mixed theorem.  These are ambient
boundary theorems proved here, not continuity limits of the positive-root
theorem.

\section{Bounded analytic itineraries}
\label{sec:compact-analytic}

Some retained itineraries remain in bounded transverse analytic tubes and need
none of the singular theorems above.  Finite coefficient division and
Weierstrass preparation give a local zero bound for each such jointly analytic
itinerary, including its identity coefficient fibers.

\begin{theorem}[Uniform zero bound for bounded analytic itineraries]
\label{thm:compact-analytic}
Let $\omega$ be a bounded stopped full-return itinerary whose primitive first hits are
jointly analytic on nested transverse physical tubes.  Then its displacement
\begin{equation}
 D_\omega(s,\lambda)=P_\omega(s,\lambda)-s
\label{eq:legacy-5-3}
\end{equation}
has a locally uniform finite number of isolated zeros.  The same ambient
neighborhood controls coefficient and identity fibers.
\end{theorem}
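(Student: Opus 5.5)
The plan is the standard Bautin--Weierstrass argument for analytic displacement families, run on the nested tubes of the statement. Fix a base parameter $\lambda_0$ and a point $s_0$ in the closure of the inner section interval of $\omega$. Since each primitive first hit is jointly analytic on the outer tube, and the inner tube is compactly contained in it, Proposition~\ref{prop:stopped-first-hit} and the strict section margins give one neighborhood $W$ of $\lambda_0$ on which the composed map $P_\omega$, and hence $D_\omega$, is real analytic in $(s,\lambda)$ on a fixed complex neighborhood of the compact closed inner interval $\bar I$. By compactness of $\bar I$ it then suffices to bound zeros locally near each $(s_0,\lambda_0)$ and take a finite subcover; the half-open endpoint conventions of Proposition~\ref{prop:exact-once} are irrelevant here, because the bounded tube contains the ambient germ $\widehat P$ at its endpoints.

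At a point where $D_\omega(\cdot,\lambda_0)\not\equiv0$ near $s_0$, let $n$ be the order of vanishing in $s$ at $s_0$. The Weierstrass preparation theorem writes $D_\omega=u\cdot p$ with $u$ a unit and $p$ a monic polynomial of degree $n$ in $s-s_0$. This bounds the zeros by $n$, counted with multiplicity, in a neighborhood of $(s_0,\lambda_0)$.

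The real content is the identity fiber, where $D_\omega(\cdot,\lambda_0)\equiv0$. Here I would use Noetherianity of convergent power series. Expand $D_\omega(s,\lambda)=\sum_j d_j(\lambda)(s-s_0)^j$, with coefficients $d_j$ analytic near $\lambda_0$. The ideal $(d_j)$ in $\mathcal O_{\lambda_0}$ is generated by $d_0,\dots,d_N$. A standard division argument on a polydisc (the Bautin lemma) then gives
\[
D_\omega=\sum_{j\le N}d_j(\lambda)(s-s_0)^j\bigl(1+\phi_j(s,\lambda)\bigr),\qquad \phi_j(s_0,\lambda)=0,
\]
with the $\phi_j$ analytic. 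By the standard Rolle/derivation-division argument (Bautin--Roussarie), such a function has at most $N$ isolated zeros near $s_0$ for $\lambda$ near $\lambda_0$, uniformly. Parameters in the identity fiber $\{d_0=\dots=d_N=0\}$ give $D_\omega\equiv0$, hence no isolated zeros. This is exactly why the same ambient neighborhood controls coefficient and identity fibers.

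The main obstacle is the first step: one needs a single neighborhood in $\lambda$ on which $D_\omega$ is analytic up to the endpoints of the section interval, so that compactness in $s$ is legitimate. I would derive this from the nested transverse tubes and the positive margins, which persist under complexification. Zeros near an endpoint of the outer domain are discarded, since they are excluded from the retained inner interval.
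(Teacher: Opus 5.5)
Your proof is correct, but at the only nontrivial point, the parameters where the displacement vanishes identically, it follows a different route from the paper.

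You stay on the original parameter space. The ascending chain condition makes an initial segment $d_0,\dots,d_N$ generate the coefficient ideal. The Bautin lemma then puts $D_\omega$ in the form $\sum_{j\le N}d_j(\lambda)(s-s_0)^j(1+\phi_j)$, and the division--derivation (Rolle) argument bounds the zeros by $N$ on one product neighborhood. Compactness of $\bar I$ finishes the proof.

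The paper instead principalizes the same coefficient ideal by a single analytic blow-up $\pi\colon\operatorname{Bl}_I\Lambda\to\Lambda$. It writes $\pi^*D_\omega=\varepsilon\widetilde D_\omega$, with one normalized generator equal to $1$ on each standard chart, so that $\widetilde D_\omega$ is nowhere identically zero in $s$. It then applies complex Weierstrass preparation to $\widetilde D_\omega$ at every point of the compact resolved phase--parameter closure, and uses properness of $\pi$ to extract finitely many neighborhoods.

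What each route buys:
\begin{itemize}
\item Your version is more elementary. It gives an explicit local bound (the Bautin index $N$) and needs no resolution, no properness argument, and no preparation on a resolved space.
\item The paper's version matches its radial--projective coefficient-cone bookkeeping (radius $\varepsilon$, apex, projective faces). That bookkeeping reappears in the strict, middle and root theorems and in the specialization graph.
\end{itemize}

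Both arguments rest on the same input: $D_\omega$ itself, and not merely each of its Taylor coefficients, lies in $I\cdot\mathcal O_{(s_0,\lambda_0)}$. You cite this as the Bautin lemma, and the paper uses it tacitly when it writes $\pi^*D_\omega=\varepsilon\widetilde D_\omega$. It is worth stating that step explicitly rather than presenting it as a routine polydisc division.
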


\begin{remark}
The bound is uniform on one analytic phase--parameter neighborhood and then
on a finite cover of the compact section closure.  Coefficient and identity
strata remain in the same preparation; an identity fiber has no isolated
member.
\end{remark}

\begin{proof}
The finite composition defining $P_\omega$ is analytic on a common bounded
phase--parameter tube.  Fix a phase origin and expand
\begin{equation}
 D_\omega(s,\lambda)=\sum_{n\ge0}a_n(\lambda)s^n.
\label{eq:analytic-coefficient-expansion}
\end{equation}
The coefficient germs generate an ideal $I$ in the local analytic parameter
ring.  By Noetherianity, there are indices
\(0\le n_0<\cdots<n_N\) such that
\(b_j:=a_{n_j}\), \(0\le j\le N\), generate every \(a_n\).  The analytic
Noetherian and preparation--division ingredients used here are recorded in
Herv\'e~\cite[Chapter~II, Theorems~1--2]{Herve1963}.
If $I=0$, the joint displacement germ is identically zero and there is no
isolated member.  We therefore assume $I\ne0$.

We now spell out the only coefficient resolution used in this argument.  Let
\begin{equation}
 \pi:\operatorname{Bl}_I\Lambda\longrightarrow\Lambda
\label{eq:coefficient-blowup}
\end{equation}
be the analytic blow-up of \(I\), realized as the closure of the graph of
\(\lambda\mapsto[b_0(\lambda):\cdots:b_N(\lambda)]\) in
\(\Lambda\times\mathbb P^N\).  Replacing this space by its normalization
would not change the argument.  The map is proper after the parameter
neighborhood is shrunk.  On the standard chart indexed by $i$, the pulled-back
ideal is the invertible ideal $(\varepsilon)$ and
\begin{equation}
 \pi^*a_n=\varepsilon\,\widetilde a_n,\qquad
 \widetilde b_i=1
\label{eq:coefficient-chart}
\end{equation}
for one of the chosen generators.  Here the quotients are analytic
coordinates on the blow-up; no division by a possibly vanishing function on
the original parameter space is being made.  Consequently
\begin{equation}
 \pi^*D_\omega=\varepsilon\,\widetilde D_\omega,
\label{eq:normalized-analytic-displacement}
\end{equation}
and $\widetilde D_\omega$ is not identically zero in $s$ at any projective
direction, because at least one normalized generator is a unit there.

Complexify the normalized germ in the phase and resolved parameter variables.
At every point of the compact resolved phase--parameter closure, holomorphic
Weierstrass preparation
\cite[Chapter~II, Theorem~1]{Herve1963} writes
$\widetilde D_\omega$ as a distinguished polynomial of finite degree times a
unit.  That degree bounds its real isolated zeros on a smaller product
neighborhood.  Uniqueness and conjugation invariance make the factors real on
the real locus.  Krantz--Parks
\cite[Theorem~6.1.3]{KrantzParks2002} gives the corresponding direct
real-analytic preparation statement.

Properness of \eqref{eq:coefficient-blowup}, compactness of the section
closure, and the finite standard projective cover now give finitely many such
neighborhoods and hence one bound.  At a parameter for which every generator
vanishes, all coefficients in \eqref{eq:analytic-coefficient-expansion}
vanish; the original displacement is identically zero and contributes no
isolated cycle.  Its neighboring nonidentity fibers remain controlled by the
resolved neighborhoods above.  When complete center slices are present, $I$
is the reduced center ideal proved in Section~\ref{sec:part-ii-center}; in all
other bounded analytic itineraries it is the local coefficient ideal just
constructed.  No global resolution or iterative principalization theorem is
being imported.
\end{proof}

\section{Separated hyperbolic itineraries and the QRH theorem}
\label{sec:part-iii-hyperbolic}

We begin with an exact-once itinerary already shown in
Sections~\ref{sec:part-i-words}--\ref{sec:part-i-exact-once} to contain
only separated hyperbolic saddles and regular first hits.  We first construct
its physical analytic system, then verify QRH membership and a common
admissible representative, and only then invoke Mourtada's local finiteness
theorem.  We use Mourtada's principal-orbit and integral-projection framework,
the local rational and quasi-resonant construction in Appendix~VA and the
proof of Theorem~0, the QRH inverse result VB4, and the local finiteness
theorem IVC1 along an admissible Hilbert derivation
\cite{Mourtada2009}.  None of these results constructs an H14 section,
connector, or physical itinerary.  For the central and lips cases we use
Theorem~3, Theorems~3.1--3.2, and Corollary~3.6 of
Dumortier--Ilyashenko--Rousseau~\cite{DIR2002}, only after their geometric
hypotheses have been verified in the physical family.

Table~\ref{tab:imported-theorem-interface} is the formal import interface.
Its third column contains the hypotheses proved for the H14 family; no entry
in the first column is used to manufacture those hypotheses.

The table is also a call-site ledger.  Appendix~VA, VA1, and VB4 are used
only in the construction leading to
Proposition~\ref{prop:part-iii-qrh-realization}; IVC1 is invoked only in
Theorem~\ref{thm:part-iii-hyperbolic-zero}.  DIR Theorem~3 supplies only the
finite-smooth saddle-node coordinates in the one-central and strict-lips
arguments.  The proof-level affine hh reduction from Theorem~3.1 is used
only in Lemma~\ref{lem:part-iii-physical-affine-closing}; the boundary
nonaffinity argument from Theorem~3.2 is used only after
Proposition~\ref{prop:complete-physical-lips}, in
Corollary~\ref{cor:two-central-nonaffine-boundary} and at the natural endpoint
of Theorem~\ref{thm:part-iii-strict-zero}.  Corollary~3.6 is invoked only in
Theorems~\ref{thm:part-iii-one-central-zero} and
\ref{thm:part-iii-two-central-no-pp}.  These restrictions prevent an imported
normal form or finiteness theorem from serving as a substitute for an H14
first-hit, connection, or boundary proof.

\begin{table}[p]
\centering
\caption{Assumption--verification--conclusion map for the imported local
theorems.  Page locators refer to the printed pages of the cited sources.}
\label{tab:imported-theorem-interface}
\begingroup
\scriptsize
\setlength{\tabcolsep}{3pt}
\begin{tabularx}{\textwidth}{@{}
 >{\raggedright\arraybackslash}p{.17\textwidth}
 >{\raggedright\arraybackslash}X
 >{\raggedright\arraybackslash}X
 >{\raggedright\arraybackslash}p{.19\textwidth}@{}}
\toprule
Source result&Imported hypotheses&Verification in this paper&Conclusion used\\
\midrule
Mourtada, Appendix VA and Theorem~VA1 (p.~80), with the proof of
Theorem~0 (p.~79) and VB4 (p.~88)&
Analytic hyperbolic saddle; one-sided positive sections; a local rational,
ratio-one, or quasi-resonant chart; VB4 only for a tangent-to-identity QRH
germ&
Proposition~\ref{prop:part-iii-hyperbolic-word} gives analytic saddles,
positive ratios, sections, and physical connectors; the local ratio cover is
proved below&
Each physical Dulac correction, and only the diagonal inverse that needs it,
belongs to \(QRH_1\)\\
\addlinespace
Mourtada, Definitions IB1--IB5 (pp.~11--13), IB7 (pp.~21--22), and
Section IVA.1 (p.~60)&
An admissible positive-corner representative, a principal orbit, analytic
integral projection, and diagonal Hilbert data&
Proposition~\ref{prop:part-iii-qrh-realization} constructs one connected
physical fiber, its exact splitting integrals, the invariant corner, and the
closing germ&
The H14 system is one
\(\chi_\omega\in\Xi H_k[QRH]\) system on the physical fibers\\
\addlinespace
Mourtada, Theorem IVC1 (p.~69), read with Theorem~0 (pp.~2, 79)&
A QRH closing germ along a derivation in \(\Xi H_k[QRH]\), on one common
admissible representative whose integral fibers are the relevant orbits&
Propositions~\ref{prop:part-iii-hyperbolic-word} and
\ref{prop:part-iii-qrh-realization}; exact-once is
Proposition~\ref{prop:exact-once}&
Finite local degree, hence the hyperbolic itinerary zero bound\\
\addlinespace
DIR, Theorem~3 (p.~6)&
A finite-multiplicity analytic saddle-node in a \(C^\infty\) unfolding&
The prepared weak quadratic, nonzero transverse eigenvalue, physical
sections, and retained central passage are verified before
Section~\ref{sec:part-iii-central}&
A prescribed finite-smooth orbital normal form; no analytic upgrade or
physical incidence is inferred\\
\addlinespace
DIR, Theorem~3.1 and its proof (pp.~12--13)&
Two saddle-nodes of opposite attractivity, an actual hh connection, a
continuum of complete pp orbits, and a nonzero transition derivative of some
order \(n\ge2\)&
Proposition~\ref{prop:complete-physical-lips},
Lemma~\ref{lem:two-central-landing-product-tube}, and
Lemma~\ref{lem:part-iii-physical-affine-closing}; nonaffinity is
Corollary~\ref{cor:two-central-nonaffine-boundary}, and
Theorem~\ref{thm:part-iii-strict-zero} selects the finite jet order before
requesting that normal-form order&
Cyclicity at most \(n\) for the selected lips graphic and an affine physical
closing equation\\
\addlinespace
DIR, Theorem~3.2 and its proof (pp.~13--14)&
An analytic lips ensemble with a PP boundary of ratio different from one
(or no analytic first integral), or a BP boundary&
Proposition~\ref{prop:complete-physical-lips} proves the complete boundary,
Lemma~\ref{lem:two-central-landing-product-tube} preserves the stopped word,
\eqref{eq:legacy-7-21} gives the PP ratio margin, and
Corollary~\ref{cor:two-central-nonaffine-boundary} applies the result&
Boundary finite cyclicity and the nonaffine transition jet required in the
previous row\\
\addlinespace
DIR, Corollary~3.6 (pp.~17--18)&
A finite central saddle-node graphic using the selected nodes centrally; for
the clause used here there is no pp connection and, with more than one node,
all selected nodes have the same attractivity&
Theorems~\ref{thm:part-iii-one-central-zero} and
\ref{thm:part-iii-two-central-exhaustion} certify the physical graphic,
absence of pp, and attractivity&
Cyclicity one in the one-central and two-central no-pp cases\\
\bottomrule
\end{tabularx}
\endgroup
\end{table}

The only subtle scope issue in the QRH rows is the characteristic ratio.
Near ratio one, VA1 treats an analytic saddle form
\begin{equation}
 u\,dv+v\{r(\alpha)+a(u,v,\alpha)\}\,du=0,\qquad
 a(0,0,\alpha)=0,\quad r(\alpha)=1+\mu(\alpha),
\label{eq:legacy-7-0a}
\end{equation}
and puts its Dulac correction in \(QRH_1\).  At a rational center
\(r_0=n/m\), positive double ramification \(x=X^m,\ y=Y^n\) changes the
characteristic ratio to
\begin{equation}
 \widetilde r=\frac{m r}{n}=1+\mu,\qquad
 \widetilde r(r_0)=1.
\label{eq:legacy-7-0b}
\end{equation}
The ramification is one-to-one on the positive quadrant and preserves
isolated positive solutions.  The same Appendix~VA scope paragraph treats a
quasi-resonant center directly.  Hence every ratio in the compact positive
set of a certified H14 cell has a source-supplied local chart, and a finite
subcover is taken only after this local cover exists.  No density of rational
ratios and no extension across a zero eigenvalue is used.

Mourtada's VB4 is reserved for the diagonal QRH inverse; ordinary analytic
connector inverses use the analytic inverse theorem.  On the DIR side,
Theorem~3.1's nonzero \(n\)-th derivative is supplied by the nonaffine
infinity-jet argument in the proof of Theorem~3.2 only after the physical
PP/BP boundary and its ratio alternative have been certified.  Thus a normal
form never creates an hh connection, a pp strip, or a boundary graphic in
this proof.

\begin{proposition}[Analytic realization of a hyperbolic itinerary]
\label{prop:part-iii-hyperbolic-word}
Every retained all-hyperbolic H14 itinerary has a finite cover by compact analytic
itinerary cells.  On each cell, every used saddle has analytic separatrix
coordinates and fixed one-sided analytic sections such that, after one fixed
local time orientation,
\begin{equation}
 \dot u_j=u_jA_j(u_j,v_j,\alpha),\qquad
 \dot v_j=-v_jB_j(u_j,v_j,\alpha),
 \qquad A_j,B_j\ge\lambda_*>0.
\label{eq:legacy-7-1}
\end{equation}
The ratios \(r_j=B_j(0,0,\alpha)/A_j(0,0,\alpha)\) remain in one compact
subset of \((0,\infty)\).  Every connector between consecutive sections is
the actual jointly analytic first hit
\begin{equation}
 s_{j+1}=T_j(t_j,\alpha),
 \qquad 0<c_T\le\partial_{t_j}T_j\le C_T,
\label{eq:legacy-7-2}
\end{equation}
on one common interval, with positive section-normal, stopping-face, and
itinerary-existence margins.  The same original five parameters are used at every
vertex and connector.

A zero eigenvalue, node, multiple endpoint root, missing connector, change in
the first-contact outcome, or collar exit is not a boundary of this analytic cell.  It is a
separately named adjacent regime reached at the first lost margin.  No DIR endpoint
coordinate and no finite-smooth saddle-node normalizer occurs here.
\end{proposition}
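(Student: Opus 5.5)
The plan is to build the cells from the stopped atlas of Proposition~\ref{prop:stopped-first-hit} and Theorem~\ref{thm:finite-words}, restricted to itineraries whose every singular vertex is labelled a separated hyperbolic saddle by Proposition~\ref{prop:finite-singular-alphabet}. Fix such an itinerary and a parameter point \(\alpha_0\) in the closure of its retained set, at which all margins (eigenvalue, section normal, stopping-face distance, existence of the next-cut interval) are strictly positive. The cells will be small compact neighbourhoods of such points. Wherever a margin vanishes we do not extend the cell. By the first-loss convention this point belongs to the adjacent named regime: a zero eigenvalue goes to a saddle-node, a coalescing endpoint root goes to a \(D_1/D_2\) cell, a missing connector goes to a gate, and a collar contact goes to exit. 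So the cells need only cover the set where all margins are bounded below by a fixed \(\eta>0\). That set is compact, and a finite subcover follows once local cells exist.

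\emph{Local step at a saddle.} A hyperbolic singular point of the analytic quadratic family persists analytically in \(\alpha\). Its stable and unstable manifolds are analytic curves depending analytically on parameters, by the analytic stable manifold theorem. Straighten them, using the endpoint charts \eqref{eq:legacy-3-6}--\eqref{eq:legacy-3-7}, where the invariant axis \(r=0\) is already one separatrix, or the upper chart \(F,Q\) of \eqref{eq:legacy-3-12}--\eqref{eq:legacy-3-13}. This gives coordinates in which the field has the form \eqref{eq:legacy-7-1} with \(A_j,B_j\) analytic. After the local time reversal fixed by the collar orientation, both are positive at the origin, with lower bound \(\lambda_*\) inherited from the eigenvalue margin after shrinking the box. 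The ratio \(r_j\) is a continuous function of \(\alpha\) on a compact set, with values in \((0,\infty)\), hence lies in a compact subset. The one-sided sections are taken as \(u_j=\mathrm{const}\) and \(v_j=\mathrm{const}\) inside the fixed singular boxes of Proposition~\ref{prop:fixed-skeleton}. These are analytic images of the model sections, so no DIR normalizer enters.

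\emph{Connectors.} Between consecutive sections the orbit crosses only regular strips and flow boxes. Write the first hit as the solution of \(\Psi(T,t,\alpha)=0\) with \(\Psi_T\) bounded away from zero by the section-normal margin. The analytic implicit function theorem then makes \(T_j\) jointly analytic. Its derivative is a product of the analytic flow Jacobian and the ratio of normals, so it is bounded above and below on the compact cell, which gives \eqref{eq:legacy-7-2}. The common interval comes from intersecting the finitely many domains before composing, as in Section~\ref{sec:part-i-exact-once}. The existence margin guarantees that this intersection contains a fixed interval throughout the cell. All of these constructions use the original five parameters directly, so one parameter fiber is shared by every vertex and connector.

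\emph{Main obstacle.} The hard part is keeping the connectors analytic and the first-hit outcome constant on a neighbourhood near the corners of the saddle boxes. There, an orbit from a section close to a separatrix might first meet a competing face rather than the next section. I would handle this with the positive stopping-face margin in the definition of the cell. By continuity of the stopped relation (Proposition~\ref{prop:stopped-first-hit}), divider points stay at a distance at least \(\eta/2\) from the retained interval. So on a small enough neighbourhood the outcome label cannot change, and any boundary where it does change is by definition a first lost margin assigned to another regime.
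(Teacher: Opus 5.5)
Your proposal is correct and follows essentially the paper's argument. The finite root/sector/stopping-face cover supplies closed separated cells with eigenvalue margins, and the analytic stable-manifold theorem plus division gives \eqref{eq:legacy-7-1} with fixed level sections. The regular connectors are analytic implicit first hits with two-sided derivative bounds from the positive normal and competing-face margins, and the first vanishing margin passes to a named adjacent face. The only cosmetic difference is that you get the sign and size of $\partial_{t_j}T_j$ from the explicit flow-Jacobian/normal-ratio formula, whereas the paper cites planar order preservation together with inner and outer itinerary domains.
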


\begin{proof}
The finite root, sector, and stopping-face cover of
Sections~\ref{sec:part-i-boxes}--\ref{sec:part-i-atlas} first separates every used
saddle from the equatorial-discriminant and equatorial/interior joint-collision
boxes of \eqref{eq:legacy-3-16}, multiple horizontal roots, and same-sign node cells.  On a
closed separated cell both eigenvalues have a
fixed nonzero margin.  The analytic stable-manifold theorem straightens the
two invariant curves; analytic Hadamard division then gives \eqref{eq:legacy-7-1}.
Fixed small levels of \(u_j\) and \(v_j\) are transverse sections with fixed
normal signs.

Delete these saddle boxes from the stopped itinerary.  Every remaining arc is a
finite concatenation of regular flow boxes.  Analytic dependence of the ODE
and the implicit-hit theorem give \(T_j\), while planar order preservation in
the oriented section coordinates gives \(\partial T_j>0\).  Inner and outer
itinerary domains with positive competing-face margins give the two-sided bounds
in \eqref{eq:legacy-7-2}.  The first vanishing normal, interval width, or competing-face
distance is exactly one of the already numbered faces, proving the finite
cover and the stated passage to adjacent regimes.
\end{proof}

Fix one certified cell and retain every intermediate outgoing value
\(x_1,\ldots,x_k\).  Let \(\delta_j\) be the actual local Dulac map in the
sections of Proposition~\ref{prop:part-iii-hyperbolic-word}.  The local orbit
one-form is
\begin{equation}
 u_j\,dv_j+v_j\frac{B_j(u_j,v_j,\alpha)}{
                         A_j(u_j,v_j,\alpha)}\,du_j=0,
 \qquad \frac{B_j}{ A_j}=r_j+a_j,\quad a_j(0,0,\alpha)=0.
\label{eq:legacy-7-3}
\end{equation}
On each local ratio chart described above, Mourtada's Appendix~VA result
places the analytic Dulac correction in \(QRH_1\): by
descent from \eqref{eq:legacy-7-0b} at a rational center, and directly on a quasi-resonant
chart.  In the physical section coordinate this gives
\begin{equation}
 \delta_j(x_j,\alpha)
 =x_j^{r_j(\alpha)}\{1+D_j(x_j,\alpha)\},
 \qquad D_j\in QRH_1,\quad D_j(0,\alpha)=0.
\label{eq:legacy-7-4}
\end{equation}
This is an exact membership statement for the analytic saddle cell, not a
formal linearization and not a claim at a zero eigenvalue.

The regular connectors, including the closing connector, are retained before
normalization.  Orient every target coordinate so that \(T_j'>0\), and put
\begin{equation}
 f_{j+1}(u,\alpha)=T_j^{-1}(u,\alpha)-T_j^{-1}(0,\alpha),
 \qquad \lambda_j(\alpha)=T_j^{-1}(0,\alpha),
 \qquad 1\le j\le k.
\label{eq:legacy-7-5}
\end{equation}
Use the cyclic convention \(x_{k+1}=x_1\).  Here \(T_k\) is the actual
regular first hit from the last outgoing section to the first incoming
section; it is not an inferred translation.
The physical cycle equations are then
\begin{equation}
 \delta_j(x_j,\alpha)-f_{j+1}(x_{j+1},\alpha)
     =\lambda_j(\alpha),\qquad 1\le j\le k.
\label{eq:legacy-7-6}
\end{equation}
Thus no connector, including the closing one, is silently replaced by a
guessed translation.

\begin{proposition}[QRH realization on the physical fibers]
\label{prop:part-iii-qrh-realization}
After the permitted diagonal analytic section changes, the zero-based
functions
\begin{equation}
 g_j^0=\delta_j(x_j,\alpha)-f_{j+1}(x_{j+1},\alpha),
 \qquad 1\le j<k,
\label{eq:legacy-7-7}
\end{equation}
are first integrals of one Hilbert derivation
\(\chi_\omega\in\Xi H_k[QRH]\).  The physical splittings are precisely the
integral-projection values \(g_j^0=\lambda_j(\alpha)\).  After adjoining
ordinary passive affine coefficients, the complete closing equation is the
QRH germ.  Write the two zero-based analytic closing-section coordinates as
\begin{equation}
 \Phi_{\rm out}(u,\alpha):=u,\qquad
 \Phi_{\rm in}(u,\alpha)
 :=T_k^{-1}(u,\alpha)-T_k^{-1}(0,\alpha).
\label{eq:legacy-7-7a}
\end{equation}
Then that germ is
\begin{equation}
 F_{A,B,C}=
 A\Phi_{\rm out}(\delta_k(x_k,\alpha),\alpha)
 -B\Phi_{\rm in}(x_1,\alpha)-C.
\label{eq:legacy-7-8}
\end{equation}
The unnormalized physical coefficient triple is
\((A,B,C)=(1,1,\lambda_k)\).  Its positive projective representative is
\begin{equation}
 (A,B,C)=\frac{ (1,1,\lambda_k)}{\sqrt{2+\lambda_k^2}},
\label{eq:legacy-7-8a}
\end{equation}
with
\(\lambda_k=T_k^{-1}(0,\alpha)\), equation \eqref{eq:legacy-7-8} is
equivalent, after multiplication by its positive denominator, to
\(\delta_k(x_k)-T_k^{-1}(x_1)=0\), which is exactly the closing equation
in \eqref{eq:legacy-7-6}.  Thus the inverse connector acts on the incoming coordinate, not
on the outgoing Dulac value.

On a finite refinement of the compact itinerary cell times
\begin{equation}
 {\cal P}=\{(A,B,C):A,B\ge0,\ A^2+B^2+C^2=1\},
\label{eq:legacy-7-9}
\end{equation}
there is one common admissible positive-corner representative and one
physical invariant fiber set on which zeros of \eqref{eq:legacy-7-8} are exactly solutions
of the actual retained H14 itinerary.  Connection and separatrix values,
projective coefficient faces, coordinate-corner faces, and identity closing
fibers remain in this representative.
\end{proposition}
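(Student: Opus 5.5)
The proof will assemble the Hilbert system one saddle at a time, using the physical data of Proposition~\ref{prop:part-iii-hyperbolic-word} and the Dulac expansions \eqref{eq:legacy-7-4}, and only then compactify over the projective coefficient sphere.

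\emph{First step: the derivation.} On each certified cell, take the phase space to be the positive corner $(x_1,\dots,x_k)\in[0,\epsilon)^k$ with parameters $\alpha$. The candidate functions are the $k-1$ zero-based splittings $g_j^0$ of \eqref{eq:legacy-7-7}. Their differentials are independent near the corner: $\partial_{x_{j+1}}g_j^0=-\partial_u f_{j+1}$ is bounded away from zero by \eqref{eq:legacy-7-2}, and the matrix $(\partial_{x_{j+1}}g_j^0)$ is triangular in the variables $x_2,\dots,x_k$. Hence the common fibers $\{g_j^0=\text{const}\}$ are curves parametrized by $x_1$. I will define $\chi_\omega$ as the vector field tangent to these fibers, normalized by $\chi_\omega x_1=x_1$; it is obtained by Cramer's rule.

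\emph{Second step: QRH membership of the coefficients.} This requires that each inverse connector $f_{j+1}^{-1}$ be analytic and that the diagonal products $\delta_j'\cdot x_j$ lie in the Dulac class. The former is ordinary analytic inversion. For the latter I use \eqref{eq:legacy-7-4} together with the ratio cover at \eqref{eq:legacy-7-0b}, and VB4 only where a tangent-to-identity inverse is needed. This yields exactly Mourtada's admissible form $\chi_\omega\in\Xi H_k[QRH]$ (IB1--IB5, IB7). The principal orbit is the fiber $g_j^0=0$, $x=0$. Integral projection is the map $x\mapsto(g_j^0(x))_j$, so the physical splitting condition $g_j^0=\lambda_j(\alpha)$ of \eqref{eq:legacy-7-6} is precisely a fiber. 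The invariance of the coordinate corners follows from $\delta_j(0)=0$ and $f_{j+1}(0)=0$.

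\emph{Third step: the closing germ.} Restrict $F_{A,B,C}$ of \eqref{eq:legacy-7-8} to these fibers. At $(1,1,\lambda_k)$, multiplication by the positive normalizer $\sqrt{2+\lambda_k^2}$ recovers the closing equation of \eqref{eq:legacy-7-6}. Because the inverse acts on $x_1$, composition with $\delta_k$ stays in the Dulac class on $\Phi_{\rm out}$, and $\Phi_{\rm in}$ is analytic. The coefficients $(A,B,C)$ are passive: they enter affinely and do not affect $\chi_\omega$. Exactness of zeros is checked in both directions. A zero on a physical fiber gives values $x_1,\dots,x_k$ satisfying all $k$ equations of \eqref{eq:legacy-7-6}. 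Conversely, every retained cycle gives such a point by Proposition~\ref{prop:exact-once}, and distinct cycles give distinct points because every $x_j$ is retained.

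\emph{Final step: common representative, and the expected main obstacle.} The hardest part is obtaining one representative uniform over the compact cell times ${\cal P}$, including the faces $A=0$ or $B=0$ (identity or degenerate closing fibers) and the coordinate corners $x_j=0$. For each point of the cell times ${\cal P}$ I will produce a local admissible representative: on ratio charts from the Appendix~VA cover, with domain size controlled by the margins $\lambda_*$, $c_T$, $C_T$ and the section margins. I then take a finite subcover. Compatibility on overlaps follows because all representatives are restrictions of the same physical functions $\delta_j$ and $T_j$, so they agree where both are defined. The possible gap is ensuring that the shrinking of the corner domain required by each local QRH chart stays bounded below. I would try to get this from compactness of the ratio set in $(0,\infty)$ together with the fixed section levels, and I would intersect all domains to a single $[0,\epsilon_0)^k$.
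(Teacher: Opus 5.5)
\textbf{Gap: the normalization of $\chi_\omega$.} Your first two steps have a genuine gap. The normalization $\chi_\omega x_1=x_1$ does not give a derivation in $\Xi H_k[QRH]$, and your reason for corner invariance does not hold. Solving $\chi_\omega g_j^0=0$ under $\chi_\omega x_1=x_1$ gives
\[
 \chi_\omega x_i=x_1\prod_{l=1}^{i-1}\frac{\delta_l'(x_l,\alpha)}{f_{l+1}'(x_{l+1},\alpha)},\qquad 2\le i\le k .
\]
This component does not vanish on $\{x_i=0\}$. Take $\lambda_1>0$: the fiber $\{g_1^0=\lambda_1\}$ reaches $x_2=0$ exactly where $\delta_1(x_1)=\lambda_1$, since $f_2(0)=0$. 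At that point its tangent has $x_2$-component $x_1\delta_1'(x_1)/f_2'(0)\neq0$, so your orbits cross that face transversally. The facts $\delta_j(0)=0$ and $f_{j+1}(0)=0$ only put the origin on the zero fiber; they say nothing about invariance of the coordinate faces.

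There is a second defect. For $i\ge3$ your coefficients contain bare factors
\[
 \delta_l'(x_l)=x_l^{r_l-1}\{r_l(1+D_l)+x_l\partial_{x_l}D_l\},\qquad l\ge2 .
\]
These are unbounded at $x_l=0$ whenever $r_l<1$, which is allowed because the ratios are only confined to a compact subset of $(0,\infty)$. So these coefficients are not QRH functions at all. In your second step you correctly name $x_j\delta_j'$ as the quantity that must lie in $QRH_1$, but the field you actually wrote down is not built from it.

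\textbf{What is missing.} You need the corner normalization the paper uses. Take the alternating-minor field annihilating $g_1^0,\dots,g_{k-1}^0$ and multiply it by $x_1\cdots x_k$; equivalently, work with the logarithmic Jacobian. The Jacobian is bidiagonal, so its $i$-th minor is $\prod_{l<i}\delta_l'(x_l)\prod_{l>i}\bigl(-f_l'(x_l)\bigr)$. Hence
\[
 \frac{\chi_\omega x_i}{x_i}=\pm\prod_{l<i}x_l\delta_l'(x_l,\alpha)\prod_{l>i}x_lf_l'(x_l,\alpha).
\]
This is a product of the $QRH_1$ functions $x_l\delta_l'$ and analytic factors. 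It gives QRH membership and invariance of every coordinate face at once. It also turns the points where a fiber meets a face into zeros of $\chi_\omega$, so they are separatrix values rather than crossings.

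On the open corner, $x_1\cdots x_k>0$ and the first minor $\prod_{l\ge2}(-f_l')$ is nonzero, so $\chi_\omega$ does not vanish there. The physical fiber is then a single orbit because the recursion
\[
 x_{j+1}=f_{j+1}^{-1}\bigl(\delta_j(x_j)-\lambda_j\bigr)
\]
is monotone, so its admissible $x_1$-set is an interval.

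With this replacement, the rest of your plan follows the paper's argument: the passive affine coefficients, the positive normalizer $\sqrt{2+\lambda_k^2}$, exactness of zeros in both directions, and the finite subcover of the cell times ${\cal P}$.
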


\begin{proof}
The alternating-minor vector field annihilating the \(k-1\) functions in
\eqref{eq:legacy-7-7}, normalized by its QRH unit so that its corner component is
\(x_1\cdots x_k\), defines \(\chi_\omega\).  The decisive derivative is
\begin{equation}
 x_j\delta_j'=x_j^{r_j}
 \{r_j(1+D_j)+x_j\partial_{x_j}D_j\}\in QRH_1,
\label{eq:legacy-7-10}
\end{equation}
and analytic inverse closure applies to the \(f_j\); VB4 is reserved for the
QRH diagonal inverses in the imported reduction.  Hence the normalized
derivation, its first integrals, and \eqref{eq:legacy-7-8} satisfy the recursive QRH
definitions.  The coefficients \((A,B,C)\) are first passive variables in
ordinary open charts and are restricted to \eqref{eq:legacy-7-9} only after membership has
been proved.

For fixed projection values, \eqref{eq:legacy-7-6} recursively gives
\begin{equation}
 x_{j+1}=f_{j+1}^{-1}
 \bigl(\delta_j(x_j,\alpha)-\lambda_j(\alpha)\bigr).
\label{eq:legacy-7-11}
\end{equation}
Monotonicity and the common itinerary inequalities make every nonempty physical
fiber one connected graph and hence one orbit of \(\chi_\omega\).  This
proves the zero/first-hit correspondence required by IVC1.  The corner is
invariant because \(\chi_\omega x_j/x_j\in QRH\); a point on a coordinate
axis itself represents a separatrix value, not a limit cycle.  Compactness of
the certified cell and \eqref{eq:legacy-7-9} now gives finitely many admissible
representatives.  An identity closing fiber contributes no isolated zero,
while the same representative controls its neighboring fibers.
\end{proof}

\begin{theorem}[Uniform zero bound for hyperbolic itineraries]
\label{thm:part-iii-hyperbolic-zero}
On every compact certified analytic H14 itinerary cell satisfying
Propositions~\ref{prop:part-iii-hyperbolic-word} and
\ref{prop:part-iii-qrh-realization}, the complete retained-variable system
has a locally uniform finite number of isolated positive solutions.  This
includes nonzero ratio-one saddles, with or without an analytic first
integral, and all projective coefficient and connection faces in \eqref{eq:legacy-7-9}.
\end{theorem}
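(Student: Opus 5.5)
The plan is to derive the theorem from Mourtada's local finiteness theorem IVC1, applied pointwise on the admissible representative of Proposition~\ref{prop:part-iii-qrh-realization}, and then to pass to a uniform bound by compactness.

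\textbf{Step 1 (fixing the data).} Fix a compact certified cell $K$ together with its finite refinement, which was chosen before any zero count. On each piece, Proposition~\ref{prop:part-iii-qrh-realization} supplies:
\begin{itemize}
\item one Hilbert derivation $\chi_\omega\in\Xi H_k[QRH]$ whose first integrals are the $g_j^0$, $1\le j<k$;
\item the closing germ $F_{A,B,C}$ of \eqref{eq:legacy-7-8}, with $(A,B,C)$ ranging over the compact projective set $\mathcal P$ of \eqref{eq:legacy-7-9};
\item the fact that the integral-projection fibres $g^0=\lambda(\alpha)$ are single connected orbits of $\chi_\omega$.
\end{itemize}
By Proposition~\ref{prop:exact-once}, isolated positive solutions of the retained system correspond exactly, with multiplicity, to isolated zeros of $F_{A,B,C}$ restricted to the physical fibre at $(A,B,C)=(1,1,\lambda_k)/\sqrt{2+\lambda_k^2}$. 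It therefore suffices to bound the number of isolated zeros of $F_{A,B,C}$ along integral fibres of $\chi_\omega$, uniformly on $K\times\mathcal P$ and in the projection values.

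\textbf{Step 2 (local finiteness).} At each point $p_0$ of the closed corner-compactified set, namely $K\times\mathcal P\times\{x\ \text{in the closed section box}\}$, apply Theorem IVC1, read with Theorem~0. It gives a neighbourhood $W_{p_0}$ and an integer $N_{p_0}$ such that the restriction of $F$ to any $\chi_\omega$-orbit inside $W_{p_0}$ has either at most $N_{p_0}$ isolated zeros or vanishes identically. Three kinds of points need attention.
\begin{itemize}
\item \emph{Corner points} (some $x_j=0$). Invariance of the corner is recorded in Proposition~\ref{prop:part-iii-qrh-realization}, so axis points are separatrix values and are never zeros that we count. This is exactly the situation IVC1 is designed for.
\item \emph{Ratio-one saddles}, with or without an analytic first integral. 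No separate argument is needed: the $QRH_1$ membership from Appendix~VA, transported through \eqref{eq:legacy-7-0b}, covers all local ratio charts, and nonresonance is not required.
\item \emph{Identity fibres.} When $F$ vanishes identically along a fibre, that fibre contributes nothing, because no zero on it is isolated. Neighbouring nonidentity fibres still lie in the same $W_{p_0}$, so the same $N_{p_0}$ controls them.
\end{itemize}

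\textbf{Step 3 (compactness).} Extract a finite subcover of the compact set. Because each physical fibre is one orbit, a physical fibre meets finitely many $W_{p_0}$, each in finitely many orbit pieces. The number of pieces must be bounded uniformly; this follows from the monotone recursive graph \eqref{eq:legacy-7-11}, whose sections are intervals on which $x_1$ is a strict coordinate. Summing the $N_{p_0}$ over the subcover gives one bound, valid uniformly on the cell and including the projective coefficient faces and connection faces.

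\textbf{Main obstacle.} I expect the hardest point to be the form of uniformity that IVC1 delivers. It bounds zeros on orbit pieces inside a neighbourhood, whereas the physical fibres are global graphs over the section. The gluing therefore requires the number of pieces of an orbit crossing the cover to be uniformly bounded, and the plan is to take that bound from monotonicity of \eqref{eq:legacy-7-11} and the common itinerary margins of \eqref{eq:legacy-7-2}.
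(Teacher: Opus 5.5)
Your proposal is correct. Its core matches the paper: IVC1 applied to the QRH Hilbert system of Proposition~\ref{prop:part-iii-qrh-realization}, the exact fibre/first-hit correspondence, and the fact that an identity fibre has no isolated member. The difference is where the compactness is done.

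The paper does not rebuild a pointwise cover. Proposition~\ref{prop:part-iii-qrh-realization} already gives a finite refinement of the cell times $\mathcal P$, with one common admissible representative on each member. On each representative the integral fibres of $\chi_\omega$ are exactly the physical fibres, which are single connected graphs by \eqref{eq:legacy-7-11}. IVC1 then yields one finite local degree per representative along whole fibres. A \emph{maximum} over the finitely many members gives the bound, so the piece-gluing problem you single out as the main obstacle never arises there. The paper also handles the faces $A=0$ and $B=0$ by hand rather than through IVC1. There the closing equation depends monotonically on $x_1$ or on $x_k$ alone along the fibre, so it has at most one isolated zero. The zero triple is excluded by \eqref{eq:legacy-7-9}.

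Your route, a cover of the whole closed box followed by gluing and a sum, is more self-contained and makes uniformity away from the corner explicit. The gluing is sound if you shrink every neighbourhood to a coordinate box in $x$: since each $x_j$ is strictly monotone in $x_1$ along a fibre, the fibre meets such a box in a single interval. Be aware, however, that at points with all $x_j>0$ and at partial corners you are not literally applying IVC1 at the origin. At interior points you need analytic (Weierstrass) finiteness. At partial corners you need the local finiteness theorem for the reduced Hilbert germ with fewer saddles. In the paper, these points are absorbed by working on a single admissible representative.
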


\begin{remark}
Uniformity holds after a finite refinement of the compact physical itinerary
cell times the projective coefficient set \eqref{eq:legacy-7-9}, with one admissible
representative on each member.  Connection, separatrix, coefficient, and
identity values remain within that representative.  A zero eigenvalue,
multiple root, node, missing connector, change in the first-contact outcome, lost section
normal, or exit is stopped at its named adjacent regime before QRH is
invoked.
\end{remark}

\begin{proof}
Mourtada's IVC1 theorem applies to every common admissible representative in
Proposition~\ref{prop:part-iii-qrh-realization}.  It gives a finite local
degree for the zero set of \eqref{eq:legacy-7-8} along the physical invariant fibers.  The
exact fiber/first-hit correspondence turns that degree into the required H14
zero count.  Taking the maximum over the finite refinement gives one bound
for the cell.  On \(A=0\) or \(B=0\) the equation is monotone or constant and
has at most one isolated solution; the zero coefficient triple is excluded
by \eqref{eq:legacy-7-9}.  An identity fiber has no isolated member.  No compactness step
crosses a zero-eigenvalue or changed-itinerary face, because those faces were
removed in Proposition~\ref{prop:part-iii-hyperbolic-word}.
\end{proof}

\section{Central passages and the two-saddle-node classification}
\label{sec:part-iii-central}

This section starts with a stopped, exact-once physical itinerary from
Proposition~\ref{prop:exact-once}.  It
does not construct the atlas and it does not infer an orbit from a formal
normal form.  We first treat the unique-central case, then prove an exhaustive
geometric classification of every retained two-central itinerary.  Only after
that classification do we invoke the no-pp clause of DIR; the pp alternatives
are treated by the strict, middle, and root theorems below.

For this section, \(S_h\) denotes the selected nonpersistent horizontal
saddle-node and \(S_v\) the selected upper \(D\)-saddle-node.  The finite
signed endpoint cover writes the upper double root as
\(w=-q\), \(q=\sigma t\), with \(t=|q|\ge0\).  The label \(E_\sigma\)
is the selected regular equatorial arc, \(p_\sigma\) its persistent principal
endpoint, and \(\xi\) is the oriented endpoint-section coordinate for which
\(0<\xi<\xi_h\) lies between the equatorial boundary and the selected hh
separatrix.  These are physical labels from the stopped atlas, not normal-form
objects.

\subsection{A single central block}

Let \(G\) be the sole central vertex selected by a retained itinerary \(\omega\).
The required uniform nondegeneracy condition is
\begin{equation}
 |\lambda_{\rm tr}(G)|,\ |q_G''(0)|\ge m_c,
 \qquad
 \min_{v\in\mathcal V(\omega)\setminus\{G\}}
 \min\{|\lambda_v^s|,|\lambda_v^u|\}\ge m_h>0.
\label{eq:legacy-7-12}
\end{equation}
Thus the local weak equation has a genuine quadratic saddle-node term and
every other singular vertex, including both principal endpoints, is
hyperbolic.  Merely having one internal label would not imply \eqref{eq:legacy-7-12}.

\begin{theorem}[One-central estimate]
\label{thm:part-iii-one-central-zero}
On a compact family of retained exact-once itineraries satisfying \eqref{eq:legacy-7-12}, with an
actual retained passage through the central sector, fixed sections, and the
uniform stopping-set margins, the number of isolated cycles is locally uniformly
finite; in fact the corresponding elementary graphic has cyclicity at most
one.
\end{theorem}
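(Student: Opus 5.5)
The plan is to reduce the statement to DIR Corollary~3.6, as recorded in Table~\ref{tab:imported-theorem-interface}. The work is to verify its hypotheses for the physical family \eqref{eq:family}; we do not build a new zero argument. We start from the retained exact-once itinerary \(\omega\) supplied by Proposition~\ref{prop:exact-once}. By Proposition~\ref{prop:stopped-first-hit} and Theorem~\ref{thm:finite-words}, \(\omega\) is a finite composition of stopped first hits on fixed sections, with positive stopping-face margins. Condition \eqref{eq:legacy-7-12} has two parts. The first gives a nonzero transverse eigenvalue and a nonzero quadratic weak coefficient at \(G\), so \(G\) is a codimension-one saddle-node, uniformly on the compact family. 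The second says every other vertex, including both principal endpoints, is a hyperbolic saddle with a uniform eigenvalue margin.

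First, at \(G\) we invoke DIR Theorem~3 for a prescribed finite smoothness order \(K\). This gives finite-smooth orbital normal-form coordinates \(x'=q_\alpha(x)\), \(y'=\pm y\) with \(q_\alpha\) of multiplicity at most two. We transport the fixed physical entry and exit sections of the central box into these coordinates, using the uniform graph representatives of Proposition~\ref{prop:endpoint-joint-uniformity} if \(G\) is an endpoint vertex, or the upper-box monotonicity \eqref{eq:legacy-3-14}--\eqref{eq:legacy-3-15} if \(G\) is the upper vertex. The retained passage hypothesis places the orbit in the central (parabolic) sector. The stopped atlas excludes the hyperbolic-sector gate and the stable-center side as first outcomes, so the passage is a genuine central transition. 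Second, for each hyperbolic vertex we use the analytic separatrix coordinates and Dulac maps of Proposition~\ref{prop:part-iii-hyperbolic-word}, and the regular connectors \eqref{eq:legacy-7-2} join consecutive sections. Together these realize \(\omega\) as an elementary saddle-node graphic that uses \(G\) centrally. It has no pp connection, since there is only one saddle-node, and the attractivity clause is vacuous.

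Third, we apply Corollary~3.6 of DIR, which gives cyclicity at most one for such a graphic. We then combine this with Proposition~\ref{prop:exact-once} to conclude that the displacement \(D_\omega\) has at most one isolated zero counted with multiplicity near each point of the compact family. A finite subcover of the compact family of itineraries then yields local uniformity. Faces where a margin in \eqref{eq:legacy-7-12} fails are not part of this cell. By Proposition~\ref{prop:finite-singular-alphabet} they are named adjacent regimes: the two-central, source, or mixed regimes, or terminal outcomes.

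The main obstacle is matching hypotheses in the first step. Corollary~3.6 is stated for the normal-form graphic with its own sections and for a smooth unfolding, whereas our itinerary is defined by physical first hits whose domains move with \(\lambda\). I would first show that the central transition expressed in DIR coordinates is exactly the stopped physical transition on a fixed common domain. The idea is to use the ownership rule of Definition~\ref{def:first-contact-owner}, so that no divider enters the central passage, and the Lyapunov function \eqref{eq:legacy-3-11}, so that the normal-form sections are crossed exactly once. Then the composition with the finitely many hyperbolic Dulac maps and regular connectors is precisely the family treated by DIR. The uniformity of their bound over the compact parameter set follows from the uniform constants \(m_c,m_h\).
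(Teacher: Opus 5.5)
Your proposal is correct and follows essentially the same route as the paper: DIR Theorem~3 only for the saddle-node certified by \eqref{eq:legacy-7-12}, the physical hyperbolic first hits as the remaining sections and connectors, the unconditional no-pp clause of DIR Corollary~3.6 for cyclicity one, and persistence of the margins in \eqref{eq:legacy-7-12} with a finite cover of the compact family for uniformity. Your explicit matching of DIR's sections with the stopped physical transition is more detailed than the paper, which simply asserts that the certified graphic is the DIR one-central configuration; note only that cyclicity one bounds isolated cycles, so your ``counted with multiplicity'' refinement is not what the corollary literally states and is not needed.
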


\begin{remark}
The bound is uniform in a full five-parameter neighborhood of a compact
certified family, provided the constants in \eqref{eq:legacy-7-12}, section normals, and
first-hit margins stay uniformly positive.  If the central root splits, the
itinerary becomes all-hyperbolic; a second central block enters the exhaustion
theorem below; and a persistent $B=0$ endpoint enters the mixed regime.
Wrong orientation, stable-center passage, a change in the first-contact outcome, collapse, and
exit are stopped before a return equation is formed.
\end{remark}

\begin{proof}
DIR2002, Theorem~3, supplies a finite-smooth central normal form only for the
actual saddle-node certified in \eqref{eq:legacy-7-12}; it supplies no
connector.  The remaining sections and connectors are the physical
hyperbolic first hits certified in Section~\ref{sec:part-i-atlas}.  The
resulting complete graphic is therefore exactly the one-central,
no-parabolic-connection case of DIR2002, Corollary~3.6, whose unconditional
no-pp clause gives cyclicity one.
The lower bounds in \eqref{eq:legacy-7-12} and the fixed first-contact margins persist on one open
neighborhood in the original parameters.  A finite cover of the compact base
family yields the uniform statement.  If one of these inequalities or
passage conditions is lost, the itinerary enters the adjacent case described
in the preceding remark.
\end{proof}

\subsection{The finite list of two-central configurations}

There are at most two internal central gates.  In signed reciprocal endpoint
coordinate \(u\), the horizontal equation has the prepared form
\begin{equation}
 \widehat X u=-uP(u,\lambda),\qquad
 P(u,\lambda)=U(u,\lambda)(u^2+A_1u+A_0),\quad U\ne0.
\label{eq:legacy-7-13}
\end{equation}
The quadratic factor has at most one multiple root across the two signed
endpoint charts; the persistent root \(u=0\) is an endpoint passage rather
than an internal gate.  The only other central label is the unique upper
\(D\)-collision.  Hence a retained itinerary has zero, one, or precisely these
two internal central gates.

The intersection with the center variety is also explicit.  On the quadratic
center component,
\begin{equation}
 \mu_2=-B,\qquad c=\mu_3=-a,
\label{eq:legacy-7-14}
\end{equation}
and the horizontal and vertical root polynomials
\begin{equation}
 f_Q(x)=1-B+ax+Bx^2,
 \qquad E_Q(w)=-B+aw-(1-B)w^2
\label{eq:legacy-7-15}
\end{equation}
have common discriminant \(a^2-4B(1-B)\).  Their double-root locus is
\begin{equation}
 B=\frac{t^2}{1+t^2},\qquad 0<t<t_0,
 a=\frac{2\epsilon t}{1+t^2},\qquad c=\mu_3=-a,
 \qquad \epsilon\in\{\pm1\}.
\label{eq:legacy-7-16}
\end{equation}
We call this the explicit invariant-center component.  The reversible center component has no
nonpersistent horizontal double root.  The discriminant calculation alone
does not certify a physical lips ensemble.  On the invariant-center component, however, direct
substitution in the H14 field gives the invariant strong line
\begin{equation}
 L_\epsilon=tx-\epsilon t^2z+\epsilon=0,
 \qquad X_\lambda L_\epsilon=K_\epsilon L_\epsilon,
\label{eq:legacy-7-16a}
\end{equation}
and the signed first-hit computation of
Proposition~\ref{prop:complete-physical-lips} supplies the complete pp strip and its
PP boundary through \(p_{-\epsilon}\).  Thus every two-central parameter on
the invariant-center component carries the required physical configuration.  At this
stage the conclusion is only that the residual affine class is empty on this
center component; it is not yet its global elimination.

For completeness, away from this invariant-center component the strong connection is tested physically.
On \(c=d\), write
\begin{equation}
 L=x+\frac{d}{2(1-B)}z+\frac{a}{2B},
\quad
 X_\lambda L=\left(Bx+\frac{a}{2}+\frac{d}{2}z\right)L
              -\kappa_{\rm inv}z,
\quad
 \kappa_{\rm inv}=\frac{4B(1-B)+ad}{4B(1-B)}.
\label{eq:legacy-7-17}
\end{equation}
The symbol \(\kappa_{\rm inv}\) is an invariance defect, not the root-scale
coordinate used later.  Equation \eqref{eq:legacy-7-17} recovers the invariant-center component when
\(\kappa_{\rm inv}=0\), but a general curved connection is accepted only
when its actual landing difference vanishes on the enlarged stopped-itinerary
domain.

\begin{proposition}[Finite successor induction for two-central candidates]
\label{prop:two-central-successors}
Suppose a candidate parabolic interval stops before it becomes a complete pp
component.  Its first stopped event has exactly one of the following
successors, read with the minimal-face priority of
Proposition~\ref{prop:exact-once}.
\begin{enumerate}[label=\textup{(\arabic*)},leftmargin=2.8em]
\item A traversable numbered divider, previous-side cut, corner, or tangency
passes to the pre-existing adjacent retained itinerary.
\item Loss or collision of one selected central gate passes to a hyperbolic,
one-central, no-passage, or persistent-endpoint itinerary.
\item A source, coefficient, or scale equality with both gates retained
passes to the source, mixed, strict, middle, or root regime selected by
\eqref{eq:legacy-5-2}.
\item A landing gap satisfying
\(|\delta_{\rm hh}|\ge\eta>0\) contributes no fixed point in a sufficiently
small neighborhood of the two-central face.
\item A passive side, collar exit, wrong orientation, stable-center limit, or
node core is terminal and has no full-return equation.
\item Coincident dividers or a collapsed interval leave no phase point
between the corresponding labels.
\end{enumerate}
Every continuation reaches, after finitely many such handoffs, a named zero
theorem or a terminal zero-contribution face.
\end{proposition}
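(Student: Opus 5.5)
The plan is to use the stopped-first-hit machinery of Sections~\ref{sec:part-i-atlas}--\ref{sec:part-i-exact-once} as a well-founded induction.  First I will classify the first stopped event of a candidate parabolic interval and check that the six alternatives are exhaustive and mutually exclusive.  Then I will show that every handoff strictly decreases a finite rank, so that no infinite chain of handoffs can occur.

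For exhaustiveness, let $J$ be a candidate pp interval on the entry section of $S_h$ that stops before it becomes a complete pp component.  By Proposition~\ref{prop:stopped-first-hit}, its first stopped event is a contact with a member of the finite face list $\mathcal F$, or a divergent flight into a named sector.  The owner of a simultaneous contact is the minimal face of Definition~\ref{def:first-contact-owner}, which gives a unique label.

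I will sort the owners by type, using the outcome alphabet \eqref{eq:stopped-outcome-alphabet}.
\begin{itemize}
\item The labels \emph{next}, divider, previous-side, corner and tangency give item (1).  By Theorem~\ref{thm:finite-words} the adjacent itinerary already exists in the finite graph.
\item Root, gate and sector collisions at $S_h$ or $S_v$ give item (2).  The prepared form \eqref{eq:legacy-7-13} and the upper gate equation \eqref{eq:legacy-3-16} show that losing a multiple root produces either simple roots, which are hyperbolic, or a persistent root $u=0$, which is the mixed case.  Losing a gate is a no-passage face.
\item Equalities of $t$, $\varrho_w$ or $\kappa$, or coefficient equalities, give item (3), with the regime selected by the half-open rule \eqref{eq:legacy-5-2}.
\item A nonvanishing landing difference gives item (4).
\item The terminal labels give item (5).
\item Coincident dividers give item (6).
\end{itemize}

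For item (4) I will argue by continuity.  The hh landing difference $\delta_{\rm hh}$ is continuous on the enlarged stopped domain.  Any fixed point near the two-central face has to shadow both central passages, and this forces $|\delta_{\rm hh}|$ to be small.  So a uniform gap $|\delta_{\rm hh}|\ge\eta$ rules out zeros after the parameter neighbourhood is shrunk.

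For termination, I will attach to each itinerary the lexicographic rank (number of retained central gates, depth of its minimal face in $\mathcal F$, cut index).  The claim to check is that every non-terminal handoff strictly lowers this rank.  Item (2) removes a central gate.  Item (1) moves to a face of lower dimension, or to a strictly later cut, by \eqref{eq:legacy-4-3}.  Item (3) leaves the successor induction altogether, since it lands directly in a regime that is owned by a named zero theorem.  Since $\mathcal F$ is finite and the cut-open graph is acyclic, every chain stops after finitely many steps.  It stops either at a named zero theorem from Table~\ref{tab:regimes} or at a face of type (4)--(6), and those faces contribute no fixed points.

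The main obstacle is item (1) together with the termination claim.  A divider handoff at the same cut could in principle lead back to a two-central candidate with the same rank, which would give a cycle among labels.  I would first try to exclude this with the minimal-face rule: a traversable divider puts the adjacent itinerary on a strictly smaller face of the closed intersection lattice of $\mathcal F$.  If that turns out to be insufficient, I would add the number of incident dividers as a further component of the rank.  It can only decrease, because of the one-intersection property in Proposition~\ref{prop:three-box-model}.
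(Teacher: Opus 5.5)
Your outline matches the paper's proof. The finite event list comes from Proposition~\ref{prop:stopped-first-hit}, and simultaneous contacts are resolved by the minimal-face rule. Cases (2) and (3) are the prepared gate classification and the half-open dictionary \eqref{eq:legacy-5-2}, case (4) is continuity of the actual landing difference $\delta_{\rm hh}$, and cases (5)--(6) are terminal by construction.

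The step that does not work as written is your termination rank. Only case (1) keeps the candidate inside the two-central successor chain. Cases (2) and (3) hand it to a regime owned by a named zero theorem, and (4)--(6) are terminal. So the gate count and face-depth components are unnecessary. In your lexicographic tuple they also give the wrong direction: in case (1) the cut index goes up, not down, so when the face depth is unchanged the tuple increases rather than "strictly lowers."

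Your two repairs for the same-cut worry are not supported by what you cite.
\begin{itemize}
\item The minimal-face rule of Definition~\ref{def:first-contact-owner} only assigns a simultaneous contact to one owner. It does not place the successor itinerary on a strictly smaller face.
\item The one-intersection hypothesis of Proposition~\ref{prop:three-box-model} bounds the number of dividers on an arc. It does not make any divider count decrease along handoffs.
\end{itemize}

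The paper's argument is simpler and removes the worry. The first stopped event lies downstream on the forward orbit, so the adjacent retained itinerary in case (1) begins later in the strictly ordered cut list \eqref{eq:legacy-4-3}. The cut-open rank therefore strictly increases at every case-(1) handoff. It takes at most $m+1$ values, and there are finitely many itineraries, so neither a same-rank cycle nor an infinite continuation chain can occur. Use the cut index alone, or $m$ minus it if you want a decreasing quantity, and drop the other rank components and both fallbacks; the proof then closes as in the paper.
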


\begin{proof}
The stopped first-hit theorem supplies the finite event list, and the
minimal-face rule assigns every simultaneous contact to one physical face.
In case~\textup{(1)}, the adjacent itinerary begins later in the strictly
ordered cut list \eqref{eq:legacy-4-3}; hence the cut-open rank increases.
There are only finitely many ranks and finitely many itineraries.  Cases
\textup{(2)}--\textup{(3)} are the prepared gate classification and the fixed
half-open scale dictionary, respectively.  In case~\textup{(4)}, continuity
of the actual landing difference gives a uniform sign.  The last two cases
are terminal by construction.  Thus a candidate is followed through a finite
handoff rather than discarded merely because its local itinerary label
changes, and an infinite continuation chain is impossible.
\end{proof}

\begin{proposition}[Completion of a physical lips configuration]
\label{prop:complete-physical-lips}
Every retained two-central pp candidate admits a finite refinement.  On each
subfamily carrying a complete physical lips configuration, the following
conditions hold:
\begin{enumerate}[label=\textup{(\roman*)},leftmargin=2.8em]
\item there are two distinct simple physical saddle-nodes;
\item their nonzero transverse eigenvalues have opposite signs;
\item the weak and strong sections are fixed and have positive normals;
\item the actual strong landing difference is zero;
\item there is a nonempty interval of complete pp orbits;
\item the other boundary is a complete PP or BP incidence;
\item no extra root, gate, stopping face, or collar side meets the retained interval;
\item on a PP boundary, \(r_{\rm bd}\notin[1/2,2]\); no ratio condition is
required on a BP boundary.
\end{enumerate}
Let \(\delta_{\rm hh}\) denote the actual strong landing difference on the
fixed physical sections, so condition \textup{(iv)} is
\(\delta_{\rm hh}=0\).  On a closed subcell
\(|\delta_{\rm hh}|\ge\eta>0\), no sequence of isolated fixed points can
approach the two-central face.  The remaining punctured neighborhood of the
connection face is not classified here by a formula: its common physical
product tube is proved in
Lemma~\ref{lem:two-central-landing-product-tube}, and its role as the constant
coefficient of the physical closing equation is proved independently in
Lemma~\ref{lem:part-iii-physical-affine-closing}.  Thus every refined
subfamily is either a complete physical lips configuration satisfying
\textup{(i)}--\textup{(viii)}, a landing-gap-separated subcell with zero
contribution to the local two-central limit, a punctured landing-split
subfamily on the same stopped word, or a named geometric first-hit outcome.

Condition \textup{(v)} means complete orbits whose alpha- and omega-limits are
the two saddle-nodes, not merely a finite section-to-section hit.  Sector
ordering is part of the refinement: if an opposite-sign adjacency has no
parabolic next-section itinerary, it first reaches a stable-center, preceding-side, or
exit face and is not retained.  Consequently every retained no-pp itinerary has
the same-attractivity required by
Theorem~\ref{thm:part-iii-two-central-no-pp}.  If a parabolic next-section interval
is nonempty, its maximal survivor component is followed until its non-hh
endpoint first reaches PP, BP, another gate, a passive side, a collapsed
interval, or the collar.  The first two complete conditions
\textup{(vi)}--\textup{(viii)}; the middle outcomes are already named adjacent
regimes, and Step~5 below eliminates the collar possibility.  The analytic
nonaffinity of the completed second boundary is separated into
Corollary~\ref{cor:two-central-nonaffine-boundary}.
\end{proposition}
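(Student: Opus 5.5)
The plan is to follow a retained two-central pp candidate through the stopped first-hit atlas, refining only at numbered faces, and to verify (i)--(viii) one at a time on each refined piece. \emph{Step~1.} By \eqref{eq:legacy-7-13} and the upper-box classification of Proposition~\ref{prop:finite-singular-alphabet}, a retained two-central itinerary carries exactly the nonpersistent horizontal double root and the upper \(D\)-collision. The prepared quadratic term is nonzero and the transverse eigenvalue has a fixed margin, which gives (i). Fixed weak and strong sections are taken from the model sections of Proposition~\ref{prop:stopped-first-hit}; their normals are uniform, which gives (iii).

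\emph{Step~2: sector ordering.} Compute the signs of the transverse eigenvalues after the common time orientation. If they have the same sign, no parabolic next-section interval can exist. The parabolic side of the first saddle-node then reaches a stable-center, previous-side, or exit face, and is terminal by Theorem~\ref{thm:finite-words}. In that case the itinerary is no-pp with same attractivity. Otherwise (ii) holds, and we take the maximal survivor component of the parabolic next-section interval.

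\emph{Step~3: the strong landing.} Define \(\delta_{\rm hh}\) as the difference between the landing of the strong separatrix of \(S_v\) on the fixed strong section of \(S_h\) and the strong separatrix value there. On closed subcells with \(|\delta_{\rm hh}|\ge\eta\), continuity and the fixed section margins give a uniform sign. Case~(4) of Proposition~\ref{prop:two-central-successors} then gives zero contribution. The punctured split neighborhood is left with the same stopped word, to be handled by the product-tube lemma. The subfamily \(\delta_{\rm hh}=0\) is (iv).

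\emph{Step~4: completeness of the strip and the second boundary.} Follow each point \(\xi\in(0,\xi_h)\) of the endpoint interval. Use the upper-box monotonicity \eqref{eq:legacy-3-15} (the directed four-cell graph) to show that the orbit is squeezed into the central sector of \(S_v\). Use the Lyapunov identity \eqref{eq:legacy-3-11} at the endpoint to show that its backward limit is \(S_h\). This makes the orbits complete pp orbits, giving (v). Move \(\xi\) away from the hh side until the first stopped event. By the minimal-face rule and Proposition~\ref{prop:two-central-successors}, that event is one of the following: PP through \(p_\sigma\) along \(E_\sigma\) and the endpoint chain, BP on the center side, another gate, a passive side, a collapse, or the collar. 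The middle outcomes are already named regimes. On the invariant-center component one checks the collar alternative directly using the invariant line \eqref{eq:legacy-7-16a}.

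\emph{Main obstacle: excluding the collar in general.} I expect the hardest step to be excluding the collar exit away from that component. My first attempt would be a barrier argument: the hh orbit and the graphic portion along \(E_\sigma\) bound a region of the fixed collar. That region is disjoint from the collar sides once \(U\) is chosen thin, because the strip lies between two orbits that both stay near \(\GammaH\). Planar orbit uniqueness then prevents strip orbits from touching a collar face first. This establishes (vi) and (vii), since any extra root or face meeting the interval would have been an earlier stopped event.

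Finally, for (viii), compute the PP ratio as the product of the hyperbolic ratios at \(p_\sigma\) and along the endpoint chain, using the endpoint eigenvalues from \eqref{eq:legacy-3-6}--\eqref{eq:legacy-3-7}. Near the base field the radial eigenvalue is small while the transverse one is \(\pm1\), so the ratio is far from \(1\). Shrinking the parameter ball keeps \(r_{\rm bd}\notin[1/2,2]\).
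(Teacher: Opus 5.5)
Your planar-barrier mechanism could replace the paper's Step~5, which uses Gronwall order bands in the regular tiles and the common-time cooperative comparison \eqref{eq:legacy-7-24}--\eqref{eq:legacy-7-24b} in the upper box. As written, however, the proposal has genuine gaps.

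\textbf{Step~2 proves the wrong implication.} The statement needs this: an \emph{opposite}-sign adjacency whose parabolic next-section set is empty first reaches a stable-center, preceding-side, or exit face. That is what yields ``every retained no-pp itinerary has same attractivity.''
You show instead that same sign excludes pp, which is the easy converse. You then call the same-sign parabolic side terminal, although that itinerary is retained: it is exactly the DIR Corollary~3.6 case. In the opposite-sign branch you silently assume the parabolic interval is nonempty. The case ``opposite signs, no pp'' is never treated.

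\textbf{The collar and hidden-divider step is not closed.}
\begin{enumerate}
\item ``Disjoint from the collar sides once $U$ is chosen thin'' reverses the quantifiers. $U$ and the stopped atlas are fixed before the parameter ball, and thinning $U$ only brings its sides closer.
\item What you actually need is that the hh arc, as a certified retained arc, has a positive stopping-face margin, while the equatorial arc has a fixed distance from the sides. The band between them then inherits a margin; this is \eqref{eq:legacy-7-22d}--\eqref{eq:legacy-7-22f2}.
\item The same margin, as in \eqref{eq:legacy-7-23b}, places $I_-$ and $r=r_*$ outside the upper disc bounded by the equator, the entry segment $w=\delta$, and the hh arc.
\item Even then, trapping plus the four-cell monotonicity \eqref{eq:legacy-3-15} only forces convergence to \emph{some} equilibrium of that disc. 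You must show $S_v$ is the only one. On the two-gate sheet, $E(w)=-(1-B)(w+q)^2$ has only the double root. Also $S(0)=-(1-B)q^2(1-2B)^2<0$ with $S'\le-\tfrac12$, which excludes a singular point on $F=0$.
\end{enumerate}
Without item~4, an interior saddle's stable branch would be a hidden divider inside $J_0$. Your claim that every $\xi\in(0,\xi_h)$ yields a complete pp orbit would then be unproved, and a PP boundary through that saddle would have an uncontrolled ratio.

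\textbf{Condition (viii) depends on the same computation.} It shows that $p_\sigma$ is the only hyperbolic saddle on the PP chain. Your ``product of ratios along the chain'' then reduces to the single small ratio at $p_\sigma$, which the paper writes as $|B/(1-B)|$ or its reciprocal.
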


\begin{proof}
\medskip\noindent\emph{Step 1: complete the surviving parabolic interval.}\par\noindent
For each ordered sign/outcome label, the actual strong first hit is defined on
an enlarged positive-margin domain and its landing difference
\(\delta_{\rm hh}\) certifies the hh incidence.  On the adjacent parabolic
side, recursive survivor intervals
select finitely many maximal first-hit itineraries.  Near either saddle-node the
local equations have the form
\begin{equation}
 \dot x=x^2U(x),\qquad \dot y=-\gamma(x,y)y,
 \qquad U,\gamma\ge c_0>0.
\label{eq:legacy-7-19}
\end{equation}
The weak travel time diverges while the strong variable contracts
exponentially.  Appending these two semiorbits converts every retained
first-hit interval into a continuum of complete pp orbits.

\medskip\noindent\emph{Step 2: separate a landing coefficient from a landing gap.}\par\noindent
Suppose isolated fixed points approach two internal critical boxes.
The finite list of cyclic itineraries and compact physical sections give a
subsequence with one fixed sector itinerary.  If it has a pp passage, the
complementary limiting arcs are the two selected strong branches, and
continuity of every actual first hit gives
\(\delta_{\rm hh}\to0\).  Hence a closed subcell
\(|\delta_{\rm hh}|\ge\eta\) contains no such fixed point near the
two-central face.  This proves the landing-gap assertion of the proposition.
No persistence of a punctured full-lap domain and no coefficient
normalization is inferred from the equality \(\delta_{\rm hh}=0\); those are
the two separate lemmas cited in the statement.

\medskip\noindent\emph{Step 3: dispatch every truncated candidate.}\par\noindent
The sector alternatives preceding this construction are exhaustive.  With
no parabolic next-section itinerary, cyclic sector order leaves only central
transitions.  An opposite-sign central adjacency necessarily meets the
stable-center divider, the previous-side cut, or the collar exit before the
next gate; these are terminal first-hit outcomes.  Hence the only retained no-pp
alternative has same attractivity.  With a nonempty parabolic itinerary, planar
order makes each maximal survivor set an interval, and its non-hh endpoint is
followed through the finite cut order.  It either supplies the boundary field
below or first hits one of the already numbered terminal or adjacent outcomes.
This is precisely the finite continuation of
Proposition~\ref{prop:two-central-successors}.

\medskip\noindent\emph{Step 4: identify the only possible second boundary.}\par\noindent
The other endpoint is retained only if both physical axis landings and a
positive boundary-arc margin give a complete PP incidence, or if the actual
center-side axis gives a complete BP incidence.  Write the selected upper
double root as \(w=-q\).  On the two-central sheet, after shrinking,
\begin{equation}
 S(0)=-(1-B)q^2(1-2B)^2<0,\qquad S'(R)\le-\tfrac12,
\label{eq:legacy-7-20}
\end{equation}
so \(S(R)<0\) for \(R>0\) and there is no interior boundary saddle.
The only PP saddle is consequently a persistent principal endpoint, with
ratio
\begin{equation}
 r_{\rm bd}=\left|\frac{B}{1-B}\right|
 \quad\hbox{or its reciprocal}.
\label{eq:legacy-7-21}
\end{equation}
After \(|B|<1/4\), this ratio lies outside \([1/2,2]\).

\medskip\noindent\emph{Step 5: exclude every hidden divider and complete the boundary.}\par\noindent
It remains to prove that no unlisted divider truncates the pp interval.  Let
\(w=-q\) denote the selected upper double root.  On that sheet,
\begin{equation}
 m=-(1-B)q^2,\qquad d=2(1-B)q,\qquad
 E(w)=-(1-B)(w+q)^2.
\label{eq:legacy-7-21a}
\end{equation}
Let
\(\sigma\) denote the signed source-arc record selected by the stopped itinerary;
for \(q>0\) it is (+), and reflection treats \(q<0\).  On the
invariant-center component one has
\(\sigma=-\epsilon\).  In physical endpoint coordinates,
\begin{equation}
 \dot\xi=-\rho(\xi,\alpha)\xi(\xi-\xi_h)^2,
 \qquad \dot\eta=\eta,\qquad \rho\ge\rho_0>0,
 \qquad J_0=\{0<\xi<\xi_h\}.
\label{eq:legacy-7-22}
\end{equation}
Every orbit in \(J_0\) has alpha-limit \(S_h\); its two boundary paths are
the selected hh orbit and
\(S_h\to p_\sigma\to E_\sigma\).  The fixed cut order prevents a reset,
extra lap, or opposite source arc, so the only macro itinerary is
\begin{equation}
 S_h\longrightarrow E_\sigma\longrightarrow S_v
\label{eq:legacy-7-22a}
\end{equation}
through a finite ordered list of regular tiles.  Before using order, record
the complete list of physical boundary faces in
Table~\ref{tab:two-central-boundary-faces}.
\begin{table}[H]
\centering
\caption{Physical boundary faces for the retained two-central pp interval.}
\label{tab:two-central-boundary-faces}
\label{eq:legacy-7-22b}
\small
\begin{tabularx}{.92\textwidth}{@{}
  >{\raggedright\arraybackslash}p{.17\textwidth}
  >{\raggedright\arraybackslash}p{.39\textwidth}
  >{\raggedright\arraybackslash}X@{}}
\toprule
Piece&Boundary faces&Possible divider\\
\midrule
Endpoint&\(\eta=\eta_0,\eta_1\), \(\xi=0,\xi_h\)&
\(\xi=0,\xi_h\)\\
\(E_\sigma\) tube&\(\theta=\theta_j,\theta_{j+1}\),
\(n=n_j^-,n_j^+\)&backward exit-corner orbit\\
Upper box&\(I_+,I_-\), \(r=r_*\), \(\partial B_{S_v}\)&
stable or central equilibrium branch\\
\bottomrule
\end{tabularx}
\end{table}
The endpoint row is exhaustive by \eqref{eq:legacy-7-22}, while the middle row consists of
fixed nonsingular flow boxes.  In such a tile,
\begin{equation}
 \frac{dn}{ d\theta}=f_j(\theta,n),\qquad X_\lambda\theta\ge2c_0,
 \qquad n_j^-<n_{e,j}<n_{h,j}<n_j^+,
\label{eq:legacy-7-22c}
\end{equation}
where \(n_{e,j}\) and \(n_{h,j}\) are the equatorial and hh solution graphs.
Let \(C_j\ge1\) be a bi-Lipschitz constant for the fixed physical tile
coordinate, and let \(\eta_{hh}>0\) be one quarter of the minimum stopping-face
distance of the compact selected hh subword in the original physical
metric.  Define the physical collar distances in this tile by
\begin{equation}
\begin{aligned}
 \rho_j^e&=\min_\theta\{
 n_{e,j}(\theta)-n_j^-,\,n_j^+-n_{e,j}(\theta)\}>0,\\
 \rho_j^h&=\min_\theta\{
 n_{h,j}(\theta)-n_j^-,\,n_j^+-n_{h,j}(\theta)\}
 \ge \frac{4\eta_{hh}}{ C_j}>0.
\end{aligned}
\label{eq:legacy-7-22d}
\end{equation}
The first inequality follows from the fixed nonzero widths of the equatorial
tube; the second is precisely the hh stopping-face margin transported by the
bi-Lipschitz chart.  Put
\(\Delta_j=\theta_{j+1}-\theta_j\).
For \(N=n-n_{e,j}\), scalar uniqueness gives the literal difference equation
\begin{equation}
 \frac{dN}{ d\theta}=A_j(\theta,N)N,\qquad |A_j|\le L_j.
\label{eq:legacy-7-22e}
\end{equation}
If \({\cal D}_j=(d_j^-,d_j^+)\) is the interval of entry values reaching the
next cut, Gronwall gives neighborhoods of the two boundary entries of radii
\begin{equation}
 m_j^e=\frac{\rho_j^e}{2}e^{-L_j\Delta_j},\qquad
 m_j^h=\frac{\rho_j^h}{2}e^{-L_j\Delta_j}.
\label{eq:legacy-7-22f}
\end{equation}
Since \({\cal D}_j\) is one interval and its endpoints are exactly the two
backward exit-corner orbits in
Table~\ref{tab:two-central-boundary-faces}, the whole segment between
\(n_{e,j}(\theta_j)\) and \(n_{h,j}(\theta_j)\) lies at positive distance
from every divider.  More explicitly, on the closed order band
\begin{equation}
 K_j=\{(\theta,n):n_{e,j}(\theta)\le n\le n_{h,j}(\theta)\}
\label{eq:legacy-7-22f1}
\end{equation}
one has pointwise
\begin{equation}
 n-n_j^-\ge\rho_j^e,\qquad n_j^+-n\ge\rho_j^h.
\label{eq:legacy-7-22f2}
\end{equation}
Thus neither collar, either collar corner, nor a preceding-side face can be
the first contact.  Since \(X_\lambda\theta>0\), every intermediate graph
reaches the next \(\theta\)-face.  Induction over the finite tile list carries all of
\(J_0\) to the upper entry as
\begin{equation}
 w=\delta,\qquad 0<r<r_h,
\label{eq:legacy-7-22g}
\end{equation}
with equator and hh orbit as endpoints.

Put \(v=-w\).  The exact upper equations are
\begin{equation}
 \dot r=r\{(1+r)v-d-rc\},\qquad
 \dot v=-m-dv+(1-B)v^2+r\{1+(a-c)v+v^2\}.
\label{eq:legacy-7-23}
\end{equation}
There is no unrecorded gate because
\begin{equation}
 S(0)=-(1-B)q^2(1-2B)^2<0,\qquad S'(r)\le-\frac{1}{2}.
\label{eq:legacy-7-23a}
\end{equation}
Before their first target-block entry, the compact equatorial and hh arcs have
positive distance from \(I_-\), \(r=r_*\), and their corners.  Let
\(C_{\rm up}\ge1\) dominate both bi-Lipschitz comparisons from the original
physical metric to the upper \((r,w)\) chart and from that chart to the fixed
target saddle-node chart.  Choose the target block itself disjoint from
\(I_-\cup\{r=r_*\}\).  A common upper-coordinate margin is therefore
\begin{equation}
 \varepsilon_{\rm up}=\min\left\{r_*,\frac{3\delta}{4},
 \frac{2\eta_{hh}}{C_{\rm up}},
 \operatorname{dist}(\overline{B_{S_v}},I_-\cup\{r=r_*\})\right\}>0.
\label{eq:legacy-7-23b}
\end{equation}
Moreover the oriented system is cooperative,
\begin{equation}
 \partial_v\dot r=r(1+r)\ge0,
 \qquad \partial_r\dot v=1+(a-c)v+v^2\ge\frac{1}{2}.
\label{eq:legacy-7-24}
\end{equation}
Every intermediate path therefore remains between the two boundary paths and
keeps the margin \eqref{eq:legacy-7-23b}.  Here the comparison is at a common value of the
positive desingularized upper-chart time: start all solutions on
\(w=\delta\), write \(v=-w\), and denote the
equatorial and hh solutions by \((r_e,v_e)\), \((r_h,v_h)\).  The
quasimonotone system \eqref{eq:legacy-7-24} preserves the positive quadrant, so
\begin{equation}
 r_e(t)\le r(t)\le r_h(t),\qquad
 v_e(t)\le v(t)\le v_h(t)
\label{eq:legacy-7-24b}
\end{equation}
throughout their common maximal interval.  Consequently
\begin{equation}
 r_*-r(t)\ge r_*-r_h(t),\qquad
 w(t)+\delta=\delta-v(t)\ge\delta-v_h(t),
\label{eq:legacy-7-24c}
\end{equation}
and \eqref{eq:legacy-7-23b} excludes the collar, \(I_-\), and their corners for every
intermediate solution, not only at its endpoints.

The two boundary solutions enter the target block
\begin{equation}
 B_{S_v}=\{-X_0\le x\le0,\ 0\le y\le Y_0\},\qquad
 \dot x=x^2U(x),\quad \dot y=-\gamma(x,y)y,
 \qquad U,\gamma\ge c_v>0,
\label{eq:legacy-7-24a}
\end{equation}
with invariant inner axes and strictly inward outer sides, and therefore both
converge to \(S_v\).  This is all that is needed from the target coordinates:
the common-time inequalities \eqref{eq:legacy-7-24b} are in the upper \((r,v)\) chart, and
both their lower and upper bounds converge to the same physical point
\(S_v\).  Coordinate-wise squeezing gives
\((r(t),v(t))\to S_v\) for every intermediate solution.  No monotonicity of
the target coordinate change is assumed.  The whole interval is one pp
component, whose second boundary is
\(S_h\to p_\sigma\to S_v\), not an unrecorded collar limit.  Thus the
geometric boundary is complete.  Its analytic nonaffinity, and hence the
empty residual class, are isolated in the next corollary.
All statements above concern one of the finitely many physical stopped
itineraries; no connected-component finiteness for a landing-zero set is used.
The complete boundary-face and first-contact classification, the representative Gronwall
estimates, and the finite algebraic identities used above are collected in
Appendix~\ref{app:two-central-incidence}.
\end{proof}

\begin{corollary}[Nonaffine second boundary and empty residual class]
\label{cor:two-central-nonaffine-boundary}
For every complete component of
Proposition~\ref{prop:complete-physical-lips}, the non-hh boundary is a
certified PP or BP boundary in the sense of DIR.  Its critical pp transition
is nonaffine.  Consequently the former affine/collar residual satisfies
\begin{equation}
 \mathfrak A_{\rm aff}=\varnothing.
\label{eq:legacy-7-25}
\end{equation}
\end{corollary}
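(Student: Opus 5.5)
The plan is to take the complete component produced by Proposition~\ref{prop:complete-physical-lips} and show that its non-hh boundary meets the hypotheses of DIR Theorem~3.2 word for word. The nonaffinity of the critical pp transition will then come from the nonaffine infinity-jet argument in the proof of that theorem, and the emptiness of the residual class will follow by exhaustion. In order, the steps are: (a) check that the boundary is a PP or BP incidence in the DIR sense; (b) check the ratio or first-integral alternative in the PP case; (c) invoke DIR; (d) deduce $\mathfrak A_{\rm aff}=\varnothing$.

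For (a), Step~5 of the proposition already gives the second boundary explicitly as the chain $S_h\to p_\sigma\to E_\sigma\to S_v$ through a finite list of tiles. I will record three facts from that step. The pp interval $J_0$ is carried entirely onto the upper entry between the equatorial and hh paths, by \eqref{eq:legacy-7-22c}--\eqref{eq:legacy-7-22g}. The cooperative squeeze \eqref{eq:legacy-7-24}--\eqref{eq:legacy-7-24c} sends every intermediate orbit to $S_v$. No interior boundary saddle exists, because of \eqref{eq:legacy-7-23a}. So the only singular point on the boundary other than the two saddle-nodes is the persistent principal endpoint $p_\sigma$. If $p_\sigma$ is a hyperbolic saddle, the boundary is PP. If instead the chain enters $S_v$ along its center-side axis, it is BP. Lemma~\ref{lem:two-central-landing-product-tube} keeps the stopped word fixed, so this boundary is the one used by the physical closing equation and not one suggested by a normal form.

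For (b), in the PP case \eqref{eq:legacy-7-21} gives the hyperbolicity ratio $|B/(1-B)|$ or its reciprocal. After shrinking to $|B|<1/4$ this lies outside $[1/2,2]$, and in particular it is different from one. That is exactly the analytic-lips PP alternative of DIR Theorem~3.2, and no first-integral discussion is needed. In the BP case DIR imposes no ratio condition. Analyticity of the ensemble comes from the analyticity of the quadratic family, and the saddle-nodes are the simple ones certified in condition~(i).

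For (c) and (d), the proof of DIR Theorem~3.2 shows that, under these boundary hypotheses, the infinity-jet of the critical pp transition map is not affine. It therefore has a nonzero derivative of some finite order $n\ge2$, which is the input Theorem~3.1 requires. To conclude $\mathfrak A_{\rm aff}=\varnothing$, I will recall how the residual class was defined: complete two-gate hh/pp itineraries with an affine pp transition, or with a collar-terminated second boundary. Step~5 already excludes the collar alternative, and every complete component has been shown nonaffine, so the class is empty.

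The main obstacle is step (a). One must make sure that DIR's notion of a PP or BP boundary, which requires complete limiting incidence at the correct sectors, is really delivered by the squeeze argument uniformly on the subfamily, and not only pointwise. I would handle this by passing the margins $\varepsilon_{\rm up}$ and $m_j^{e},m_j^{h}$ to a closed subcell and then arguing by compactness.
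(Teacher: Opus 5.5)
Your proposal is correct and follows the paper's route. The paper uses Step~4 and \eqref{eq:legacy-7-21} for the PP ratio margin (with no condition needed for BP), then Step~5 for boundary completeness and exclusion of the collar, then the proof of DIR Theorem~3.2 for a nonaffine jet that rules out the affine residual. The compactness argument you flag as the main obstacle is not needed, because the corollary is a statement about each complete component; uniformity over subfamilies is handled separately in Lemma~\ref{lem:two-central-landing-product-tube} and the later zero theorems.
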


\begin{proof}
In the PP case, Step~4 of the proposition identifies the boundary saddle and
\eqref{eq:legacy-7-21} places its characteristic ratio outside
\([1/2,2]\), hence away from one.  In the BP case no ratio alternative is
required.  Step~5 proves that the entire survivor interval reaches this same
physical boundary and that no collar side or hidden divider replaces it.
The proof of DIR2002, Theorem~3.2, therefore applies to the certified PP/BP
alternative and gives a nonaffine infinity jet of the critical transition.
Analyticity gives a first nonzero derivative of finite order at least two.
An affine residual transition would contradict that jet; an unrecorded collar
residual was already excluded by Step~5.  This proves
\eqref{eq:legacy-7-25}.
\end{proof}

\begin{lemma}[Common physical product tube across the landing split]
\label{lem:two-central-landing-product-tube}
Let $K_{\rm geo}$ be a compact family of complete physical lips
configurations supplied by
Proposition~\ref{prop:complete-physical-lips}.  There is a finite phase cover
by intervals $J_i^-\Subset J_i^+$, one-sided at the hh and PP/BP endpoints,
nested physical product tubes $T_i^-\Subset T_i^+$, open parameter
neighborhoods $W_i$ in the original five parameters, and numbers
$\delta_i>0$ with the following property.  Whenever the two selected central
gates and the signed sector label are retained, $\alpha\in W_i$, and
$|\delta_{\rm hh}(\alpha)|<\delta_i$, every primitive first hit composing the
selected stopped word is defined on $J_i^-$ and remains in $T_i^+$ until its
named target.  The same ordered full-lap word is therefore defined on this
common product tube whether or not $\delta_{\rm hh}$ vanishes.  A nonzero
landing split changes the landing point on the fixed strong section; it does
not insert a root, a gate, a box side, or a collar contact.
\end{lemma}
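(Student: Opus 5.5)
The plan is a compactness-plus-persistence argument on the compact base family $K_{\rm geo}$. It rests on one observation: every primitive first hit in the selected word is either a regular transverse hit or a passage through one of the two certified saddle-node boxes. Both kinds are stable under small perturbation of the original five parameters, provided the margins are strict and the incoming interval is kept at positive distance from the strong separatrix.

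Fix $\alpha_0\in K_{\rm geo}$. Proposition~\ref{prop:complete-physical-lips} gives the complete configuration at $\alpha_0$: conditions (iii) and (vii) provide positive section normals and positive distance to every root, gate, box side and collar side. Step~5 of that proof gives the explicit margins, namely $\rho_j^e$, $\rho_j^h$ in the regular tiles, $\varepsilon_{\rm up}$ in the upper box, and the target block $B_{S_v}$. All of these involve only the phase variable and the fixed skeleton of Proposition~\ref{prop:fixed-skeleton}.

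Cover the closed phase interval between the hh endpoint and the PP/BP endpoint by finitely many intervals $J_i^-\Subset J_i^+$. The two end intervals are taken one-sided, with the closed endpoint being the hh or PP/BP boundary orbit itself. On each $J_i^+$, define $T_i^+$ as the closed neighborhood of width half the minimal margin around the union of the $\alpha_0$ trajectories. Take $T_i^-$ with width one quarter.

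The regular tiles are handled by continuous dependence together with the transversality $X_\lambda\theta\ge 2c_0$ of \eqref{eq:legacy-7-22c}. This gives a neighborhood $W_i$ on which every regular factor stays in $T_i^+$ and reaches its next $\theta$-face; Gronwall \eqref{eq:legacy-7-22f} supplies the quantitative radius.

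The two central passages need more care, because the flight time is unbounded. There I would use the local form \eqref{eq:legacy-7-19} with $U,\gamma\ge c_0$ uniformly in a neighborhood of $\alpha_0$, as long as both central gates are retained. In that form the block $B_{S_v}$ and the endpoint slab \eqref{eq:legacy-7-22} have strictly inward or outward sides and invariant axes. Entry into the block is therefore stable, and the exit on the fixed strong section is a continuous function of the entry point and $\alpha$. This holds on the one-sided interval up to the strong axis by Proposition~\ref{prop:endpoint-joint-uniformity}, which gives uniform $C^K$ control through the root--section collision without differentiating a flat inverse.

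For the upper box, the cooperative comparison \eqref{eq:legacy-7-24}--\eqref{eq:legacy-7-24c} persists, because $\partial_r\dot v\ge 1/2$ and the sign $S(0)<0$ are open conditions. The comparison bounds are then the perturbed boundary paths, and those remain within $\varepsilon_{\rm up}/2$ of the unperturbed ones.

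The main obstacle is the hh end of the phase cover. At $\alpha_0$ the hh orbit is itself a boundary of $J_0$. Once $\delta_{\rm hh}\neq0$, the perturbed strong separatrix lands at distance $O(|\delta_{\rm hh}|)$ from the old landing point, and it could in principle cut the one-sided interval or change the first-contact owner. I would resolve this by choosing $\delta_i$ smaller than the transported hh stopping-face margin $\eta_{hh}/C_j$ from \eqref{eq:legacy-7-22d}. The split then only moves the landing point along the fixed strong section, and the section has a fixed positive normal, so the split produces neither a new divider nor an exit. Here $\delta_{\rm hh}$ is continuous in $\alpha$ on the enlarged domain of Step~1, which is what makes this choice possible.

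Successor cases (2)--(3) of Proposition~\ref{prop:two-central-successors} are excluded by the hypothesis that both gates and the sector label are retained. Finally, take a finite subcover of $K_{\rm geo}$ by the resulting neighborhoods, and set $W_i$ and $\delta_i$ to be the intersections and minima over this finite family.
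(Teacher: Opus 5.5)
Your proposal follows the paper's proof. Both arguments use strict base margins from Proposition~\ref{prop:complete-physical-lips}\textup{(iii)},\textup{(vii)} and the Step~5 order-band and upper buffers, nested tubes inside those margins, continuous dependence of the transverse first hits, one-sided section charts at the hh and PP/BP ends, and a finite phase--base subcover, so that a small $\delta_{\rm hh}$ only moves the landing point on the fixed strong section.

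The differences are minor. The paper obtains $W_i,\delta_i$ by contradiction: along a bad sequence, the first nontarget contacts would converge to a contact of the base orbit, violating the strict margin. You use explicit continuity and Gronwall radii instead. Also, case~\textup{(3)} of Proposition~\ref{prop:two-central-successors} is not excluded by gate retention, as you claim. This is harmless here, since a scale equality is a regime label rather than a change of physical first hit.
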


\begin{proof}
At a base configuration, conditions \textup{(iii)} and \textup{(vii)} of
Proposition~\ref{prop:complete-physical-lips}, the order-band margins
\eqref{eq:legacy-7-22d}--\eqref{eq:legacy-7-22f2}, and the upper margin
\eqref{eq:legacy-7-23b} separate every retained arc from every nontarget
member of the fixed face list $\mathcal F$.  The two boundary arcs and all
intermediate pp arcs lie in the same finite chain of regular tiles and the
same upper target block.  Choose inner and outer tubes inside these positive
margins.  Analytic dependence of a transverse first hit on its entry point
and on all five parameters then preserves each primitive map from the inner
tube into the outer tube.

This construction is uniform on a phase neighborhood.  At the two
compactified phase endpoints use the one-sided section charts already present
in the stopped atlas; a finite phase and base subcover gives the displayed
family of product tubes.  If no $W_i$ and $\delta_i$ worked, a sequence of
entries and parameters would converge to a base orbit but would meet a
nontarget member of $\mathcal F$ before its prescribed target.  Its first
such contacts would converge to a contact of the base orbit with that face,
contradicting the strict stopping-face margin.  The two strong branches may
consequently land at different points of their fixed section without changing
any preceding first-hit domain.  This is a persistence statement for the
physical stopped word; it makes no coefficient division and uses no strict,
middle, root, or zero-scale estimate.
\end{proof}

\begin{lemma}[Landing coefficient in the common physical product tube]
\label{lem:part-iii-physical-affine-closing}
Fix one of the common product tubes from
Lemma~\ref{lem:two-central-landing-product-tube} and a retained
complete-lips itinerary with positive coalescence scale in one of the strict,
middle, or positive-root regimes.  On every sufficiently small punctured
landing-split fiber in that tube, the same ordered primitive first hits form
the physical full lap.  On each fixed parameter fiber,
let \(\widetilde q,\widetilde v\) be the DIR strong-section coordinates and
let \(q,v\) be the direct action coordinates used in the resolved connector.
Then the physical full-return closing equation is, in the direct coordinates,
\begin{equation}
 A_0{\cal T}(q)-B_0q-C_0=0.
\label{eq:part-iii-physical-affine-closing}
\end{equation}
The changes between the two pairs of section coordinates are affine, and
zeros and their multiplicities are preserved.  This assertion is fiberwise
on the positive-scale regimes; no joint DIR chart through the source or exact
mixed face is asserted.  In this decomposition the landing split occurs only
in the undivided constant coefficient $C_0$; it does not alter the product
tube or create another phase map.
\end{lemma}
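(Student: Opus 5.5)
The plan is to build the closing equation directly from the ordered primitive first hits that Lemma~\ref{lem:two-central-landing-product-tube} supplies on the common product tube. Only after that do we identify its coefficients. We fix a parameter fiber $\alpha\in W_i$ with $0<|\delta_{\rm hh}(\alpha)|<\delta_i$ and positive coalescence scale.

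\textbf{Step 1: factor the full lap.} We write the full lap as
\[
P=R_{\rm reg,2}\circ\mathcal D_v\circ R_{\rm pp}\circ\mathcal D_h\circ R_{\rm reg,1}.
\]
Here $\mathcal D_h,\mathcal D_v$ are the central passages through $S_h,S_v$, $R_{\rm pp}$ is the parabolic transition between the weak sections, and $R_{\rm reg,j}$ are the regular connectors through the hh strong sections.

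\textbf{Step 2: put the saddle-node passages in DIR form.} DIR Theorem~3 is applied to the two certified simple saddle-nodes of Proposition~\ref{prop:complete-physical-lips}(i)--(iii). In the resulting finite-smooth normal forms the strong-to-strong passages are flat in the weak direction. The strong-section coordinates $\widetilde q,\widetilde v$ are then the normal-form strong coordinates restricted to the fixed sections. The point is that the strong contraction in \eqref{eq:legacy-7-19} is linear in the normal form, so the passage maps act affinely on the strong coordinate up to flat terms.

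\textbf{Step 3: reduce the closing equation.} On the strong sections the closing condition says that the hh connector, transported once around, returns the same strong coordinate. We fix the strong-section charts so that the base hh landing point is the origin. Then, by the proof-level reduction in DIR Theorem~3.1, the closing equation becomes $A_0\mathcal T(\widetilde q)-B_0\widetilde q-C_0=0$, where:
\begin{itemize}
\item $\mathcal T$ is the critical pp transition read through the two weak sections;
\item $A_0,B_0>0$ are the linear parts of the regular strong-to-strong connectors;
\item $C_0$ is the signed displacement of the landing point, namely $\delta_{\rm hh}$ times a positive unit.
\end{itemize}
The landing split enters only through $C_0$. This is because the product tube and every first-hit domain are unchanged by Lemma~\ref{lem:two-central-landing-product-tube}, so no phase map depends on whether $\delta_{\rm hh}$ vanishes.

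\textbf{Step 4: pass to the direct coordinates.} On the fixed fiber, the DIR coordinate $\widetilde q$ and the direct action coordinate $q$ parametrize the same one-dimensional section. Both are normalized to vanish at the hh landing and to have unit scale at a fixed base orbit. The claim is that, after absorbing the connector's linear part, the change between them is affine. If it is only a diffeomorphism, we instead absorb its nonlinear part into $\mathcal T$, which stays a composition of transitions. Either way, the same conjugacy computation as \eqref{eq:multiplicity-conjugacy} shows that zeros and multiplicities are preserved. Finally, Proposition~\ref{prop:exact-once} identifies zeros of this equation with the isolated cycles of the stopped word.

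\textbf{Main obstacle.} The hardest step is justifying that the section changes are genuinely affine. In particular, the nonlinear part of the regular connectors must be placed into $\mathcal T$ rather than into the coefficients, and this must be done uniformly over the punctured fiber without a joint chart through $\delta_{\rm hh}=0$. The first approach to try is to choose the DIR strong coordinate to be the pullback of the direct coordinate by the regular connector, fiber by fiber. Affineness then holds by construction, and one only has to check that DIR flatness is preserved under this fiberwise reparametrization.
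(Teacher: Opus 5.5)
There is a genuine gap in Step~4, which is exactly where the content of the lemma lies. You state the affineness of the gauge changes $\widetilde q\leftrightarrow q$, $\widetilde v\leftrightarrow v$ as ``the claim'' and never prove it, and neither fallback works.

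\begin{itemize}
\item Normalizing both coordinates to vanish at the hh landing and to agree on one base orbit fixes two values of the coordinate change. It does not make the change affine.
\item Absorbing a nonlinear change $\widetilde q=\varphi(q)$ into $\mathcal T$ breaks the form of the equation, because the input occurs twice. You get $A_0(\widetilde{\mathcal T}\circ\varphi)(q)-B_0\varphi(q)-C_0=0$, whose second term is not affine in $q$. Two derivatives then no longer kill the comparison part.
\item The later curvature estimates, \eqref{eq:legacy-7-45}--\eqref{eq:legacy-7-48} and \eqref{eq:legacy-7-63}--\eqref{eq:legacy-7-67}, concern the transition written in the direct coordinates and say nothing about $\varphi''$.
\item The conjugacy computation \eqref{eq:multiplicity-conjugacy} preserves zeros and multiplicities under any diffeomorphism. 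It does not preserve the affine-comparison structure that the two extra Rolle steps need.
\item Your ``first approach'' is circular. The DIR strong-section coordinates are not free: they are the coordinates in which the central passages are exactly linear, $\widetilde q\mapsto M(\alpha)\widetilde q$ and $\widetilde v\mapsto m(\alpha)\widetilde v$, and the hh connector is the translation $\widetilde v\mapsto\widetilde v+\delta(\alpha)$. Redefining them as pullbacks of the direct coordinates destroys this linearity unless the direct coordinates already linearize the passages, which is the fact to be proved. Preservation of flatness is not the issue.
\end{itemize}

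The missing observation is that the direct action coordinates make the same two strong equations linear, $\dot q=\varsigma_hq$ and $\dot v=\varsigma_vv$, as in \eqref{eq:legacy-7-43}. Hence, on each fixed parameter fiber, the two gauges are related by affine maps $\widetilde q=a_hq+b_h$ and $\widetilde v=a_vv+b_v$, with $a_ha_v\ne0$ and coefficients depending only on the parameter. Substitute $\widetilde{\mathcal T}(\widetilde q)=a_v\mathcal T((\widetilde q-b_h)/a_h)+b_v$ into the DIR closing equation $\widetilde{\mathcal T}(\widetilde q)-m^{-1}M^{-1}\widetilde q+m^{-1}\delta=0$. This gives the direct form with $A_0=a_v$, $B_0=m^{-1}M^{-1}a_h$, and $C_0=m^{-1}M^{-1}b_h-m^{-1}\delta-b_v$. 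The curvature then transfers through $\widetilde{\mathcal T}''=(a_v/a_h^2)\mathcal T''$.

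This also corrects Step~3 in two places. First, $B_0$ carries the inverse central multipliers, not linear parts of connectors; in DIR coordinates the hh connector is a pure translation. Second, $C_0$ contains the gauge offsets as well as the landing term $-m^{-1}\delta$. The landing split does lie only in $C_0$, but $C_0$ is not simply $\delta_{\rm hh}$ times a positive unit.

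Likewise, in Step~2 the passages must be exactly linear in the normal-form coordinates, not linear ``up to flat terms.'' A flat remainder would survive into the closing equation and spoil the exact affine form.
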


\begin{proof}
The common-product-tube lemma supplies the domains and the actual first-hit
word before any normal form is introduced.  The coordinate reduction below
is therefore applied to already defined physical maps on each positive-scale
fiber; it is not used to continue a map across a failed first contact.

The section-coordinate reduction in the proof of DIR2002,
Theorem~3.1, makes the two central passages
\(\widetilde q\mapsto M(\alpha)\widetilde q\) and
\(\widetilde v\mapsto m(\alpha)\widetilde v\), with \(Mm\ne0\), and makes
the already certified hh connector
\(\widetilde v\mapsto\widetilde v+\delta(\alpha)\).  If
\(\widetilde{\cal T}\) is the pp transition, composing these four actual
first hits around the exact-once itinerary gives the physical equation
\begin{equation}
 \widetilde{\cal T}(\widetilde q)
 -m^{-1}M^{-1}\widetilde q+m^{-1}\delta=0.
\label{eq:part-iii-dir-closing}
\end{equation}
The direct action coordinates also make the same two strong equations linear.
Consequently, on their common fixed-parameter sections,
\begin{equation}
 \widetilde q=a_hq+b_h,
 \qquad \widetilde v=a_vv+b_v,
 \qquad a_ha_v\ne0,
\label{eq:part-iii-affine-gauges}
\end{equation}
where the four coefficients depend only on the parameter.  Hence
\(\widetilde{\cal T}(\widetilde q)
=a_v{\cal T}((\widetilde q-b_h)/a_h)+b_v\).  Substitution in
\eqref{eq:part-iii-dir-closing}, followed by the affine change of input,
gives \eqref{eq:part-iii-physical-affine-closing}, for example with
\[
 A_0=a_v,\qquad B_0=m^{-1}M^{-1}a_h,\qquad
 C_0=m^{-1}M^{-1}b_h-m^{-1}\delta-b_v,
\]
up to multiplication of the triple by a common nonzero factor.
The connector occurrence of the landing difference is the term
$-m^{-1}\delta$ in $C_0$.  In particular, no factor
$\delta_{\rm hh}$ is divided out and no domain is selected according to
whether that difference vanishes.

The input change has derivative \(a_h\ne0\), and multiplication of the
closing equation by a nonzero parameter factor does not change an isolated
zero or its multiplicity.  In particular,
\begin{equation}
 \widetilde{\cal T}''(\widetilde q)
 =\frac{a_v}{a_h^2}{\cal T}''
   \left(\frac{\widetilde q-b_h}{a_h}\right),
\label{eq:part-iii-curvature-gauge}
\end{equation}
so the curvature/Rolle count in the direct gauges counts precisely the
physical cycles.  On a punctured coefficient chart a common coefficient
factor may be divided out; at its radial apex the original displacement is
identically zero and has no isolated member.  The faces \(t=0\) and
\(\kappa=0\) are not obtained by extending these gauges: they retain their
independent source and exact-mixed assignments.
\end{proof}

\begin{theorem}[Geometric completion and scale routing of two-central itineraries]
\label{thm:part-iii-two-central-exhaustion}
In a sufficiently small full five-parameter neighborhood, every retained
exact-once full-return itinerary on a minimal face and carrying both possible internal
central blocks has the following three ordered conclusions.
\begin{enumerate}[label=\textup{(\roman*)},leftmargin=2.8em]
\item \emph{Physical geometry.}  It is either the same-attractivity no-pp
configuration, a named terminal first-contact outcome, or a complete
physical lips configuration satisfying
Proposition~\ref{prop:complete-physical-lips}\textup{(i)}--\textup{(viii)}.
In the latter case the hh connection, the nonempty pp component, its complete
PP/BP boundary, and the absence of a hidden divider are properties of the
original vector field.
\item \emph{Boundary type.}  Every complete component has the nonaffine
PP/BP transition of
Corollary~\ref{cor:two-central-nonaffine-boundary}; hence no affine or collar
residual remains.
\item \emph{Scale routing.}  Only after \textup{(i)}--\textup{(ii)} have
been established is every retained itinerary assigned exactly once to one of
\begin{equation}
 \begin{gathered}
 \text{two-central no-pp},\quad
 \text{positive-margin strict lips},\\
 \text{middle QBF/QHH},\quad
 \text{positive root merger},\\
 \text{matched source},\quad
 \text{exact mixed persistent endpoint}.
 \end{gathered}
\label{eq:legacy-7-26}
\end{equation}
by the fixed half-open rule \eqref{eq:legacy-5-2}.
\end{enumerate}
No local zero estimate is used to prove these three conclusions.

On the complete candidate cover, a passive side, collar exit,
wrong-orientation sector, node core, or interval collapse is a terminal
outcome before \eqref{eq:legacy-7-26} is entered.  Thus the classification is also exhaustive at
the candidate level: terminal passive/exit outcomes remain in the geometric
classification, while every actual retained itinerary enters exactly one of the
six regimes in \eqref{eq:legacy-7-26}.

A landing-gap-separated subcell has zero contribution to the local
two-central limit before regime selection.  A punctured landing-split fiber
near \(\delta_{\rm hh}=0\) is not another outcome: by
Lemma~\ref{lem:two-central-landing-product-tube} it uses the same physical
product tube, and by
Lemma~\ref{lem:part-iii-physical-affine-closing} its landing value is retained
as the ambient constant coefficient of the selected strict, middle, or root
comparison.
\end{theorem}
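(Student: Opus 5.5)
The proof will take the three conclusions in the order stated, and each step will only combine results already established above. No zero estimate enters.

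For (i), start from a retained exact-once itinerary carrying both internal central gates. By the gate count after \eqref{eq:legacy-7-13}, these gates are exactly the nonpersistent horizontal double root $S_h$ and the upper $D$-collision $S_v$. Split according to whether a parabolic next-section interval exists.
\begin{itemize}
\item If no such interval exists, cyclic sector order leaves only central transitions. Step~3 of Proposition~\ref{prop:complete-physical-lips} then shows that an opposite-sign adjacency first meets a stable-center, previous-side, or collar face. That outcome is terminal by Proposition~\ref{prop:stopped-first-hit}, so the only retained alternative is the same-attractivity no-pp configuration.
\item If the interval is nonempty, follow its maximal survivor component by Proposition~\ref{prop:two-central-successors}. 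After finitely many rank-increasing handoffs, it either reaches a named terminal or adjacent outcome, or completes.
\end{itemize}
In the completing case, Steps~1, 4, and 5 of Proposition~\ref{prop:complete-physical-lips} give conditions (i)--(viii). These are verified on the original field: the upper chain uses the cooperative squeeze \eqref{eq:legacy-7-24}--\eqref{eq:legacy-7-24c}, and Table~\ref{tab:two-central-boundary-faces} excludes hidden dividers. Subcells with $|\delta_{\rm hh}|\ge\eta$ contribute nothing near the face, by Step~2.

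For (ii), apply Corollary~\ref{cor:two-central-nonaffine-boundary} directly to each complete component. The PP ratio bound \eqref{eq:legacy-7-21} together with $|B|<1/4$, or alternatively the BP alternative, gives a nonaffine transition. Hence $\mathfrak A_{\rm aff}=\varnothing$, and Step~5 already removed the collar residual.

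For (iii), on the complete-lips carrier remaining after (i)--(ii), evaluate $t$, $\varrho_w$, and $\kappa$ from Table~\ref{tab:scale-dictionary}. Then apply \eqref{eq:legacy-5-2}, whose five lines are pairwise disjoint and cover all cases:
\begin{itemize}
\item $t=0$;
\item $t>0$ with $\kappa=0$;
\item $t\ge t_{\rm str}$;
\item $0<t<t_{\rm str}$ with $\varrho_w<\varrho_\#$;
\item $0<t<t_{\rm str}$ with $\varrho_w\ge\varrho_\#$.
\end{itemize}
This works because $t_{\rm str}$ and $\varrho_\#$ are fixed before any neighborhood is chosen. Adding the no-pp class gives the six regimes of \eqref{eq:legacy-7-26}.

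Punctured landing-split fibers are absorbed rather than added as a seventh case. Lemma~\ref{lem:two-central-landing-product-tube} keeps them on the same product tube, and Lemma~\ref{lem:part-iii-physical-affine-closing} places $\delta_{\rm hh}$ only in $C_0$.

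The main obstacle is the "exactly once" clause of (iii) at the corner faces $t=\kappa=0$ and $t=t_{\rm str}$. These must be decided by the half-open priority, not by continuity. The minimal-face convention of Definition~\ref{def:first-contact-owner} ensures that no itinerary lies on two strata at once. The first check will be that the strict positive-margin condition at $t=t_{\rm str}$ is inherited from condition (viii) and the margins of Lemma~\ref{lem:two-central-landing-product-tube}. If it is not, the loss is routed by first contact, as in Proposition~\ref{prop:two-central-successors}.
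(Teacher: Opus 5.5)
\paragraph{Parts (i) and (ii).} These follow the paper's proof essentially step for step. You use the gate count from \eqref{eq:legacy-7-13}, the sector refinement and Steps~1--5 of Proposition~\ref{prop:complete-physical-lips}, the successor induction, the landing-gap and product-tube lemmas, and then Corollary~\ref{cor:two-central-nonaffine-boundary}.

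\paragraph{The gap is in (iii).} You read \eqref{eq:legacy-5-2} as a partition of the values of $(t,\varrho_w,\kappa)$ and conclude that every itinerary lands in exactly one row. But the rows of Table~\ref{tab:regimes} are theorem domains, not numerical ranges. The root row means the itinerary is represented in a root chart. There $\kappa$ is defined at all, and $(b,A,M,D)$ lies near $(1,-2,-1,2)$, as \eqref{eq:legacy-7-54}--\eqref{eq:legacy-7-55} and Theorem~\ref{thm:part-iii-root-zero} require. The middle row likewise means the compact normalized base of \eqref{eq:legacy-7-50}. Your instruction to ``evaluate $\kappa$'' therefore presupposes what has to be shown: that no two-central pp itinerary with $0<t<t_{\rm str}$ lies in some other direction of the weighted resolution.

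The paper supplies this with an argument missing from your plan.
\begin{itemize}
\item The weight-$(2,1)$ blow-up of the punctured $(b_m,A_m)$-plane has finitely many compact angular charts.
\item In a chart that retains the horizontal central gate, the leading weak bracket is $b+A\bar R+\bar R^2$. The double-root equations $b+A\bar R+\bar R^2=0$ and $A+2\bar R=0$, with $\bar R>0$, force the normalized pair into signed neighborhoods of $(b,A)=(1,-2)$. At leading order the double-root locus is the weighted parabola $b_m=A_m^2/4$, one angular direction per signed chart.
\item On every other angular chart this quadratic has only simple roots, no positive root, or the wrong crossing sign. These are hyperbolic/one-central, no-passage, or passive/exit outcomes.
\item The prepared upper weak quadratic has only four outcomes: a retained double root, separated simple roots, no retained root, or the wrong transverse sign.
\end{itemize}
Only after this does it follow that an opposite projective chart carries no unlisted two-central pp itinerary. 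Hence the middle and root rows genuinely exhaust $0<t<t_{\rm str}$.

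\paragraph{Two smaller corrections.} First, disjointness of the rows comes from the fixed half-open cutoffs and their priority, not from Definition~\ref{def:first-contact-owner}. That definition only assigns simultaneous phase-space contacts with members of $\mathcal F$. The priority is: on $t>0$, strict first, then middle including $\varrho_w=\varrho_\#$, then root, then the $\varrho_w=0$ face to mixed; and $t=0$, including the corner, to source.

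Second, your strict line must keep its qualifier ``with positive lips margins.'' The mixed line $\kappa=0$ means $B=a=0$ for every $t>0$, including $t\ge t_{\rm str}$. What keeps the two lines disjoint is that at $B=a=0$ there is no separate simple $S_h$, so the lips margins fail.

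With these repairs, the real work left in (iii) is the exhaustion of the weighted resolution described above. The corner ownership you identify as the main obstacle is settled by the fixed priority in one line.
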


\begin{remark}
The classification is uniform on the finite signed gate, sector, and
first-hit cover in all five original parameters, including divider
collisions and both endpoint charts.  A retained itinerary without a complete
pp connection enters the no-pp case.  A pp itinerary enters the complete-lips
case only when the hypotheses of Proposition~\ref{prop:complete-physical-lips} hold;
nearby landing-split fibers remain in the same
ambient coefficient cone, whereas a landing-gap-separated cell has no
approaching two-central fixed point.  In the source/root resolution, $t=0$
enters the source regime, $t>0$, $\varrho_w=0$ the mixed regime,
$\varrho_w\ge\varrho_\#$ the middle regime, and
$0<\varrho_w<\varrho_\#$ the root regime; equality belongs to the middle
regime.  This theorem classifies only itineraries already retained by
Theorem~\ref{thm:finite-words}
and plays no role in constructing the stopped atlas.
\end{remark}

\begin{proof}
\medskip\noindent\emph{Step 1: physical geometry and landing domains.}\par\noindent
Equation \eqref{eq:legacy-7-13} and the unique upper \(D\)-collision give the finite list of gate
configurations.  The sector refinement in
Proposition~\ref{prop:complete-physical-lips} first proves that a retained itinerary with
no parabolic connection has same attractivity; an opposite-sign adjacency
without a parabolic next-section itinerary is already a stable-center,
previous-side, or exit outcome.  The retained no-pp itineraries are therefore
exactly the two-central no-pp case.  Every remaining opposite-sign itinerary has a selected
maximal parabolic survivor interval.  Proposition~\ref{prop:complete-physical-lips}
refines it into a complete physical lips configuration, a
landing-gap-separated cell with zero contribution to the local two-central
limit, or a named geometric first-hit outcome.
Lemma~\ref{lem:two-central-landing-product-tube} places every sufficiently
small punctured landing split in the same stopped-word domain; this conclusion
uses only the physical margins and compactness of the finite first-hit cover.

\medskip\noindent\emph{Step 2: complete boundary and nonaffinity.}\par\noindent
For every complete component, Step~5 of
Proposition~\ref{prop:complete-physical-lips} proves the full PP/BP boundary
before Corollary~\ref{cor:two-central-nonaffine-boundary} invokes the DIR
boundary result.  The corollary gives
\(\mathfrak A_{\rm aff}=\varnothing\).  Neither its hypotheses nor its proof
uses a scale assignment or a zero count.

\medskip\noindent\emph{Step 3: half-open scale routing.}\par\noindent
On the globally fixed nested tubes of Section~\ref{sec:regime-table}, a complete-lips
point outside the inner tube or with \(t\ge t_{\rm str}\) belongs to the strict regime,
including equality.  Only the pointwise region \(0<t<t_{\rm str}\)
proceeds to the middle/root weighted split.

It remains to cover a vanishing source/root margin.  The finite resolution
uses \(t\ge0\) and, for \(t>0\), the weighted middle variables
\begin{equation}
 b_m=\frac{B}{ t^2},\qquad A_m=\frac{a}{ t},\qquad
 \varrho_w=(b_m^2+A_m^4)^{1/4}.
\label{eq:legacy-7-26a}
\end{equation}
The weight-\((2,1)\) blow-up of the punctured \((b_m,A_m)\)-plane has
finitely many compact angular charts.  In a chart that can retain the
horizontal central gate, the leading weak bracket is
\begin{equation}
 b+A\bar R+\bar R^2 .
\label{eq:legacy-7-26b}
\end{equation}
The double-root equations are
\(b+A\bar R+\bar R^2=0\) and \(A+2\bar R=0\).
For a positive retained root, radial normalization therefore puts the
angular pair in finite signed neighborhoods of \((b,A)=(1,-2)\).  On every
other angular chart the same quadratic has
only simple roots, no positive root, or the wrong crossing sign.  The first
two alternatives are respectively hyperbolic/one-central or no-passage;
the last is passive or exit.  At the upper vertex the prepared weak
quadratic has only four outcomes: a retained double root, separated simple
roots, no root in the retained side, or a root with the wrong transverse
sign.  These are, respectively, the present \(D\)-gate, hyperbolic boxes,
one-central/no-passage, and passive/exit outcomes from the finite gate
table.  Thus an opposite
projective chart cannot carry an unlisted two-central pp itinerary.

On the surviving angular charts, \(\varrho_w\ge\varrho_\#\), including
equality, is the fixed middle overlap by definition.  For
\(0<\varrho_w<\varrho_\#\), the weight-\((2,1)\) substitution
\eqref{eq:legacy-5-2b} introduces the positive radial coordinate \(\kappa\)
and the finite root-chart cover.  The face \(\varrho_w=0\) is \(B=a=0\) and is the exact
mixed persistent-endpoint regime.  The face \(t=0\), including their corner,
belongs to the source regime.  These four sets cover the weighted resolution and are
half-open by construction.

Every failed complete-lips hypothesis or first-hit condition is already a
named stopped alternative.  On the surviving common product tubes,
Lemma~\ref{lem:part-iii-physical-affine-closing} retains the landing split as
the undivided constant coefficient of the strict, middle, or root closing
equation; it therefore creates no additional scale row.
Minimal-face priority assigns
the face \(t=0\) to source.  On \(t>0\), it assigns strict first, then
middle (including the overlap equality), root, and finally the
\(\varrho_w=0\) face to mixed.  Hence each retained itinerary reaches exactly one row of
\eqref{eq:legacy-7-26}, while passive, exit, wrong-orientation, node, and collapsed cases
remain terminal geometric outcomes.
\end{proof}

\begin{theorem}[Two-central no-pp estimate]
\label{thm:part-iii-two-central-no-pp}
Let a retained itinerary contain the nonpersistent horizontal and upper \(D\)
simple saddle-nodes, both traversed through their central sectors, with
same attractivity and no complete pp connection.  If all remaining physical
sections, connectors, and vertices are certified, the graphic has cyclicity
one and an ambient locally uniform finite zero bound.
\end{theorem}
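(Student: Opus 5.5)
The plan follows the template of Theorem~\ref{thm:part-iii-one-central-zero}: verify, for the actual H14 graphic, the hypotheses of the no-pp clause of DIR2002, Corollary~3.6, and then pass from cyclicity of the graphic to a locally uniform zero bound by a finite cover. Throughout, the physical data come from the stopped atlas; the normal form is used only inside the two saddle-node boxes.

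The first step fixes the geometric configuration. By Theorem~\ref{thm:part-iii-two-central-exhaustion}(i) and the sector refinement in Proposition~\ref{prop:complete-physical-lips}, a retained two-central itinerary without a complete pp connection is exactly the same-attractivity configuration. Opposite-sign adjacencies without a parabolic next-section itinerary were already stopped at a stable-center, previous-side, or exit face, so no extra attractivity hypothesis is needed beyond what the routing supplies.

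The second step certifies the two saddle-nodes. Each of $S_h$ and $S_v$ is simple, with a nonzero transverse eigenvalue and a genuine quadratic weak term. For $S_h$ this comes from the prepared quadratic factor in \eqref{eq:legacy-7-13}. For $S_v$ it comes from the double root of $E$ together with \eqref{eq:legacy-7-20}, which excludes any interior boundary saddle. At each of these two certified points, DIR2002, Theorem~3 then supplies a finite-smooth orbital normal form and nothing more.

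The third step certifies the rest of the graphic. Every remaining vertex is hyperbolic with eigenvalue margins, exactly as in \eqref{eq:legacy-7-12}. The connectors and sections are the physical first hits of Proposition~\ref{prop:stopped-first-hit} and Theorem~\ref{thm:finite-words}. With this, the complete graphic is a finite central saddle-node graphic that uses both nodes centrally, has no pp connection, and whose nodes share one attractivity. That is precisely the clause of Corollary~3.6 recorded in Table~\ref{tab:imported-theorem-interface}, and it gives cyclicity one.

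The main obstacle is uniformity. DIR's cyclicity statement is local at a single base graphic, whereas the claim is an ambient bound over a full five-parameter neighborhood of a compact certified family. I would proceed as in the one-central proof. The saddle-node nondegeneracy constants, the hyperbolic margins, the section normals, the same-attractivity sign, and the first-contact margins are all strict inequalities, so they persist on an open set of the original parameters. Absence of pp is not a strict inequality, so it needs its own argument. The key point is that any nearby itinerary acquiring a pp strip would, by Proposition~\ref{prop:two-central-successors}, have to pass through a named handoff: loss or collision of a gate, a scale face, or a terminal outcome. Each such handoff is an adjacent regime with its own estimate, not part of this case. Compactness of the certified base family then gives finitely many DIR neighborhoods and one bound. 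Finally, Proposition~\ref{prop:exact-once} turns that bound on cycles into the stated bound on isolated displacement zeros, counted with multiplicity.
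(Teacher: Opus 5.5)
Your proposal follows the paper's proof: the physical sector classification certifies the same-attractivity, no-pp central graphic, the unconditional no-pp clause of DIR2002, Corollary~3.6, gives cyclicity one, and persistence of the certified graphic gives the ambient neighborhood, with every face where a hypothesis fails already assigned to an adjacent regime. Two small corrections. First, the absence of pp needs no separate persistence argument: a pp orbit must leave a repelling parabolic sector and enter an attracting one, so same attractivity, which is an open sign condition, already excludes it. Second, a bound on the number of cycles bounds only the number of distinct zeros through Proposition~\ref{prop:exact-once}, not zeros counted with multiplicity, so either drop that phrase or take the multiplicity bound from the derivative estimate inside the DIR proof.
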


\begin{remark}
The theorem is applied only after
Theorem~\ref{thm:part-iii-two-central-exhaustion} has certified the no-pp
alternative.  Its bound is uniform on an open full-parameter neighborhood of
each certified elementary graphic and hence, by a finite cover, on compact
families.  A complete pp component leaves this case; a split gate becomes
hyperbolic or one-central, a persistent endpoint enters the mixed regime, and
wrong-attractivity or failed first-contact outcomes are discarded before a
return equation is formed.
\end{remark}

\begin{proof}
The physical sector classification supplies exactly the same-attractivity,
no-parabolic-connection hypothesis of the unconditional no-pp clause of
DIR2002, Corollary~3.6.  That clause gives cyclicity one.  We do not use its
generic one-pp or two-pp alternatives.  Persistence of the certified
elementary graphic supplies the ambient neighborhood; faces on which its
hypotheses fail have already been assigned to the adjacent regimes.
\end{proof}

\section{Lips configurations with uniform positive margins}
\label{sec:part-iii-strict}

We now consider complete physical lips configurations from
Proposition~\ref{prop:complete-physical-lips} whose geometric and analytic margins
remain positive on a compact base family.  The DIR strong-section coordinates
were introduced in Lemma~\ref{lem:part-iii-physical-affine-closing} only after
the common physical product tube had been certified.  They simplify the
central maps and hh connector but create none of the physical hypotheses.

\begin{theorem}[Positive-margin lips estimate]
\label{thm:part-iii-strict-zero}
Fix one complete physical lips configuration and a compact family
\(K_{\rm str}\) on which the transverse eigenvalues, quadratic weak
coefficients, section normals, stopping-face distances, hh and pp margins,
boundary-arc margin, and, only in the PP case, the PP ratio margin are
positive.  Assume also positive
separation from the source and root-merger faces.  Then the complete
compactified pp phase interval, including its hh and PP/BP endpoints, has a
locally uniform finite zero bound for
\begin{equation}
 A_0(\alpha)R_\alpha(y)-B_0(\alpha)y-C_0(\alpha)=0,
\label{eq:legacy-7-27}
\end{equation}
where
\begin{equation}
 (A_0,B_0,C_0)=\varepsilon(\widehat A,\widehat B,\widehat C),
 \qquad [\widehat A:\widehat B:\widehat C]\in\mathbb{RP}^2
 \quad(\varepsilon\ne0).
\label{eq:legacy-7-27a}
\end{equation}
The projective direction controls the punctured coefficient neighborhood;
$\varepsilon=0$ is the identity coefficient apex.  No certified nonzero
direction gives an identity return.
\end{theorem}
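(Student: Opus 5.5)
The plan is to work fiberwise in the direct action gauges of Lemma~\ref{lem:part-iii-physical-affine-closing}, where the closing equation already has the affine form \eqref{eq:legacy-7-27}. I would reduce the zero count to a nonvanishing derivative of $R_\alpha$ of some fixed finite order, uniformly on the compact family $K_{\rm str}$. Lemma~\ref{lem:two-central-landing-product-tube} supplies one common product tube on which the stopped word is defined for every $\alpha$ near $K_{\rm str}$, whether or not $\delta_{\rm hh}$ vanishes. By that lemma, the landing split enters only through $C_0$. So \eqref{eq:legacy-7-27} is a genuine equation on a fixed compactified phase interval $[0,1]$, with $y=0$ the hh endpoint and $y=1$ the PP/BP endpoint. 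Positive separation from the source and root faces means the finite-smooth DIR Theorem~3 normal forms for both saddle-nodes exist with uniform constants. This should make $R_\alpha$ a $C^N$ family on the closed interval for any prescribed $N$.

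The key step is \emph{uniform nonaffinity} on all of $[0,1]$. Corollary~\ref{cor:two-central-nonaffine-boundary}, via the DIR Theorem~3.2 argument, gives a nonzero jet at the PP/BP boundary endpoint for each base configuration. Since $R_\alpha$ is analytic on the open interior, this gives some $n\ge2$ with $R_\alpha^{(n)}\not\equiv0$. At each point $(y_0,\alpha_0)$ of the compact set $[0,1]\times K_{\rm str}$ I would then choose a local order $n(y_0,\alpha_0)\ge2$ with $R^{(n)}_{\alpha_0}(y_0)\ne0$.

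This needs care at the two endpoints.
\begin{itemize}
\item At the PP/BP endpoint, the infinity-jet nonaffinity from the proof of DIR Theorem~3.2, using the ratio margin $r_{\rm bd}\notin[1/2,2]$ in the PP case, yields a finite-order nonzero derivative that is continuous in $\alpha$.
\item At the hh endpoint, I would use the DIR Theorem~3.1 reduction to make the central passages linear. Then $R_\alpha$ is a transition between two finite-smooth charts whose regularity is uniform by the positive eigenvalue and weak-coefficient margins.
\item At interior points, analyticity together with the endpoint jet excludes $R^{(j)}_{\alpha_0}\equiv0$ on any subinterval for $j\ge2$.
\end{itemize}
By continuity of $R^{(n)}$ in $(y,\alpha)$, the nonvanishing persists on a product neighborhood.

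Given $R^{(n)}\ne0$ on a neighborhood $V\times W$, take $n$ derivatives of \eqref{eq:legacy-7-27}. For $n\ge2$ the affine part $B_0y+C_0$ disappears, and we get $A_0R^{(n)}=0$. If $A_0\ne0$, iterating Rolle's theorem with multiplicity bounds the zeros in $V$ by $n$. If $A_0=0$, the equation is affine in $y$ and has at most one isolated zero unless $B_0=C_0=0$. That case is the apex $\varepsilon=0$, which is excluded on the projective sphere \eqref{eq:legacy-7-27a}. So I would work on $\mathbb{RP}^2$ (or the unit sphere) times $K_{\rm str}$ times $[0,1]$, all compact, and extract a finite subcover. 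The maximum of the resulting local orders, plus one for the $A_0=0$ face, gives the locally uniform bound. Multiplicities match physical cycles by \eqref{eq:part-iii-curvature-gauge} and Proposition~\ref{prop:exact-once}.

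I expect the main obstacle to be the endpoint uniformity: extracting from DIR Theorem~3.2 a nonzero derivative of \emph{bounded} order that stays nonzero in a neighborhood of the compactified boundary point, rather than an infinity-jet statement at a single parameter. My first attempt would be to note that the nonaffine jet arises from the boundary saddle's Dulac term $y^{r_{\rm bd}}$ (or the BP center-side expansion) with a coefficient bounded away from zero on $K_{\rm str}$. Since $r_{\rm bd}$ is bounded away from $1$ (PP case), the derivative of order $\lceil r_{\rm bd}\rceil+1$, computed in the Euler variable, is a unit times $y^{r_{\rm bd}-n}$, and this is uniformly nonzero for small $y>0$.
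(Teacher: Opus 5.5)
Your architecture matches the paper's: a finite order $n\ge2$ with $R^{(n)}\ne0$ at each base point, obtained from the nonaffinity of Corollary~\ref{cor:two-central-nonaffine-boundary} plus analytic continuation; persistence on a product neighborhood; multiplicity Rolle on $\widehat A\ne0$ and the affine count on $\widehat A=0$; compactness; and the apex $\varepsilon=0$ as an identity. The steps you add beyond that skeleton, however, do not hold as written.

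\textbf{Order of choices.} DIR Theorem~3 gives a $C^K$ normal form on a neighborhood that depends on $K$. It does not give one neighborhood with uniform constants serving every prescribed $N$. This is why the paper fixes $n(b,a)$ first, then $K\ge n+2$, and only then the tubes and ${\cal U}_{b,a}$, as in \eqref{eq:strict-quantifier-order}. Your per-point construction survives only if it is reorganized in that order.

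\textbf{Regularity at the endpoints.} $R_\alpha$ is not $C^N$ on the closed interval. Near the PP endpoint it factors through the Dulac map of the boundary saddle and behaves like $y^{r}$ with $r=r_{\rm bd}^{\pm1}$. When $r<1/2$ this is not even $C^1$, so ``choose $n$ with $R^{(n)}_{\alpha_0}(y_0)\ne0$ and use continuity of $R^{(n)}$'' is unavailable there, as your last paragraph implicitly concedes. At the hh endpoint your bullet supplies only finite smoothness. That does not by itself give a nonzero derivative of finite order at the endpoint. The paper obtains one from one-sided analytic continuation of the pp transition through the hh endpoint, so the interior argument applies there unchanged.

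\textbf{Your PP fallback fails near resonances.} The margin $r_{\rm bd}\notin[1/2,2]$ does not exclude integer exponents; for example $B=1/(k+1)$ with $k\ge4$ gives reciprocal ratio $k$. If the Dulac exponent is $r=k$, your order $n=\lceil r\rceil+1=k+1$ makes the coefficient $r(r-1)\cdots(r-n+1)$ vanish. For $r$ slightly below $k$ the same coefficient carries the small factor $r-k$. So on a compact family crossing the resonance, the $n$-th derivative is not uniformly a unit times $y^{r-n}$; exactly at resonance the Dulac map also acquires logarithmic terms.

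\textbf{The order that works is $n=2$.} Write $R=h_1\circ D\circ h_2$ with $h_2(0)=0$ and $D(x)=x^{r}(1+\psi)$, and use the standard uniform Dulac bounds $|(x\partial_x)^j\psi|\to0$. Then, with $x=h_2(y)$,
\[
R''=h_1'(0)\,h_2'(0)^2\,r(r-1)\,x^{r-2}\bigl(1+o(1)\bigr)
\]
uniformly. Here $|r(r-1)|$ is bounded below precisely because $r$ ranges in a compact subset of $(0,\infty)\setminus[1/2,2]$. This is the ``hyperbolic power'' alternative of the DIR Theorem~3.2 argument that the paper invokes. The BP endpoint carries no ratio condition and no power term for this computation to exploit. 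It needs DIR's separate BP argument, which your proposal leaves unspecified.

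\textbf{Final count.} The bound on the whole phase interval is the sum of the local orders over the finite phase cover, together with the at most one zero on $\widehat A=0$. It is not the maximum of the local orders plus one.
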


\begin{remark}
The hypotheses describe two actual opposite saddle-nodes, fixed physical
sections, a positive-margin hh itinerary, a nonempty maximal complete pp
strip, its complete PP/BP boundary, isolation, and, in the PP case, \eqref{eq:legacy-7-21}.
The bound is uniform on nested physical tubes and phase intervals in open
neighborhoods of all five original parameters; a finite subcover of
$K_{\rm str}$ gives one constant.  The hh and PP/BP phase endpoints are
included.  Source coalescence, root merger, loss of a section or a
complete-lips hypothesis,
and a change in the first-contact outcome enter their named adjacent regimes rather than being
reached by continuation of this theorem.
\end{remark}

\begin{proof}
The order of choices is important and is
\begin{equation}
 (b,a)\ \longrightarrow\ n(b,a)\ \longrightarrow\
 K\ge n(b,a)+2\ \longrightarrow\
 (T^-\Subset T^+,\ I^-\Subset I^+,\ {\cal U}_{b,a})
 \ \longrightarrow\ \text{Rolle bound}.
\label{eq:strict-quantifier-order}
\end{equation}
Fix first a base-phase pair \((b,a)\).  The complete physical lips component
and its critical transition already exist by
Proposition~\ref{prop:complete-physical-lips}; no normal form is needed to
define them.  Corollary~\ref{cor:two-central-nonaffine-boundary} says that the
critical analytic transition \(R_b\), in the direct section gauge, is
nonaffine.  At an interior phase point, if all derivatives of order at least
two vanished, its Taylor series would be affine near that point and analytic
continuation would make it affine on the connected component.  Therefore
one may first choose a finite \(n=n(b,a)\ge2\) with
\(R_b^{(n)}(a)\ne0\).

Only after this finite order is known do we choose \(K\ge n+2\) and invoke
DIR2002, Theorem~3, for \(C^K\) fibered saddle-node normal forms.  The
physical closing Lemma~\ref{lem:part-iii-physical-affine-closing} then gives
\eqref{eq:legacy-7-27} in those coordinates.  Let \(m_{b,a}>0\) be the
minimum of the physical margins in the statement.  Continuity of the finite
\(C^n\) jet and persistence of those margins now give nested tubes, intervals,
and an open neighborhood in the original five parameters,
\begin{equation}
 T^-_{b,a}\Subset T^+_{b,a},\qquad
 I^-_{b,a}\Subset I^+_{b,a},\qquad
 {\cal U}_{b,a}\subset\mathbb R^5,
\label{eq:legacy-7-28}
\end{equation}
on which every inner-tube orbit follows the same physical itinerary in the
outer tube and, after shrinking,
\begin{equation}
 \inf_{I^+_{b,a}}|R_b^{(n)}|\ge4c_{b,a}>0,
 \qquad
 \|R_\alpha^{(n)}-R_b^{(n)}\|_{C^0(I^+_{b,a})}<2c_{b,a}.
\label{eq:legacy-7-29}
\end{equation}
For \(\varepsilon\ne0\), divide \eqref{eq:legacy-7-27} by \(\varepsilon\).  If
\(\widehat A\ne0\), work only on that projective chart.  The affine terms in
\eqref{eq:legacy-7-27} vanish after \(n\) derivatives, and multiplicity Rolle gives at most
\(n\) zeros on \(I^-_{b,a}\).  On the complementary chart
\(\widehat A=0\), a nonzero affine equation has at most one zero (and a
nonzero constant has
none), uniformly up to its projective boundary.

At the hh endpoint, one-sided analytic continuation of the pp transition
first supplies a finite \(n(b,a)\) exactly as above; the normal-form order and
neighborhood are chosen afterward.  At the natural PP/BP endpoint, the
proof-level argument of DIR2002, Theorem~3.2, first supplies nonaffinity
through, respectively, its hyperbolic power/logarithmic alternative or its
BP length contradiction.  The one-sided analytic expansion then selects a
finite nonzero derivative, and only afterward do we set the finite-smooth
order \(K\).  Thus
both endpoints have their own theorem neighborhoods and are not extrapolated
from an interior jet.  Compactness of the phase interval and of
\(K_{\rm str}\) now selects finitely many neighborhoods from
\eqref{eq:strict-quantifier-order}; the maximum of their finitely many
orders is finite.  If a nonzero projective
direction made \eqref{eq:legacy-7-27} an identity, \(R_b\) would be affine, contradicting the
same proof-level nonaffine-jet argument.
At \(\varepsilon=0\) the original displacement is identically zero and has
no isolated zeros.  Since every punctured fiber is
controlled by the compact projective-direction cover, the same ambient
coefficient cone controls its apex.
\end{proof}

\section{The intermediate coalescing scale}
\label{sec:part-iii-middle}

The strict theorem cannot be compactified through simultaneous source
coalescence.  The middle theorem resolves that loss while the horizontal
nonpersistent-root scale and the upper \(D\)-gate scale remain comparable.
The invariant-center component supplies an exact model, not a perturbation theorem.
Uniformity is proved separately on a buffered finite-phase region (QBF) and
on an unbounded hyperbolic corner (QHH), then extended in a fixed-\(\gamma\)
resolved frame to the entire middle regime.

Every middle and root comparison below uses the coefficient-cone convention
\begin{equation}
 (A_0,B_0,C_0)=\varepsilon_c
   (\widehat A_0,\widehat B_0,\widehat C_0),\qquad
 [\widehat A_0:\widehat B_0:\widehat C_0]\in\mathbb {RP}^2
 \quad(\varepsilon_c\ne0).
\label{eq:legacy-7-29d}
\end{equation}
The projective direction controls the punctured coefficient neighborhood;
\(\varepsilon_c=0\) is the identity apex and contributes no isolated zero.
Statements below about a zero triple always mean this radial apex, not a
point of projective space.

\begin{lemma}[Jets of scalar first-hit maps and their inverses]
\label{lem:scalar-first-hit-jets}
Let \(x'=f(\tau,x,\lambda)\) be a scalar clock equation on a doubled
first-hit tube.  Suppose the clock normal is separated from zero, every
coefficient word of total order at most four has a finite \(L^1(d\tau)\)
majorant, and the clock endpoints, entry value, entry section, and target
section depend mixed \(C^4\)-smoothly on the phase and resolved parameters,
with all tensors through order four uniformly bounded.  Assume also that the
entry and target normal determinants, including the oriented phase
determinant, are separated from zero on the closed doubled tube.  Then the
first-hit map and its inverse have uniform mixed \(C^4\) bounds and fixed
oriented margins \(0<m\le P'\le M\).
\end{lemma}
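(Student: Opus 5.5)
The plan is to obtain the lemma from the weighted variational estimate \eqref{eq:legacy-6-51a}--\eqref{eq:legacy-6-51b}, the Fa\`a di Bruno partition formula \eqref{eq:legacy-6-54b}, and the moving-hit recursion \eqref{eq:legacy-6-43}, keeping the derivative count fixed at four.

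\emph{Step 1 (state derivatives).} Write $x(\tau;p)$ for the solution with $p$ the phase and the resolved parameters. Its entry value is $x_{\rm in}(p)$ at $\tau_{\rm in}(p)$.

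For a word $D_I$ with $|I|\le4$, the derivative satisfies the linear inhomogeneous system \eqref{eq:legacy-6-54b}. The homogeneous part is $f_x D_I x$. The forcing consists of $f_p$ times parameter derivatives, plus products of lower words multiplied by coefficient words of order at most four.

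The scalar fundamental solution is $\exp\int f_x\,d\tau$. It and its inverse are bounded by $e^{\|f_x\|_{L^1}}$; this is the scalar case of \eqref{eq:legacy-6-51b} with $W=1$.

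\emph{Step 2 (entry data).} The initial data of $D_I x$ are obtained by differentiating $x(\tau_{\rm in}(p);p)=x_{\rm in}(p)$. This gives the moving-entry rows, exactly as in \eqref{eq:legacy-6-53g}. These involve $f$ at the entry point and derivatives of $\tau_{\rm in}$ and $x_{\rm in}$, all of which are bounded by hypothesis.

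\emph{Step 3 (induction on $|I|$).} Assume all lower words are bounded in sup norm. Each forcing term is then a bounded product times an $L^1$ majorant. Variation of constants gives a sup bound for $D_I x$ with constant $e^{\|f_x\|_{L^1}}(|{\rm data}|+\|{\rm forcing}\|_{L^1})$. This closes the induction at order four.

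\emph{Step 4 (target hit).} The target section is written $\Psi(\tau,x(\tau;p),p)=0$. The separated clock normal and the target normal determinant give $|\Psi_T|\ge c_{\rm hit}$. Recursion \eqref{eq:legacy-6-43} then bounds the derivatives of the hit time $T(p)$ through order four. The map $P$ is the composition of the state with the target chart, so its mixed $C^4$ bounds follow by Fa\`a di Bruno.

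\emph{Step 5 (margins and inverse).} This is the step I expect to be the main obstacle. The other steps are bookkeeping. Here one must ensure that the derivative lower bound does not degenerate near the edges of the doubled tube.

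Differentiate $P$ in the phase variable. Using \eqref{eq:legacy-6-43}, the result is the product of three factors: the entry normal determinant, the fundamental factor $e^{\int f_x}$, and the inverse of the target normal determinant.

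The oriented phase determinant has a fixed sign, and the other two factors are separated from zero on the closed tube. This gives $0<m\le P'\le M$ uniformly.

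The inverse then has $(P^{-1})'=1/P'\circ P^{-1}$. Its higher derivatives are polynomials in the derivatives of $P$ divided by powers of $P'\ge m$, so the inverse-function Fa\`a di Bruno formula yields uniform mixed $C^4$ bounds. The mixed parameter derivatives are handled the same way, by differentiating $P(P^{-1}(y,\lambda),\lambda)=y$.
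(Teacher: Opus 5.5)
Your proposal is correct and follows essentially the paper's proof. It uses the triangular Bell recurrence bounded inductively by variation of constants and the $L^1$ majorants, the moving-endpoint rows at entry and target, the inverse jets obtained by differentiating $P(P^{-1}(y,\lambda),\lambda)=y$ with $P'\ge m$, and the two-sided phase margin from the entry and target section determinants together with $J=\exp\int f_x\,d\tau$. The only difference is cosmetic: you treat the target through the implicit hit recursion \eqref{eq:legacy-6-43}, whereas the paper writes the explicit first-derivative formula \eqref{eq:legacy-7-29b} and differentiates it three more times.
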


\begin{proof}
For every mixed derivative word \(I\), differentiation of the scalar flow
gives the triangular recurrence
\begin{equation}
 (D_I X)'=f_xD_I X+{\cal B}_I,
\label{eq:legacy-7-29a}
\end{equation}
where \({\cal B}_I\) is the finite Bell-polynomial sum of coefficient
derivatives and strictly lower solution jets.  Variation of constants and
the \(L^1\) majorants bound these words inductively.  If entry clock \(a\),
target clock \(b\), entry state \(x_0\), and parameters vary in a direction
\(D\), the first derivative is exactly
\begin{equation}
 DP=J\{Dx_0-f(a,x_0)Da\}+Z_D+f(b,P)Db,
\label{eq:legacy-7-29b}
\end{equation}
where \(J=\exp\int f_x\,d\tau\) and \(Z_D\) is the fixed-endpoint parameter
variation.  Differentiating \eqref{eq:legacy-7-29b} three more times gives all moving-
endpoint tensors; the unique top-order solution jet remains linear.

If \(C_j\) bounds the \(j\)-th mixed tensor of \(P\) and \(m\le P'\),
differentiating \(P(P^{-1}(y,\lambda),\lambda)=y\) gives
\begin{equation}
\begin{aligned}
 I_1&=m^{-1}(1+C_1),\\
 I_2&=m^{-1}C_2(1+I_1)^2,\\
 I_3&=m^{-1}\{C_3(1+I_1)^3+3C_2(1+I_1)I_2\},\\
 I_4&=m^{-1}\{C_4(1+I_1)^4+6C_3(1+I_1)^2I_2
       +3C_2I_2^2+4C_2(1+I_1)I_3\}.
\end{aligned}
\label{eq:legacy-7-29c}
\end{equation}
Finally \eqref{eq:legacy-7-29b}, multiplied by the clock normal, is the physical entry-
section determinant.  The two section-normal margins and \(J\) give the
two-sided phase bound.
\end{proof}

\subsection{The invariant-center anchor and its exact transition}

For \(0<t<t_0<1/2\) and \(\epsilon=\pm1\), take the parameters on the
invariant-center component
\begin{equation}
 B=\frac{t^2}{1+t^2},\quad m=-B,
 \quad a=\frac{2\epsilon t}{1+t^2},
 \quad c=d=-a.
\label{eq:legacy-7-30}
\end{equation}
There are two actual simple saddle-nodes
\begin{equation}
 S_h=(-\epsilon/t,-1),\qquad S_v=\{r=0,w=\epsilon t\},
\label{eq:legacy-7-31}
\end{equation}
with opposite transverse signs.  Their strong separatrices lie on
\begin{equation}
 L_\epsilon=tx-\epsilon t^2z+\epsilon=0.
\label{eq:legacy-7-32}
\end{equation}
The component on the side opposite the finite center is a complete pp
continuum; its other boundary passes through the single principal endpoint
\(p_{-\epsilon}\).  Its boundary ratio is \(t^2\) or \(t^{-2}\), hence is
outside \([1/2,2]\) after shrinking \(t_0\).  These assertions follow by
substitution in the physical H14 field and by the first-hit argument of
Proposition~\ref{prop:complete-physical-lips}; no zero count is claimed yet.

Choose analytic strong normalizing coordinates \(q_h,q_v\) on fixed weak
sections at the two saddle-nodes.  Along the exact invariant-center pp
component the
saddle-node first
integrals give
\begin{equation}
 R_t(q_h)=K_tq_h^{t^2},\qquad K_t>0.
\label{eq:legacy-7-33}
\end{equation}
At a reference phase \(q_0\), normalize the value and first derivative:
\begin{equation}
 \frac{R_t(q_0(1+u))-R_t(q_0)}{ q_0R_t'(q_0)}
 =\frac{(1+u)^{t^2}-1}{ t^2},
\label{eq:legacy-7-34}
\end{equation}
with removable value \(\log(1+u)\) at \(t=0\).  Uniformly for
\(0\le t\le1/2\) and \(|u|\le1/2\),
\begin{equation}
 \frac{d^2}{ du^2}\frac{(1+u)^{t^2}-1}{ t^2}
 =(t^2-1)(1+u)^{t^2-2}\le-\frac{1}{3}.
\label{eq:legacy-7-35}
\end{equation}
Thus the exact model affine comparison has at most two zeros on each
normalizing interval.  The rest of the section establishes that the relevant
concavity survives on neighboring physical itineraries.

\subsection{Buffered finite phase and the resolved connector}

First use relative parameters
\begin{equation}
 B=t^2b,\qquad m=t^2M,\qquad
 a=tA,\qquad c=tC,\qquad d=tD,
\label{eq:legacy-7-36}
\end{equation}
with normalized coefficients in a fixed compact neighborhood of either
signed invariant-center branch.  On the finite-phase region
\(0<R=r/t^2\le R_L\), require the same physical first-hit itinerary on the larger
buffer \(R\le R_L^+\).  In coherent strong coordinates the transition
factors as
\begin{equation}
 {\cal T}(q)=H\left(t^2\log\frac{q_*}{ q}\right),
 \qquad \left|\frac{H''}{ H'}\right|\le K_L.
\label{eq:legacy-7-37}
\end{equation}
The map \(H\) is the composition of the residual physical regular factors,
not an unspecified germ.  On the doubled QBF tube every such factor has a
scalar clock with denominator and entry/target section normals separated
from zero.  All coefficient words through order four are bounded on fixed
clock intervals.  Lemma~\ref{lem:scalar-first-hit-jets} and the finite
Fa\`a di Bruno composition recurrence therefore bound \(H,H^{-1}\) in
\(C^4\) and bound \(H'\) away from zero.  This proves the displayed
\(H''/H'\) estimate from the same physical buffers used to define the itinerary.
Consequently, for small \(t\),
\begin{equation}
 \frac{q{\cal T}''(q)}{{\cal T}'(q)}
 =-1-t^2\frac{H''}{ H'}\le-\frac{1}{2}.
\label{eq:legacy-7-38}
\end{equation}
Two applications of multiplicity Rolle show that an affine comparison has
at most two isolated zeros on each QBF component.

Large \(R\) is not sent to the source theorem.  On every buffered QHH
through component the physical connector is decomposed, in the order met by
the orbit, as
\begin{equation}
 G=P_4\circ P_3\circ P_2\circ P_1\circ P_0.
\label{eq:legacy-7-39}
\end{equation}
Here \(P_0\) is the horizontal moving-entry bridge, \(P_1\) the fixed
reciprocal flight, \(P_2\) the complete long weighted flight, \(P_3\) the
fixed core flight, and \(P_4\) the moving-entry vertical tail.  Moving
sections are part of these definitions.  In the signed coordinate
\(\xi=-\epsilon tx\), with \(s=t^2z\), the long factor is resolved by
\(h=s-t^2=\rho^2\) and \(t=\rho T\).  Put
\begin{equation}
 \widehat A=-\epsilon A,\qquad
 \widehat C=-\epsilon C,\qquad
 \widehat D=-\epsilon D.
\label{eq:legacy-7-39a0}
\end{equation}
Its exact weighted field is
\begin{equation}
\begin{aligned}
 \xi'&=\rho^2B_w,& \rho'&=\frac{\rho}{2}A_w,&
 T'&=-\frac{T}{2}A_w,\\
 B_w&=-1+T^2\xi(b\xi+\widehat A)+M\rho^2,&
 A_w&=\xi(1+T^2)+\rho^2(T^2\widehat C+\widehat D).
\end{aligned}
\label{eq:legacy-7-39a}
\end{equation}
The resolved norm of a section map is
\begin{equation}
 \|P\|_{C^4_{\rm res}}=
 \max_{a+|\nu|\le4}\sup
 |\partial_x^a{\mathcal D}^{\nu}P|,\qquad
 {\mathcal D}\in\{\mathcal E,\partial_b,\partial_M,
 \partial_{\widehat A},\partial_{\widehat C},
 \partial_{\widehat D}\},
\label{eq:legacy-7-39b}
\end{equation}
where \(\mathcal E\) is the Euler lift on the current section; ordinary
unweighted \(\partial_t\) is not in this norm.  On the long chart the
commuting interior frame is
\begin{equation}
 \partial_\xi,\qquad
 {\cal V}_2=\rho\partial_\rho-T\partial_T,\qquad
 {\cal E}_2=T\partial_T|_\rho,\qquad
 \partial_{b,M,\widehat A,\widehat C,\widehat D}.
\label{eq:legacy-7-39c}
\end{equation}
At its moving endpoints the exact lifts are
\begin{equation}
 {\cal E}_-={\cal E}_2+{\cal V}_2,
 \qquad {\cal E}_+={\cal E}_2-T_+^2{\cal V}_2,
\label{eq:legacy-7-40}
\end{equation}
and every resolved derivative word through order four obeys
\begin{equation}
 \left|{\cal W}\left(2\rho\frac{B_w}{ A_w}\right)\right|
 \le C_{\cal W}\rho,\qquad
 \int_{\rho_-}^{\rho_+}C_{\cal W}\rho\,d\rho
 \le \frac{C_{\cal W}s_-}{2}.
\label{eq:legacy-7-41}
\end{equation}
This integrable estimate, rather than compact convergence in unscaled
coordinates, controls the long flight and all moving-endpoint terms.

The other four factors have the following exact clocks.  Put
\(\sigma=-\epsilon\).  The horizontal bridge uses \(z\) as clock:
\begin{equation}
\begin{aligned}
 \dot R&=-\sigma t^2R
 \{b+\sigma AR+R^2[1-z+t^2M(z-1)^2]\}=:N_0,\\
 \dot z&=\sigma z+t^2R\{D(z-1)^2+C(z-1)\}=:Q_0,
 \qquad \frac{dR}{ dz}=\frac{N_0}{ Q_0}.
\end{aligned}
\label{eq:legacy-7-41a}
\end{equation}
It starts on the moving strong-section curve
\((R_0(p),z_0(p))\), ends at fixed \(z=z_a\), and outputs \(R^{-1}\).
On the doubled bridge \(R_-\le R\le R_+\), \(|Q_0|\ge d_0\), and
\begin{equation}
 \chi_0=R_{0,p}-\frac{N_0}{ Q_0}z_{0,p}
\label{eq:legacy-7-41b}
\end{equation}
is the physical entry-section determinant divided by \(Q_0\).  Hence
\(0<\chi_{0,-}\le|\chi_0|\le\chi_{0,+}\), and every resolved coefficient
word in \(N_0/Q_0\) has an \(L^1(dz)\) majorant \(C_jt^2\).

On fixed reciprocal levels \(v_a\le v\le v_b\),
\begin{equation}
 \frac{d\xi}{ dv}=t^2\frac{-v+\xi(b\xi+\widehat A)+t^2Mv^2
  }{ \xi(1+v)+t^2(\widehat Cv+\widehat Dv^2)},
 \qquad Q_1\ge d_1>0.
\label{eq:legacy-7-41c}
\end{equation}
Both endpoints are fixed and every coefficient word is
\(L^1(dv)\)-bounded by \(C_jt^2\).  For the compact core put
\(U=\xi+s-1\).  Its exact clock is
\begin{equation}
 \frac{dU}{ ds}=\frac{N_3}{ Q_3},\qquad s_-\le s\le s_+,\qquad Q_3\ge c_s>0,
\label{eq:legacy-7-41d}
\end{equation}
where
\begin{equation}
\begin{aligned}
 N_3={}&sU+(M+\widehat D-1)s^2\\
 &+t^2\{1+b(U-s+1)^2+\widehat A(U-s+1)
       +(\widehat C-2\widehat D-2M)s\}\\
 &+t^4(M+\widehat D-\widehat C),\\
 Q_3={}&s\{U+1+(\widehat D-1)s\}
       +t^2(\widehat C-2\widehat D)s
       +t^4(\widehat D-\widehat C).
\end{aligned}
\label{eq:legacy-7-41e}
\end{equation}
This fixed clock interval gives finite \(L^1(ds)\) bounds for every
fourth-order quotient word.

At \(s=s_+\), the exact overlap with the vertical box is
\begin{equation}
 R_{v,0}=(s_+-t^2)^{-1},\qquad
 W_0=-\epsilon(U-s_++1)(s_+-t^2)^{-1}.
\label{eq:legacy-7-41f}
\end{equation}
In the direct vertical coordinates supplied by the graph transform, write
\(u_0(U)\), \(\ell_0(U)=\log|v_0(U)|\).  The final clock and target are
\begin{equation}
 \frac{du}{ d\ell}=F_v(u),\qquad
 \ell_0(U)\longrightarrow\ell_*:=\log v_v^*.
\label{eq:legacy-7-41g}
\end{equation}
The entry bracket
\(\chi_4=u_{0,U}-F_v(u_0)\ell_{0,U}\) is the transported physical
\(s=s_+\) section-normal determinant.  The vertical coordinate Jacobian,
the normal eigenvalue, \(Q_3\), and \(|v_0|\) all have fixed two-sided
margins, and therefore
\begin{equation}
 0<\chi_{4,-}\le|\chi_4|\le\chi_{4,+}.
\label{eq:legacy-7-41h}
\end{equation}
All coefficient words in \(F_v\) have finite \(L^1(d\ell)\) bounds on the
fixed vertical clock interval.

For \(P_2\), \eqref{eq:legacy-7-41} is the corresponding \(L^1\) estimate in logarithmic
clock \(d\log\rho\), with profile \(C_j\rho^2\).  Thus the five profiles,
in their physical order, are
\begin{equation}
 C_jt^2\,dz,\quad C_jt^2\,dv,\quad
 C_j\rho^2\,d\log\rho,\quad C_j\,ds,\quad C_j\,d\ell .
\label{eq:legacy-7-41i}
\end{equation}
They are uniformly integrable.  Equations \eqref{eq:legacy-7-40}, \eqref{eq:legacy-7-41b}, and
\eqref{eq:legacy-7-41f}--\eqref{eq:legacy-7-41h} include every moving endpoint and phase-entry term.
Reorient the intermediate local phase coordinates coherently, flipping a
shared source/target pair whenever necessary, so that physical order gives
\(P_j'>0\) for all five factors.  These flips do not change a first-hit
itinerary or an absolute section-normal margin.  The absolute derivative bounds
above may therefore be multiplied with one common sign.
Applying Lemma~\ref{lem:scalar-first-hit-jets} to the five rows, composing
by Fa\`a di Bruno, and using \eqref{eq:legacy-7-29c} now gives
\begin{equation}
 \|G\|_{C^4_{\rm res}}+\|G^{-1}\|_{C^4_{\rm res}}\le C_G,
 \qquad 0<g_0\le G'\le g_1.
\label{eq:legacy-7-42}
\end{equation}
These are lifted Euler bounds through \(t=0\); no ordinary unweighted
\(\partial_t^j\) estimate is asserted.  The exact local clocks can be chosen
as
\begin{equation}
 \dot p=\varsigma_ht^2F_h(p),\quad \dot q=\varsigma_hq,
 \qquad
 \dot u=\varsigma_vF_v(u),\quad \dot v=\varsigma_vv,
\label{eq:legacy-7-43}
\end{equation}
with
\begin{equation}
 F_v''\ge c_v,\qquad |F_v'(u)|\le L_v(|u|+\eta),
 \qquad u=G(p).
\label{eq:legacy-7-44}
\end{equation}

Let \({\cal T}(Q_h(p))=Q_v(G(p))\).  The curvature numerator is
\begin{equation}
\begin{split}
 {\cal K}={}&t^2F_hG'-F_v(G)
 +t^2F_hF_v(G)\frac{G''}{ G'}\\
 &-t^2F_hF_v'(G)G'+t^2F_v(G)F_h'.
\end{split}
\label{eq:legacy-7-45}
\end{equation}
It satisfies
\begin{equation}
 \frac{d}{ dp}\log\frac{d{\cal T}}{ dQ_h}
 =\frac{\cal K}{ t^2F_hF_v(G)},
\quad
 {\cal K}''=-F_v''(G)(G')^2-F_v'(G)G''+t^2{\cal R}_K,
 \quad |{\cal R}_K|\le C_K.
\label{eq:legacy-7-46}
\end{equation}
Choose the inner vertical interval first, then the parameter radius, and
finally \(t_0\), so that
\begin{equation}
 |F_v'(G)G''|\le\frac{c_vg_0^2}{4},\qquad
 t_0^2C_K\le\frac{c_vg_0^2}{4}.
\label{eq:legacy-7-47}
\end{equation}
Equations \eqref{eq:legacy-7-44}--\eqref{eq:legacy-7-47} imply
\begin{equation}
 {\cal K}''\le-\frac{1}{2}c_vg_0^2<0.
\label{eq:legacy-7-48}
\end{equation}
Hence \({\cal K}\), and therefore \({\cal T}''\), has at most two zeros
with multiplicity.  By
Lemma~\ref{lem:part-iii-physical-affine-closing}, the middle affine comparison
is the physical full-return closing equation
\begin{equation}
 A_0{\cal T}(q)-B_0q-C_0=0.
\label{eq:legacy-7-48a}
\end{equation}
If \(A_0\ne0\), two more Rolle steps give at
most four zeros.  If \(A_0=0\) and \(B_0\ne0\), there is at most one; if
\(A_0=B_0=0\) and \(C_0\ne0\), there is none.  The zero triple is an
identity apex with no isolated zeros.  A nonzero projective direction cannot
give an identity, since with \(A_0\ne0\) that would make \({\cal T}\)
affine, contrary to \eqref{eq:legacy-7-48}.  The horizontal and vertical weak brackets are
prepared quadratics on the fixed signed cell.  Each contributes at most two
divider roots.  Four ordered divider points, together with the two side
endpoints, cut the input interval into at most five nonempty intervals;
scalar uniqueness makes the preimage of every through interval an interval.
Hence there are at most five through components, so \(20\) is a valid QHH
aggregate bound per signed local itinerary; the number four is per component.

The overlap is assigned once.  With \(R_L=2R_*\) and \(R_L^+=3R_*\),
\begin{equation}
 \text{QBF: }0<R\le\frac{3R_*}{2}\text{, including equality};
 \qquad
 \text{QHH: }R>\frac{3R_*}{2}.
\label{eq:legacy-7-49}
\end{equation}

\subsection{The full fixed-gamma middle regime}

The theorem below uses
\begin{equation}
 B=t^2b,\qquad m=t^2M,\qquad a=tA,\qquad d=tD,\qquad c=\gamma,
\label{eq:legacy-7-50}
\end{equation}
where \((b,M,A,D)\) stays in a fixed compact signed QL neighborhood and
\(|\gamma|\le\gamma_0\).  There is no bound on \(\gamma/t\).  One cannot
deduce this case by putting \(e=t^2\) in \eqref{eq:legacy-7-36}, since
\(t\gamma=\gamma\sqrt e\) is not \(C^1\) in \(e\) at zero.  Instead the
oriented horizontal field, with all outer factors retained, is
\begin{equation}
 R'=t^2f(R,N,t,\beta_\gamma),\qquad
 N'=\sigma N+t\gamma_sR(N-R)+t^2G_0(R,N,t,\beta_\gamma),
\label{eq:legacy-7-50a}
\end{equation}
where \(G_0\) is polynomial with bounded derivatives and \(\gamma_s\) is
the fixed signed copy of \(\gamma\).  Writing \(N=t\eta\) gives the regular
invariance equation
\begin{equation}
 \eta+t^2f(R,t\eta,t,\beta_\gamma)\eta_R
 =\gamma_sR(t\eta-R)+tG_0(R,t\eta,t,\beta_\gamma).
\label{eq:legacy-7-50b}
\end{equation}
The time-one graph transform on a fixed doubled box has limiting weak
linear part \(e^{-1}\).  Shrink \(t_0,\gamma_0\) so its phase-jet
contraction is at most \(3/4\).  When the fixed-point equation is
differentiated with
\begin{equation}
 \partial_R,\quad t\partial_t|_\gamma,\quad
 \partial_b, \partial_M, \partial_A, \partial_D, \partial_\gamma.
\label{eq:legacy-7-51}
\end{equation}
the only highest graph jet is multiplied by \(I-D\mathfrak T\), whose
inverse has norm at most \(4\); all right-hand terms contain bounded
coefficient jets and lower graph jets.  Induction through order six gives
the required entry and normalizer jets and the physical graph
\(N=t\eta=O(t|\gamma|+t^2)\).  The vertical oriented field is likewise
\begin{equation}
\begin{aligned}
 R_v'&=-R_v\{W+D+t^2R_vW+t\gamma_sR_v\},\\
 W'&=M-DW-(1-t^2b)W^2-R_v
 +t^2R_v(AW-W^2)-t\gamma_sR_vW.
\end{aligned}
\label{eq:legacy-7-51a}
\end{equation}
Its weak polynomial at \(R_v=0\) is independent of \(\gamma\); the same
separated-normal graph, foliation, and fiber recurrences therefore retain
\eqref{eq:legacy-7-44} and the moving-entry margin \eqref{eq:legacy-7-41h}.
In the weighted long chart the denominator is
\begin{equation}
 A_w^\gamma=\xi(1+T^2)+\rho T\gamma+\rho^2D.
\label{eq:legacy-7-52}
\end{equation}
A \(\partial_\gamma\)-derivative contributes \(\rho T=t\le C\rho\), so
\eqref{eq:legacy-7-41} and hence the five-factor closure remain valid in this frame.  The
finite core differs from the relative chart by bounded \(t\gamma\)-terms.
Consequently each of the five rows \eqref{eq:legacy-7-41a}--\eqref{eq:legacy-7-41i} satisfies the
hypotheses of Lemma~\ref{lem:scalar-first-hit-jets} in the fixed-\(\gamma\)
frame; this proves \eqref{eq:legacy-7-42} without an ordinary \(e=t^2\) derivative.

Finally, the chart must contain every actual middle hh base, not merely a
formal neighborhood of the invariant-center component.  Construct first the
nearby upper-double-root
sheet, before imposing the hh landing.  Put
\begin{equation}
 B=bt^2,\qquad a=-tC,\qquad c=\tau+tC .
\label{eq:legacy-7-52a0}
\end{equation}
At \(t=0\) the three horizontal saddle-node equations are the regular IFT
system
\begin{equation}
 1+bX^2-CX=0,\qquad XZ_h-\tau=0,\qquad X(2bX-C)=0.
\label{eq:legacy-7-52a1}
\end{equation}
On the opposite-attractivity branch
\(X=b^{-1/2}\), \(C=2\sqrt b\), \(Z_h=\tau\sqrt b\), and the IFT
determinant is nonzero.  Hence \(b\) is a genuine local coordinate on this
two-gate sheet; all remaining endpoint coefficients and entries vary
analytically with \((b,t,\tau)\).

It remains to prove that every source-accumulating hh branch enters this
local sheet.  In the horizontal overlap put
\begin{equation}
 U=tx+t^2z-1+t\tau,\qquad {\cal Z}=t^2z.
\label{eq:legacy-7-52a1a}
\end{equation}
On each fixed compact \((U,{\cal Z})\)-rectangle the exact scaled field and
all derivatives needed below converge as \(t\to0\) to
\begin{equation}
 U'=U{\cal Z},\qquad {\cal Z}'={\cal Z}(U+{\cal Z}+1),
\label{eq:legacy-7-52a1b}
\end{equation}
and the horizontal strong point converges to
\((b^{-1/2}-1,0)\).  At the upper vertex, with
\(r=t^2R\), \(w=tW\), and \(\eta=W+1\), the limiting field is
\begin{equation}
 R'=-R(1+\eta),\qquad \eta'=-R-\eta^2,
\label{eq:legacy-7-52a1c}
\end{equation}
whose strong graph is \(\eta=R\).  On the common fixed section \(R=R_0\),
the overlap has \({\cal Z}=t^2+R_0^{-1}\), and its vertical strong landing
converges to \(U=0\).

Choose a fixed first-exit rectangle for \eqref{eq:legacy-7-52a1b} with vertical sides
\(U=\pm K\), then shrink the physical doubled tubes so that the same sides
remain transverse for the exact fields.  A retained hh branch cannot leave
through either side and later re-enter without first taking the named
preceding-side stopping face.  Hence every source-accumulating retained hh sequence has
a compact limiting branch ending on \(U=0\).  Since \(U=0\) is invariant for
\eqref{eq:legacy-7-52a1b}, uniqueness forces that whole limiting branch to be this axis.
Its horizontal endpoint therefore satisfies
\(b^{-1/2}-1=0\), so \(b\to1\).  Thus all such hh branches eventually lie
inside the common IFT neighborhood above; the local landing computation below
is exhaustive, not merely local near a selected base.

We now display the finite-\(t\) system behind that IFT.  On the selected upper
double-root sheet,
\begin{equation}
 m=-t^2(1-bt^2),\qquad d=2t(1-bt^2).
\label{eq:legacy-7-52a1d}
\end{equation}
In the positive horizontal endpoint chart \eqref{eq:legacy-3-6}, put
\(r=t/X\), \(z=tZ\), and set
\begin{equation}
\begin{aligned}
 K_t(Z)&=1-tZ-t^2(1-bt^2)(tZ-1)^2,\\
 F_1&=bX^2-CX+K_t(Z),\\
 F_2&=XZ+2t(1-bt^2)(tZ-1)^2+(\tau+tC)(tZ-1),\\
 F_3&=\det D_{(X,Z)}(F_1,F_2).
\end{aligned}
\label{eq:legacy-7-52a1e}
\end{equation}
The exact horizontal saddle-node equations are
\(F_1=F_2=F_3=0\).  Indeed, \(F_1=F_2=0\) are exactly the two equilibrium
equations after multiplication by nonvanishing factors, and the determinant
condition is preserved by the analytic change \((r,z)=(t/X,tZ)\).  At
\(t=0\), \eqref{eq:legacy-7-52a1e} is precisely \eqref{eq:legacy-7-52a1}.  Thus the finite-\(t\) equations,
not only their limiting system, supply the analytic three-variable IFT.

Put \(D_t=1+t^2\).  At \(b_0=D_t^{-1}\), direct differentiation of
\eqref{eq:legacy-7-52a1e}, with \(x_h=r_h^{-1}\), gives
\begin{equation}
 \frac{d}{ dB}(x_h+tz_h)=-\frac{D_t^3}{2t^3},
 \qquad \partial_bU(p_h)=-\frac{D_t^3}{2}.
\label{eq:legacy-7-52a1f}
\end{equation}
This is the derivative of the moving physical strong endpoint.  Along the
exact invariant line \(U=0\), the connector equations reduce to
\begin{equation}
 {\cal Z}'=\frac{1-t^2}{ D_t}(1+{\cal Z})({\cal Z}-{\cal Z}_h),
 \qquad
 \partial_UU'=\frac{1-t^2}{ D_t}({\cal Z}-{\cal Z}_h).
\label{eq:legacy-7-52a1g}
\end{equation}
Along the moving two-gate sheet \(C=C(b,t,\tau)\), the total parameter
derivative is
\(\mathcal B_b=\partial_b+(\partial_bC)\partial_C\).  Before imposing the
hh value, direct substitution in the exact scaled field gives, after removal
of its nonvanishing orbital factor,
\begin{equation}
 U'\big|_{U=0}
 =-t(-2{\cal Z}+1+t^2-t\tau)\,\Xi,
 \qquad \Xi=C+b(t^2+t\tau-1)-1.
\label{eq:legacy-7-52a1g0}
\end{equation}
At \(b_0\), the endpoint IFT values displayed in \eqref{eq:legacy-7-52a1j} give
\[
 \Xi=0,\qquad
 \mathcal B_b\Xi
 =(1-t^2-t\tau)+(t^2+t\tau-1)=0.
\]
The derivative of the omitted orbital factor is multiplied by \(\Xi=0\).
Thus it is this explicit total cancellation, not a fixed-\(C\) invariance
claim, that makes the forcing vanish:
\begin{equation}
 \mathcal B_b U'\big|_{U=0,b=b_0}=0.
\label{eq:legacy-7-52a1g1}
\end{equation}
Thus the normal variation \(u=\mathcal B_b U\), including the
\(\partial_bC\) contribution, satisfies the homogeneous equation
\begin{equation}
 \frac{du}{ d{\cal Z}}=\frac{u}{1+{\cal Z}},
 \qquad
 u({\cal Z})=u({\cal Z}_0)\frac{1+{\cal Z}}{1+{\cal Z}_0}.
\label{eq:legacy-7-52a1h}
\end{equation}

For the vertical branch write its compactified strong graph as
\(\eta=RH_v(R;b,t,\tau)\).  Its exact endpoint slope is
\begin{equation}
 h_v:=H_v(0;b,t,\tau)
 =\frac{2Ct^2-t^2+t\tau-1}{2bt^2-1}.
\label{eq:legacy-7-52a1i}
\end{equation}
At \(b=b_0\), the same IFT gives
\begin{equation}
 C_0=\frac{2-t\tau}{ D_t},\qquad
 \partial_bC=1-t^2-t\tau,\qquad
 h_v=1-t\tau-t^2,\qquad \partial_bh_v=0.
\label{eq:legacy-7-52a1j}
\end{equation}
The overlap identities
\({\cal Z}=t^2+R^{-1}\) and
\(U=\eta/R-1+t\tau+t^2\) show that the vertical normal variation has finite
endpoint value zero.  Every nonzero solution of \eqref{eq:legacy-7-52a1h} is a nonzero
multiple of \(1+{\cal Z}\) and is unbounded as \(R\downarrow0\); therefore
the admissible vertical variation is identically zero.

Let \({\cal Z}_h=t^2z_h\) be the scaled horizontal strong-entry level, and
fix the vertical comparison section \(R=R_0\), so that
\(M_t=t^2+R_0^{-1}\).  Denote by
\(\Delta(b,t,\tau)\) the horizontal strong landing minus the vertical
strong landing on this same physical section.  At
\(b_0=(1+t^2)^{-1}\), the line
\begin{equation}
 x+tz-t^{-1}+\tau=0
\label{eq:legacy-7-52a2}
\end{equation}
is invariant and \(\Delta=0\).  The finite-\(t\) endpoint calculation
\eqref{eq:legacy-7-52a1e}--\eqref{eq:legacy-7-52a1j}, followed by transport along the invariant line, gives
\begin{equation}
\begin{aligned}
 \partial_bU_h(M_t)\big|_{b_0}
 &=-\frac{(1+t^2)^3}{2}\frac{1+M_t}{1+{\cal Z}_h},\\
 \partial_bU_v(M_t)\big|_{b_0}&=0.
\end{aligned}
\label{eq:legacy-7-52a3}
\end{equation}

Subtracting the two rows yields
\begin{equation}
 \partial_b\Delta(b_0,t,\tau)
 =-\frac{(1+t^2)^3}{2}\frac{1+M_t}{1+{\cal Z}_h}.
\label{eq:legacy-7-53}
\end{equation}
On the two fixed clock intervals the first two \(b\)-variations satisfy
\begin{equation}
 v'=\Phi_Uv+\Phi_b,\qquad
 w'=\Phi_Uw+\Phi_{UU}v^2+2\Phi_{Ub}v+\Phi_{bb}.
\label{eq:legacy-7-53a}
\end{equation}
Their entry jets and all coefficients are bounded, while the vertical graph
normal eigenvalue is separated from zero.  Gronwall and the bounded overlap
therefore give \(|\partial_b^2\Delta|\le C_\Delta\).  On the compact doubled
sections, \((1+M_t)/(1+{\cal Z}_h)\) has a fixed positive lower bound, so
\eqref{eq:legacy-7-53} gives \(-\partial_b\Delta(b_0,t,\tau)\ge c_\Delta>0\).  Choose the
common \(b\)-radius smaller than \(c_\Delta/(2C_\Delta)\).  Then
\(\partial_b\Delta<-c_\Delta/2\) throughout the box, and the mean-value
theorem proves \(\Delta=0\) if and only if \(b=b_0\).

Substitution of this unique hh value in the two-gate sheet gives the actual
base family
\begin{equation}
 B=\frac{t^2}{1+t^2},\quad m=-\frac{t^2}{1+t^2},\quad
 d=\frac{2t}{1+t^2},\quad
 a=\frac{-2t+t^2\tau}{1+t^2},\quad
 c=\gamma=\frac{2t+\tau}{1+t^2}.
\label{eq:legacy-7-53b}
\end{equation}
Thus \(\tau=a+c\) is the exact unscaled center departure.  The
fixed-\(\gamma\) chart
contains the entire source-accumulating middle base locus, including
unbounded \((a+c)/t\).

\begin{theorem}[Uniform estimate on the intermediate scale]
\label{thm:part-iii-middle-zero}
On every retained complete-lips itinerary in the middle chart
\eqref{eq:legacy-7-50}, for
\(0<t<t_0\), every actual affine pp comparison has at most two isolated
zeros on each QBF component and at most four on each QHH component.  The
estimates are uniform on the full compact normalized base and all
\(|\gamma|\le\gamma_0\), with no restriction on \(\gamma/t\).
\end{theorem}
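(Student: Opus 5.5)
The plan is to assemble the ingredients of Section~\ref{sec:part-iii-middle} in a fixed order. The goal is that the two counts come from two separate curvature mechanisms, one on each of the QBF and QHH cells of \eqref{eq:legacy-7-49}. Both mechanisms are then applied to the physical closing equation supplied by Lemma~\ref{lem:part-iii-physical-affine-closing}.

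First I fix the domain. The hh-base analysis ending in \eqref{eq:legacy-7-53b} shows that every source-accumulating middle hh branch lies in the IFT neighborhood of the two-gate sheet. It also shows that the landing difference $\Delta$ vanishes only at $b=b_0$, with $\partial_b\Delta<-c_\Delta/2$. Hence every retained complete-lips middle itinerary lies in one compact normalized base times $\{|\gamma|\le\gamma_0\}$, expressed in the fixed-$\gamma$ chart \eqref{eq:legacy-7-50}. Lemma~\ref{lem:two-central-landing-product-tube} then puts the punctured landing-split fibers on the same product tube, with the split entering only through $C_0$. So on each fiber the closing equation is exactly $A_0\mathcal T(q)-B_0q-C_0=0$ in the projective cone \eqref{eq:legacy-7-29d}. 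The apex $\varepsilon_c=0$ is an identity and contributes no isolated zero.

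On QBF, where $R\le 3R_*/2$, I use the factorization \eqref{eq:legacy-7-37}. The bounds for $H$ and $H^{-1}$ come from Lemma~\ref{lem:scalar-first-hit-jets} on the doubled buffer $R\le R_L^+$. In the fixed-$\gamma$ frame the extra terms are the bounded $t\gamma$ terms from \eqref{eq:legacy-7-50a}--\eqref{eq:legacy-7-51a}, and the graph transform handles them through order six. This gives \eqref{eq:legacy-7-38}, that is, $q\mathcal T''/\mathcal T'\le-1/2$. The argument then splits by the vanishing of $A_0$ and $B_0$:
\begin{itemize}
\item If $A_0\neq0$, $\mathcal T''$ does not vanish, so after differentiating twice to kill the affine terms, multiplicity Rolle gives at most two zeros.
\item If $A_0=0$ but $B_0\neq0$, the equation is affine and has at most one zero.
\item If $A_0=B_0=0$, the equation is a nonzero constant and has none.
\end{itemize}

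On QHH the steps are as follows.
\begin{itemize}
\item Obtain \eqref{eq:legacy-7-42} for the five-factor connector $G$. In the fixed-$\gamma$ frame this uses \eqref{eq:legacy-7-52}: a $\partial_\gamma$ derivative only produces $\rho T=t\le C\rho$, so the integrable profiles \eqref{eq:legacy-7-41i} survive.
\item Use the curvature numerator \eqref{eq:legacy-7-45} together with the choices \eqref{eq:legacy-7-47}, which give $\mathcal K''<0$ by \eqref{eq:legacy-7-48}.
\item Deduce that $\mathcal T''$ has at most two zeros.
\item Two further Rolle steps give at most four zeros of the closing equation when $A_0\ne0$, and the degenerate projective charts behave exactly as on QBF.
\item No nonzero projective direction is an identity: if $A_0\ne0$, an identity would force $\mathcal T$ to be affine, contradicting \eqref{eq:legacy-7-48}, and on QBF it would contradict \eqref{eq:legacy-7-38}.
\end{itemize}

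Uniformity follows from the fact that every constant ($K_L$, $g_0$, $c_v$, $C_K$, $t_0$, $\gamma_0$) comes from resolved $C^4$ norms without ordinary $\partial_t$ derivatives. These constants are therefore independent of $\gamma/t$, and a finite cover of the compact base finishes the argument.

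The main obstacle I expect is the QHH curvature sign uniformly in $\gamma/t$. One has to check that the moving-entry brackets $\chi_0$ and $\chi_4$, and the coefficient $L^1$ bounds in all five factors, keep their margins in the fixed-$\gamma$ frame, where $\gamma$ is not $O(t)$. For this I would first verify the long-chart estimate \eqref{eq:legacy-7-41} with the modified denominator $A_w^\gamma$. This is the only place where $\gamma$ enters at order $\rho$ rather than $t^2$.
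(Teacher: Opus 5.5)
Your proposal is correct and follows essentially the paper's own route. The QBF count comes from $q\mathcal T''/\mathcal T'\le-1/2$ in \eqref{eq:legacy-7-38} plus two Rolle steps, and the QHH count comes from the five-factor bounds \eqref{eq:legacy-7-42} and $\mathcal K''<0$ in \eqref{eq:legacy-7-48} plus two further Rolle steps; uniformity in $\gamma/t$ comes from the fixed-$\gamma$ recurrences \eqref{eq:legacy-7-51}--\eqref{eq:legacy-7-52}, base exhaustiveness from \eqref{eq:legacy-7-53}, and the identification with the physical closing equation from Lemma~\ref{lem:part-iii-physical-affine-closing}. One small correction to your closing remark: in the fixed-$\gamma$ frame, $\gamma$ also enters the endpoint and vertical fields at order $t$, not $t^2$ (e.g.\ $t\gamma_sR(N-R)$ in \eqref{eq:legacy-7-50a}), which is why the separate graph-transform recurrence in $t\partial_t|_\gamma,\partial_\gamma$ is needed; your QBF paragraph already invokes it, so the argument is unaffected.
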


\begin{remark}
The hypotheses comprise a complete physical lips configuration from
Proposition~\ref{prop:complete-physical-lips}, comparable horizontal and upper-gate
scales, the parameters \eqref{eq:legacy-7-50}, and doubled QBF/QHH physical
buffers.  The bounds are uniform for $0<t<t_0$ on a fixed compact
$(b,M,A,D)$-tube, for all allowed $\gamma$, and on the half-open phase split
\eqref{eq:legacy-7-49}.  On coefficient faces we use the cone \eqref{eq:legacy-7-29d}: for
$\varepsilon_c\ne0$, curvature applies when $\widehat A_0\ne0$, the case
$\widehat A_0=0$, $\widehat B_0\ne0$ is affine, and
$\widehat A_0=\widehat B_0=0$, $\widehat C_0\ne0$ has no zero.  The apex
$\varepsilon_c=0$ is an identity and contributes no isolated zero.
A collapsed through interval contributes no isolated point, while neighboring
nonempty intervals retain the same estimates.  Failure of a denominator,
root, clock, side, section normal, target-section landing normal, or first-hit
condition stops at its named boundary outcome; this is loss of the target first-hit tube,
not an hh landing split.  The outer endpoint-root merger is treated in the
next section.
\end{remark}

\begin{proof}
The QBF estimate is \eqref{eq:legacy-7-38}.  Equations \eqref{eq:legacy-7-39}--\eqref{eq:legacy-7-48} prove the QHH estimate
on every actual through component; doubled boxes ensure that the analysis is
performed before a competing stopping face is reached.  The fixed-\(\gamma\) graph and
flow recurrences \eqref{eq:legacy-7-51}--\eqref{eq:legacy-7-52} reproduce these estimates uniformly without
ordinary \(e\)-smoothness.  Equation \eqref{eq:legacy-7-53} supplies physical base
exhaustiveness, and Proposition~\ref{prop:complete-physical-lips} supplies the
complete pp strip and boundary.  The half-open assignment \eqref{eq:legacy-7-49} covers all
phases once.  Finite prepared root alternatives give finitely many components,
so the component bounds sum to a uniform itinerary bound.
\end{proof}

\section{The positive root-merger scale}
\label{sec:part-iii-root}

The final positive-scale regime is the merger of the nonpersistent horizontal
double root with the persistent \(B=a=0\) triple endpoint, while the upper
\(D\)-gate remains on its normalized scale.  The object is again a complete
physical lips itinerary, not merely a weighted polynomial chart.  We obtain an
explicit bound of \(24\) per signed angular itinerary.  The exact source and
mixed faces remain the independent estimates of
Theorems~\ref{thm:part-ii-source-zero} and~\ref{thm:part-ii-mixed-zero}.

Fix \(\sigma\in\{\pm1\}\) and write
\begin{equation}
 B=t^2\kappa^2b,\qquad m=t^2M,\qquad
 a=\sigma t\kappa A,\qquad d=\sigma tD,\qquad c=\sigma\gamma,
\label{eq:legacy-7-54}
\end{equation}
where \((b,A,M,D)\) lies in a compact neighborhood of
\((1,-2,-1,2)\),
\begin{equation}
 0<t\le t_0,\qquad 0<\kappa\le\kappa_0,\qquad
 |\gamma|\le\gamma_0.
\label{eq:legacy-7-55}
\end{equation}
No condition is imposed on \(\gamma/t\), \(t/\kappa\), or
\(\kappa/t\).  Off the double-root sheet, \(\kappa\) is the radial
variable of the weight-\((2,1)\) blow-up of \((B/t^2,a/t)\); its angular
coordinates \((b,A)\) are not simultaneously zero.  Thus all five original
parameters remain fixed along each physical orbit.

At the horizontal endpoint put
\begin{equation}
 u=\sigma t\kappa\bar R,\qquad \bar N=\bar Rz.
\label{eq:legacy-7-56}
\end{equation}
The prepared weak bracket is
\begin{equation}
 H=b+A\bar R+\bar R^2-\bar R\bar N
       +t^2M(\bar N-\bar R)^2,
\label{eq:legacy-7-57}
\end{equation}
so the exact weak factor is
\begin{equation}
 e=(t\kappa)^2>0.
\label{eq:legacy-7-58}
\end{equation}
The exact oriented endpoint field is
\begin{equation}
\begin{aligned}
 \bar R'&=-\sigma t^2\kappa^2\bar R H,\\
 \bar N'&=\sigma\{\bar N+\kappa t\gamma\bar R(\bar N-\bar R)
 +\kappa t^2D(\bar N-\bar R)^2-t^2\kappa^2\bar NH\}.
\end{aligned}
\label{eq:legacy-7-58a}
\end{equation}
Its time-one graph transform on a fixed doubled box has contraction at most
\(3/4\).  The invariant graph is
\begin{equation}
 \bar N=\kappa\eta(\bar R,t,\kappa,b,M,A,D,\gamma),
\label{eq:legacy-7-58b}
\end{equation}
and differentiating its fixed-point equation in
\begin{equation}
 \partial_{\bar R},\quad t\partial_t,\quad\kappa\partial_\kappa,\quad
 \partial_b,\partial_M,\partial_A,\partial_D,\partial_\gamma
\label{eq:legacy-7-58c}
\end{equation}
again isolates the highest graph jet under an operator with inverse norm at
most \(4\).  Thus its mixed jets through order six are bounded.  On this
graph,
\begin{equation}
 H_{\rm gr}=b+A\bar R+(1+t^2M)\bar R^2
 -\kappa(1+2t^2M)\bar R\eta+\kappa^2t^2M\eta^2,
\label{eq:legacy-7-58d}
\end{equation}
so \(\partial_{\bar R}^2(H_{\rm gr}-b-A\bar R)>c_h>0\).  The graph,
foliation, and finite fiber-linearization recurrences have denominators
\begin{equation}
 |k\varsigma_h-jeF_h'(p)|\ge\tfrac12,
 \qquad k\ge1,\quad0\le j\le6,
\label{eq:legacy-7-58e}
\end{equation}
and supply the direct weak coordinates used below, with bounded inverses.
The physical connector has five factors
\begin{equation}
 G_{\rm rt}=P_4^{\rm rt}\circ P_3^{\rm out}\circ
 P_2^{\rm rt}\circ P_1^{\rm rt}\circ P_0^{\rm rt}.
\label{eq:legacy-7-59}
\end{equation}
Here \(u=1/x=\sigma t\kappa\bar R\) and
\(\xi=\sigma tx=\sigma t/u=X/\kappa\) is the signed scaled horizontal
coordinate; \(s=t^2z\) is the physical overlap coordinate.  The first bridge uses
\(X=\kappa\xi=1/\bar R\), the second fixed reciprocal levels, and the long
overlap uses \(h=s-t^2=\rho^2\), \(t=\rho T\).
The first three factor clocks are exact.  From the moving horizontal strong
section to fixed \(z=z_a>1\),
\begin{equation}
 \frac{d\bar R}{ dz}
 =-\frac{e\bar RH}{
 z+\kappa t\gamma\bar R(z-1)+\kappa t^2D\bar R(z-1)^2},
 \qquad Q_0\ge d_0>0.
\label{eq:legacy-7-59a}
\end{equation}
The moving-entry phase bracket is again \eqref{eq:legacy-7-29b}, and after multiplication
by \(Q_0\) it is the physical section-normal determinant.  The direct
normalizer and fixed strong-section normal therefore give
\(0<m_0\le|(P_0^{\rm rt})'|\le M_0\).  On fixed reciprocal levels,
\begin{equation}
 \frac{dX}{ dv}=e\,\frac{-v+X(bX+A)+t^2Mv^2}{
 X(1+v)+\kappa t\gamma v+\kappa t^2Dv^2},
 \qquad Q_1\ge d_1>0.
\label{eq:legacy-7-59b}
\end{equation}
Both endpoints are fixed.  On the long overlap,
\begin{equation}
 \frac{dX}{ d\rho}=2\kappa^2\rho\,
 \frac{-1+T^2X(bX+A)+M\rho^2}{
 X(1+T^2)+\kappa\rho T\gamma+\kappa\rho^2D},
 \qquad A_\kappa\ge a_\kappa>0.
\label{eq:legacy-7-59c}
\end{equation}
Its endpoints and Euler lifts are
\begin{equation}
\begin{gathered}
 \rho_-=t\sqrt{v_b},\qquad \rho_+=\sqrt{s_--t^2},\\
 {\cal E}^{t}_-={\cal E}_{t,2}
   +\rho\partial_\rho-T\partial_T,\qquad
 {\cal E}^{t}_+={\cal E}_{t,2}
 -\frac{t^2}{ s_--t^2}(\rho\partial_\rho-T\partial_T).
\end{gathered}
\label{eq:legacy-7-59d}
\end{equation}
All \(\kappa\)- and \(\gamma\)-derivatives of these geometric endpoints
vanish.  Resolved coefficient words in \eqref{eq:legacy-7-59a}, \eqref{eq:legacy-7-59b}, and \eqref{eq:legacy-7-59c} have
the respective \(L^1\) profiles
\begin{equation}
 C_je\,dz,\qquad C_je\,dv,\qquad C_j\kappa^2\rho\,d\rho.
\label{eq:legacy-7-59e}
\end{equation}
Thus Lemma~\ref{lem:scalar-first-hit-jets} gives direct and inverse
\(C^4_{\rm res}\) bounds and two-sided phase margins, uniformly at every
imbalance allowed by \eqref{eq:legacy-7-55}, for the three maps
\[
 P_0^{\rm rt},\qquad P_1^{\rm rt},\qquad P_2^{\rm rt}.
\]
The genuinely phase-dependent outer transfer is written
\begin{equation}
 S=\kappa s,\qquad Y=\kappa(\xi+s),\qquad
 X=Y-S,\qquad \kappa=SK,
\label{eq:legacy-7-60}
\end{equation}
and the final factor lands in the upper variables
\begin{equation}
 R_v=(s-t^2)^{-1}=\kappa\bar R_v,\qquad
 W=\frac{\sigma\xi}{ s-t^2}.
\label{eq:legacy-7-61}
\end{equation}
These coordinates preserve the essential product \(SK\); the proof never
estimates the outer factor on an inadmissible independent \((S,K)\)-box.
More precisely, with \(X=Y-S\),
\begin{equation}
\begin{aligned}
 \widetilde P={}&X+(M+D)S
 +SK\{-1+t\gamma-2t^2(M+D)\}\\
 &+SK^2\{t^2(1+bX^2+AX)+t^4(M+D)-t^3\gamma\},\\
 \widetilde Q={}&X+DS+SK(t\gamma-2t^2D)
 +SK^2(t^4D-t^3\gamma),
\end{aligned}
\label{eq:legacy-7-60a}
\end{equation}
and the exact desingularized outer field is
\begin{equation}
 Y'=S\widetilde P,\qquad S'=S\widetilde Q,\qquad
 K'=-K\widetilde Q,\qquad (SK)'=0.
\label{eq:legacy-7-60b}
\end{equation}
The doubled physical wedge is chosen so that
\(\widetilde Q\ge q_0>0\); hence \(S\) is a valid clock and the physical
quotient is
\begin{equation}
 \frac{dY}{ dS}=\frac{\widetilde P}{\widetilde Q}.
\label{eq:legacy-7-60c}
\end{equation}
Thus the outer map is a first hit on the fixed original-\(\kappa\) leaf, not
a map on an independent \((S,K)\)-rectangle.

Here is the promised first-exit and landing estimate.  The physical wedge is
\begin{equation}
 {\cal W}_3=\{0\le S\le S_*,\ 0\le K\le K_*:=s_-^{-1},\
               0\le SK=\kappa\le\kappa_0\}.
\label{eq:legacy-7-60c1}
\end{equation}
Thus every derivative descendant containing \(K\) retains either \(SK\) or
\(SK^2=(SK)K\); these products are bounded on \({\cal W}_3\).  At the
reference face \(\kappa=0\), \(M=-1\), \(D=2\), the quotient is
\begin{equation}
 \frac{dY}{ dS}=\frac{Y}{ Y+S},\qquad
 S=Y\log\frac{Y}{ X_0},\qquad 0<X_-\le X_0\le X_+,
\label{eq:legacy-7-60c2}
\end{equation}
and \(\widetilde Q=Y+S\ge X_->0\).  Choose the finite target \(S_*\) so
this compact reference family enters the interior of the fixed vertical
weak buffer, and choose a doubled tube around it with distance
\(\delta_*>0\) from every nontarget side.

On that tube the explicit polynomials \eqref{eq:legacy-7-60a} give constants \(C_*,L_*\)
such that
\begin{equation}
 \left|\frac{\widetilde P}{\widetilde Q}-\frac{Y}{ Y+S}\right|
 \le C_*\eta_*,\qquad
 \left|\partial_Y\frac{\widetilde P}{\widetilde Q}\right|\le L_*,
\label{eq:legacy-7-60c3}
\end{equation}
where \(\eta_*\) is the radius of the angular, \(t\), \(\kappa\), and
\(\gamma\) neighborhood.  Fix those radii so that
\begin{equation}
 C_*\eta_*S_*e^{L_*S_*}<\delta_*/2,\qquad
 \widetilde Q\ge X_-/2.
\label{eq:legacy-7-60c4}
\end{equation}
Gronwall then keeps every flight in the doubled wedge until its first hit of
\(S=S_*\), and its endpoint remains inside the vertical buffer.  Therefore
no nontarget side is crossed and the landing is a genuine first hit.  The
phase variation is
\(\exp\int_0^{S_*}\partial_Y(\widetilde P/\widetilde Q)\,dS\), so it has
fixed positive upper and lower bounds.  At \(S=0\) the exceptional
\(K\)-segment is the identity in \(Y\); at \(K=0\) \eqref{eq:legacy-7-60c2} is the limiting
scalar flight.  No derivative across these two faces is introduced.

At the target, the exact upper entry is
\begin{equation}
 \bar R_{v,0}=\frac{1}{ S_*-\kappa t^2},\qquad
 W_0=\frac{\sigma X_*}{ S_*-\kappa t^2}.
\label{eq:legacy-7-60c5}
\end{equation}
The outer landing derivative is positive by \eqref{eq:legacy-7-60c3}--\eqref{eq:legacy-7-60c4}, so we may use
the landing value \(X_*\) itself as the local phase coordinate.  Its target
phase tangent is therefore
\begin{equation}
 \eta_Y:=\partial_{X_*}(\bar R_{v,0},W_0)
 =\left(0,\frac{\sigma}{ S_*-\kappa t^2}\right).
\label{eq:legacy-7-60c5a}
\end{equation}
With \(D_s=\sigma D\), \(A_s=\sigma A\), and
\(\gamma_s=\sigma\gamma\), the vertical field is
\begin{equation}
\begin{aligned}
 \bar R_v'&=-\bar R_v
 \{W+D_s+\kappa t^2\bar R_vW+\kappa t\gamma_s\bar R_v\},\\
 W'&=M-D_sW-(1-t^2\kappa^2b)W^2-\kappa\bar R_v\\
 &\quad+\kappa t^2\bar R_v(\kappa A_sW-W^2)
       -\kappa t\gamma_s\bar R_vW .
\end{aligned}
\label{eq:legacy-7-60c6}
\end{equation}
At \(\kappa=0\) its weak polynomial is \(M-D_sW-W^2\), while the transverse
normal \(W_c+D_s\) is separated from zero on the \(D\)-collision cell.
The same \(3/4\) graph-transform and finite fiber recurrences therefore
give direct coordinates
\begin{equation}
 u'=\varsigma_vF_v(u),\qquad \bar v'=\varsigma_v\bar v,\qquad
 F_v''\ge c_v>0,\qquad |F_v'(u)|\le L_v(|u|+\eta_v).
\label{eq:legacy-7-60c7}
\end{equation}
If \(Q_v\) denotes the full brace in the first row of \eqref{eq:legacy-7-60c6}, the phase
tangent of \eqref{eq:legacy-7-60c5} has exact physical determinant
\begin{equation}
 \det(\eta_Y,X_v)=\frac{\sigma Q_v}{(S_*-\kappa t^2)^2}.
\label{eq:legacy-7-60c8}
\end{equation}
The direct-coordinate Jacobian, \(Q_v\), the target normal
\(\bar v_v^*\), and \(S_*-\kappa t^2\) have fixed two-sided margins.
Formula \eqref{eq:legacy-7-29b} therefore gives
\begin{equation}
 0<m_4\le |(P_4^{\rm rt})'|\le M_4,
 \qquad P_4^{\rm rt},(P_4^{\rm rt})^{-1}\in C^4_{\rm res}.
\label{eq:legacy-7-60c9}
\end{equation}

For root factors, \(C^4_{\rm res}\) means all phase derivatives and all words
of total length at most four in
\begin{equation}
 t\partial_t,\quad\kappa\partial_\kappa,\quad
 \partial_b,\partial_M,\partial_A,\partial_D,\partial_\gamma.
\label{eq:legacy-7-60d}
\end{equation}
On the long chart this is represented by
\begin{equation}
 \partial_X,\quad \rho\partial_\rho-T\partial_T,\quad
 T\partial_T|_\rho,\quad\kappa\partial_\kappa,
 \quad\partial_{b,M,A,D,\gamma},
\label{eq:legacy-7-60e}
\end{equation}
and on the outer wedge by
\begin{equation}
 \partial_Y,\quad S\partial_S-K\partial_K,\quad
 K\partial_K,\quad t\partial_t,\quad
 \partial_{b,M,A,D,\gamma}.
\label{eq:legacy-7-60f}
\end{equation}
The overlap matrices and moving-endpoint lifts are bounded on the doubled
wedges.  Every nonzero derivative descendant containing \(K\) retains the
factor \(SK=\kappa\), which is the closure needed for \eqref{eq:legacy-7-62}.

For the outer factor the exact entry and target data are
\begin{equation}
 S_{\rm in}=\kappa s_-,\quad K_{\rm in}=s_-^{-1},\quad
 Y_{\rm in}=X+\kappa s_-,\qquad
 S_{\rm out}=S_*,\quad K_{\rm out}=\frac{\kappa}{ S_*}.
\label{eq:legacy-7-61a}
\end{equation}
All resolved endpoint words are bounded.  On \eqref{eq:legacy-7-60c1}, every fourth-order
word in \(\widetilde P/\widetilde Q\) is bounded in \(L^1(dS)\) because
\(\widetilde Q\ge X_-/2\) and the only \(K\)-terms occur through the bounded
products \(SK,SK^2\).  Lemma~\ref{lem:scalar-first-hit-jets}, \eqref{eq:legacy-7-60c4},
and \eqref{eq:legacy-7-61a} therefore give direct and inverse \(C^4_{\rm res}\) bounds and
a two-sided phase margin for \(P_3^{\rm out}\), including its two limiting
faces.  Equation \eqref{eq:legacy-7-60c9} gives the same conclusion for \(P_4^{\rm rt}\).

Reorient the five local phase coordinates coherently along the retained
physical itinerary.  Flipping a shared source/target pair preserves every
absolute determinant estimate and makes each factor orientation preserving.
Thus the two-sided absolute margins above imply positive derivatives for all
five factors simultaneously, rather than five unrelated signs.

Thus all five factors satisfy the scalar first-hit lemma.  Their coefficient
profiles are \(O(e)\), \(O(e)\), \(O(\kappa^2\rho)\), \(O(1)\), and
\(O(1)\) on the five displayed finite or integrable clocks.  Fa\`a di
Bruno composition through order four and the inverse recurrence \eqref{eq:legacy-7-29c}
give
\begin{equation}
 \|G_{\rm rt}\|_{C^4_{\rm res}}
 +\|G_{\rm rt}^{-1}\|_{C^4_{\rm res}}\le C_G,
 \qquad 0<g_0\le G_{\rm rt}'\le g_1.
\label{eq:legacy-7-62}
\end{equation}
Here \(g_0=\prod_{j=0}^4m_j\) and \(g_1=\prod_{j=0}^4M_j\).
The bound is uniform on the compact angular base and under all imbalances
allowed in \eqref{eq:legacy-7-55}.  The finite symbolic calculation expands derivative words only
for the displayed polynomial numerator/denominator factors; the physical
order \eqref{eq:legacy-7-59}, first-contact margins, and through-component construction are
proved analytically here.

Let \(F_h,F_v\) be the prepared horizontal and vertical weak polynomials in
the direct coordinates, with \(F_v''\ge c_v>0\), and set \(G=G_{\rm rt}\).
The transition obeys
\begin{equation}
 \frac{Q_h'}{ Q_h}=\frac{1}{ eF_h},\qquad
 \frac{Q_v'}{ Q_v}=\frac{1}{ F_v},\qquad
 {\cal T}(Q_h(p))=Q_v(G(p)).
\label{eq:legacy-7-63}
\end{equation}
Define
\begin{equation}
\begin{split}
 {\cal K}_{\rm rt}={}&eF_hG'-F_v(G)
 +eF_hF_v(G)\frac{G''}{ G'}\\
 &-eF_hF_v'(G)G'+eF_v(G)F_h'.
\end{split}
\label{eq:legacy-7-64}
\end{equation}
Then
\begin{equation}
 \frac{d}{ dp}\log\frac{d{\cal T}}{ dQ_h}
 =\frac{{\cal K}_{\rm rt}}{eF_hF_v(G)},
\label{eq:legacy-7-65}
\end{equation}
and exact differentiation gives
\begin{equation}
 {\cal K}_{\rm rt}''
 =-F_v''(G)(G')^2-F_v'(G)G''+e{\cal R}_{\rm rt},
 \qquad |{\cal R}_{\rm rt}|\le C_K.
\label{eq:legacy-7-66}
\end{equation}
Choose the fixed vertical inner interval first so that
\(|F_v'(G)G''|\le c_vg_0^2/4\), and then shrink \(t_0,\kappa_0\) until
\((t_0\kappa_0)^2C_K\le c_vg_0^2/4\).  It follows that
\begin{equation}
 {\cal K}_{\rm rt}''\le-\frac{1}{2}c_vg_0^2<0.
\label{eq:legacy-7-67}
\end{equation}
Thus \({\cal T}''\) has at most two zeros with multiplicity.  By
Lemma~\ref{lem:part-iii-physical-affine-closing}, the affine comparison is
the physical full-return closing equation; two further Rolle steps give at most
four zeros on each through component.  Strict convexity of each prepared
horizontal and vertical weak bracket gives at most two roots.  Their inverse
images contribute at most four internal divider points.  The persistent side
contributes at most one
additional divider; scalar uniqueness preserves their order and makes each
surviving preimage an interval.  Hence at most five divider points cut a
signed angular itinerary into at most six through components.

\begin{theorem}[Uniform estimate at the positive root-merger scale]
\label{thm:part-iii-root-zero}
For every retained complete-lips physical root-scale itinerary
\eqref{eq:legacy-7-54}--\eqref{eq:legacy-7-55}, with the
doubled five-factor first-hit buffers used above, every comparison
\begin{equation}
 A_0{\cal T}(q)-B_0q-C_0=0
\label{eq:legacy-7-68}
\end{equation}
has at most four isolated zeros, counted with multiplicity, on each connected
through component.  Consequently there are at most
\begin{equation}
 6\cdot4=24
\label{eq:legacy-7-69}
\end{equation}
isolated zeros per signed angular itinerary.  The bound is ambient under positive
\(t\), positive \(\kappa\) split-root, landing-split, no-root, coefficient,
and identity specializations in the same stopped itinerary.
\end{theorem}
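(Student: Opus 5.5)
The plan is to assemble the estimates already established in this section, following the pattern of the middle-scale proof of Theorem~\ref{thm:part-iii-middle-zero}, and to keep the coefficient cone separate from the phase count. Fix $\sigma$ and a signed angular chart, and take the complete-lips itinerary supplied by Proposition~\ref{prop:complete-physical-lips} and Lemma~\ref{lem:two-central-landing-product-tube}.

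\textbf{Step 1: the closing equation.} By Lemma~\ref{lem:part-iii-physical-affine-closing}, applied fiberwise for $t>0$, $\kappa>0$, the physical full-return equation is exactly \eqref{eq:legacy-7-68}. This holds with multiplicities preserved, and the landing split appears only in the undivided constant $C_0$.

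\textbf{Step 2: curvature of the transition.} I would invoke the five-factor closure \eqref{eq:legacy-7-62} on the doubled buffers. That estimate rests on the exact clocks \eqref{eq:legacy-7-59a}--\eqref{eq:legacy-7-59c}, the outer first-exit estimate \eqref{eq:legacy-7-60c3}--\eqref{eq:legacy-7-60c4}, and the landing bounds \eqref{eq:legacy-7-60c9}. Together with the weak normal forms \eqref{eq:legacy-7-63}, it gives the strict sign \eqref{eq:legacy-7-67} of $\mathcal K_{\rm rt}''$, uniformly on the compact angular base for all imbalances permitted by \eqref{eq:legacy-7-55}.

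By \eqref{eq:legacy-7-65}, the zeros of $\mathcal T''$ in the direct phase $p$ are the zeros of $\mathcal K_{\rm rt}$. Here the composition with $Q_h$ is a monotone change of variables with positive Jacobian and nonvanishing factors. Two multiplicity-Rolle steps applied to \eqref{eq:legacy-7-67} then show that $\mathcal T''$ has at most two zeros.

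\textbf{Step 3: cases of the coefficient cone.} Use the cone \eqref{eq:legacy-7-29d} and treat each projective direction separately.
\begin{itemize}
\item If $\widehat A_0\neq0$, two further Rolle steps applied to \eqref{eq:legacy-7-68} give at most four zeros per through component. One Rolle step passes from zeros of $A_0\mathcal T-B_0q-C_0$ to $A_0\mathcal T'-B_0$, and a second passes to zeros of $\mathcal T''$.
\item If $\widehat A_0=0\neq\widehat B_0$, the equation is affine and has at most one zero.
\item If only $\widehat C_0\neq0$, there are no zeros.
\item The apex $\varepsilon_c=0$ is the identity and contributes no isolated zero.
\end{itemize}
A nonzero direction with $\widehat A_0\neq0$ cannot give an identity, because that would make $\mathcal T$ affine, contradicting \eqref{eq:legacy-7-67}.

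\textbf{Step 4: counting components.} The prepared horizontal and vertical weak brackets are strictly convex, so each has at most two roots. Their preimages give at most four internal dividers, and the persistent side adds at most one more. Scalar uniqueness preserves the order of these points and makes each surviving preimage an interval. Hence there are at most six through components, and $6\cdot4=24$.

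\textbf{Step 5: ambient uniformity.} The constants $c_v$, $g_0$ and $C_K$, and the radii $t_0,\kappa_0,\eta_*$, depend only on the fixed compact chart. The bound therefore persists under every specialization listed in the theorem statement:
\begin{itemize}
\item split roots lose a gate and enter a named adjacent regime;
\item landing splits stay in the same product tube;
\item collapsed intervals carry no point;
\item coefficient faces are covered by the cone cases of Step~3.
\end{itemize}
The faces $t=0$ and $\kappa=0$ are excluded by hypothesis and are handled by Theorems~\ref{thm:part-ii-source-zero} and~\ref{thm:part-ii-mixed-zero}.

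\textbf{Main obstacle.} I expect the hardest part to be uniformity of \eqref{eq:legacy-7-62} for the outer factor $P_3^{\rm out}$ as $\kappa\to0$ with $t/\kappa$ unbounded. The key check is that every resolved derivative word containing $K$ retains a factor $SK=\kappa$ or $SK^2$. This is what lets the $L^1(dS)$ bound and Lemma~\ref{lem:scalar-first-hit-jets} apply on the leaf wedge \eqref{eq:legacy-7-60c1}, rather than on an inadmissible independent $(S,K)$-box. The order of choices also has to be respected: first the inner vertical interval, then $t_0$ and $\kappa_0$, so that the error term $e\,\mathcal R_{\rm rt}$ is absorbed into the leading term of \eqref{eq:legacy-7-66}.
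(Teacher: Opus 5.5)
Your proposal is correct and follows essentially the same route as the paper's proof: the physical closing lemma, the five-factor closure \eqref{eq:legacy-7-62}, and strict curvature of $\mathcal K_{\rm rt}$ giving at most two zeros of $\mathcal T''$, followed by two further Rolle steps per through component, the coefficient-cone cases, and the five-divider/six-component count. One small wording point: in the root chart a positive-$\kappa$ split root is not sent to an adjacent regime but is absorbed by your own Step~4 (the split roots are among the at most five dividers), which is why the statement can call the bound ambient under split-root specializations in the same stopped itinerary.
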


\begin{remark}
The bound is uniform on the compact angular base for all $0<t\le t_0$,
$0<\kappa\le\kappa_0$, and $|\gamma|\le\gamma_0$, without a hidden bound on
any ratio of these variables.  On $\varepsilon_c\ne0$ we use the projective
direction in \eqref{eq:legacy-7-29d}.  If $\widehat A_0\ne0$, \eqref{eq:legacy-7-67} gives strict curvature;
if $\widehat A_0=0$, $\widehat B_0\ne0$, \eqref{eq:legacy-7-68} has at most one isolated
zero; and if $\widehat A_0=\widehat B_0=0$,
$\widehat C_0\ne0$, it has none.  The apex $\varepsilon_c=0$ is an identity
with no isolated zeros, while \eqref{eq:legacy-7-67} excludes an identity in a nonzero
projective direction.  At the boundary, loss of a denominator, section normal, weak box,
target-section landing normal, or first hit stops at the named adjacent
regime.  A collapsed interval or changed itinerary is not continued through
\eqref{eq:legacy-7-59}.
\end{remark}

\begin{proof}
Equations \eqref{eq:legacy-7-56}--\eqref{eq:legacy-7-62} construct the five factors on the same actual
physical itinerary and provide the uniform direct and inverse phase bounds.
Equations \eqref{eq:legacy-7-63}--\eqref{eq:legacy-7-67} give strict curvature.  Multiplicity Rolle and the
six-component prepared-root count yield \eqref{eq:legacy-7-69}.  Coefficient and identity
faces are handled in the ambient comparison as stated.  Every physical loss
of the five-factor itinerary is stopped before the analytic estimate is invoked,
so no non-through component is counted as a root-scale cycle.
\end{proof}

The zero-scale theorem-validity handoff is the first two rows of
\eqref{eq:legacy-5-2}.  On the remaining positive root chart the root theorem
is valid for \(0<\kappa\le\kappa_0\), although its assigned interval is only
\(0<\kappa<\kappa_\#(\mathrm{angle})\); the outer equality and overlap are
owned by the middle theorem.
At \(\kappa=0\), \eqref{eq:legacy-7-54} gives exactly \(B=a=0\), where the persistent
triple endpoint changes the physical transition.  At \(t=0\), the return is
the matched noncompact source problem.  Neither face follows by continuity
from a positive-\(e\) curvature estimate; they are the independent mixed and
source estimates proved in Sections~\ref{sec:part-ii-matched-proof}
and~\ref{sec:part-ii-mixed}.

For clarity, the positive-scale regime partition is fixed before any zero
theorem is applied.  Use the nested coalescing tubes and the fixed value
$t_{\rm str}$ chosen in Section~\ref{sec:regime-table}.  After named first-hit
losses are removed, a
complete-lips point outside the inner tube or with $t\ge t_{\rm str}$ is in
the strict regime,
including equality.  On the pointwise region $0<t<t_{\rm str}$ use the middle variables
\(b_m=B/t^2\), \(A_m=a/t\) and the weighted radius
\begin{equation}
 \varrho_w=(b_m^2+A_m^4)^{1/4}.
\label{eq:legacy-7-70a}
\end{equation}
On a root chart \(b_m=\kappa^2b\), \(A_m=\sigma\kappa A\), hence
\begin{equation}
 \varrho_w=\kappa(b^2+A^4)^{1/4}.
\label{eq:legacy-7-70b}
\end{equation}
The angular factor is bounded above and away from zero.  Choose
\(\varrho_\#\) inside the common doubled domains.  Assign
\(0<t<t_{\rm str}\), \(\varrho_w\ge\varrho_\#\), including equality, to
the middle theorem, and assign \(0<t<t_{\rm str}\),
\(0<\varrho_w<\varrho_\#\) to the root theorem.  Equivalently, on each member of the
finite root-chart cover the root regime occupies a half-open interval
\(0<\kappa<\kappa_\#(\text{angle})\).  The root theorem itself is valid
through the outer value \(0<\kappa\le\kappa_0\), and the fixed regime cutoff
satisfies \(\kappa_\#(\text{angle})\le\kappa_0\); its equality face is used
by the earlier middle regime.  Assign \(\kappa=0\) to the mixed theorem and \(t=0\) to
the source theorem.  Opposite
projective charts terminate, by their first physical contact, in the strict,
hyperbolic, one-central, passive, or exit row.  On a common physical overlap
the earlier regime in this order contains the boundary.  Every cutoff is fixed
before a theorem neighborhood is selected.  Thus the partition is by finite
resolved faces and first-hit outcomes, not by zeros of a landing function or by the
size of a subsequently chosen closure.

\begin{theorem}[Two-central physical completion and zero-theorem interface]
\label{thm:two-central-physical-completion}
Let $K_2$ be a compact family of retained exact-once itinerary lifts carrying
both possible internal central gates.  After a finite refinement by the
signed gate, sector, and first-contact data, every lift belongs to exactly one
of the following theorem domains or to a terminal zero-contribution outcome:
the two-central no-pp domain, positive-margin strict lips, middle QBF/QHH,
positive root merger, matched source, or exact mixed persistent endpoint.
For every nonterminal owner face the following implications hold in this order:
\begin{equation}
 \begin{gathered}
 \text{complete physical first-hit geometry}\\[-2pt]
 \Downarrow\\[-2pt]
 \text{complete nonaffine PP/BP boundary}\\[-2pt]
 \Downarrow\\[-2pt]
 \text{half-open scale assignment}\\[-2pt]
 \Downarrow\\[-2pt]
 \text{the corresponding local zero estimate}.
 \end{gathered}
\label{eq:two-central-interface-chain}
\end{equation}
More precisely, at a complete-lips owner with
$\delta_{\rm hh}=0$, the first implication certifies two distinct simple
saddle-nodes of opposite attractivity, the actual hh incidence, a nonempty
complete pp component, its complete second boundary, and the absence of an
intervening physical stopping face.  A neighboring fiber with a small
nonzero landing split need not itself carry the hh connection: it stays on
the same physical product tube and enters the closing equation only through
the undivided constant coefficient.  A landing gap separated from zero has
no fixed point approaching the two-central face.  The second implication removes
the affine/collar residual, and the third uses exactly the equality ownership
in \eqref{eq:legacy-5-2}.  Consequently no retained two-central lift is left
outside the finite family of proved zero estimates.

At the exact mixed and source faces, the conclusion means the face theorem
together with the finitely many incident punctured estimates; it does not
assert that one positive-scale formula extends continuously across a
zero-scale face.  Their assembly into an open original-parameter
neighborhood is carried out later in
Proposition~\ref{prop:ambient-edge-neighborhoods}.
\end{theorem}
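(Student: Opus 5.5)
The proof assembles results already established; no new estimate is needed. I would go in the order of the implication chain \eqref{eq:two-central-interface-chain}.

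\emph{Step 1: refinement and physical geometry.} Cover the compact family $K_2$ by the finite signed gate, sector, and first-contact cover from Proposition~\ref{prop:stopped-first-hit} and Theorem~\ref{thm:finite-words}. Equation \eqref{eq:legacy-7-13} and the uniqueness of the upper $D$-collision show that exactly the two selected central gates occur. On each member I apply Theorem~\ref{thm:part-iii-two-central-exhaustion}(i). This sends a lift to one of three places:
\begin{itemize}
\item the same-attractivity no-pp configuration;
\item a named terminal outcome (passive side, exit, node, wrong orientation, or collapse), which has zero contribution by Proposition~\ref{prop:exact-once};
\item the refinement of Proposition~\ref{prop:complete-physical-lips}.
\end{itemize}
That proposition splits the third case further. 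At $\delta_{\rm hh}=0$ it gives a complete lips configuration satisfying (i)--(viii). On a closed cell $|\delta_{\rm hh}|\ge\eta$ it gives a landing-gap subcell with no approaching fixed point, by Step~2 of its proof. Otherwise it gives a punctured landing-split fiber. For the last case I use Lemma~\ref{lem:two-central-landing-product-tube} to keep the same stopped word on a common product tube. Lemma~\ref{lem:part-iii-physical-affine-closing} then places the split only in the undivided constant $C_0$. Termination of these handoffs is Proposition~\ref{prop:two-central-successors}: the cut rank increases strictly.

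\emph{Step 2: nonaffine boundary.} For every complete component, Corollary~\ref{cor:two-central-nonaffine-boundary} gives a certified PP/BP boundary. Its PP ratio lies outside $[1/2,2]$ by \eqref{eq:legacy-7-21}, the pp transition is nonaffine, and $\mathfrak A_{\rm aff}=\varnothing$. This step uses no scale assignment, so the order of the chain is respected.

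\emph{Step 3: scale assignment and zero estimates.} I apply the half-open rule \eqref{eq:legacy-5-2} as in Step~3 of the proof of Theorem~\ref{thm:part-iii-two-central-exhaustion}. The cases are:
\begin{itemize}
\item $t=0$, including the corner $t=\kappa=0$, goes to source;
\item $t\ge t_{\rm str}$ goes to strict;
\item $0<t<t_{\rm str}$ with $\varrho_w\ge\varrho_\#$ goes to middle;
\item $0<t<t_{\rm str}$ with $0<\varrho_w<\varrho_\#$ goes to root;
\item $t>0$ with $\kappa=0$ goes to mixed.
\end{itemize}
Opposite angular charts end at a first physical contact that is already named. Uniqueness of the owner follows from minimal-face priority.

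For each owner I then cite the estimate:
\begin{itemize}
\item no-pp: Theorem~\ref{thm:part-iii-two-central-no-pp};
\item strict: Theorem~\ref{thm:part-iii-strict-zero};
\item middle: Theorem~\ref{thm:part-iii-middle-zero};
\item root: Theorem~\ref{thm:part-iii-root-zero};
\item source: Theorem~\ref{thm:part-ii-source-zero};
\item mixed: Theorem~\ref{thm:part-ii-mixed-zero}.
\end{itemize}
In the strict, middle, and root cases the hypotheses are verified by Steps~1--2. The coefficient cone \eqref{eq:legacy-7-29d} handles the coefficient faces and the identity apex. The zero-scale faces are quoted as independent face theorems, with the open-neighborhood assembly deferred as the statement says.

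I expect the main obstacle to be exhaustiveness and uniqueness at the equality faces. The risk is a lift on a common overlap being either omitted or claimed by two theorems, especially near the landing split and the corner $t=\kappa=0$. My first attempt would be an explicit pass through the fixed order source, strict, middle, root, mixed. I would check that each cutoff was chosen before any theorem neighborhood and that theorem validity (for example the root theorem up to $\kappa_0$) covers the assigned half-open pieces. Together these give a partition rather than a cover.
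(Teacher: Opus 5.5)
Your proposal is correct and follows essentially the same route as the paper. You do physical completion via Proposition~\ref{prop:two-central-successors}, Proposition~\ref{prop:complete-physical-lips}, Lemma~\ref{lem:two-central-landing-product-tube} and Lemma~\ref{lem:part-iii-physical-affine-closing}, then nonaffinity via Corollary~\ref{cor:two-central-nonaffine-boundary}, then the half-open routing with priority source, strict, middle, root, mixed, and finally cite the six local zero theorems; the only cosmetic difference is that you invoke Theorem~\ref{thm:part-iii-two-central-exhaustion}\textup{(i)} already in the geometric step, whereas the paper cites the underlying propositions there and uses that theorem only for the scale assignment.
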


\begin{proof}
\medskip\noindent\emph{1. Physical completion.}\par\noindent
The finite gate count \eqref{eq:legacy-7-13},
Proposition~\ref{prop:two-central-successors}, and
Proposition~\ref{prop:complete-physical-lips} give the no-pp, complete pp,
landing-gap, and named first-contact alternatives without invoking a zero
theorem.  On a complete component, the order-band Gronwall buffers and upper
cooperative squeeze prove the actual hh/pp/PP-or-BP incidence and exclude a
hidden divider.  Lemma~\ref{lem:two-central-landing-product-tube} then gives
one common physical domain across a small landing split, while
Lemma~\ref{lem:part-iii-physical-affine-closing} shows that its only occurrence
in the affine comparison is $C_0$.  The compactness argument in Step~2 of
Proposition~\ref{prop:complete-physical-lips} excludes a landing gap bounded
away from zero.

\medskip\noindent\emph{2. Boundary nonaffinity.}\par\noindent
Only after the complete second boundary has been identified does
Corollary~\ref{cor:two-central-nonaffine-boundary} apply the DIR PP/BP
boundary result.  It gives a nonzero derivative of finite order at least two
and proves $\mathfrak A_{\rm aff}=\varnothing$.  No scale theorem or zero
count enters this step.

\medskip\noindent\emph{3. Scale assignment.}\par\noindent
Theorem~\ref{thm:part-iii-two-central-exhaustion} and the fixed cutoffs
$t=t_{\rm str}$ and $\varrho_w=\varrho_\#$ assign strict first, then middle
including its equality, then positive root, with $t=0$ owned by source and
$t>0$, $\kappa=0$ owned by mixed.  These are physical and resolved-coordinate
faces fixed before any theorem neighborhood is chosen.

\medskip\noindent\emph{4. Zero estimates.}\par\noindent
The no-pp domain is bounded by
Theorem~\ref{thm:part-iii-two-central-no-pp}; the strict, middle, and root
domains by Theorems~\ref{thm:part-iii-strict-zero},
\ref{thm:part-iii-middle-zero}, and
\ref{thm:part-iii-root-zero}; and the two zero-scale faces by the independent
Theorems~\ref{thm:part-ii-source-zero} and
\ref{thm:part-ii-mixed-zero}.  The source bound is two per normalized-action
cell, the middle theorem gives its QBF/QHH component bounds, the root bound is
$24$ per signed angular itinerary, and the mixed Dulac component has at most
one isolated orbit.  Finite signed and component refinements turn these into
a finite incident bound at every lift.  This proves the theorem in the
one-way order \eqref{eq:two-central-interface-chain}.
\end{proof}

\section{Global regime exhaustion}
\label{sec:part-iii-package}

All local zero estimates and the two-central geometric classification are now
available.  Table~\ref{tab:regimes} has one proved zero estimate for each
retained physical regime, and
Theorem~\ref{thm:two-central-physical-completion} has assembled the complete
two-central interface with no residual class.  The remaining issue is no longer another zero
estimate: every first loss must descend to an already named regime, and that
descent must terminate.  The next proposition establishes this compatibility;
Section~\ref{sec:specialization} then treats proper boundary and identity
strata, and Section~\ref{sec:part-i-assembly} takes the finite compact cover.

\begin{proposition}[Compatibility of the positive-scale estimates]
\label{prop:part-iii-handoff}
For every exact-once itinerary in a positive-scale regime, the
regime table selects exactly one analytic estimate.  The selected
theorem includes its stated coefficient, identity, and phase-boundary
specializations, and sends every remaining first loss to a strictly lower
node of the finite specialization graph.  At a proper face, that face theorem
together with its finitely many incident theorem neighborhoods supplies an
open zero-bound neighborhood in all five original parameters, as made
explicit in Proposition~\ref{prop:ambient-edge-neighborhoods}.  The only
zero-scale alternatives are the source and mixed estimates of
Theorems~\ref{thm:part-ii-source-zero} and~\ref{thm:part-ii-mixed-zero}.
\end{proposition}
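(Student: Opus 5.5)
The plan is to prove Proposition~\ref{prop:part-iii-handoff} as a bookkeeping statement over the finite regime list, organised in three passes: selection, descent, and openness at faces.

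\emph{Selection.} Fix an exact-once itinerary $\omega$ in a positive-scale regime. By Theorem~\ref{thm:finite-words} and Proposition~\ref{prop:exact-once} it is one of finitely many retained stopped words. Its number of internal central gates is $0$, $1$, or $2$, by the gate count \eqref{eq:legacy-7-13} and the uniqueness of the upper $D$-collision.
\begin{itemize}
\item With zero central gates, Proposition~\ref{prop:part-iii-hyperbolic-word} and the bounded-tube criterion send $\omega$ to the hyperbolic or the compact row.
\item With one central gate, condition \eqref{eq:legacy-7-12} sends it to Theorem~\ref{thm:part-iii-one-central-zero}.
\item With two central gates, Theorem~\ref{thm:part-iii-two-central-exhaustion} sends it to no-pp, strict, middle, or root. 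The source and mixed outcomes are excluded here because the regime is positive-scale, i.e.\ $t>0$ and $\kappa>0$.
\end{itemize}
Uniqueness of the selected estimate follows from the half-open rule \eqref{eq:legacy-5-2} together with the minimal-face priority of Definition~\ref{def:first-contact-owner}. The cutoffs $t_{\rm str}$ and $\varrho_\#$ are fixed before any theorem neighborhood is chosen, so no point has two owners.

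\emph{Descent.} For each selected theorem I would read off, from its statement and remark, which coefficient, identity, and phase-boundary faces it already contains:
\begin{itemize}
\item the projective cone \eqref{eq:legacy-7-29d} or \eqref{eq:legacy-7-27a} together with its apex;
\item collapsed intervals;
\item the hh and PP/BP endpoints in the strict case;
\item landing splits, via Lemma~\ref{lem:two-central-landing-product-tube}.
\end{itemize}
Every other first loss is one of the named events: a lost section normal, a root split or collision, a gate loss, or a competing face. I would define the specialization graph as follows. Its nodes are the pairs (itinerary label, regime), and they are ordered lexicographically by three keys. The first key is the number of retained central gates. The second is the cut-open rank \eqref{eq:legacy-4-3}. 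The third is the position in the priority list strict $\to$ middle $\to$ root $\to$ mixed/source.

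Proposition~\ref{prop:two-central-successors} lists every successor of a stopped event. Each successor does one of the following:
\begin{itemize}
\item lowers the number of central gates (case~(2));
\item raises the cut rank (case~(1));
\item moves to a later, or zero-scale, regime (case~(3));
\item is terminal, or has a landing gap and contributes no fixed point (cases~(4)--(6)).
\end{itemize}
So each step is strictly lower in this order. Since all the keys are finite, every chain of descents terminates.

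\emph{Openness at faces.} At a proper face $F$, take the face theorem's own neighborhood. Intersect it with the finitely many neighborhoods of the theorems owning the regimes incident to $F$ in the resolved cover: the punctured strict, middle, and root charts, and the adjacent hyperbolic or one-central ones. Each of these neighborhoods is open in the original five parameters, because every positive-scale theorem was proved uniform up to its theorem-validity radius, for instance $\kappa\le\kappa_0$, which exceeds its assignment radius. Hence the sum of finitely many bounds holds on an open set. This is the content I would defer formally to Proposition~\ref{prop:ambient-edge-neighborhoods}.

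The zero-scale faces $t=0$ and $\kappa=0$, $t>0$ are reached only through case~(3), where the rule \eqref{eq:legacy-5-2} assigns them to Theorems~\ref{thm:part-ii-source-zero} and~\ref{thm:part-ii-mixed-zero}. This shows that these are the only zero-scale alternatives.

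The main obstacle is the openness step at corners where several cutoffs meet, for example $t=t_{\rm str}$ together with $\varrho_w=\varrho_\#$, or root charts whose angle degenerates toward opposite projective charts. At such corners one must check that validity radii strictly exceed assignment radii on a common doubled domain. I would first try to handle this by choosing $t_{\rm str}$ and $\varrho_\#$ as regular values inside the doubled overlaps, as in Section~\ref{sec:regime-table}, and then shrinking the parameter ball a single time at the end.
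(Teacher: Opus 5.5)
Your selection step is the paper's argument: split by $0$, $1$, or $2$ internal central blocks, then get disjointness from the fixed cutoffs and the half-open rule \eqref{eq:legacy-5-2}. The descent step, however, fails as set up, because your lexicographic key (central-gate count, cut rank, priority position) is not monotone along first losses.

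\emph{Third key.} The half-open rule deliberately gives each cutoff equality to the \emph{earlier} regime. A root family with $\varrho_w\uparrow\varrho_\#$ specializes to a middle-owned point. A middle family with $t\uparrow t_{\rm str}$ specializes to a strict-owned point. At these faces there is no new contact and the itinerary is unchanged (Table~\ref{tab:specialization-edges}). So your first two keys stay constant while the third moves \emph{up} your list. Yet $\kappa\downarrow0$ sends the same root family \emph{down} the list to mixed, so no orientation of the third key works.

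\emph{First key.} Proposition~\ref{prop:two-central-successors} only covers truncated two-central pp candidates. The first losses of the hyperbolic and one-central theorems include a zero eigenvalue (Proposition~\ref{prop:part-iii-hyperbolic-word}) and the appearance of a second central block (remark after Theorem~\ref{thm:part-iii-one-central-zero}). Both \emph{raise} the gate count. More basically, nodes labelled by (itinerary, regime) only record which regimes are adjacent, and adjacency runs in both directions.

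The missing idea is to keep termination data separate from ownership data. The paper measures descent by the minimal face in the finite face poset $\mathcal F$, closed under intersections, or equivalently by the complexity $\mathfrak c$ of \eqref{eq:legacy-5-4}. Every proper specialization imposes one further equality and releases none. That equality is one of: a divider coincidence, a stopping-face contact, a scale equality, a vanishing projective coefficient, or the radial apex. So the rank strictly drops whichever regime owns the new face. The owner is read off only afterwards, from the canonical priority. With that measure your argument closes.

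In the openness step there is a smaller problem:
\begin{itemize}
\item Take the \emph{union} of the incident theorem domains, not their intersection.
\item Do not claim each domain is open in the five parameters: the mixed estimate lives only on $B=a=0$, and the root estimate only on $\kappa>0$.
\item Openness comes from summing their bounds and using compactness of the exceptional fiber of the resolution, as in Proposition~\ref{prop:ambient-edge-neighborhoods}.
\item ``Validity radius exceeds assignment radius'' handles only the $\varrho_\#$ overlap, not the zero-scale faces.
\end{itemize}
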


\begin{proof}
For zero central blocks, Table~\ref{tab:regimes} selects the compact analytic theorem or
Theorem~\ref{thm:part-iii-hyperbolic-zero}.  For one central block, \eqref{eq:legacy-7-12}
chooses Theorem~\ref{thm:part-iii-one-central-zero}.  For two blocks,
Theorem~\ref{thm:two-central-physical-completion} first removes all geometric
ambiguity, certifies the landing coefficient on its common product tube, and
selects the no-pp, strict, middle, root, source, or mixed alternative.  The strict/middle/root domains are disjoint by the fixed cutoff
$t=t_{\rm str}$, the weighted regime
partition \eqref{eq:legacy-7-70a}--\eqref{eq:legacy-7-70b}, and the half-open
assignment rule \eqref{eq:legacy-5-2}.  Each theorem statement records its
coefficient and identity policy before a boundary descent is started.  In
particular, the exact mixed-face Dulac estimate is not asserted to be open in
$(B,a)$ by itself: its positive-root, source, and split first-contact
neighbors form the finite incident family in
Table~\ref{tab:specialization-edges}.  Every other first loss is one of the
faces already present in the stopped first-hit complex, so the minimal-face
rank decreases.  Finiteness of that rank proves termination.
This argument imports the theorems only after exact-once reduction and hence
does not feed back into construction of the physical itineraries.
\end{proof}

\begin{proposition}[Exhaustive and disjoint regime assignment]
\label{prop:regime-assignment}
Every retained itinerary of Theorem~\ref{thm:finite-words} belongs to exactly
one relative-interior row of Table~\ref{tab:regimes}.  After the fixed strict
region, including \(t=t_{\rm str}\), has been assigned, the inner
coalescing resolution \(0<t<t_{\rm str}\) uses the weighted radius
\eqref{eq:legacy-5-2a} and the half-open priority
\eqref{eq:legacy-5-2}.
Equality in the last line belongs to the middle case.  Minimal resolved faces
and these half-open rules count every physical lift once.
\end{proposition}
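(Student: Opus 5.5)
The proof is an assembly argument: first exhaustiveness, then disjointness, then single counting of every lift.

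For exhaustiveness, I start from Theorem~\ref{thm:finite-words}. Every retained itinerary is one of finitely many stopped full-return itineraries, and its terminal alternatives have already been removed. I then sort it by the number of internal central gates it carries. By \eqref{eq:legacy-7-13} and the uniqueness of the upper $D$-collision this number is $0$, $1$ or $2$.
\begin{itemize}
\item With no central gate, Proposition~\ref{prop:finite-singular-alphabet} gives three possibilities: every vertex is a direct passage, a separated hyperbolic saddle, or the source core. These go respectively to the compact analytic row, the separated hyperbolic row, and the matched source row. The source row is identified by the first-loss refinement \eqref{eq:legacy-6-66b} and Table~\ref{tab:source-first-loss}.
\item With one central gate, \eqref{eq:legacy-7-12} either holds, giving the one-central row, or fails. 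If it fails, the failing quantity is a named first loss. It sends the itinerary to the hyperbolic row when the root splits, and to the mixed row when a persistent $B=a=0$ endpoint appears.
\item With two central gates, Theorem~\ref{thm:part-iii-two-central-exhaustion} routes the itinerary to exactly one of the six rows in \eqref{eq:legacy-7-26}. Theorem~\ref{thm:two-central-physical-completion} guarantees that no residual class is left over.
\end{itemize}
The case $B=0$, $a\neq0$ carries no full return by \eqref{eq:legacy-6-80}, so it falls in the no-full-return row.

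For disjointness, I argue first across the gate-count classes and then inside the coalescing class. The central-gate count is intrinsic to the physical lift: it is read off from prepared roots with fixed nondegeneracy margins. Different counts therefore cannot share a lift, except on a proper face. Proper faces are assigned by Definition~\ref{def:first-contact-owner} to their unique minimal owner. Within the two-central coalescing class I argue in three steps.
\begin{itemize}
\item The cutoff $t_{\rm str}$ is a regular value fixed before any theorem neighborhood is chosen. This splits $t\ge t_{\rm str}$ (strict, equality included) from $0<t<t_{\rm str}$.
\item On $0<t<t_{\rm str}$, \eqref{eq:legacy-5-2b} shows that $\varrho_w$ equals $\kappa$ times an angular factor bounded above and away from zero. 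The condition $\varrho_w\ge\varrho_\#$ versus $0<\varrho_w<\varrho_\#$ is therefore a well-defined half-open split, and it is consistent across overlapping angular charts, because $\varrho_w$ is defined intrinsically from $(B,a,t)$ rather than from the chart.
\item The faces $t=0$ (source) and $\kappa=0$ with $t>0$ (mixed) complete the partition, as in \eqref{eq:legacy-5-2}. Equality $\varrho_w=\varrho_\#$ goes to middle by definition.
\end{itemize}

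For single counting, I invoke Proposition~\ref{prop:exact-once}. Overlap charts identify the same physical lift through \eqref{eq:legacy-4-1}, and collapsed intervals carry no point. Hence each physical lift receives exactly one owner.

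The main obstacle is chart-independence of the assignment at chart overlaps: the same lift must not be called root in one angular chart and middle in another, or strict in one tubular chart and coalescing in another. I would handle this by checking that the defining function $t$ is compatible across the nested tubular covers (the regular value lies in their overlap), and that $\varrho_w$ is expressed directly in the original parameters. The assignment is then a function of the physical point and the parameter alone, and chart choice cannot change it.
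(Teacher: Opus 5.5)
Your proposal is essentially the paper's own assembly argument. It has the same ingredients: the gate count from \eqref{eq:legacy-7-13} and the unique upper $D$-collision, the two-central routing through Theorems~\ref{thm:part-iii-two-central-exhaustion} and~\ref{thm:two-central-physical-completion}, ownership of the equality faces by the half-open rule \eqref{eq:legacy-5-2}, and overlap identification for single counting. Where you argue that $t$ and $\varrho_w$ are chart-independent, the paper instead imposes the fixed priority strict, middle, root, mixed on common overlaps.

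One case is missing from your zero-gate bullet. A persistent $B=a=0$ endpoint with no upper central gate (the no-root side and the adjacent split collar of the mixed Dulac cone) must go to the mixed row, not the compact or hyperbolic row. The paper secures this through its parameter-dominated first-loss classification, which gives a persistent endpoint the mixed row before any hyperbolic or one-central row.
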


\begin{proof}
First separate source itineraries and bounded analytic itineraries.  Among the
remaining itineraries, Proposition~\ref{prop:finite-singular-alphabet}
distinguishes zero, one, or two internal central blocks.  The zero- and
one-central cases go to the hyperbolic or one-central row.
Theorem~\ref{thm:two-central-physical-completion} routes the two-central case
to no-pp or to a complete physical lips configuration, proves that no
residual affine class remains, and records the applicable zero theorem.
Proposition~\ref{prop:part-iii-handoff} then assigns every positive-scale lips
configuration to strict, middle, or root and assigns the only zero-scale
possibilities to the independent source and mixed theorems.

The cutoff \(t=t_{\rm str}\), the weighted radius
\(\varrho_w=\varrho_\#\), and the two zero-scale faces are owned exactly as in
\eqref{eq:legacy-5-2}; hence no diagonal sequence is omitted and no equality
is counted twice.  Terminal first-contact outcomes were removed before this
classification, while chart overlaps represent the same physical first hit.
This proves exhaustiveness and disjointness.
\end{proof}

\section{Boundary and identity strata}
\label{sec:specialization}

The preceding classification first applies to relative interiors.  We now
follow every proper specialization in the finite incidence complex.  A
strictly decreasing integer complexity proves termination, and the half-open
priority assigns a terminal analytic regime or a genuine zero-contribution
face.

For this purpose an \emph{ambient zero-bound neighborhood} of a physical lift
$\zeta$ means an open neighborhood $W_\zeta$ of its image in the original
five-parameter space, a physical phase neighborhood $J_\zeta$, and an integer
$N_\zeta<\infty$ such that the sum of the isolated zeros represented by all
minimal-face itinerary labels incident to $\zeta$ is at most $N_\zeta$ on
$J_\zeta\times W_\zeta$.  The word \emph{ambient} refers to openness in the
original parameters, not merely relative openness on an exceptional divisor
or coefficient face.  An individual face theorem need not supply this by
itself; finitely many incident theorem neighborhoods may be summed.

The regime of a relative-interior itinerary does not by itself determine every proper
face.  We therefore form a finite specialization graph.  Its vertices record
the box, divider ordering, first-hit, primitive, fixed regime-scale status,
and radial--projective coefficient labels.  A proper arrow occurs when
consecutive dividers coincide; a transverse stopping face becomes a named
singular-sector outcome; one of the fixed source/strict/middle/root scale inequalities reaches
its assigned equality face; a nonzero normalized coefficient reaches a
projective face; or the coefficient radius reaches its all-zero apex.
Overlap identifications are the same physical map and are not arrows.

Let \(n_{\rm sc}(v)\) be the number of active regime-scale inequalities at
\(v\) which have not yet been specialized to a named equality, and let
\(n_{\rm rad}(v)\in\{0,1\}\) be one off the all-zero coefficient apex and
zero at that apex.  These labels are finite because the stopped atlas and all
half-open cutoffs were fixed in advance.

For a vertex $v$, define
\begin{equation}
\begin{aligned}
 \mathfrak c(v)= {}&
 \bigl(\text{number of nonempty labelled intervals}\bigr)\\
 &+\bigl(\text{number of unspecialized stopping faces}\bigr)\\
 &+n_{\rm sc}(v)\\
 &+\bigl(\text{number of nonzero projective coefficients}\bigr)\\
 &+n_{\rm rad}(v).
\end{aligned}
\label{eq:legacy-5-4}
\end{equation}

Termination and regime assignment use different data.  The integer $\mathfrak c$
proves only that specialization cannot continue indefinitely.  The terminal regime is
fixed independently by the following canonical priority.  A physical lift is
first placed on the unique minimal resolved face containing it.  On that face,
its first stopped event precedes every remote description; a collapse face
precedes its parent itinerary; source-threshold equality uses the post-threshold
primitive vector and the explicit compact/hyperbolic/central/mixed/terminal
reclassification above; and a persistent $B=0$ endpoint uses the mixed row
before a generic hyperbolic or one-central closure row.  Certified lips faces use the
strict/middle/root/source/mixed half-open rules, including \eqref{eq:legacy-5-2}, and the
internal QBF/QHH boundary belongs to the fixed earlier side.  Coefficient
faces use their unique minimal projective face, while an identity is retained
under the ambient theorem that controls its neighboring fibers.  Chart
overlaps identify the same physical lift and do not enter this priority.

For a physical lift \(\zeta\), package these order-independent data into its
canonical signature
\begin{equation}
 \mathfrak s(\zeta)=
 \bigl(F_{\min}(\zeta),E_{\rm first}(\zeta),
       S_{\rm ho}(\zeta),R_{\rm coef}(\zeta),
       P_{\min}(\zeta)\bigr).
\label{eq:specialization-signature}
\end{equation}
Here \(F_{\min}\) is the minimal resolved face, \(E_{\rm first}\) the first
physical stopped event, \(S_{\rm ho}\) the source, strict, middle, root, or
mixed half-open scale status (or \emph{not applicable}), \(R_{\rm coef}\) records
whether the coefficient radius is zero, and \(P_{\min}\) is the minimal
projective coefficient face when that radius is nonzero.  The canonical
priority above defines the single-valued map
\begin{equation}
 \operatorname{Assign}\colon
 \{\text{canonical signatures}\}\longrightarrow
 \{\text{theorem regimes or terminal labels}\}.
\label{eq:specialization-assignment}
\end{equation}

The owner of a boundary point and the estimate controlling its neighborhood
are deliberately separate.  The owner column below prevents double counting;
the ambient-control column may sum estimates from several incident sides.
Thus, for example, the mixed face is owned by the mixed theorem although an
open neighborhood of that face also uses the uniform positive-root estimate.

\begin{lemma}[First changed specialization slot]
\label{lem:first-specialization-slot}
Let a sequence in one relative-interior vertex converge to a proper physical
lift.  After passage to a subsequence, its first changed datum is a divider or
stopping-face event, a fixed scale equality, a projective coefficient face,
or the radial coefficient apex.  If several changes occur simultaneously,
their common minimal face carries the combined canonical signature
\eqref{eq:specialization-signature}.  There is no additional limiting datum.
\end{lemma}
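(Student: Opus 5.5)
The plan is a compactness-and-finiteness argument on the finitely many data that label a vertex of the specialization graph. The key observation is that a relative-interior vertex is determined by finitely many discrete labels: box, divider ordering, first-hit outcome, primitive, scale status and projective coefficient face. Each of these labels is either constant or changes along a convergent sequence. I will first use Proposition~\ref{prop:stopped-first-hit} and Theorem~\ref{thm:finite-words} to fix, after passing to a subsequence, one box, one ordering chart \eqref{eq:legacy-4-2}, and one first-hit outcome. Since these lists are finite, a single subsequence realizes a constant label along the sequence. The limit lift then either keeps all labels or differs in at least one of them.

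Second, I will classify the possible changes slot by slot, in the order listed in the statement. In the phase slot, the closure part of the proof of Theorem~\ref{thm:finite-words} says that a limit of retained segments is a minimal corner or tangency owner, a prepared root, gate or sector owner, an adjacent labelled itinerary, a terminal outcome, or a collapsed interval. This gives the divider or stopping-face event, and rules out an infinite-switching or Zeno alternative. In the scale slot, the cutoffs $t_{\rm str}$, $\varrho_\#$ and $\kappa_\#(\mathrm{angle})$, together with the faces $t=0$ and $\kappa=0$, were fixed in advance in \eqref{eq:legacy-5-2}. A convergent sequence of scale values therefore either stays in the open stratum or has a limit on one of these finitely many equality faces. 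In the coefficient slot, the radial--projective decomposition \eqref{eq:legacy-7-27a} and \eqref{eq:legacy-7-29d} writes coefficients as $\varepsilon\cdot[\widehat A:\widehat B:\widehat C]$ with the direction in a compact $\mathbb{RP}^2$. After a further subsequence the direction converges to a point of a unique minimal projective face, or $\varepsilon\to0$, which is the apex. Among the slots that change, I choose the first one in the fixed canonical priority. This is well defined because the priority ordering is independent of the sequence.

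Third, I treat simultaneous changes. Here I use closure of $\mathcal F$ under intersections (Proposition~\ref{prop:fixed-skeleton}) and Definition~\ref{def:first-contact-owner}: coincident phase events have a common minimal face. The resolved scale and coefficient faces are products of independently fixed strata, so their intersection with that minimal phase face is again a unique minimal resolved face. On this face the five entries of \eqref{eq:specialization-signature} are the limits of the separate slots, which gives the combined signature. Finally, "no additional limiting datum" follows because the vertex labels listed before $\mathfrak c$ in \eqref{eq:legacy-5-4} exhaust the data. Every datum of a lift is either phase geometry, fixed by the atlas, or one of the scale and coefficient labels. Chart overlaps are identifications of the same physical lift by \eqref{eq:legacy-4-1}, so they contribute no arrow.

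The main obstacle is the phase slot when flight times diverge while scale variables degenerate at the same time, for example at a source or root corner. There the limiting first-hit event could a priori depend on the path of approach. I would handle this with Proposition~\ref{prop:endpoint-joint-uniformity} and the joint resolved covers, which are uniform in the simultaneous root--section limit. Together with the divergent-flight row of Table~\ref{tab:source-first-loss}, this assigns the limiting sector status in every cell of a finite cover. That reduces the path dependence to a finite choice of cell, which one more subsequence removes.
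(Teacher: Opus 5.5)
Your argument is correct and takes the same approach as the paper. The paper's proof relies on the finiteness of $\mathcal F$, of the divider orderings, of the scale cutoffs and of the coefficient charts. It sends bounded-time limits to a member of $\mathcal F$ and unbounded flights to a finite singular sector by Proposition~\ref{prop:stopped-first-hit}, identifies the remaining data with the three coordinate statuses in \eqref{eq:specialization-signature}, and assigns simultaneous changes by Definition~\ref{def:first-contact-owner}. Your subsequence extraction, the compactness of the projective direction set in the coefficient slot, and your appeal to Proposition~\ref{prop:endpoint-joint-uniformity} where flight time diverges together with a scale degeneration are a more explicit unpacking of those same inputs; the last of these is already used inside the proof of Proposition~\ref{prop:stopped-first-hit}.
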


\begin{proof}
The face list, divider orderings, scale cutoffs, and coefficient charts are
finite.  Bounded flight-time limits reach a member of $\mathcal F$, while an
unbounded flight enters one of the finite singular sectors by
Proposition~\ref{prop:stopped-first-hit}.  All remaining data are precisely
the three finite coordinate statuses in
\eqref{eq:specialization-signature}.  Definition~\ref{def:first-contact-owner}
assigns simultaneous changes to their common minimal face.
\end{proof}

Table~\ref{tab:specialization-edges} is therefore the complete finite edge
list.  Chart overlap is included only to record that it is an identification,
not an arrow.

\begin{table}[p]
\centering
\footnotesize
\setlength{\tabcolsep}{3pt}
\caption{Specialization edges, physical owners, and ambient zero-bound neighborhoods.}
\label{tab:specialization-edges}
\vspace{3pt}
\begin{tabular}{@{}
 >{\raggedright\arraybackslash}p{.18\textwidth}
 >{\raggedright\arraybackslash}p{.25\textwidth}
 >{\raggedright\arraybackslash}p{.23\textwidth}
 >{\raggedright\arraybackslash}p{.27\textwidth}@{}}
\toprule
Boundary trigger&First physical event and target&Physical owner at the face&Ambient control in the original parameters\\
\midrule
Resolved chart overlap&The same orbit reaches the same member of $\mathcal F$;&
physical identification by \eqref{eq:legacy-4-1}&The same theorem
representative is used on both charts; the lift is counted once\\
Consecutive dividers coincide&The intervening section interval collapses; a
regular boundary point uses its common Poincar\'e germ&The minimal
corner/divider face of Definition~\ref{def:first-contact-owner}&The empty
interval contributes zero; finitely many incident half-open labels are
restrictions of one ambient germ by Proposition~\ref{prop:exact-once}\\
Loss of a denominator, clock, section normal, target landing normal, or
through condition&The orbit reaches its first named root, gate, previous side,
node, stable-center side, wrong-orientation sector, or collar face&The
corresponding member of $\mathcal A_{\rm stop}$; a surviving passage is
reclassified by Table~\ref{tab:regimes}&Propositions~\ref{prop:fixed-skeleton}
and~\ref{prop:stopped-first-hit}, followed by the target-row theorem; terminal
faces contribute zero\\
First effective source threshold becomes an equality&The post-threshold
primitive is evaluated before any remote landing description&Source,
compact, hyperbolic, one-central, mixed, or a terminal first-contact label&The
source bound, the appropriate target-row bound, and the finite first-loss
cover in the common physical tube\\
One or two central passages split, or the pp condition changes&The first gate
split or the actual absence/presence of a complete pp strip is recorded&
Hyperbolic, one-central, two-central no-pp, or complete-lips routing&
Theorems~\ref{thm:part-iii-hyperbolic-zero},
\ref{thm:part-iii-one-central-zero},
\ref{thm:part-iii-two-central-no-pp}, and
\ref{thm:part-iii-two-central-exhaustion} on their certified physical sides\\
$t$ reaches $t_{\rm str}$ with positive lips margins&No new contact; this is
the fixed outer boundary of the coalescing tube&Strict, including equality&
Theorem~\ref{thm:part-iii-strict-zero}; loss of a physical margin is owned by
one of the preceding first-contact rows\\
$0<t<t_{\rm str}$ and $\varrho_w$ reaches $\varrho_\#$&No new contact; the
doubled middle/root tubes represent the same itinerary&Middle, including
equality&Theorem~\ref{thm:part-iii-middle-zero}; the root theorem is valid on
the common doubled overlap\\
$t>0$ and $\kappa\downarrow0$&The endpoint root becomes the persistent
$B=a=0$ transition&Exact mixed&Theorem~\ref{thm:part-ii-mixed-zero} on the
face, Theorem~\ref{thm:part-iii-root-zero} uniformly for
$0<\kappa\le\kappa_0$, and the finitely many split/no-passage first outcomes\\
$t\downarrow0$&The coalescing itinerary becomes the matched noncompact source
return&Source&Theorem~\ref{thm:part-ii-source-zero} on the face together with
the uniform positive-$t$ middle, root, mixed, and first-contact bounds on the
finite incident cover\\
A nonzero projective coefficient vanishes while the coefficient radius stays
positive&The same physical return equation reaches its minimal projective
face&That coefficient face, not an identity unless the full displacement
vanishes&The ambient coefficient clauses of the compact, QRH, strict, middle,
root, source, or mixed owner; no division is made before the face is assigned\\
The coefficient radius reaches zero&The unnormalized displacement is read at
its radial apex&Identity if it vanishes identically; otherwise the minimal
analytic coefficient face&The apex has no isolated member; compactness of the
projective direction set gives finitely many punctured ambient theorem
neighborhoods controlling nearby fibers\\
A phase endpoint is reached&The actual hh, PP/BP, lower-gate, separatrix, or
collar first contact is used&The incident theorem endpoint or a terminal
label&The endpoint clauses of the source, strict, middle, root, hyperbolic,
and central theorems; at a regular divider use the common germ of
Proposition~\ref{prop:exact-once}\\
\bottomrule
\end{tabular}
\end{table}

\begin{proposition}[Ambient neighborhoods at every specialization edge]
\label{prop:ambient-edge-neighborhoods}
Every physical lift in the closed resolved carrier has an ambient zero-bound
neighborhood in the sense above.  On a relative interior it is supplied by
the theorem in Table~\ref{tab:regimes}.  On a proper face it is supplied by
the finite family of incident estimates in the corresponding row of
Table~\ref{tab:specialization-edges}.  In particular, the mixed face, the
source face, and the coefficient apex are not treated by continuity from one
punctured formula: their own face conclusions and all incident punctured
estimates are used together.
\end{proposition}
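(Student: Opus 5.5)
The plan is a case-by-case verification over the finite list of canonical signatures. For each signature we exhibit an open set in the five original parameters, a phase neighborhood, and a finite count. By Lemma~\ref{lem:first-specialization-slot}, every proper lift $\zeta$ lies on a unique minimal resolved face with signature $\mathfrak s(\zeta)$. The only ways it can be reached from a relative interior are the finitely many triggers listed in Table~\ref{tab:specialization-edges}. It therefore suffices to work through each row: take the union of the theorem neighborhoods of all incident labels, and check two things. First, this union contains an open set of original parameters around the image of $\zeta$. Second, the sum of the incident bounds is finite.

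We first treat relative interiors. Here Proposition~\ref{prop:regime-assignment} gives a unique row of Table~\ref{tab:regimes}, and each listed theorem is already stated as locally uniform in all five original parameters. So $W_\zeta$, $J_\zeta$ and $N_\zeta$ are read off directly. This covers Theorems~\ref{thm:compact-analytic}, \ref{thm:part-iii-hyperbolic-zero}, \ref{thm:part-iii-one-central-zero}, \ref{thm:part-iii-two-central-no-pp}, \ref{thm:part-iii-strict-zero}, \ref{thm:part-iii-middle-zero} and \ref{thm:part-iii-root-zero}. Theorem~\ref{thm:part-ii-source-zero} also applies here, with its compact $L\le L_0$ range handled by Weierstrass preparation.

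Next come the generic proper edges: overlap identifications, divider collisions, first-contact losses, and source thresholds. Collapsed intervals contribute nothing. Terminal faces have no displacement equation. At a regular divider, Proposition~\ref{prop:exact-once} shows that all incident half-open labels are restrictions of one ambient Poincar\'e germ, so isolation and multiplicity are counted once, on that germ. Since only finitely many labels are incident, by Proposition~\ref{prop:stopped-first-hit} and Theorem~\ref{thm:finite-words}, summing their bounds gives $N_\zeta$. Openness follows because each target-row theorem's neighborhood is open in the original parameters, and a finite intersection of open sets is open.

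Coefficient faces and the apex are handled by the radial--projective cone used in each owner theorem. The projective direction set is compact, so finitely many punctured theorem neighborhoods cover all nearby nonzero directions. The apex itself is an identity with no isolated member.

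The scale faces $t=t_{\rm str}$ and $\varrho_w=\varrho_\#$ are easy. In both cases the strict or middle theorem is valid on a doubled tube that strictly contains the equality, and the root theorem is valid up to $\kappa_0>\kappa_\#$. The adjacent open sides thus overlap, and the two bounds add.

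The main obstacle is the two zero-scale faces. Their face theorems are not open in $(B,a)$ or in $t$ on their own. For the mixed face $t>0$, $\kappa=0$, I would fix a lift and intersect three open sets: the neighborhood where Theorem~\ref{thm:part-iii-root-zero} holds uniformly for $0<\kappa\le\kappa_0$; the face estimate of Theorem~\ref{thm:part-ii-mixed-zero}; and the finitely many split, no-passage and source-overlap first outcomes. The crucial point to verify is that every original parameter near $\zeta$ falls into one of these pieces. There are three possibilities: either $B=a=0$, or it has a positive weighted radius less than $\kappa_0$ in some angular chart, or it belongs to an opposite projective chart that terminates by first contact. This is exactly the content of the angular-chart analysis in Theorem~\ref{thm:part-iii-two-central-exhaustion}, Step~3. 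The neighborhood is then open, with $N_\zeta\le 1+24\cdot(\text{number of charts})+(\text{first-outcome bounds})$.

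For the source face $t=0$ the same scheme applies, but the incident family is larger. It consists of Theorem~\ref{thm:part-ii-source-zero} on the source action tube, together with the middle, root and mixed estimates, all uniform for small $t>0$ on the finite cover. Here I expect the delicate check to be that the source action tube from Proposition~\ref{prop:part-ii-action-localization} and the middle/root tubes jointly cover a full phase neighborhood, with no gap at large normalized action. I would close this gap using the first-loss routing of Table~\ref{tab:source-first-loss}, which sends every exit from the direct source tube to a named regime whose neighborhood is already open.
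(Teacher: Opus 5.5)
\paragraph{Overall.} Your row-by-row organization matches the paper's. The relative interiors, the generic first-contact and divider edges, and the $t=t_{\rm str}$, $\varrho_w=\varrho_\#$ equalities are handled essentially as the paper handles them. The gap is at the two zero-scale faces, which is where the proposition has real content.

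\paragraph{The mixed face.} The three sets you propose to intersect at a mixed-face lift are not open in the original parameters. These are the root region $0<\kappa\le\kappa_0$ of one angular chart, the face $B=a=0$, and the opposite projective charts. The face has empty interior, and the root region omits both the face and every other angle. Observing that each nearby parameter lies in one of these sets is only a pointwise partition. It is not a uniform bound.

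Only the root piece carries a constant that is uniform down to $\kappa=0$. The opposite charts route, by first contact, to strict, hyperbolic, or one-central rows. Those theorems are uniform only on cells with positive eigenvalue and section margins. As $\kappa\downarrow0$ the endpoint degenerates to the persistent triple root and these margins collapse. So the ``first-outcome bounds'' in your $N_\zeta$ are not shown to be finite on any neighborhood of $\zeta$.

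\paragraph{The missing step.} What you need is the compact-exceptional-fiber argument, which the paper uses. First, give every point of the compact fiber over the image of $\zeta$ its own resolved theorem neighborhood with a finite bound. At the mixed face this fiber is every angular direction at $\kappa=0$; at the source face it is the whole $t=0$ fiber of the weighted resolution. Second, extract a finite subcover by compactness. Third, use properness of the resolution. If no open original-parameter neighborhood had its full preimage inside the subcover, a sequence of parameters outside it would, after compact resolved lifting, converge to an uncovered fiber point.

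This step is what upgrades relative openness on the blow-up to openness in the original parameters. The paper applies the same argument to projective coefficient faces and to the radial apex. Your tube-lemma remark for coefficient cones is the special case of a product cone, but you did not carry that argument over to the scale faces.

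\paragraph{The source face.} The delicate point is this same parameter openness, not phase coverage at large normalized action. Phase coverage is already supplied by Proposition~\ref{prop:part-ii-action-localization}. Routing exits from the source tube through the first-loss table gives pointwise neighborhoods. Without fiber compactness and properness, nothing ensures that finitely many of them cover a full original-parameter neighborhood of the source point.
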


\begin{proof}
Interior points are exactly the hypotheses of the applicable local zero
theorem.  At a first-contact or divider edge, the finite face list
$\mathcal F$, the finite outcome alphabet
\eqref{eq:stopped-outcome-alphabet}, and the ambient-germ clause of
Proposition~\ref{prop:exact-once} leave only the finitely many incident labels
listed in the first five rows of Table~\ref{tab:specialization-edges}.  Their
target theorems have uniform constants up to the stated first loss; summing
those constants gives the required $N_\zeta$.

At $\varrho_w=\varrho_\#$ the doubled middle/root domains overlap, and at
$\kappa=0$ the root bound is uniform for every positive $\kappa$ while the
mixed theorem controls the equality face.  At $t=0$ the source theorem
controls the equality face and the positive-$t$ middle/root estimates are
uniform on their compact normalized bases.  The remaining angular,
first-contact, and split-root alternatives are finite.  Thus each point of
the compact exceptional fiber has a resolved theorem neighborhood with a
finite bound.  A finite subcover of that fiber contains the full inverse image
of some open neighborhood of its image in the original parameter space:
otherwise a sequence outside the subcover would converge, after compact
resolved lifting, to an uncovered point of the exceptional fiber.  This
proves original-parameter openness, rather than only relative openness on the
blow-up.

The same compact-fiber argument applies to a projective coefficient face and
to the radial apex.  At the apex the unnormalized displacement decides
identity, while the compact projective direction set supplies finitely many
punctured neighborhoods.  The strict equality and ordinary phase endpoints
are already included in the closed theorem domains stated in their rows.
This proves every case in the table.
\end{proof}

\begin{proposition}[Finite specialization and confluence]
\label{prop:specialization}
Every proper specialization lowers $\mathfrak c$.  Consequently every
specialization chain terminates.  If two chains impose compatible equalities
and end at the same physical lift \(\zeta\), both end with the signature
\(\mathfrak s(\zeta)\) and hence with the same assignment.  Equivalently, the
specialization graph is confluent modulo chart-overlap identifications.
The common assignment is the bounded analytic theorem, one of the ambient local
zero theorems in Proposition~\ref{prop:part-iii-handoff}, or a
zero-contribution terminal label.
This includes every boundary, coefficient, collapse, separatrix, and identity
value of every full-return itinerary.  Its ambient original-parameter
neighborhood is the one supplied by
Proposition~\ref{prop:ambient-edge-neighborhoods}.
\end{proposition}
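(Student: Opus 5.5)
The plan is to prove the three assertions of Proposition~\ref{prop:specialization} separately: strict decrease of \(\mathfrak c\), termination, and confluence. For the decrease, I apply Lemma~\ref{lem:first-specialization-slot} to an arbitrary proper arrow \(v\to v'\). Its first changed datum has only four possible types, and I check each against one summand of \eqref{eq:legacy-5-4}.

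\emph{Divider or stopping-face event.} A coincidence of consecutive dividers deletes one nonempty labelled interval. A transverse face becoming a named singular-sector outcome removes one unspecialized stopping face.

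\emph{Scale equality.} Reaching \(t=t_{\rm str}\), \(\varrho_w=\varrho_\#\), \(\kappa=0\) or \(t=0\) lowers \(n_{\rm sc}\) by one. This works because the cutoffs were fixed in Section~\ref{sec:regime-table} before any neighborhood was chosen, so an equality can never be reopened.

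\emph{Projective face.} The number of nonzero projective coefficients drops by one.

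\emph{Radial apex.} \(n_{\rm rad}\) passes from \(1\) to \(0\).

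The essential point is that no other summand increases. The stopped atlas of Proposition~\ref{prop:stopped-first-hit} and Theorem~\ref{thm:finite-words} supplies a fixed finite face and divider list, and a limit can only identify labels, never add one. Chart overlaps are identifications rather than arrows, so they do not count. Since \(\mathfrak c\) is a nonnegative integer, every chain terminates after at most \(\mathfrak c(v)\) steps.

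For confluence, I take two chains that impose compatible equalities and end at the same physical lift \(\zeta\). The signature \eqref{eq:specialization-signature} depends only on \(\zeta\) and not on the path: \(F_{\min}\) is the unique minimal resolved face by Definition~\ref{def:first-contact-owner}; \(E_{\rm first}\) is the first stopped contact of the actual orbit; \(S_{\rm ho}\) is read from the half-open rule \eqref{eq:legacy-5-2}; and \(R_{\rm coef}\) and \(P_{\min}\) are read from the unnormalized displacement. Both terminal vertices therefore carry \(\mathfrak s(\zeta)\). Because \(\operatorname{Assign}\) in \eqref{eq:specialization-assignment} is single-valued, the two chains receive the same assignment. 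This is confluence modulo overlaps; it is a direct consequence of path-independence and needs no diamond-lemma argument.

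Finally I identify the image of \(\operatorname{Assign}\). Proposition~\ref{prop:regime-assignment} covers relative interiors, and Proposition~\ref{prop:part-iii-handoff} sends positive-scale losses either to its ambient theorems or to the source and mixed estimates. Terminal labels in \(\mathcal A_{\rm stop}\) and collapsed intervals contribute zero. An identity apex has no isolated member and is retained by the theorem that controls the neighbouring fibers, as Theorem~\ref{thm:compact-analytic} and the coefficient-cone clauses provide. The ambient neighborhoods are then exactly those given by Proposition~\ref{prop:ambient-edge-neighborhoods}.

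The main obstacle is a simultaneous change, for example \(t\) and \(\kappa\) vanishing together, or a divider collision occurring at a scale equality. In that case one must show that \(\mathfrak c\) still drops strictly, rather than one summand falling while another rises. My approach is to sum the decrements over the combined signature of Lemma~\ref{lem:first-specialization-slot}. I must also check that the fixed half-open priority sends the corner \(t=\kappa=0\) to the source regime no matter which equality a chain imposes first.
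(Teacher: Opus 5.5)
Your proposal is correct and follows the paper's proof. Each first-changed-slot type from Lemma~\ref{lem:first-specialization-slot} lowers exactly one summand of \eqref{eq:legacy-5-4} and raises none, termination follows by integer descent, and the assignment comes from evaluating the canonical signature \eqref{eq:specialization-signature} on the terminal physical lift and applying the single-valued priority map. The one difference is that the paper also checks local diamonds directly: equalities in distinct slots commute, a divider collision is identified by \(F_{\min}\), \(R_{\rm coef}=0\) is read before a projective direction, and intersecting scale faces are resolved by the fixed cutoffs. Combined with termination, this gives confluence in the rewriting sense, whereas your path-independence argument alone already yields the stated same-lift, same-assignment conclusion.
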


\begin{remark}
The finite specialization graph includes divider, stopping-face, scale, projective
coefficient, and radial-apex faces.  The radial apex is a graph vertex rather
than a projective direction.  Identity intervals contribute no isolated
member, while neighboring fibers retain the theorem neighborhood attached to
their terminal regime.
\end{remark}

\begin{proof}
A divider equality deletes one nonempty interval.  A stopping-face specialization
replaces an unspecialized passage by its already constructed root-clock,
hyperbolic, central, regular, previous-side, or exit label.  A coefficient
face deletes a nonzero projective coefficient.  A fixed source threshold,
the strict cutoff \(t=t_{\rm str}\), a middle/root equality, or a zero-scale
face resolves one active scale slot and lowers \(n_{\rm sc}\).  Passing from
a punctured coefficient cone to its all-zero apex lowers \(n_{\rm rad}\).
Thus every proper arrow strictly lowers one summand of \eqref{eq:legacy-5-4} and increases
none; the finite graph has no infinite descending chain.

The terminal regime is not inferred from this decrease.  Evaluate all final
face, first-event, scale, and coefficient labels on the terminal physical
lift, then apply the canonical priority fixed above.  To see confluence
directly, consider two compatible outgoing arrows.  Equalities in distinct
slots---stopping event, scale, coefficient radius, or projective
coordinate---commute and have the common descendant obtained by imposing
both.  Two descriptions of the same divider collision are identified by
\(F_{\min}\); at the coefficient apex \(R_{\rm coef}=0\) is read before a
projective direction; and at intersecting scale faces the globally fixed
values \(t_{\rm str}\), \(\varrho_\#\), and the QBF/QHH cutoff determine
\(S_{\rm ho}\) by \eqref{eq:legacy-5-2}.  Incompatible arrows have disjoint
physical lifts and require no common descendant.  Thus every local diamond
commutes after overlap identification.  Together with strict decrease of
\(\mathfrak c\), this proves that every pair of chains ending at \(\zeta\)
has the canonical signature \eqref{eq:specialization-signature} and the same
assignment.

At a terminal transverse itinerary, use its source, mixed, hyperbolic, central,
strict, middle, root, or compact analytic neighborhood.  A critical
$d$-stopping face is passive, while its split side is hyperbolic or regular.  A
collapsed interval carries no section point, and a collar exit cannot lie on
a cycle contained in the open collar.  If a displacement is identically zero
on a relative open interval, that interval is a period annulus and has no
isolated member.  This last observation is not used to bound nearby fibers:
the ambient theorem attached to the same minimal face supplies that bound.
A general coefficient face is likewise treated by its terminal theorem
and is not declared an identity.  Backward induction on $\mathfrak c$ proves
the proposition without an induction on the dimension or number of connected
components of a parameter status set.
\end{proof}

\section{Compact assembly and passage to the full quadratic space}
\label{sec:part-i-assembly}

The preceding sections give finitely many itineraries and local theorem
neighborhoods, but not yet one number.  We compactify the normalized carrier
only after every point has such a neighborhood, and then obtain the finite sum
that proves Theorem~\ref{thm:normalized-main}.  The local slice and a nested
collar then give Theorem~\ref{thm:main} without another zero estimate.

Let $\mathfrak K$ be the finite disjoint union of closed normalized
phase--parameter carriers for all minimal-face-assigned itinerary lifts, with overlap
lifts of the same physical point identified.  Its factors are closed resolved
parameter charts, closed physical sections, the compact source action
intervals, buffered QBF/QHH and pp intervals, and compact radial--projective
coefficient cones, represented by a radial interval times the projective
sphere with all directions identified at radius zero.  Zero-length intervals
enter only as collapse labels and have no phase factor.  Hence $\mathfrak K$
is compact.

\begin{proposition}[Compact assembly of the local zero bounds]
\label{prop:conditional-assembly}
For the local zero theorems listed in Table~\ref{tab:regimes}, every
$\zeta\in\mathfrak K$ has a relatively open product neighborhood
$\mathcal O_\zeta$ and an integer $N_\zeta<\infty$ bounding all isolated
zeros represented there; its parameter projection contains an open
neighborhood in the original five parameters.  A finite subcover gives a
uniform finite bound for every stopped full-return itinerary, including all
proper specializations.
\end{proposition}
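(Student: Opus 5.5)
The plan is a standard compactness argument, run only after every local ingredient is in place. First, for each $\zeta\in\mathfrak K$ I produce the neighborhood $\mathcal O_\zeta$ and the integer $N_\zeta$. If $\zeta$ lies in the relative interior of a row of Table~\ref{tab:regimes}, I take the theorem neighborhood of that row. The relevant results are Theorems~\ref{thm:compact-analytic}, \ref{thm:part-iii-hyperbolic-zero}, \ref{thm:part-iii-one-central-zero}, \ref{thm:part-iii-two-central-no-pp}, \ref{thm:part-iii-strict-zero}, \ref{thm:part-iii-middle-zero}, \ref{thm:part-iii-root-zero}, \ref{thm:part-ii-source-zero}, and \ref{thm:part-ii-mixed-zero}. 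The terminal rows contribute $N_\zeta=0$.

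If $\zeta$ lies on a proper face, I first use Proposition~\ref{prop:specialization} to reach its terminal assignment. I then invoke Proposition~\ref{prop:ambient-edge-neighborhoods}, which sums the finitely many incident estimates of the relevant row of Table~\ref{tab:specialization-edges}. That sum is the integer $N_\zeta$. The same proposition shows that the neighborhood is open in the original five parameters, not merely relatively open on a blow-up. At a divider, Proposition~\ref{prop:exact-once} ensures that the incident half-open labels are restrictions of one ambient germ, so their zeros are counted once and not duplicated. At the coefficient apex, the identity fiber contributes no isolated zero, while the compact projective direction set controls the nearby fibers.

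Next comes the finite subcover. I use compactness of $\mathfrak K$, which holds because it is a finite union of closed resolved charts, compact sections and action intervals, and compact radial--projective cones. This extracts $\zeta_1,\dots,\zeta_p$ with $\mathfrak K\subset\bigcup_i\mathcal O_{\zeta_i}$, and I set $N=\sum_i N_{\zeta_i}$. Every isolated zero of every retained itinerary displacement is represented at some point of $\mathfrak K$, and so lies in some $\mathcal O_{\zeta_i}$. Because chart overlaps are identified in $\mathfrak K$ and owners are assigned by minimal faces, each physical zero is charged to at least one member of the cover. Any overcounting only enlarges the bound. Finitely many itineraries exist by Theorem~\ref{thm:finite-words}, so the bound is uniform over all of them and over all proper specializations.

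The main obstacle is the passage from the parameter projections to one open neighborhood in the original parameters. The projections of the $\mathcal O_{\zeta_i}$ are open sets, but in blown-up charts a finite subcover does not automatically contain the full inverse image of a ball. I would argue by contradiction, as in the proof of Proposition~\ref{prop:ambient-edge-neighborhoods}. Suppose a sequence of parameters tends to $0$, and each member carries an isolated zero lying in no member of the cover. Lift the sequence to $\mathfrak K$; compactness gives a subsequential limit $\zeta^*$. That limit lies in some $\mathcal O_{\zeta_i}$, a contradiction. After this step, the intersection of the finitely many parameter projections gives the neighborhood, and the uniform bound $N$ holds on it.
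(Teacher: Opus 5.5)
Your proposal follows the paper's proof: Proposition~\ref{prop:ambient-edge-neighborhoods} supplies the summed ambient neighborhoods at relative-interior and face points, Proposition~\ref{prop:specialization} guarantees that no face is left untreated, and compactness of $\mathfrak K$ together with the finite sum $\sum_i N_{\zeta_i}$ gives the bound. Your last paragraph is redundant, because $\mathfrak K$ is built over the whole closed resolved parameter ball and the compact-fiber openness argument is already carried out inside Proposition~\ref{prop:ambient-edge-neighborhoods}. Its closing sentence is also wrong and should be dropped: the parameter projections of the $\mathcal O_{\zeta_i}$ are neighborhoods of different base parameters, so their intersection need not contain the base field, whereas the bound in fact holds on the entire fixed ball over which $\mathfrak K$ was built.
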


\begin{remark}
The compact normalized carrier covers the closed resolved five-parameter ball
and every physical phase endpoint.  Near a meeting face we may sum the bounds
from finitely many incident theorem neighborhoods, while the half-open
priority represents its physical lift once.  Collapse, passive, exit, and
identity members contribute no isolated cycle themselves.
\end{remark}

\begin{proof}
Proposition~\ref{prop:ambient-edge-neighborhoods} supplies precisely such a
product neighborhood at every interior point and proper face, with the
finite incident constants already summed.  This numerical enlargement
controls every side of a face; it does not change the unique physical regime
assigned to $\zeta$.  Proposition~\ref{prop:specialization} ensures that no
untreated face remains.  Compactness now gives
\begin{equation}
 \mathfrak K\subset\mathcal O_{\zeta_1}\cup\cdots\cup
 \mathcal O_{\zeta_A}.
\label{eq:legacy-5-5}
\end{equation}
Order this cover and assign each lift to its first cover member.  The assigned
sets need not be connected or analytic, because the bound on an open
neighborhood restricts to every subset.  For each chart--itinerary pair,
\begin{equation}
 M_{\chi,\omega}\le\sum_{a=1}^{A}N_a<\infty.
\label{eq:legacy-5-6}
\end{equation}
This is the numerical compactness step; finiteness of the itinerary labels alone
would not imply \eqref{eq:legacy-5-6}.
\end{proof}

\begin{proof}[Assembly of Theorem~\ref{thm:normalized-main} from the local zero theorems]
Theorem~\ref{thm:finite-words} and
Proposition~\ref{prop:exact-once} assign every isolated cycle in $U$ to one
full-return itinerary exactly once.  The source and exact mixed regimes are treated by
Theorems~\ref{thm:part-ii-source-zero} and
\ref{thm:part-ii-mixed-zero}.  The positive-scale regimes are treated by
Theorems~\ref{thm:part-iii-hyperbolic-zero},
\ref{thm:part-iii-one-central-zero},
\ref{thm:part-iii-two-central-no-pp},
\ref{thm:part-iii-strict-zero},
\ref{thm:part-iii-middle-zero}, and
\ref{thm:part-iii-root-zero};
Theorem~\ref{thm:two-central-physical-completion} proves that the two-central
geometric alternatives reach this list with no residual affine case and with
the landing split retained in the same physical product tube.
Proposition~\ref{prop:regime-assignment} then makes the resulting regime
partition exhaustive and disjoint.
Theorem~\ref{thm:compact-analytic}
handles bounded analytic itineraries,
Proposition~\ref{prop:ambient-edge-neighborhoods} supplies an open
original-parameter neighborhood at every proper edge, and
Proposition~\ref{prop:specialization} makes the edge assignment terminating
and confluent.  Thus Proposition~\ref{prop:conditional-assembly} applies.

Write the resulting disjoint partition as
\begin{equation}
 \mathfrak P_{\rm src}\sqcup\mathfrak P_{\rm str}
 \sqcup\mathfrak P_{\rm mid}\sqcup\mathfrak P_{\rm rt}
 \sqcup\mathfrak P_{\rm mix}\sqcup\mathfrak P_{\rm reg}
 \sqcup\mathfrak P_{\partial}.
\label{eq:legacy-5-7}
\end{equation}
Here the regular class contains compact analytic, all-hyperbolic,
one-central, and two-central no-pp relative interiors; the boundary class
contains proper specializations after their terminal theorem regime is fixed.
For each class sum the finitely many numbers \eqref{eq:legacy-5-6}, assigning zero to
identity, collapse, passive, and exit contributions.  Exact-once counting
then gives
\[
 N_{H14^3}(\lambda;U)
 \le B_{\rm src}+B_{\rm str}+B_{\rm mid}+B_{\rm rt}
      +B_{\rm mix}+B_{\rm reg}+B_{\partial}
 =:B_{H14^3}<\infty
\]
for every sufficiently small value of all five original parameters.  The
collar $U$ and the parameter neighborhood were fixed before this finite sum,
so the bound is locally uniform.  This proves
Theorem~\ref{thm:normalized-main} with $U_*=U$.
\end{proof}

\begin{proof}[Proof of Theorem~\ref{thm:main}]
Let $U_*$, $\Lambda$, and $B_{H14^3}$ be supplied by
Theorem~\ref{thm:normalized-main}.  Choose a second fixed two-sided annulus
$U$ around $\GammaH$ with
\begin{equation}
 \overline U\subset U_*.
\label{eq:nested-quadratic-collars}
\end{equation}
Affine maps near the identity extend projectively to the Poincar\'e sphere,
and the extensions converge uniformly to the identity.  By
\eqref{eq:nested-quadratic-collars}, shrink the neighborhood $\mathcal G$ in
Proposition~\ref{prop:quadratic-local-slice} so that
\begin{equation}
 \Phi(\overline U)\subset U_*
 \qquad\text{for every }(\Phi,\rho)\in\mathcal G.
\label{eq:uniform-collar-transport}
\end{equation}
Shrink $\mathcal V_0$ further so that the parameter $\lambda$ in its unique
slice representation lies in $\Lambda$, and call the resulting open
coefficient neighborhood $\mathcal V$.

For $Y\in\mathcal V$, write
$Y=\rho\Phi^*X_\lambda$.  The map $\Phi$ injects the isolated limit cycles of
$Y$ in $U$ into those of $X_\lambda$ in $U_*$ and, by
Proposition~\ref{prop:quadratic-local-slice}, preserves their multiplicities.
Hence
\[
 N_{\mathcal Q_2}(Y;U)
 \le \NH(\lambda;U_*)
 \le B_{H14^3}.
\]
The three data $U$, $\mathcal V$, and $B_{H14^3}$ are independent of $Y$;
this proves the theorem in the full quadratic coefficient space.
\end{proof}

\clearpage
\appendix
\section{Signed chart and endpoint variants}
\label{app:chart-endpoint-variants}

We collect here the signed compactification formulas and the auxiliary
endpoint identities used in Proposition~\ref{prop:finite-singular-alphabet}.
The orientation is the one fixed in Section~\ref{sec:part-i-graphic}.
Only one chart is derived in detail; the remaining signed variants follow by
the displayed substitutions.  The construction of physical passages and the
proof that these charts exhaust the counted collar remain in
Sections~\ref{sec:part-i-graphic}--\ref{sec:part-i-atlas}.

Write
\[
 a=\mu _4+B\mu _5,\qquad c=(1-2B)\mu _5,
 \qquad d=\mu _3,\qquad m=\mu _2,
\]
and
\[
 P=-y+Bx^2+my^2+ax,\qquad
 Q=x(1+y)+dy^2+cy.
\]

\subsection{Representative derivation and signed formulas}

In the positive horizontal chart put \(x=r^{-1}\), \(y=z-1\), and use
\(d/d\tau=r\,d/dt\).  Since \(dr/dt=-r^2P\), direct substitution gives
\[
 \frac{dr}{ d\tau}
 =-r\{B+ar+r^2[1-z+m(z-1)^2]\},
\]
whereas \(dz/d\tau=rQ\) gives
\[
 \frac{dz}{ d\tau}=z+r[d(z-1)^2+c(z-1)].
\]
Restricting the radial factor to the analytic transverse nullcline produces
the prepared germ
\begin{equation}
 P_+(r,\lambda)=B+ar+r^2U_+(r,\lambda),
 \qquad U_+(0,0)\ne0.
 \label{eq:app-positive-endpoint-preparation}
\end{equation}
This is the representative derivation.

\paragraph{Signed chart formulas.}
The reflected substitution and the upper directional substitution are
recorded in the following finite list:
\begin{equation}
\begin{aligned}
x=1/r, y=z-1:\qquad
 \dot r&=-r\{B+ar+r^2[1-z+m(z-1)^2]\},\\
 \dot z&=z+r[d(z-1)^2+c(z-1)];\\[1mm]
x=-1/r, y=z-1:\qquad
 \dot r&=r\{B-ar+r^2[1-z+m(z-1)^2]\},\\
 \dot z&=-z+r[d(z-1)^2+c(z-1)];\\[1mm]
y=1/r, x=w/r:\qquad
 \dot r&=-r[w+d+r(w+c)],\\
 \dot w&=m-dw+(B-1)w^2+r[-1+(a-c)w-w^2].
\end{aligned}
\label{eq:app-signed-chart-formulas}
\end{equation}
The first two rows use the same physical root \(r=0\), with opposite
transverse orientation.  Their radial preparations are
\[
 P_+(r,\lambda)=B+ar+r^2U_+(r,\lambda),\qquad
 P_-(r,\lambda)=B-ar+r^2U_-(r,\lambda).
\]
Thus a simple root, a double root, and the critical \(B=a=0\) layer are kept
as distinct labels.  No full parameter block is replaced by an \(r^3\) unit.
In the upper chart the equatorial gate equation is the literal quadratic
\[
 E(w,\lambda)=m-dw+(B-1)w^2,
\]
so it contributes at most two equatorial roots.  Direct symbolic expansion
checks the three rows of \eqref{eq:app-signed-chart-formulas} and the source
first integral.  The directional endpoint rows in the overlap cover follow
by the same elementary substitutions.

At the source, \(H=x^2/2+y-\log(1+y)\) satisfies \(X_0H=0\).  This identity
fixes the labelled limiting ovals but is not evaluated on perturbed points
with \(1+y\le0\).  Transversality, entry/exit orientation, and exact-once
stopping are supplied by Section~\ref{sec:part-i-atlas}, not by the
cancellation \(X_0H=0\).

\subsection{Auxiliary pp and endpoint scale identities}

Put \(u=\log(1+y)\), \(g=e^u-1\), and
\[
 W(u)=\frac{e^{2u}}{2}-2e^u+u+\frac{3}{2}.
\]
Then \(W'=g^2\).  With \(H_u=g\), \(H_x=x\),
\[
 H_1=xg+\frac{x^3}{3},\qquad
 \omega_r=x\,du+g\,dx,
\]
one has the exact cohomological relation
\[
 dH_1=\omega_r+x\,dH.
\]
The same direct calculation verifies the pp basis bridge and the equality of
the second-order time integrands.  These are algebraic identities on source
ovals; they do not by themselves give a common perturbed return domain.

For the endpoint three-scale comparison let
\(\mathcal D F=qF\), \(\mathcal D\rho=-\rho\), and write
\(q_1:=\mathcal D q\).  The Wronskian of \(1,F,F\rho\) is exactly
\begin{equation}
 W_3=F^2\rho\{q_1+q(1-q)\}.
 \label{eq:app-endpoint-wronskian}
\end{equation}
For a \(D_2\) factor with \(q=\nu+2\kappa/r^2\), the bracket in
\eqref{eq:app-endpoint-wronskian} has leading coefficient
\(-4\kappa^2/r^4\).  Finally, in source endpoint coordinates,
\[
 e^{-H}=e\,z e^{-z}e^{-1/(2r^2)}.
\]
These identities can also be checked by direct symbolic expansion.  The joint
parameter/section uniformity of the true passage is proved separately in
Appendix~\ref{app:endpoint-joint-uniformity}.

\subsection{A numerical consistency check}

As a consistency check, we numerically integrate the exact physical source
field on the section \(x=0\).  Writing the full physical return as
\(P(s;B,m,a,c,d)\), we keep \(B,m,a,c,d\) fixed when taking finite
differences in \(L=-\log s\), and record
\[
 E_0=\frac{P|_{d=c}}{ s},\qquad
 V=\frac{P-P|_{d=c}}{ d-c},\qquad
 W=\frac{V}{1+E_0}.
\]
The finite sample ranges over several scaled parameter directions and values
of \(s\).  It is useful for detecting sign, scaling, and fixed-parameter
implementation errors, but it is not interval arithmetic and is not used to
prove exhaustiveness, uniformity, a limiting exponent, or a zero count.

\subsection{Role of the computations}

The computer algebra checks only the polynomial substitutions, source
cancellations, cohomological identities, basis bridge, Wronskian, and source
\(D_2\) boundary coordinate displayed above.  It does not choose physical
sections, prove coverage of the collar, establish a transverse normal,
classify a first-contact outcome, preserve the cut order, or show that every counted
orbit enters one listed chart.  Those assertions are proved in the main text;
the accompanying reproducibility package contains the code and complete
numerical data.

\section{The finite source six-jet recurrence}
\label{app:source-six-jet}

This appendix records the finite recurrence underlying
Theorem~\ref{thm:part-ii-six-jet}.  It is formulated on the common physical
source domain constructed in Section~\ref{sec:part-ii-six-jet}, with its
moving sections, cuts, hits, and positive gate margins \(C_\pm\).  The
recurrence controls every derivative of total order at most six.  Existence
of the physical source itinerary and the eventual zero count are proved in
Sections~\ref{sec:part-ii-center}--\ref{sec:part-ii-matched-proof}, not by the
combinatorial enumeration below.

The order six is fixed analytically before this enumeration.  The main text
uses the rectangular budget \(\mathcal Q_{4,2}\): at most four derivatives in
the action and normalized variables, followed by at most two
fixed-original successors \(\mathfrak D\).  The former cover the moving
evaluation, inverse/Hadamard operations, and center contractions; the latter
are the two derivatives in the curvature test.  Hence every requested word
has total order at most six, and no downstream formula requests order seven.
For the quotient by the forced parameter \(k\), the one bare
\(\partial_k\) row is used only with the triangular norm
\(\max_{j\le2}\|\mathfrak D^j\widetilde T\|_{C_e^{4-j}}\); it is not
stacked on the \((4,2)\) corner.  Thus those integral words have order at most
five, while the undivided rectangular \(4+2\) package has order six.
The statement is not that every conceivable proof needs six derivatives;
order five simply does not close the \(4+2\) norm used here.

\subsection{Finite alphabet and closure operations}

\subsubsection{Registered primitive maps}

For auditability, the \(35\) typed primitives are grouped below.  Each name
corresponds to a literal graph, flow, hit, reciprocal, composition, or inverse
formula; there is no unspecified remainder class.
\begin{itemize}
\raggedright
\sloppy
\item \emph{Lower core:}
\texttt{LC\_FORCED}, \texttt{LC\_UNFORCED}, \texttt{LC\_JRATIO},
\texttt{LC\_ASTAR}, \texttt{LC\_KERNEL}, and
\texttt{LC\_MOVING\_CUT}.
\item \emph{Action and section coordinates:}
\texttt{ACT\_OUTER}, \texttt{ACT\_INNER}, \texttt{LABEL\_OUTER},
\texttt{LABEL\_INNER}, and \texttt{LABEL\_INVERSE}.
\item \emph{Tails:}
\texttt{TAIL\_H}, \texttt{TAIL\_ENTRY}, \texttt{FIXED\_TAIL},
\texttt{TAIL\_DELTA}, and \texttt{TAIL\_RATIO}.
\item \emph{Upper layers and compact bulk:}
\texttt{LAYER\_PRINCIPAL}, \texttt{LAYER\_UNFORCED},
\texttt{LAYER\_FORCED}, \texttt{BULK}, \texttt{BULK\_HIT}, and
\texttt{REGULAR\_HIT}.
\item \emph{Successor maps:}
\texttt{ACTION\_INVERSE}, \texttt{GAMMA\_SUCCESSOR}, and
\texttt{SOURCE\_SUCCESSOR}.
\item \emph{Source-action compositions:}
\texttt{SAL\_OUTER}, \texttt{SAL\_INNER}, \texttt{SAL\_ENTRY},
\texttt{SAL\_LAYER}, \texttt{SAL\_BULK},
\texttt{SAL\_RETURN\_HIT}, \texttt{SAL\_ACTION\_MAP},
\texttt{SAL\_ACTION\_INVERSE}, \texttt{SAL\_UPPER\_MAP}, and
\texttt{SAL\_FULL\_COMPOSITION}.
\end{itemize}
The machine-readable registry records one row for each canonical derivative
multiindex and a summary of the primitive count, dependency depth, and
completion status.  These data verify the finite enumeration; the weighted
analytic estimates below are what attach mathematical bounds to its rows.

\subsubsection{Derivative recurrence}

The seven commuting graph directions are
\[
 \theta\partial_\theta,\quad \partial_k,\quad \partial_e,
 \quad\partial_{\beta_B},\quad\partial_{\beta_m},
 \quad\partial_{\beta_a},\quad\partial_{\beta_c}.
\]
The primitive types are the lower fixed-initial and fixed-terminal cores
$(\mathrm{LC},\mathrm{LT})$, fold-transverse tails $\mathrm{FT}$, outer action
and inverse graphs $(\mathrm{AO},\mathrm{AI})$, upper layers and compact bulk
$(\mathrm{UL},\mathrm{UB})$, moving hits and cuts $(\mathrm{MH},\mathrm{MC})$,
and finite compositions/inverses $\mathrm{CP}$.  Splitting these types by
sign, forced/unforced status, graph orientation, and terminal role gives
exactly 35 typed nodes.  Their dependency graph has depth at most seven.

For a labelled derivative word $I$, exact differentiation of a graph system
$Y'=\mathcal V(Y,p)$ gives
\begin{equation}
 (D_IY)'=\mathcal V_YD_IY+\mathcal V_pD_Ip
 +\sum_{\substack{\pi\in\Pi(I)\\|\pi|\ge2}}
 D^{|\pi|}\mathcal V[Z_A:A\in\pi],
 \label{eq:app-source-bell}
\end{equation}
where $\Pi(I)$ is the set of labelled set partitions and every $Z_A$ has
strictly smaller order.  The same formula applied to $qq^{-1}=1$ gives the
reciprocal rows.  A moving hit $T(p)$ defined by
$\Psi(T(p),p)=0$ satisfies
\begin{equation}
 T_I=-\frac{\mathcal B_I}{\Psi_T},
 \label{eq:app-source-hit}
\end{equation}
with $\mathcal B_I$ formed from lower-order hit and flow derivatives.  At a
moving integration cut, ordinary Leibniz differentiation adds the endpoint
rows; none is discarded as ``flat'' before its weighted estimate is proved.

The polynomial reserve is generated by
\begin{equation}
 p_0=8,\qquad p_{n+1}=(n+2)p_n+8.
 \label{eq:app-source-reserve}
\end{equation}
Indeed, if an order-$n$ lower word costs at most $r_n=p_n-4$, then a Bell
term with $b\le n+1$ blocks costs
\[
 br_n+8(b+1)\le(n+2)p_n+4=r_{n+1}.
\]
This is a closure estimate, not merely a count of formal words.  The only
nonunit denominators are fixed graph normals, $Y+\theta^2$, and the two gate
factors $C_\pm$.  Through order six the recurrence produces at
most 14 inverse gate factors.  The declared reserve is the larger value
$q_6=448$, and the differentiated wedge gives
\[
 C_+^{-q_+}C_-^{-q_-}e^{-3/(8\theta^2)}
 \le C e^{-17/(64\theta^2)},
 \qquad q_++q_-\le448.
\]
Thus every inverse row is paid by a displayed exponentially flat tail.

The fixed-terminal tail illustrates why moving data belong to the alphabet.
With $u=(1-r)u_{\varepsilon,*}$ and $z=e^{-u/\theta^2}$, the augmented state
contains $(u,X,\tau,I_0,\mathcal A_T)$ and starts at
\[
 (u_{\varepsilon,*},\varepsilon,0,0,A_{\varepsilon,*}).
\]
Consequently a nonempty parameter word starts with
\[
 (D_Iu_{\varepsilon,*},0,0,0,D_IA_{\varepsilon,*}),
\]
so the moving terminal clock and entry action are not silently frozen.  The
weight
\[
 (u,X,\theta\tau,
   \theta^3e^{3/(8\theta^2)}I_0,\mathcal A_T)
\]
reduces the tail equations to bounded $L^1$ coefficient mass, exactly as in
the proof of Theorem~\ref{thm:part-ii-six-jet}.

\subsection{A representative estimate and the finite count}

For example, the first nontrivial forced lower-core derivative has total
order one, normalized numerator degree \(11\), fixed denominator power \(4\),
bounded-atom count \(11\), and carrier
\[
 \theta^{-21}(1+x)^{16}(z+s|k|).
\]
This representative estimate follows from the lower flow recurrence and its
fixed denominator margins.

Canonicalization uses the commutativity of the seven graph generators and
retains the ordered multiplicity of each canonical word.  Across all nodes
and orders zero through six, the result is 167115 canonical commuting words.
The resulting finite family has maximum dependency depth $7$, numerator
degree $28$, fixed
denominator power $17$, and bounded-atom count $28$.  Every primitive in an
actual source itinerary reaches a named node by one of
\eqref{eq:app-source-bell}, \eqref{eq:app-source-hit}, moving-cut Leibniz,
variation of constants, reciprocal differentiation, or finite composition.
Thus no additional primitive is required.

\subsection{Computer verification of the enumeration}

Physical exhaustiveness follows from the common physical graph cover, the
lower/fold/upper passage decomposition, fixed section normals, and the
first-loss analysis proved in Section~\ref{sec:part-ii-action}.  A finite computer check verifies the
combinatorial consequences of that proof: registration
of the 35 nodes, dependency acyclicity and depth, all canonical words through
order six, partition and inverse bookkeeping, denominator assignment, carrier
labels, and the declared worst exponents.

The computation does not prove the existence of a stopped orbit,
fold transversality, the Gaussian kernel estimate, gate positivity, the
exponential reserve, source localization, or the two-step Rolle argument.
Those are mathematical arguments in
Sections~\ref{sec:part-ii-six-jet}--\ref{sec:part-ii-matched-proof}.  The
complete enumeration and its verification data are included in the
accompanying reproducibility package rather than printed here.

\section{Focal recurrence and the reduced center ideal}
\label{app:center-bautin}

We give the finite rational calculation behind the center set and the
reduced Bautin ideal used in Theorem~\ref{thm:part-ii-center-ideal}.  The
calculation starts from the trace-zero normalized quadratic field, identifies
the two center components, and then uses their global first integrals to
justify division on the common physical return domain.  The last point is
essential: the local focal recurrence alone does not show that a return map
is defined on a complete center annulus.

\subsection{The degree-four and degree-six obstructions}

Write the normalized field as
\[
 u'=-v+Au^2+Cuv+Dv^2,
 \qquad
 v'=u+Euv+Fv^2.
\]
For
$V_n=\sum_{j=0}^n c_{n,j}u^{n-j}v^j$ and
$G_n=(Q_1\partial_u+Q_2\partial_v)V_{n-1}
     =\sum_jg_{n,j}u^{n-j}v^j$, direct coefficient comparison gives
\begin{equation}
\begin{aligned}
g_{n,j}={}&[A(n-1-j)+Ej]c_{n-1,j}\\
&+[C(n-j)+F(j-1)]c_{n-1,j-1}
  +D(n-j+1)c_{n-1,j-2}.
\end{aligned}
\label{eq:app-bautin-coeff}
\end{equation}
The homological equation is
\begin{equation}
\begin{aligned}
0={}&(j+1)c_{n,j+1}-(n-j+1)c_{n,j-1}+g_{n,j}\\
&-\mathbf1_{\{n,j\ \mathrm{even}\}}
  {\binom{n/2}{j/2}}L_{n/2-1}.
\end{aligned}
\label{eq:app-bautin-homological}
\end{equation}
For odd $n$ the system is invertible.  For even $n$, the gauge $c_{n,0}=0$
fixes the coefficients and leaves one radial obstruction.  At degree four,
one obtains the representative identity
\begin{equation}
 8L_1=AC+CD+2DF-EF.
 \label{eq:app-bautin-l1}
\end{equation}
At degree six, if $g_{6,j}$ are the coefficients from
\eqref{eq:app-bautin-coeff}, angular averaging gives
\[
 L_2=\frac1{16}(5g_{6,0}+g_{6,2}+g_{6,4}+5g_{6,6}).
\]
The expanded universal numerator has 30 monomials.  The expansion is useful
for exact symbolic regression, but the logical mechanism is the finite
recurrence \eqref{eq:app-bautin-coeff}--\eqref{eq:app-bautin-homological}.

Return now to the H14 parameters and put $e=d+a$ and $t_c=B+m$.  After a
positive unit is removed, the first obstruction is
\begin{equation}
 e(2a^2+2t_c-2B-1)-2ae^2+a(2B-1)t_c=0.
 \label{eq:app-bautin-first}
\end{equation}
Its derivative with respect to $e$ is nonzero at the source.  Its analytic
root $e=\psi(a,t_c,B)$ vanishes on both $a=0$ and $t_c=0$, hence two
one-variable Hadamard integrals give $\psi=at_cV$.  Substitution in the
degree-six recurrence vanishes on the same two slices and yields
\begin{equation}
 L_2=a(B+m)U(a,B,m),
 \qquad U(0)=\frac1{48}.
 \label{eq:app-bautin-l2}
\end{equation}
The coefficient $1/48$ is the degree-two term after imposing the first
obstruction, so $U$ is a unit.  This slice argument is the human proof of
exact divisibility; a quadratic-jet calculation alone would not suffice.

The transversality of the two obstruction directions is also explicit.  If
(u=a(B+m)) and (D) denotes differentiation in the coordinates ((e,u)),
then the derivative of the degree-four obstruction with respect to (e) is
(-1) at the source, while the coefficient of (u) in the reduced
degree-six obstruction is (1/48).  Consequently
\begin{equation}
 \det D(L_1,L_2)(0)=-\frac1{48}\ne0.
 \label{eq:app-center-determinant}
\end{equation}
This finite determinant identifies the two transverse focal directions.  The
claim that these directions generate the physical center ideal still uses the
two global center identities and the common return domains below.

\subsection{Center components, ideal, and complete domains}

The two branches are
\[
 \mathcal C_R=\{\tau=0,a=0,d=0\},
 \qquad
 \mathcal C_Q=\{\tau=0,t_c=0,d+a=0\}.
\]
On $\mathcal C_R$ the field is reversible.  On $\mathcal C_Q$, the polynomial
$K_Q$ displayed in the proof of
Theorem~\ref{thm:part-ii-center-ideal} gives the inverse integrating factor
\[
 \mathcal V_Q=\frac{(1+y)K_Q}{a^2-1},
 \qquad X(\mathcal V_Q)=(\operatorname{div}X)\mathcal V_Q,
 \qquad \mathcal V_Q(0,0)=1.
\]
Thus both necessary branches are centers.  In coordinates
$(\tau,\ell,a,t_c,B)$ their union has reduced ideal
$(\tau,\ell,at_c)$.  Since $d=\ell+\tau-a$,
\[
 at_c=\tau t_c+\ell t_c-dt_c,
\]
and therefore
\begin{equation}
 \mathcal I_{\mathcal C}=(\tau,\ell,d(B+m)).
 \label{eq:app-center-ideal}
\end{equation}

The algebraic ideal is not yet a license to divide a return map.
Section~\ref{sec:part-ii-center}
constructs a common itinerary domain star-shaped under the successive contractions
in $\tau$, $a$, and $t_c$.  On that domain the full return is the identity on
the two complete center slices, so integral Hadamard division yields the
decomposition stated in Theorem~\ref{thm:part-ii-center-ideal} without
continuing in $\ell$ or dividing a finite-smooth normalizer.

Completeness of the slices is checked by explicit global objects.  On
$\mathcal C_R$, the first integral is
\[
 H_R(x,z)=\frac12 z^{-2B}x^2+V_R(z),
 \qquad
 V_R'(z)=z^{-2B-1}\{(z-1)-m(z-1)^2\}.
\]
The only additional finite critical point is $S_R=(0,1+1/m)$.  The exact
barrier difference displayed after Theorem~\ref{thm:part-ii-center-ideal}
controls the comparison with $z=0$.  On $\mathcal C_Q$, the component of
$\{\mathcal V_Q\ne0\}\cap\{y>-1\}$ containing the origin carries the
analytic first integral; its only additional finite singularity is
$S_Q=(-a/B,-1/B)$ on $K_Q=0$.  These barriers identify the connected section
interval on which each center return is the identity.  They provide the
common physical domain required before the division above.

\subsection{Computer algebra checks}

Computer algebra verifies the generator bridge, the Darboux cofactor
identities, the degree-four and degree-six recurrences, vanishing on both
parameter branches, the coefficient \(U(0)=1/48\), and the displayed
first-integral, barrier, and invariant-conic identities.  These finite checks
do not prove that the two slices exhaust the physical center domains, that
the stopped return is defined on a common star-shaped tube, or that Hadamard
division is valid there.  Those assertions are established by the geometric
arguments in Section~\ref{sec:part-ii-center} and above.  The code and complete
outputs are included in the accompanying reproducibility package.

\section{Endpoint joint uniformity and two-central incidence}
\label{app:endpoint-two-central}

We give two calculations that would otherwise interrupt the main proof: the
simultaneous root and section degeneration at either semihyperbolic endpoint,
and the first-hit analysis for a retained two-central itinerary.  Starting
from the endpoint preparations in Section~\ref{sec:part-i-boxes} and the
complete-lips coordinates in Section~\ref{sec:part-iii-central}, we obtain a
finite exact-clock cover and the physical face classification used in
Theorem~\ref{thm:part-iii-two-central-exhaustion} and the product-tube input
to Theorem~\ref{thm:two-central-physical-completion}.

\subsection{Joint endpoint uniformity}
\label{app:endpoint-joint-uniformity}

\subsubsection{Physical preparation and the model coefficient bridge}

After the local time reversal required at one endpoint, write the physical
field as
\[
 X_\lambda=F(r,z,\lambda)\partial_r+G(r,z,\lambda)\partial_z.
\]
The transverse equation has a unique analytic nullcline
\(z=h(r,\lambda)\), and the radial numerator on that nullcline is
\[
 N^{\rm ph}_\pm(r,\lambda)
 =r\{B\mathcal U_1+(a\mathcal U_2+B\mathcal U_3)r
       +\mathcal C r^2\},
 \qquad
 \mathcal U_1(0)\mathcal U_2(0)\mathcal C(0)\ne0.
\]
Thus \((B,a)\) is locally equivalent to the first two prepared
coefficients.  The root-incidence cover has at most three lifts over one
original parameter.

For a prescribed finite order \(K\), the DIR saddle-node normal form gives
fibered \(C^K\) orbital coordinates
\[
 \dot x=q_\alpha(x),\qquad \dot y=-y,\qquad
 q_\alpha(x)=b_0+b_1x+\frac{x^3}{1+\alpha x^2}.
\]
Let \(\Phi_\lambda\) be the orbital equivalence and \(\tau_\lambda\) its
time unit.  Restricting only the first component of
\(D\Phi_\lambda X_\lambda
=\tau_\lambda(q_\alpha\partial_x-y\partial_y)\circ\Phi_\lambda\)
to the physical nullcline gives
\[
 q_\alpha(\phi_\lambda(r))
 =v(r,\lambda)N^{\rm ph}_\pm(r,\lambda),
 \qquad v\ne0,\quad \phi_\lambda'\ne0.
\]
Consequently the model and physical prepared coefficients are related by a
triangular unit matrix:
\[
 \begin{pmatrix}A_\xi\\B_{1,\xi}\\C_\xi\end{pmatrix}_{\rm mod}
 =
 \begin{pmatrix}
 u_{11}&0&0\\u_{21}&u_{22}&0\\u_{31}&u_{32}&u_{33}
 \end{pmatrix}
 \begin{pmatrix}a_\xi\\b_\xi\\c_\xi\end{pmatrix}_{\rm ph},
 \qquad u_{11}u_{22}u_{33}\ne0.
\]
This bridge is why all root sheets and discriminants are resolved first in
the analytic physical preparation, rather than in coefficients created by a
finite-smooth normalizer.

For one labelled root \(\xi\), put \(s=x-\xi\).  If
\[
 N_\alpha(x)=b_0+b_1x+\alpha b_0x^2+(1+\alpha b_1)x^3,
\]
Taylor's formula is exact:
\[
 N_\alpha(\xi+s)
 =s(A_\xi+B_{1,\xi}s+C_\xi s^2),\qquad
 C_\xi=1+\alpha b_1.
\]
Thus the same finite cover contains a simple root
\(A_\xi\ne0\), a \(D_1\) root
\(A_\xi=0,\ B_{1,\xi}\ne0\), and a \(D_2\) root
\(A_\xi=B_{1,\xi}=0\).

\subsubsection{Exact clock and the mixed root/section cover}

On a retained one-sided component the exact model clock is
\begin{equation}
 \Theta_\xi(s,\alpha)=
 \int_{s_*}^{s}
 \frac{U_\xi(v,\alpha)}{
  v(A_\xi+B_{1,\xi}v+C_\xi v^2)}\,dv,
 \label{eq:app-endpoint-clock}
\end{equation}
and \(y e^{\Theta_\xi}\) is a first integral of the model.  The joint
degeneration \((s,B_{1,\xi},A_\xi)\to0\) is resolved with weights $(1,1,2)$.
The finite signed cover consists of
\[
\begin{array}{ll}
s=\rho S,\ B_{1,\xi}=\pm\rho,\ A_\xi=\rho^2\bar A,
 &s=\rho S,\ B_{1,\xi}=\rho\bar B_1,\ A_\xi=\pm\rho^2,\\[1mm]
B_{1,\xi}=s\bar B_1,\ A_\xi=s^2\bar A
 &\text{(phase-dominant chart).}
\end{array}
\]
Here \(\rho\) is constant on each original-parameter fiber.  In the
phase-dominant chart the fixed-fiber lift of \(s\partial_s\) is
\[
 \mathcal L
 =s\partial_s-\bar B_1\partial_{\bar B_1}
  -2\bar A\partial_{\bar A},
\qquad
 \mathcal L\Theta_\xi
 =\frac{U_\xi}{ s^2(\bar A+\bar B_1+C_\xi)}.
\]
After the finite normalized quadratic is split into sign/root cells, the
last denominator is a unit.  On a phase-dominant cone,
\[
 |A_\xi|\le\varepsilon v^2,\qquad
 |B_{1,\xi}|\le\varepsilon v,
\]
and hence
\[
 \frac{c_1}{ v^3}\le
 \left|\frac{U_\xi(v)}{
 v(A_\xi+B_{1,\xi}v+C_\xi v^2)}\right|
 \le \frac{c_2}{ v^3}.
\]
Integration gives a uniform \(D_2\) comparison
\[
 -\frac{C_2}{ s^2}+C_3
 \le \sigma_\chi\Theta_\xi(s)
 \le -\frac{C_1}{ s^2}+C_4
\]
on the retained orientation.  A zero of the normalized quadratic is not
discarded: it is relabelled as a simple-root chart, a \(D_1\) chart, or a
named boundary/no-passage face.  This proves that the cover is joint in
section and parameter variables rather than an iterated limit.

For reference, integrating \eqref{eq:app-endpoint-clock} gives the three
specializations
\[
\begin{array}{c|c}
A_\xi\ne0&|s|^\nu\text{ times a nonvanishing factor},\\
A_\xi=0,\ B_{1,\xi}\ne0&
|s|^\nu e^{-\kappa_1/s}\text{ times a nonvanishing factor},\\
A_\xi=B_{1,\xi}=0&
|s|^\nu e^{-\kappa_2/s^2-\kappa_1/s}
\text{ times a nonvanishing factor}.
\end{array}
\]
Indeed, for \(U=u_0+u_1s+u_2s^2\), the \(D_1\) partial fractions are
\[
\begin{split}
 \int \frac{U\,ds}{ s^2(B_{1,\xi}+C_\xi s)}
={}&-\frac{u_0}{ B_{1,\xi}s}
 +\frac{B_{1,\xi}u_1-C_\xi u_0}{ B_{1,\xi}^2}\log|s|\\
 &+\frac{B_{1,\xi}^2u_2-B_{1,\xi}C_\xi u_1+C_\xi^2u_0
    }{ B_{1,\xi}^2C_\xi}
   \log|B_{1,\xi}+C_\xi s|,
\end{split}
\]
up to a constant.  At \(D_2\),
\[
 \int \frac{U\,ds}{ C_\xi s^3}
 =-\frac{u_0}{2C_\xi s^2}-\frac{u_1}{ C_\xi s}
   +\frac{u_2}{ C_\xi}\log|s|.
\]
At the H14 source endpoint this yields \(e^{-1/(2s^2)}\), up to the named
regular coordinate factors.

The physical first hit is the composition
\[
 R_{\rm out}\circ D_\xi\circ R_{\rm in},
\]
not the bare exponential.  The regular maps are recorded separately, and
overlapping weighted charts project to the same physical first hit.  Direct
symbolic calculation verifies the cubic root translation, the displayed
\(D_1\) partial fractions, the \(D_2\) primitive, and the source first
integral.  DIR applicability, the physical-to-model transfer, fixed-fiber
derivative bounds, overlap compatibility, inverse-flat avoidance, and the
uniform exponential comparison are proved in the text.

\subsubsection{Uniform estimates through cofactor-root collisions}

The weighted cover above must remain uniform when the two roots of the
quadratic cofactor collide.  After division by its positive quadratic
coefficient, write
\begin{equation}
 Q(s)=s^2+c_1s+c_0,
 \qquad Q(s)>0\quad(0<s\le s_*).
 \label{eq:app-endpoint-quadratic}
\end{equation}
The retained component contains no second unnamed positive root.  On a finite
signed cover take \(c_0\ge0\), fix \(M>2\), and let \(V\) be the positive
time unit in the physical-to-model bridge, with \(0<v_0\le V\le v_1\).  Put
\begin{equation}
 D(s,\alpha)=e^{-\Phi(s,\alpha)},\qquad
 \Phi(s,\alpha)=\int_s^{s_*}
       \frac{V(v,\alpha)}{vQ(v,\alpha)}\,dv.
 \label{eq:app-endpoint-factor}
\end{equation}

\paragraph{Balanced collision away from the positive double root.}
If \(|c_1|\le M\sqrt{c_0}\), write
\begin{equation}
 c_0=\delta^2,\qquad c_1=\varkappa\delta,\qquad
 Q(s)=s^2+\varkappa\delta s+\delta^2.
 \label{eq:app-endpoint-balanced}
\end{equation}
On every closed cell \(\varkappa\ge-2+g_0^2\), with fixed \(g_0>0\), one has
\(Q(s)\asymp s^2+\delta^2\).  Consequently
\begin{equation}
 \Phi\ge cs^{-2}\quad(s\ge\delta),\qquad
 \Phi\ge c\delta^{-2}\left(1+\log\frac{\delta}{s}\right)
       \quad(0<s\le\delta).
 \label{eq:app-endpoint-balanced-margin}
\end{equation}
Every fixed-fiber Euler derivative of \(V/Q\) has only a finite algebraic
loss in \(s^{-1}\) and \(\delta^{-1}\) on these cells.

\paragraph{The discriminant-gap cell.}
The remaining through cell is
\begin{equation}
 \varkappa=-2+g^2,\qquad 0<g\le g_0,\qquad
 Q(s)=(s-\delta)^2+g^2\delta s.
 \label{eq:app-endpoint-gap}
\end{equation}
The face \(g=0,\ s=\delta\) is the positive double-root gate.  A passage
reaching it is stopped and represented in the simple-root, \(D_1\), or
no-passage chart; the through map does not cross this face.  In the bottleneck
band \(\delta/2\le s\le3\delta/2\), set
\begin{equation}
 u=\frac{s-\delta}{\delta}.
 \label{eq:app-endpoint-gap-coordinate}
\end{equation}
Since \(Q(s)\asymp\delta^2(u^2+g^2)\), integration up to \(2\delta\)
gives, for each fixed \(N\),
\begin{equation}
 \Phi(s,\alpha)\ge\frac{c}{\delta^2(|u|+g)},\qquad
 Q(s)^{-N}\le C\delta^{-2N}(|u|+g)^{-2N}.
 \label{eq:app-endpoint-gap-estimate}
\end{equation}
Indeed, after \(v=\delta(1+w)\), the clock contains
\(c\delta^{-2}\int_u^1dw/(w^2+g^2)\); if \(u<0\), it crosses the minimum
of \(Q\) and is larger.  For \(s\ge3\delta/2\), integration on \([s,2s]\)
gives \(\Phi\ge c/s^2\).  For \(0<s\le\delta/2\), the clock contains the
whole bottleneck and the lower interval, whence
\begin{equation}
 \Phi\ge\frac{c}{\delta^2g}
       +\frac{c}{\delta^2}\log\frac{\delta}{s}.
 \label{eq:app-endpoint-gap-lower}
\end{equation}
The exponential therefore controls both phase powers and every prescribed
inverse power of the actual root gap \(g\).  The cell \(\varkappa<-2\) has
positive real cofactor roots and has already been separated into a
simple-root, \(D_1\), or no-passage alternative.

\paragraph{Hierarchical collision from \(D_1\) to \(D_2\).}
If \(c_1>M\sqrt{c_0}\), the roots are real and nonpositive.  Write
\begin{equation}
 Q(s)=(s+e)(s+d),\qquad 0\le e\le\eta d,\qquad 0<\eta<1.
 \label{eq:app-endpoint-hierarchical}
\end{equation}
The three scale intervals give
\begin{align}
 \Phi&\ge cs^{-2}, &&s\ge d,\notag\\
 \Phi&\ge\frac{c}{ds}, &&e\le s\le d,\notag\\
 \Phi&\ge\frac{c}{ed}\left(1+\log\frac e s\right),
       &&0<s\le e.
 \label{eq:app-endpoint-hierarchical-estimate}
\end{align}
The last interval is empty when \(e=0\); the middle estimate then is the
usual \(D_1\) bound \(e^{-c/(ds)}\).  Euler derivatives introduce only finite
powers of \(s^{-1},d^{-1},e^{-1}\) on the intervals where those variables
occur, and the corresponding exponential estimate absorbs them.

For completeness, the identity
\((s\partial_s)D=(V/Q)D\) implies inductively that
\(D^{-1}(s\partial_s)^jD\) is a polynomial in the finite Euler jet of
\(V/Q\).  Combining the three covers above with
\(x^{-N}e^{-c/x}\to0\) gives the joint statement: for every prescribed
\(K,L\), after the endpoint parameter neighborhood is reduced,
\begin{equation}
 \lim_{\varepsilon\downarrow0}
 \sup_{\substack{\alpha\ \text{in the finite endpoint cover}\\0<s<\varepsilon}}
 s^{-L}\bigl|(s\partial_s)^jD(s,\alpha)\bigr|=0,
 \qquad 0\le j\le K.
 \label{eq:app-endpoint-joint-flatness}
\end{equation}
The supremum includes \(g\downarrow0\), the nested limit \(e,d\downarrow0\),
and overlaps with the weighted \((s,B_{1,\xi},A_\xi)\) cover.  Regular changes
of section are unit-equivalent to \(s\), so the estimate returns to the
physical first-hit coordinates.  Keeping the passage as an implicit graph
avoids differentiating the inverse of an exponentially flat map.

\subsection{Two-central incidence and first-hit rigidity}
\label{app:two-central-incidence}

\subsubsection{Finite algebraic family and physical boundary faces}

The two possible internal central gates are the selected nonpersistent
horizontal saddle-node \(S_h\) and the unique upper \(D\)-saddle-node \(S_v\).
The persistent principal endpoint is not counted as a third central gate.
For a signed parameter \(q\ne0\), set \(D=1+q^2\) and consider the exact
transverse family
\[
\begin{gathered}
B=\frac{q^2}{ D},\qquad m=-\frac{q^2}{ D},\qquad
d=\frac{2q}{ D},\\
a=\frac{-2q+q^2s}{ D},\qquad c=\frac{2q+s}{ D}.
\end{gathered}
\]
In \(z=1+y\), put
\[
\begin{split}
F&=1-z+Bx^2+m(z-1)^2+ax,\\
G&=xz+d(z-1)^2+c(z-1),\\
L&=x+qz-q^{-1}+s.
\end{split}
\]
A representative exact identity is
\begin{equation}
 F+qG=\frac{q(qx+z-1)}{ D}\,L.
 \label{eq:app-invariant-line}
\end{equation}
Thus \(L=0\) is invariant in this finite family.  Its center generators are
\(\tau=s\), \(\ell=-s/D\), and \(d(B+m)=0\).  Direct expansion also gives the
double upper factor
\[
 E(w)=-\frac{(w+q)^2}{ D},
\]
the horizontal double-root branch, its weak coefficient, and the
corresponding divergence residual.  These identities identify algebraic
candidate gates; they do not by themselves prove a complete pp strip.

Before any order argument, the retained itinerary uses exactly the three
physical face classes in
Table~\ref{tab:two-central-boundary-faces}: endpoint, regular tube, and upper
box.  The appendix notation below is the notation of those same rows; no
additional divider or stopping face is introduced here.
The endpoint row follows from the exact weak equation and strong-axis
orientation.  The middle row is a fixed nonsingular flow box, so no unnamed
gate can appear there.  The upper row is closed by the gate equation and the
cooperative comparison below.

\subsubsection{Representative first-hit estimate}

In one regular tile,
\[
 \frac{dn}{ d\theta}=f_j(\theta,n),\qquad
 X_\lambda\theta\ge2c_0>0.
\]
Let \(n_{e,j}<n_{h,j}\) be the equatorial and hh solution graphs and let
\(\rho_j^e,\rho_j^h>0\) be their minimum physical distances from both collar
faces.  With \(N=n-n_{e,j}\),
\[
 \frac{dN}{ d\theta}=A_j(\theta,N)N,\qquad |A_j|\le L_j.
\]
If \(\Delta_j=\theta_{j+1}-\theta_j\), Gronwall gives the explicit entry
buffers
\[
 m_j^e=\frac{\rho_j^e}{2}e^{-L_j\Delta_j},\qquad
 m_j^h=\frac{\rho_j^h}{2}e^{-L_j\Delta_j}.
\]
The next-cut domain is one interval.  Since it contains neighborhoods of
both boundary graphs, it contains the whole order band between them.  This
excludes both collar faces, their corners, and contact with the preceding side at
every intermediate time, not merely at the two limiting orbits.

At upper entry put \(v=-w\).  On the two-gate sheet the exact upper equations
have
\[
 \partial_v\dot r=r(1+r)\ge0,\qquad
 \partial_r\dot v=1+(a-c)v+v^2\ge\frac{1}{2}.
\]
The common-time comparison therefore preserves
\[
 r_e(t)\le r(t)\le r_h(t),\qquad
 v_e(t)\le v(t)\le v_h(t).
\]
The equatorial and hh boundary solutions enter the same target
saddle-node block and converge to \(S_v\); every intermediate solution does
the same.  Hence the entire endpoint interval is one pp component.  Its
non-hh boundary is the complete chain
\(S_h\to p_\sigma\to S_v\), whose saddle ratio is
\(|B/(1-B)|\) or its reciprocal.  The purported residual class is therefore
\[
 \mathfrak A_{\rm aff}=\varnothing.
\]

Computer algebra verifies \eqref{eq:app-invariant-line}, the invariant family,
eigenslope, weak coefficient, center-generator and divergence identities, as
well as the middle/upper rescalings, overlap, gate variation, homogeneous
normal variational equation, log-ratio monotonicity, and endpoint eigenslope
formulas.

These finite checks do not prove an hh connection, a nonempty pp continuum,
transversality of physical sections, survival of the whole order band,
same-component landing, completeness of the PP/BP boundary, or exhaustion of
the possible regimes.  Those assertions follow from the physical face
classification, scalar uniqueness, Gronwall buffers, upper comparison, and
exact cycle--zero cut order in Section~\ref{sec:part-iii-central}.  Code and
complete outputs are included in the accompanying reproducibility package.

\section{Algebraic checks for the mixed, middle, and root estimates}
\label{app:zero-theorem-certificates}

This appendix records representative finite calculations used in the mixed,
middle, and root-scale zero estimates.  The physical hypotheses, complete
first-hit itineraries, boundary assignments, topological arguments, and Rolle
counts are established in the main text.  The calculations below verify the
algebraic identities and finite derivative recurrences invoked there.

\subsection{Exact mixed Li\'enard--Dulac identities}
\label{app:mixed-dulac-certificate}

On the exact face \(B=a=0\), write \(m=-p\), \(t=1+y>0\),
\(\epsilon=\ell/d\), and
\[
 G=(1-p)(t-1-\log t)+\frac{p}{2}(t-1)^2,
 \qquad
 \phi=(t-1)\left(\frac{\epsilon}{ t}-1\right),
 \qquad F=d\phi .
\]
For the exact Li\'enard system \(y_\tau=u-F(y)\), \(u_\tau=-g(y)\), where
\(g=G'\), put
\[
 {\cal V}=u^2-\frac{2}{3}uF-\frac{1}{9}F^2+2G.
\]
The finite symbolic calculation expands the weighted Lie derivative to
\begin{equation}
 \begin{split}
 {\cal M}
 &=(u-F){\cal V}_y-g{\cal V}_u
       -\frac{2}{3}\operatorname{div}(X){\cal V}\\
 &=\frac{4d}{3}{\cal K},\qquad
 {\cal K}=G\phi'-\phi g+\frac{d^2}{9}\phi^2\phi'.
 \end{split}
 \label{eq:app-mixed-dulac-lie}
\end{equation}
The first summand \(W=G\phi'-\phi g\) is affine separately in \(p\) and
\(\epsilon\).  Its four corner identities are exactly
\eqref{eq:legacy-6-93}, after renaming the main-text variable \(\zeta\) as
\(t\).  Thus the certificate below checks the same four functions rather than
introducing a second version of the table.
The comparison used for the possibly negative last term is the exact
identity
\begin{equation}
 t^2(t^2-2\epsilon t+\epsilon)
 -(t-\epsilon)^2(t^2-\epsilon)
 =\epsilon\{(2-\epsilon)t^2-2\epsilon t+\epsilon^2\}.
 \label{eq:app-mixed-comparison}
\end{equation}
Finally, for the physical field and the positive multiplier
\({\cal B}_0=e^{2dx}/(1+y)\), direct expansion gives
\[
 \frac{\operatorname{div}({\cal B}_0X)}{{\cal B}_0}
 =d(1-y)-\frac{\ell}{1+y}-2dpy^2.
\]

These cancellations are only the finite algebraic spine of the proof.  The
argument proves positivity of the four corner functions, interpolates
that positivity on \(0\le p,\epsilon\le1\), and combines
\eqref{eq:app-mixed-comparison} with the cone \(p\ge q_0d^2\).  It then proves
that the zero set of \({\cal V}\) consists of the isolated origin and one
proper arc, and applies Green's theorem to
\(|{\cal V}|^{-3/2}X\).  It also proves that every counted orbit lies in the
full-return physical domain \(z=1+y>0\), treats \(B=0,a\ne0\) by the incompatible
endpoint orientations, and sends the split complement to its uniform
sink/no-passage block.  Here \(t=1+y\) is the Li\'enard coordinate, whereas
\(t_{\rm sc}\) denotes the scale used in the final handoff.  At
\(t_{\rm sc}=0\) the source theorem is used; in the root notation the face
\(t_{\rm sc}>0,\ \kappa=0\) is precisely the persistent mixed regime.

The computer algebra check does not prove that the physical cone is
exhaustive, determine the topology of \(\{{\cal V}=0\}\), exclude omitted
roots, construct uniform first-hit domains, or establish the zero count.
Code and complete outputs are included in the accompanying reproducibility package.

\subsection{The unscaled-c middle QBF/QHH extension}
\label{app:middle-qhh-certificate}

The middle regime uses
\[
 B=t^2b,\qquad m=t^2M,\qquad a=tA,\qquad d=tD,\qquad c=\gamma,
\]
with no bound on \(\gamma/t\).  In the older bounded-\(C\) formulas this is the
substitution \(C=\gamma/t\).  Direct expansion verifies that every apparent
negative power cancels.  For example the finite and upper fields become
\begin{equation}
\begin{aligned}
 \xi'&=Ms^2-s+t^2(1+b\xi^2+A\xi-2Ms)+t^4M,\\
 s'&=\xi s+Ds^2+(t\gamma-2t^2D)s+t^4D-t^3\gamma,\\
 R'&=-R\{W+D+t^2RW+t\gamma R\}.
\end{aligned}
\label{eq:app-middle-extended-fields}
\end{equation}
On \(h=s-t^2=\rho^2\), \(t=\rho T\), the weighted clock denominator is
\[
 A_w^\gamma=\xi(1+T^2)+\rho T\gamma+\rho^2D,
\]
and in \(v=h/t^2\) the reciprocal denominator is
\[
 \xi(1+v)+t\gamma v+t^2Dv^2.
\]
These are polynomial in the resolved variables.  The same calculation gives the
exact H14 two-gate family
\[
 b=\frac{1}{1+t^2},\quad M=-\frac{1}{1+t^2},\quad
 A=\frac{-2+t\tau}{1+t^2},\quad D=\frac{2}{1+t^2},\quad
 \gamma=\frac{2t+\tau}{1+t^2},
\]
which tends to \((1,-1,-2,2,0)\) whenever \(t,\tau\to0\), without a
restriction on \(\tau/t\).

For the QHH connector, use the signed QL coefficient vector
\(\beta=(b,M,\widehat A,\widehat C,\widehat D)\).  The five commuting
derivation alphabets are
\begin{equation}
\begin{aligned}
 \mathfrak A_0={}&\{\partial_R,\partial_z,{\cal E}_0,
   \partial_b,\partial_M,\partial_{\widehat A},
   \partial_{\widehat C},\partial_{\widehat D}\},\\
 \mathfrak A_1={}&\{\partial_\xi,{\cal V}_1,{\cal E}_1,
   \partial_b,\partial_M,\partial_{\widehat A},
   \partial_{\widehat C},\partial_{\widehat D}\},\\
 \mathfrak A_2={}&\{\partial_\xi,{\cal V}_2,{\cal E}_2,
   \partial_b,\partial_M,\partial_{\widehat A},
   \partial_{\widehat C},\partial_{\widehat D}\},\\
 \mathfrak A_3={}&\{\partial_U,\partial_s,{\cal E}_3,
   \partial_b,\partial_M,\partial_{\widehat A},
   \partial_{\widehat C},\partial_{\widehat D}\},\\
 \mathfrak A_4={}&\{\partial_u,\partial_\ell,{\cal E}_v,
   \partial_b,\partial_M,\partial_{\widehat A},
   \partial_{\widehat C},\partial_{\widehat D}\}.
\end{aligned}
\label{eq:app-qhh-alphabets}
\end{equation}
Every \(P_4\) coefficient word containing \(\partial_\ell\) is zero.  For
\(j=0,\ldots,4\), all ordered words in \(\mathfrak A_j\) of length at most
four are included.  There are \(1+8+8^2+8^3+8^4=4681\) such ordered words,
or \(495\) commuting multiindices, per expanded factor.  Labeled-set
Fa\`a di Bruno, quotient, endpoint, composition, and inverse recurrences
cover the resulting section-map jets.  Equations
\eqref{eq:legacy-7-39}--\eqref{eq:legacy-7-42} give the physical factor
order and show why these alphabets are exhaustive.

The proof does substantially more.  It constructs the horizontal and
vertical graph transforms in the fixed-\(\gamma\) resolved frame, checks the
moving endpoint terms, and composes the five physical factors
\(P_0,\ldots,P_4\) on doubled first-hit buffers.  It proves
\[
 \|G\|_{C^4_{\rm res}}+\|G^{-1}\|_{C^4_{\rm res}}\le C_G,
 \qquad 0<g_0\le G'\le g_1,
\]
and, on every QHH component, obtains
\[
 {\cal K}''=-F_v''(G)(G')^2-F_v'(G)G''+t^2{\cal R}_K
 \le-\frac{1}{2}c_vg_0^2<0.
\]
The five commuting resolved alphabets and the labeled-set Fa\`a di Bruno,
quotient, moving-endpoint, composition, and inverse recurrences are displayed
in Section~\ref{sec:part-iii-middle}.  They form a finite, formula-defined proof of the
five-factor \(C^4\) closure.  No standalone generated QHH derivative table is
used as evidence for that closure.

The substitution check alone does not construct the finite QBF/QHH phase
split, prove the complete through components and denominator/section-normal
margins, or obtain the two-zero QBF and four-zero-per-QHH-component Rolle
bounds.

The half-open QBF/QHH overlap is assigned once.  Loss of a root, landing,
coefficient direction, denominator, side, or first hit stops at the
strict-lips, root-scale, mixed-face, passive, or exit regime specified in
Table~\ref{tab:regimes}.  The face \(t=0\) belongs to the source theorem, while the exact
persistent face \(t>0,\kappa=0\) belongs to the mixed theorem.  No
positive-margin theorem is continued across either face.

The finite algebraic check does not prove the graph-transform contraction,
the five-factor physical order, complete through components, coefficient
margins, phase connectors, or the curvature/Rolle count.  Those arguments
appear in Section~\ref{sec:part-iii-middle}; code and complete outputs are
included in the accompanying reproducibility package.

\subsection{The positive root-scale derivative recurrence}
\label{app:root-scale-certificate}

In a signed root chart the physical parameters are
\[
 B=t^2\kappa^2b,\qquad m=t^2M,\qquad
 a=\sigma t\kappa A,\qquad d=\sigma tD,\qquad c=\sigma\gamma.
\]
We first check the exact reciprocal endpoint substitution.  With
\(u=\sigma t\kappa\bar R\), \(\bar N=\bar Rz\), and
\[
 H=b+A\bar R+\bar R^2-\bar R\bar N+t^2M(\bar N-\bar R)^2,
\]
the two components are exactly
\[
\begin{aligned}
 \bar R'&=-\sigma t^2\kappa^2\bar R H,\\
 \bar N'&=\sigma\{\bar N+\kappa t\gamma\bar R(\bar N-\bar R)
 +\kappa t^2D(\bar N-\bar R)^2-t^2\kappa^2\bar N H\}.
\end{aligned}
\]
It also checks the outer transfer
\[
 S=\kappa s,\qquad Y=\kappa(\xi+s),\qquad X=Y-S,\qquad \kappa=SK.
\]
For the displayed polynomials \(\widetilde P,\widetilde Q\), the exact
desingularized field satisfies
\[
 Y'=S\widetilde P,\qquad S'=S\widetilde Q,\qquad
 K'=-K\widetilde Q,\qquad (SK)'=0.
\]
Thus the substitution preserves the original \(\kappa\)-fiber; \(S\) and
\(K\) are not independent physical parameters.

We next expand the eight numerator/denominator polynomials for the first four
polynomial factors and exhaust all ordered derivative words
of length at most four.  Table~\ref{tab:root-derivative-enumeration}
summarizes this finite certificate.
\begin{table}[H]
\centering
\caption{Finite derivative enumeration for the four polynomial root factors.}
\label{tab:root-derivative-enumeration}
\scriptsize
\begin{tabular}{@{}crrrrl@{}}
\toprule
Factor&Monomials&Letters&Words/monomial&Pairs&Multipliers, orders \(0\)--\(4\)\\
\midrule
\(N_0\)&7&9&7381&51667&1, 4, 16, 64, 256\\
\(Q_0\)&6&9&7381&44286&1, 2, 4, 8, 16\\
\(N_1\)&4&9&7381&29524&1, 4, 16, 64, 256\\
\(Q_1\)&4&9&7381&29524&1, 2, 4, 8, 16\\
\(N_2\)&4&11&16105&64420&1, 3, 9, 27, 81\\
\(Q_2\)&4&11&16105&64420&1, 2, 4, 8, 16\\
\(N_3\)&17&9&7381&125477&1, 4, 16, 64, 256\\
\(Q_3\)&7&9&7381&51667&1, 4, 16, 64, 256\\
\bottomrule
\end{tabular}
\end{table}
The total is \(460985\) monomial--ordered-itinerary pairs, including zero
descendants.  The outer rows are first rewritten with
\(SK=\varkappa\) and \(SK^2=\varkappa K\); hence every surviving descendant
containing \(K\) retains the essential \(\varkappa\)-factor.  A representative
quotient recurrence, obtained from \(Qf=N\), is
\[
 c_0=\frac{n_0}{ d},\qquad
 c_k=\frac{n_k+\sum_{j=1}^k \binom{k}{j}q_jc_{k-j}}{ d},
 \qquad 1\le k\le4,
\]
where \(d>0\) is the denominator margin proved in
Section~\ref{sec:part-iii-root}.

The proof constructs all five physical factors in their actual order,
proves the fixed-product wedge, first-exit landing, denominator and section
normal margins, the full vertical determinant, and the direct/inverse
\(C^4_{\rm res}\) bounds.  The four polynomial factors covered by the
\(460985\)-pair enumeration are \(P_0^{\rm rt},P_1^{\rm rt},P_2^{\rm rt}\), and
\(P_3^{\rm out}\).  The fifth physical factor \(P_4^{\rm rt}\) is handled by
the separate formula-defined graph, normal, entry, vertical-flight, output,
and inverse rows in the proof; it is not silently counted in the table.
The argument then proves the at-most-six through components and the
four-zero affine Rolle bound on each.

The theorem-validity handoff is the one fixed in
\eqref{eq:legacy-5-2}: \(t=0\) belongs to source, \(t>0,\kappa=0\) to the
mixed persistent-\(D\) theorem, and the positive-root theorem is valid on
\(t>0,\ 0<\kappa\le\kappa_0\).  Its owner interval stops at
\(\kappa_\#(\mathrm{angle})\), whose equality belongs to middle.  Every
other lost denominator, section, landing, or first hit stops at its named
adjacent regime.

The finite enumeration does not prove existence of a retained complete-lips itinerary,
the physical five-factor order, the fixed wedge, landing, vertical
determinant, through-component topology, boundary assignment, or Rolle
counting.  Those are proved in Section~\ref{sec:part-iii-root}; code and
complete outputs are included in the accompanying reproducibility package.

\subsection{Domains of the finite identities}

The identities above are used only on the physical domains for which the
main text proves all section, denominator, and first-hit margins.  We record
those domains explicitly to separate algebraic verification from analytic
applicability.

For the mixed Li\'enard--Dulac identities, the full-return domain is
\(z=1+y>0\), with
\[
 0\le p,\epsilon\le1,\qquad d>0,\qquad p\ge q_0d^2.
\]
The endpoint orientations and the topology of the relevant Dulac oval are
established before the multiplier is used.

For the intermediate-scale identities, \(|\gamma|\le\gamma_0\), with no
bound on \(\gamma/t\).  Every clock denominator, entry normal, and target
normal has a fixed positive margin on the doubled through component.
Coefficient faces remain in the same compact normalized base and are treated
by the displayed recurrence or by the identity alternative.  In particular,
the exact two-gate family converges without any restriction on \(\tau/t\).

For the positive root-merger recurrence,
\[
 t>0,\qquad 0<\kappa\le\kappa_0,
\]
and the normalized coefficients \(b,M,A,D,\gamma\) range over the compact
root carrier.  Positivity of the clock denominators and section-normal
determinants is an analytic hypothesis proved for the physical five-factor
itinerary.  The faces \(t=0\) and \(\kappa=0\) are not obtained by continuing
the positive-root calculation: they are treated by the source and
persistent-endpoint theorems, respectively.

\section{Computer-assisted calculations and data availability}
\label{sec:reproducibility}

Several finite algebraic calculations were checked by computer.  For the
source estimate, the calculation enumerates the \(35\) primitive derivative
types and all \(167115\) canonical commuting words of total order at most
six.  Appendix~\ref{app:mixed-dulac-certificate} records the exact
Li\'enard--Dulac identities used on the mixed face.  For the middle QHH
regime, the calculation checks the finite derivative alphabets
and the polynomial substitutions recorded in
Appendix~\ref{app:middle-qhh-certificate}.  For the root-scale regime, it
expands the first four polynomial factors and checks the \(460985\)
monomial--derivative pairs described in
Appendix~\ref{app:root-scale-certificate}.  Further symbolic calculations
verify the compactification formulas, center ideal, endpoint identities, and
mixed Li\'enard--Dulac cancellations in the preceding appendices.

Table~\ref{tab:certificate-interface} gives the audit boundary for every
machine-assisted part.  A successful run proves only the finite predicate in
the second column.  In particular, none of the scripts constructs an orbit or
promotes a symbolic chart identity to a physical incidence statement.

\begingroup
\footnotesize
\setlength{\tabcolsep}{3pt}
\refstepcounter{table}
\label{tab:certificate-interface}
\begin{center}
\textup{Table~\thetable: Claim--certificate--human-proof map.}\par
\smallskip
\begin{tabularx}{\textwidth}{@{}
 >{\raggedright\arraybackslash}p{.22\textwidth}
 >{\raggedright\arraybackslash}p{.25\textwidth}
 >{\raggedright\arraybackslash}p{.19\textwidth}
 >{\raggedright\arraybackslash}p{.27\textwidth}@{}}
\toprule
Mathematical use&Finite machine predicate&Replay unit&Human proof not supplied
by the run\\
\midrule
Signed compactification and endpoint preparation,
Propositions~\ref{prop:finite-singular-alphabet} and
\ref{prop:endpoint-joint-uniformity}&
Exact chart substitutions, endpoint-scale identities, and the four displayed
joint-endpoint cancellations&
{\ttfamily chart-signed-\linebreak compactification},
{\ttfamily chart-pp-scale}, and
{\ttfamily endpoint-joint-\linebreak uniformity}&
Chart coverage, section transversality, sign conventions, first-hit
exhaustiveness, and joint parameter--section estimates\\
\addlinespace
Source six-jet closure,
Theorem~\ref{thm:part-ii-six-jet} and
Appendix~\ref{app:source-six-jet}&
Closure of \(35\) primitive types and enumeration of \(167115\) canonical
words through total order six&
{\ttfamily source-six-jet-\linebreak owner-dag}&
Existence of the physical word, common moving-boundary domains, analytic
majorants, coefficient handoffs, and the Rolle argument\\
\bottomrule
\end{tabularx}
\end{center}

\pagebreak
\begin{center}
\textup{Table~\thetable\ (continued)}\par
\smallskip
\begin{tabularx}{\textwidth}{@{}
 >{\raggedright\arraybackslash}p{.22\textwidth}
 >{\raggedright\arraybackslash}p{.25\textwidth}
 >{\raggedright\arraybackslash}p{.19\textwidth}
 >{\raggedright\arraybackslash}p{.27\textwidth}@{}}
\toprule
Mathematical use&Finite machine predicate&Replay unit&Human proof not supplied
by the run\\
\midrule
Center division,
Theorem~\ref{thm:part-ii-center-ideal} and
Appendix~\ref{app:center-bautin}&
Focal recurrences, Darboux identities, generator bridges, and center-domain
polynomial identities&
{\ttfamily bautin-center-\linebreak basis},
{\ttfamily bautin-recurrence},
{\ttfamily bautin-center-\linebreak bautin}, and
{\ttfamily bautin-global-\linebreak domains}&
Completeness of both physical center return domains and legality of division
only after full-word composition\\
\addlinespace
Mixed Li\'enard--Dulac estimate,
Lemma~\ref{lem:part-ii-mixed-dulac} and
Appendix~\ref{app:mixed-dulac-certificate}&
Exact full-ratio substitutions and the identities
\eqref{eq:legacy-6-91}--\eqref{eq:legacy-6-95}&
{\ttfamily mixed-dulac-\linebreak full-ratio}&
The topology of \eqref{eq:mixed-zero-arc}, the line-punctured Dulac argument,
the physical cone, and boundary assignment\\
\addlinespace
Two-central completion,
Proposition~\ref{prop:complete-physical-lips},
Lemma~\ref{lem:two-central-landing-product-tube},
Corollary~\ref{cor:two-central-nonaffine-boundary}, and
Theorem~\ref{thm:two-central-physical-completion}&
Invariant-line, eigenslope, overlap, first-variation, and first-port
identities&
{\ttfamily open-aff-\linebreak invariant-line} and
{\ttfamily open-aff-first-\linebreak port}&
Actual hh/pp/PP-or-BP incidence, survival of the order band, cooperative
squeezing, boundary completeness, and nonaffinity transfer\\
\addlinespace
Middle QBF/QHH estimate,
Theorem~\ref{thm:part-iii-middle-zero} and
Appendix~\ref{app:middle-qhh-certificate}&
The unscaled-\(c\) substitutions, weighted overlaps, and common compact
base-point limit&
{\ttfamily middle-qhh-c-\linebreak dominant}&
Five-factor physical order, complete derivative recurrence, through-component
exhaustiveness, flow estimates, and Rolle counting\\
\addlinespace
Positive root estimate,
Theorem~\ref{thm:part-iii-root-zero} and
Appendix~\ref{app:root-scale-certificate}&
Four-factor substitutions, operator identities, and the \(460985\)
monomial--word enumeration&
{\ttfamily root-scale-triple-\linebreak merger} and
{\ttfamily root-scale-\linebreak rtm27-rtm47}&
The fifth physical factor, positive denominators, component topology,
boundary handoffs, and the final curvature/Rolle count\\
\bottomrule
\end{tabularx}
\end{center}
\endgroup

The versioned certificate archive is deposited at Zenodo
\cite{LuH14Data2026}.  Version~2.0.0 has DOI
\href{https://doi.org/10.5281/zenodo.21542256}
{\nolinkurl{10.5281/zenodo.21542256}}.  The replay-bundle fingerprints are
\begin{quote}
\footnotesize\ttfamily
\noindent SHA-256:\quad
2d29fe9dad3d4e748268660adc3fe026abd99c5615c6c44c006a429fa404842b\\
\noindent MD5:\quad 30edab70ad304a0411ff3024ffa50677
\end{quote}
The authoritative manifest is
\nolinkurl{reproduce/MANIFEST.md}; the per-case commands, script hashes,
canonical-output hashes, artifact assertions, and hashes of this TeX source
and the unified reader PDF are in
\nolinkurl{reproduce/expected_outputs.json}.  The row-level map from current
theorem or appendix labels to replay cases and output hashes is
\nolinkurl{reproduce/claim_certificate_map.csv}.

In the locked CPython~3.12.5 environment described by
\nolinkurl{reproduce/environment.lock}, the complete noninteractive replay is
\begin{verbatim}
H14_REPRO_PYTHON=/path/to/cpython-3.12.5/bin/python \
  ./reproduce/reproduce_all.sh
\end{verbatim}
and terminates with
\texttt{reproduction passed: 15/15 cases}.  The launcher checks the
interpreter and package versions, every script hash, the frozen blueprint
hash, the hash-locked requirements file, the unified-PDF and source bindings,
the canonical stdout hashes, and the two generated source artifacts.  A
separate standard-library implementation,
\nolinkurl{reproduce/independent_audit.py}, rereads the case logs, verifies the
claim map, and independently checks the source TSV invariants and the
\(460985\)-pair root total.  The continuous replay records the exact Git commit
and tree with both logs.  Neither checker imports a mathematical conclusion
from the other.  The mathematical statements, their domains, and the human
arguments needed to interpret the finite checks are all contained in this
single PDF; raw generated tables are electronic data rather than a second
paper.

\section*{Declaration of AI use}

The author used OpenAI Codex in preparing the manuscript and takes full
responsibility for its content.

\end{document}